\newtheorem{theo}{Theorem}[section]
\newtheorem{defi}{Definition}[section]
\newtheorem{axiom1}{Axiom}
\newtheorem{coro}{Corollary}
\newtheorem{axioms}{Axiom}
\crefname{axiom1}{Axiom}{Axioms}
\crefname{axioms}{Axiom}{Axioms}
\newcommand{\pow}{\mathrm{pow}}
\title{ {An axiomatic treatment of Persistent Homology}  }
\author{
    \name S. T. Ura
    \email{sergio.ura@gmail.com}, \\
    \addr{S\~ao Paulo State University (UNESP), Department of Mathematics, Brazil}
    \AND 
    \name M.\ A. \ F.\ Contessoto\
    \email{marco.contessoto@unesp.br}\\
    \addr{S\~ao Paulo State University (UNESP), Department of Mathematics,  Brazil}
    \AND
    \name Alice K.\ M.\ Libardi 
    \email{alice.libardi@unesp.br}, \\
    \addr{S\~ao Paulo State University (UNESP), Department of Mathematics,  Brazil}
  }
\begin{document}

\maketitle

\begin{abstract}
We develop an axiomatic framework for persistent homology in any  degree. We prove the existence and uniqueness for both a persistent version of the Eilenberg-Steenrod axioms for classical homology and a reduced version of this set of axioms. \footnote{This work is partially supported by the Projeto Tem\'atico: Topologia Alg\'ebrica, Geom\'etrica e Diferencial, FAPESP Process Numbers 2016/24707-4 and 2022/16455-6}
\end{abstract}

\tableofcontents

\section{Introduction}

In different branches of mathematics,  questions often fall into two main types: those about existence and those about classification. As soon as the existence question has a positive answer, we are faced with the classification question. The classification problem is typically attacked by introducing  notions of equivalence between objects and by identifying quantities that   behave well under  these notions of equivalence.

\paragraph{Algebraic topology.} In terms of studying topological spaces, Algebraic and Differential Topology attack these questions by associating "invariant objects" to the topological spaces. These objects  are often integers but  richer information can sometimes be extracted from invariant algebraic structures such as groups and rings.
The first results in Algebraic Topology were announced by Poincar\'e in  Comptes-Rendus Note of $1892$.  In $1895$ he published the theory developed in a paper entitled ``Analysis Situs" \cite{Poincare}, and  between $1899$ and $1905$ he complemented  the theory through the papers called ``Compl\'ements \`a l\'a Analysis Situs" \cite{Poincare}.

Many mathematicians before Poincar\'e tried to attach \emph{numbers} invariant under homeomorphism. 
Poincar\'e was the first to introduce the idea of 
computing with topological
objects, not only with numbers, by defining
the concepts of homology and of fundamental group 
as topological invariants. Homology and Homotopy were in fact first introduced for  applications to problems of topology, but many  applications to other parts of mathematics  such  categories and functors, homological algebra,  K-theory were subsequently found.
One  central concept in homology is the category of generalized chain complexes in an abelian category, mentioned by  Poincar\'e, in $1900$, after  a frustrated attempt to give a mathematical formulation to the intuitive ideas of Riemann and Betti, in $1895$. 

The following is account of some of the main developments that took place in the early history  of algebraic topology. 

\begin{itemize}

\item Initially, Poincar\'e restricted his attention to compact triangulated spaces (or finite cell complexes) and proved that a subdivision of every cell into smaller cells gives the same homology for the subdivided triangulation. So one can restrict attention to  euclidean simplicial complexes, in which every cell is a rectilinear simplex contained in some $\mathbb{R}^N$ with large $N$.

\item The primary goal of homology theory is attach to any triangulable space $X$ a system of "homology groups" $H_{n}(X)$, for $n\geq 0,$ invariant under homeomorphism. For compact spaces such as $S^n, P_n(\mathbb{R}), T^n$, it is not difficult define triangulations.    Poincar\'e conjectured that manifolds in general and algebraic varieties (with singularities) can be triangulated. In 1930, Cairns and van der Waerden  proved  this conjecture.

\item Around $1925$, several topologists tried to  define homology groups for spaces for which no triangulation is known, or even possible. The idea of singular homology arose from these attempts  and was eventually  provided  by Eilenberg. Other methods were  developed by Vietoris \cite{Vietoris}, Alexandroff \cite{Alexandroff}, Lefschetz \cite{Lefschetz} and \v{C}ech \cite{Cech}. In all cases,  the coefficient of  the homology group considered 
  was a free $\mathbb{Z}$-module. For singular homology, the dimension of $ H_p(X;Q)$  is called the $p$-th Betti number $b_p(X)$ of the space $X$.

\item The idea of duality was considered by Poincar\'e in \cite{Poincare} section $9$, but only in 1935, under the influence of Pontrjagin's duality theorem, the general definition of cohomology was given. Alexander and Spanier  gave a direct definition of cohomology for an arbitrary space. Then ``duality" in  algebraic topology was better understood as a consequence of this.

\item Around  1949, through comparing the various definitions of homology and cohomology it was proved that  \v{C}ech cohomology and Alexander--Spanier cohomology give isomorphic cohomology groups, whereas the singular cohomology of a space may differ from its \v{C}ech cohomology.

\item 
In $1945$, a new approach was considered by Eilenberg and Steenrod \cite{Eilenberg}. They selected a small number of  properties shared by the various theories and adopted them as axioms for a theory of homology (and cohomology). Also they showed that on the category of compact triangulable spaces all theories verifying the axioms give isomorphic groups (uniqueness); in other words, there is only one notion of homology (and cohomology) in that category \cite{Eilenberg}.

\item In $1946,$ Leray introduced the concept of  sheaf cohomology which may be considered as a general machinery applicable to problems dealing with "passage from local to global properties".  Problems of this type arose  in the 1880's in the study of analytic functions of several complex variables by Poincar\'e, Cousin and later Cartan and Oka. The machinery used in 1946 in the study of such  problem was gradually refined and generalized by Leray and other mathematicians until the present day.\footnote{Text extracted from the book of J. Dieudonn\'e \cite{J-D}.}

\item {Morse theory in its classical form provides a relationship between the critical points of a given function on a manifold and the topology of the manifold. Between   $1980$ to 1990, Morse theory was extended to infinite dimensions. In the book \cite {Morse}, in $1934$,  Morse  exposed his idea, one the most important and influential works of the twentieth century. }

\end{itemize}

\paragraph{The advent of persistent homology.} Ideas related to algebraic topology began to be considered in  applications in the early 1990s eventually leading to the notion of \emph{persistent homology} and to the field of \emph{topological data analysis} (TDA).

\begin{itemize}
\item 
Frosini was the first to introduce the concept of size function \cite{Frosini1990} in the early 1990s -- a precursor of what came to be known as persistent homology.\footnote{Size functions coincide with degree-0 persistent homology.} Size  functions were used to compare shapes \cite{Frosini-Landi,Ferri} with applications in in computer vision and pattern recognition
\cite{vuff,frosini5,frosini4,frosini6,frosini7,frosini8,frosini9,frosini10}.

\item In 1999 Robins \cite{Robins} defined the concept of persistent Betti number, which she used to study fractal sets related to the structure of specific materials and also in image processing. A possible algorithm for computer implementation was suggested and tested, proving that it is possible to obtain topological information about structures of a compact space from
finite approximations of it.

\item A few years later, Edelsbrunner et al. \cite{E-L-Z}  gave  
optimized algorithms to compute persistent homology in the setting of finite simplicial complexes.

\item Other manifestations of the idea of the idea of persistence were a posteriori found in the work of Morse \cite{morse} and
 Barannikov \cite{S-B}.

\item Once consolidated, the persistence theory has seen many subsequent developments, including the elucidation of its stability \cite{Cohen2007,Chazal},
multiparameter \cite{gunnarZomo}, its connection to the notion of M\"obius inversion 
\cite{Amit-Generalized}, cohomological persistent invariants \cite{Marco-F-L-A,Facundo-L-A}, and to the filling radius \cite{Sunhyuk-Facundo}.

 \item Hierarchical clustering and persistent homology are deeply connected, since applying the $\pi_0$ functor to a simplicial filtration we obtain a dendrogram describing the evolution of the connected components during the filtration. Concerned about this connection, Carlsson and M\'emoli \cite{Gunnar-Facundo1, Gunnar-Facundo2}  provided an axiomatic framework for classifying different clustering functors.

 \end{itemize}
\paragraph{Our contribution.} In this paper, we develop an axiomatic framework for persistent homology in any  degree. 

\paragraph{Structure of the paper.}

In Section \ref{sec:prelim}, we  establish basic definitions and terms used throughout the paper. This section includes a definition of persistent homology for filtered spaces.

Section \ref{axioms-section} not only presents a persistent version of the classical Eilenberg-Steenrod axioms for homology, but introduces also a reduced version to the set of axioms is identified. 
These apply to the $\mathcal{RF}$ category of filtered spaces (see Appendix \ref{apdxb}). 
In this section we show that these two sets of axioms are equivalent. Roughly speaking, it is shown that the Axiom \ref{ax5} is a consequence of the of the other axioms.

In Section \ref{sec:uniq}, we prove a uniqueness theorem. That is, we prove that there is only one persistent homology theory that satisfies the persistent version of the Eilenberg-Steenrod axioms, up to  isomorphism.

 The persistent homology theory defined in Section \ref{sec:prelim} satisfies all the axioms for persistent homology that we put forward.

In Appendix \ref{ap1} we introduce the theoretical elements needed in order to prove the uniqueness theorem. These include a series of theorems and definitions from the classical homology theory adapted to the context of persistent homology and filtered spaces. 
In Appendix \ref{apdxb} we present the definition of the category {$\mathscr{RF}$}, and the definition of a simplicial complex in the context of finite sets.

 \section{Preliminaries}\label{sec:prelim} \quad

Throughout this paper, let us fix 
$M$, a  large {integer} in the extended real number line $\overline{\mathbb{R}} = \mathbb{R}\cup\{-\infty, +\infty\}$ 
 (one can {set} $M=+\infty$). Consider the poset  ${\mathrm{Int}(\mathbb{R})} := \{ [\varepsilon, \varepsilon'], \, \varepsilon \leq \varepsilon']\},$ with the order $ I \leq  J$ if and only if, $ I \subseteq J$. {Whenever} we write $\varepsilon\leq \varepsilon'$, {we mean} $0\leq\varepsilon\leq \varepsilon'\leq M.$

 Given a  set $X$, we recall that $\pow(X)$ is the set of all subsets of $X$, and a function  $F_X: \pow(X)\longrightarrow\overline{\mathbb{R}}$ is a monotone function  if $F_X(\sigma)\leq F_X(\tau),$ whenever $\sigma\subseteq\tau$, for $\sigma,\tau\in \pow(X).$

 A \textbf{filtration} over a finite set $X$ is a monotone function 
 $F_X: \pow(X)\longrightarrow (-\infty, M]$ that gives rise to a family of subsets {$K^\varepsilon:=F_{X}^{-1}((-\infty, \varepsilon])$}, 
 {$\varepsilon\leq M$}, and inclusions maps $\{K^{\varepsilon}\stackrel{k^{I}}{\longrightarrow}K^{\varepsilon'}\}$, 
 {$\varepsilon\leq\varepsilon'\leq M$}. Each $K^\varepsilon$ of $\mathbb{X}$ is a simplicial complex and each $\sigma\in K^\varepsilon$ with $\#\{\sigma\}=n+1$ is an $n$-simplex of $K^\varepsilon$ (see Appendix \ref{simpcomp}).

 A \textbf{filtered set} is a pair $\mathbb{X}=(X, F_X)$, where  $F_X$ is a filtration over $X$. Given $\mathbb{X}=(X, F_X)$ and $\mathbb{Y}=(Y, F_Y)$ filtered sets, a map $f: \mathbb{X}\longrightarrow \mathbb{Y}$ is a \textbf{filtration preserving map} if $F_{X}\geq F_{Y}\circ f$.

\begin{defi}
A \textbf{relative filtered set} $(\mathbb{X},\mathbb{A}):=((X,A),(F_X,F_A))$ is a pair $(X,A)$ of finite sets, where $A\subset X$,  equipped with a pair of maps $(F_X,F_A)$ defined in the following way: the maps $F_X: \pow(X)\longrightarrow\mathbb{R}$ and $F_A:\pow(A)\longrightarrow \mathbb{R}$ are filtrations over ${X}$ and over ${A}$, respectively, and they satisfy $F_A(\sigma)\geq F_X(\sigma)$ for all $\sigma\in \pow(A)$.
A \textbf{filtration preserving map of relative filtered sets} is any map $f: (\mathbb{X},\mathbb{A}) \longrightarrow (\mathbb{Y},\mathbb{B})$ such that 
    \begin{itemize}
        \item $f$ is a map of pairs $f:(X, A)\longrightarrow (Y, B)$, that is, $f:X\longrightarrow Y$ such that $f(A)\subset B$; 
        \item $f:(X, F_X) \longrightarrow (Y, F_Y)$ is a filtration preserving map and
        \item the restriction $f|_A:(A,F_A) \longrightarrow (B,F_B)$ is also a filtration preserving map.
\end{itemize}
 \end{defi}

\begin{remark}\label{Remark-1}
It is important to note that the word filtration in the previous definition is related to the fact that, from the finite set $X$ with the map $F_X$, the following filtrations of simplicial complexes are obtained 
$$
K^{\varepsilon} = F_X^{-1}((-\infty,\varepsilon]) \text{ and }A^{\varepsilon} = F_A^{-1}((- \infty, \varepsilon]).
$$
\end{remark}

\begin{defi}
Let $\mathbb{X} = (X, F_X)$ and $\mathbb{Y} = (Y, F_Y)$ be filtered sets. The \textbf{union} of {these} filtered sets is the filtered set  $\mathbb{X} \cup \mathbb{Y} = (X\cup Y, \mathcal{F})$, where $\mathcal{F}$ is given by
\[
\mathcal{F} (\sigma) :=  \begin{cases}

\min\{ F_X(\sigma) , F_Y(\sigma)\}  & \mbox{if } \sigma\subset X\cap Y \\

F_X(\sigma) & \mbox{if } \sigma\subset X\mbox{ and } \sigma\not\subset Y \\

F_Y(\sigma) & \mbox{if } \sigma\subset Y\mbox{ and } \sigma\not\subset X  \\

M & \mbox{otherwise}.
\end{cases} 
\]

The \textbf{intersection} of  filtered sets is the filtered set $\mathbb{X} \cap \mathbb{Y} = (X\cap Y, \mathcal{F'})$, where $\mathcal{F'}$ is given by
\[
\mathcal{F'} (\sigma) :=  
\max\{ F_X(\sigma) , F_Y(\sigma)\}.  
\]
\end{defi}

 For stating the next results, we need to define the functor persistent homology of relative filtered pairs with the respective filtration preserving homomorphism.

\begin{defi}\label{defhomrel}

For every $n\in\mathbb{Z}, \,\, n > 0$,
 and each $\varepsilon >0$ fixed we  have from the classical theory that $C_n^{\varepsilon}(\mathbb{X},\mathbb{A}) := C_n(K^\varepsilon, A^\varepsilon) =  {C_n(K^\varepsilon)}/{C_n(A^\varepsilon)}$ and 
the map
$\partial^{\varepsilon}_{n}:C_{n}(K^{\varepsilon})\longrightarrow C_{n-1}(K^\varepsilon)$ induces
$\partial^{\varepsilon}_{n}:C_{n}(K^{\varepsilon}, A^\varepsilon)\longrightarrow C_{n-1}(K^\varepsilon, A^\varepsilon).$ Then we obtain the following \textbf{relative chain complex}, denoted by $(C(K^{\varepsilon}, A^{\varepsilon}), \partial^{\varepsilon})$:
\[
\mathcal{C}^\varepsilon: \cdots C_{n}(K^{\varepsilon}, A^\varepsilon) 
\stackrel{\partial^{\varepsilon}_{n}}{\longrightarrow}  
C_{n-1}(K^{\varepsilon}, A^\varepsilon) 
\stackrel{\partial^{\varepsilon}_{n-1}}{\longrightarrow} \cdots
\]
\noindent Consider     
$Z^{\varepsilon}_{n}(\mathbb{X},\mathbb{A})  :={\ker} (\partial^{\varepsilon}_{n})$ and $B^{\varepsilon}_{n}(\mathbb{X},\mathbb{A}) : = \mathrm{Im} (\partial^{\varepsilon}_{n+1})$   the sets of  \textbf{relative $n$-cycles}  and \textbf{relative $n$-boundaries}, respectively.

For  $ I $ in the poset  $\mathrm{Int}(\mathbb{R})$ and $n\in\mathbb{Z}, \,\, n > 0$, the chain map $k^{I}_n:C_n(K^{\varepsilon})\longrightarrow C_n(K^{\varepsilon'})$, induced by the inclusion map $ k^{I}:{K^{\varepsilon}}\longrightarrow K^{\varepsilon'}$, also induces a chain map in the relative chain complex and for simplicity it also will  be denoted by $k^{I}_n$.

The \textbf{relative $n$-persistent homology group} of $(\mathbb{X},\mathbb{A})$ is defined as the quotient
\[
\mathrm{H}_{n}^I(\mathbb{X},\mathbb{A}):= 
\dfrac{{k}_{n}^{I}({Z}_{n}^{\varepsilon}(\mathbb{X},\mathbb{A}))}{{B}_{n}^{\varepsilon'}(\mathbb{X},\mathbb{A})\cap {k}_{n}^{I}({Z}_{n}^{\varepsilon}(\mathbb{X},\mathbb{A}))}  \cdot
\]

\end{defi}

Given a {filtration} preserving map of relative filtered sets
$f: (\mathbb{X},\mathbb{A}) \longrightarrow (\mathbb{Y},\mathbb{B})$  the \emph{relative chain map induced by $f$} is induced by 
$f_{\#}^{I}:
  k^{I}_{n}(Z_{n}^{\varepsilon}(\mathbb{X})) 
 \longrightarrow
 k^{I}_{n}(Z_{n}^{\varepsilon}(\mathbb{Y}))$
 previously defined and we will give the same notation
 $f_{\#}^{I}:
  k^{I}_{n}(Z_{n}^{\varepsilon}(\mathbb{X}, \mathbb{A})) 
 \longrightarrow
 k^{I}_{n}(Z_{n}^{\varepsilon}(\mathbb{Y}, \mathbb{B}))$.
This induces a natural homomorphism in relative persistent homology
$f^{I}_*:  
\mathrm{H}_{n}^{I}(\mathbb{X}, \mathbb{A})\longrightarrow  \mathrm{H}_{n}^{I}(\mathbb{Y},\mathbb{B})$ given by $f^{I}_*([d]) := [f^{\varepsilon'}_{\#}(d)], $ where $[d] \in \mathrm{H}_{n}^{I}(\mathbb{X}, \mathbb{A})$, for  $d\in k_n^{I}(Z_n^{\varepsilon}(\mathbb{X},\mathbb{A}))$.

\begin{defi}

The \textbf{persistent homology boundary homomorphism} 
$$ \partial:\mathrm{H}^{I}_n(\mathbb{X},\mathbb{A})\longrightarrow \mathrm{H}^{I}_{n-1}(\mathbb{A})$$ 

\noindent is the homomorphism induced by the boundary map from the group of $n$-chains of $(K^{\varepsilon'}, A^{\varepsilon'})$ to the group of $(n-1)$-chains of $A^{\varepsilon'}$.
\end{defi}

\begin{defi}\label{contiguous}
\label{seqcontiguous}
Let $(\mathbb{X},\mathbb{A})$ and $(\mathbb{Y},\mathbb{B})$ be relative filtered sets and let $f,g:(\mathbb{X},\mathbb{A})\longrightarrow(\mathbb{Y},\mathbb{B})$ be two filtration preserving maps. 
We  say that $f$ and $g$ are \textbf{contiguous maps in $I$} if for all $\alpha\in I$,
$f_\alpha,g_\alpha:(K^\alpha,L^\alpha)\longrightarrow(M^\alpha,N^\alpha)$
satisfy the property that for every $\sigma\in K^\alpha$, there is $\tau \in M^\alpha$ such that $f_\alpha(\sigma)\subset\tau$ and $g_\alpha(\sigma)\subset \tau$ and if $\sigma\in L^\alpha$, then the aforementioned $\tau$ must be a subset of $N^\alpha$, where  
$K^\alpha = F_X^{-1}((-\infty,\alpha]), \,
L^\alpha = F_A^{-1}((-\infty,\alpha]), \,
M^\alpha = F_Y^{-1}((-\infty,\alpha])$ and $
N^\alpha = F_B^{-1}((-\infty,\alpha]).$ We will say that $f$ and $g$ are \textbf{contiguous maps} if they are contiguous in $I = \mathbb{R}$.

\label{eqcontiguous}

The maps  $f$ and $g$ are \textbf{contiguously equivalent maps} if $g\circ f$ and $\mathop{id}_{(\mathbb{X},\mathbb{A})}$ are contiguous maps and also $f\circ g$ and  $\mathop{id}_{(\mathbb{Y},\mathbb{B})}$ are contiguous maps. In this case,  $(\mathbb{X},\mathbb{A})$ and $(\mathbb{Y},\mathbb{B})$ are \textbf{equi-contiguous sets} and the map $f$ (respect. $g$) is a \textbf{contiguity}.
\end{defi}

\section{Relative Persistent Homology Axioms and  Equivalence with  Simplified Axioms}\label{axioms-section}

In this section we introduce the relative persistent homology axioms and establish the equivalence with a set of simplified axioms.

\begin{theo} 
{
There is a unique pair $(\mathrm{H},\partial)$ such that $\mathrm{H}$ is a functor from the category $\mathscr{RF}$ (See Appendix \ref{rf}) of filtered relative sets to the category $\mathscr{A}b$ of abelian groups
and
$\partial$ is a natural transformation which reduces the degree by 1, $\partial^{I} : \mathrm{H}^I (\mathbb{X}, \mathbb{A}) \longrightarrow \mathrm{H}^I(\mathbb{A},\emptyset)$, that satisfies the following axioms:
}
\end{theo}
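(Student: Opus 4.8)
The plan is to establish the statement in two independent halves: \emph{existence}, by exhibiting a pair $(\mathrm{H},\partial)$ satisfying the listed axioms, and \emph{uniqueness}, by proving that any two such pairs are naturally isomorphic (this is the ``up to isomorphism'' claim).

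For existence I would take $\mathrm{H}$ to be the relative persistent homology functor $\mathrm{H}_n^I$ of Definition \ref{defhomrel}, together with the persistent boundary homomorphism $\partial$ defined immediately after it, and verify each axiom against this concrete construction. Functoriality reduces to the functoriality of the relative chain maps $k_n^I$ and $f_\#^I$ on the defining quotients $k_n^I(Z_n^\varepsilon)/(B_n^{\varepsilon'}\cap k_n^I(Z_n^\varepsilon))$, and naturality of $\partial$ follows because the classical connecting homomorphism commutes with chain maps and survives passage to the images under $k^I$. The homotopy axiom I would obtain from Definition \ref{contiguous}: contiguous simplicial maps are classically chain homotopic at each level $\alpha$, and one checks that the chain homotopy is compatible with the inclusions $k^I$, so that contiguous, hence contiguously equivalent, maps induce equal maps on $\mathrm{H}_n^I$. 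Exactness, i.e. the persistent long exact sequence of the pair $(\mathbb{X},\mathbb{A})$, I would derive by applying the classical long exact sequence at the levels $\varepsilon$ and $\varepsilon'$ and tracking how the images of $k^I$ and the boundaries interact inside the defining quotient; the only delicate point is to confirm that the persistence operation does not destroy exactness. Excision and the dimension axiom are then checked directly on the combinatorial model of Appendix \ref{simpcomp}.

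For uniqueness I would adapt the classical Eilenberg--Steenrod comparison argument to the category $\mathscr{RF}$, using the machinery imported in Appendix \ref{ap1}. Given two pairs $(\mathrm{H},\partial)$ and $(\mathrm{H}',\partial')$, the strategy is to construct a natural isomorphism by induction on the simplicial structure of the filtered set. The dimension axiom pins both theories down on the basic objects, namely single vertices entering the filtration; the long exact sequence and excision then let me compare them on a simplex, on a sphere, and finally on an arbitrary filtered complex, by inducting on the number of simplices and on the filtration levels. At each stage the five lemma, applied to the persistent long exact sequences of a pair, upgrades agreement on subcomplexes to agreement on the whole, and naturality guarantees that the isomorphisms assemble coherently.

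The main obstacle I anticipate is the two-parameter nature of persistent homology: the groups $\mathrm{H}_n^I$ depend on an interval $I=[\varepsilon,\varepsilon']$ rather than on a single space, and the defining quotient mixes cycles born at $\varepsilon$ with boundaries appearing only at $\varepsilon'$. Consequently the inductive comparison in the uniqueness proof must be carried out compatibly with every inclusion-induced map $k^I$ as $I$ varies, and each five-lemma step must be applied to the correct persistent long exact sequence rather than to the classical one at a fixed level. Verifying that excision and the homotopy axiom survive the passage to these images, that is, that the chain homotopies and excision isomorphisms descend to the persistent quotients functorially in $I$, is where I expect the bulk of the technical work to lie.
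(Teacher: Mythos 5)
Your two-part strategy (existence by verifying the axioms on the concrete theory of Definition \ref{defhomrel}, uniqueness by an Eilenberg--Steenrod comparison) matches the paper's high-level plan, but the mechanisms you propose differ from the paper's at both key points. For uniqueness, you propose the ``cellular induction'' variant: pin both theories down on points via the dimension axiom, then propagate agreement through simplices and arbitrary filtered complexes by inducting on the number of simplices, applying the Five Lemma to the persistent long exact sequence at each stage. The paper instead follows the chain-reconstruction route of Eilenberg--Steenrod's \emph{Foundations}: it defines internal chain groups $\mathcal{C}^{I}_q(\mathbb{X},\mathbb{A}):=\mathrm{H}^I_q(\mathbb{X}^{(q)}\cup\mathbb{A},\mathbb{X}^{(q-1)}\cup\mathbb{A})$ from the skeletal filtration (Definition \ref{qskltn}, Lemma \ref{lema5}), proves the Main Isomorphism Theorem \ref{main.iso} identifying $\mathrm{H}^I_q$ with the homology of this chain complex, and then shows any comparison homomorphism is forced at the chain level by its value on the coefficient group $G^I$, using the incidence isomorphisms of Definition \ref{incidence-iso} and the generators $gA^0\cdots A^q$; this yields the sharper statement that a \emph{given} coefficient homomorphism extends uniquely and naturally, with isomorphisms when the coefficient map is one. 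For the contiguity axiom, you verify it directly via level-wise chain homotopies in the concrete model, whereas the paper's main structural point (Theorems \ref{thrm-equivalence} and \ref{theoequiv}) is that \cref{ax5} need not be verified at all: it is \emph{derived} from the simplified \cref{as1,as2,as3} via the cylinder construction $\mathbb{X}^{cyl}$, so existence reduces to checking only S1--S3. Your approach buys directness; the paper's buys a smaller checklist and the theoretical dividend that the homotopy axiom is redundant.

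Two smaller remarks. First, the compatibility ``with every inclusion-induced map $k^I$ as $I$ varies'' that you anticipate as the bulk of the technical work is not actually demanded by the framework: in the category $\mathscr{RF}$ of Appendix \ref{rf} the interval $I$ is part of the object and morphism sets between objects with distinct intervals are empty, so the comparison is carried out at fixed $I$ and no coherence across intervals is required. Second, you correctly flag exactness of the persistent long exact sequence as the delicate point in the existence half, but your proposal leaves it at ``tracking how the images of $k^I$ and the boundaries interact''; since the defining quotient $k^I_n(Z^\varepsilon_n)/(B^{\varepsilon'}_n\cap k^I_n(Z^\varepsilon_n))$ mixes the two endpoints of $I$, this is precisely where a detailed verification is owed, and it is worth noting that the paper itself asserts rather than writes out this verification for the theory of Section \ref{sec:prelim}.
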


\begin{axiom1}\label{ax1} If $\mathop{id}: (\mathbb{X},\mathbb{A})\longrightarrow (\mathbb{X}, ,\mathbb{A})$ is the  filtration preserving identity  map, then   $ \mathop{id}^{I}_*: \mathrm{H}_{*}^{I}(\mathbb{X},\mathbb{A})\longrightarrow \mathrm{H}_{*}^{I}(\mathbb{X},\mathbb{A})$ is the identity homomorphism.
\end{axiom1}

\begin{axiom1}\label{ax2} Let $f: (\mathbb{X}, \mathbb{A})\longrightarrow (\mathbb{Y},\mathbb{B})$ and $g:(\mathbb{Z}, \mathbb{C})\longrightarrow (\mathbb{X}, \mathbb{A})$ be filtration preserving maps. Then  $(f\circ g)^{I}_*=f^{I}_*\circ g^{I}_*$. In other words, the following diagram  commutes:

\[
\xymatrix
{
	\mathrm{H}_{*}^I(\mathbb{Z},\mathbb{C})\ar[dr]^{g^{I}_*} \ar[rr]^{(f\circ g)^{I}_*} &  & \mathrm{H}_{*}^I(\mathbb{Y},\mathbb{B})  \\
	 & \mathrm{H}_{*}^I(\mathbb{X},\mathbb{A}) \ar[ur]^{f^{I}_*} &  \\
}
\]
\end{axiom1}

\begin{axiom1}\label{ax3} Let $\mathbb{(X,A)}$ and  $\mathbb{(Y,B)}$ be relative filtered sets and let $f: \mathbb{(X,A)}\longrightarrow \mathbb{(Y,B)}$ be  a filtration preserving map. Then the following diagram commutes:
\[
\xymatrix
{
	\mathrm{H}_{n}^I(\mathbb{X},\mathbb{A})     \ar[d]_{\partial^{I}_{A}} \ar[rr]^{f^{I}_*} & & \mathrm{H}_{n}^I(\mathbb{Y},\mathbb{B}) \ar[d]^{\partial^{I}_{B}}  \\
	\mathrm{H}_{n-1}^I(\mathbb{A}) \ar[rr]^{(f|_{A})^I_*} & &  \mathrm{H}_{n-1}^I(\mathbb{B})
}
\]
\end{axiom1}

\begin{axiom1}\label{ax4}\label{seq-axiom}
Let $\mathbb{(X,A)}$  be a relative filtered set and  
 $i: \mathbb{A}\longrightarrow \mathbb{X}$ and $j :\mathbb{X}\longrightarrow (\mathbb{X}, \mathbb{A})$   inclusion maps. The following sequence is exact:
$$ \cdots\longrightarrow \mathrm{H}_{n}^I(\mathbb{A})\stackrel{i^{I}_*}{\longrightarrow} \mathrm{H}_{n}^I(\mathbb{X})\stackrel{j^{I}_*}{\longrightarrow} \mathrm{H}_{n}^I(\mathbb{X},\mathbb{A})\stackrel{\partial^{I}_{A}}{\longrightarrow} \mathrm{H}_{n-1}^I(\mathbb{A})\longrightarrow\cdots $$
\end{axiom1}

\begin{axiom1} \label{ax5}
Let $f,g:(\mathbb{X}, \mathbb{A})\longrightarrow (\mathbb{Y},\mathbb{B})$ be filtration preserving maps. If $f$ and $g$ are contiguous maps 
then $f_{*}^{I}=g_{*}^{I}$.
\end{axiom1}

\begin{axiom1}(Dimension axiom)\label{ax6}
Let $\mathbb{X}=(X,F_X)$ be a filtered set, where  $X$  a one-point set and  $F_{X}(X) := \alpha$ for some $\alpha \in \mathbb{R}$. Then

$\mathrm{H}^I_{n}(\mathbb{X}) = \begin{cases}

\mathbb{F}, & \mbox{if } n=0 \text{ and } \varepsilon\geq\alpha; \\

\mathbf{0},& \text{ otherwise. }

\end{cases}$
\end{axiom1}

\begin{axiom1}(Excision axiom)\label{exision axiom}\label{ax7}
Let $(\mathbb{X},\mathbb{A})$  and $(\mathbb{X}',\mathbb{A}')$ be relative filtered sets
 such that $ \mathbb{X} = \mathbb{X'}\cup \mathbb{A} \text{ and } \mathbb{A'} = \mathbb{X'\cap A}$.
 Assume that the inclusion map $i: (\mathbb{X}',\mathbb{A}')\longrightarrow (\mathbb{X},\mathbb{A})$ is a filtration preserving map. Then the induced map:
\[ i_*^{I}: \mathrm{H}^I_n(\mathbb{X}',\mathbb{A}')\longrightarrow \mathrm{H}^I_n(\mathbb{X},\mathbb{A})\]

\noindent is   an isomorphism. 
\end{axiom1}

 Although it is not stated in the following theorem \emph{itself}, what is established in its proof is to show that the simplified version of the \cref{as1,as2,as3} implies \cref{ax1,ax2,ax3,ax4,ax5,ax6,ax7}. Therefore, the equivalence between the two persistent homology theories follows.

\begin{theo} \label{thrm-equivalence}
There is a unique pair $(\mathrm{H},\partial)$ such that $\mathrm{H}$ is a functor from the category $\mathscr{RF}$ (See Appendix \ref{rf}) of filtered relative sets to the category $\mathscr{A}b$ of abelian groups and $\partial$ is a natural transformation which reduces the degree by $1$, $\partial^I: \mathrm{H}^I(\mathbb{X},\mathbb{A})\to \mathrm{H}^I(\mathbb{A},\emptyset)$ that satisfies the following axioms:

\begin{axioms} \label{as1}
The inclusion map
$
(\mathbb{X}, \mathbb{X}\cap \mathbb{Y}) 
 \stackrel{I}{\hookrightarrow}  
  (\mathbb{X}\cup \mathbb{Y}, \mathbb{Y})
$
 induces an isomorphism in persistent homology.   
\end{axioms}

\begin{axioms}\label{as2}
    The following triangle is exact,  where $i:\mathbb{A}\hookrightarrow \mathbb{X}$ and $j:(\mathbb{X},\emptyset)\hookrightarrow (\mathbb{X},\mathbb{A})$ are inclusion maps:
\[
\xymatrix
{
	\mathrm{H}^I(\mathbb{X})\ar[dr]^{j_{\ast}}  &  & \mathrm{H}^I(\mathbb{A})\ar[ll]_{i_{\ast}}  \\
	 & \mathrm{H}^I(\mathbb{X},\mathbb{A}) \ar[ur]^{\partial} &  
} 
\]
\end{axioms}

\begin{axioms}
     \label{as3}
For every $(q,\alpha)$-simplex $\mathbb{S}^q_\alpha$ (see \cref{def-simp-bord}), 
\[
\mathrm{H}_{k}^I(\mathbb{S}^q_\alpha) = 
\begin{cases}

\mathbb{Z}\, & \mbox{if } k=0 \mbox{ and } \varepsilon \geq \alpha \\

\mathbf{0},& \mbox{otherwise. } 

\end{cases}
\] 
\end{axioms}

\end{theo}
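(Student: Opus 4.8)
The plan is to deduce \cref{thrm-equivalence} from the existence-and-uniqueness theorem for the full axioms \cref{ax1,ax2,ax3,ax4,ax5,ax6,ax7} by proving that the three simplified axioms \cref{as1,as2,as3}, together with the standing hypotheses that $\mathrm H$ is a functor and $\partial$ a natural transformation, already imply that whole list. Granting this, existence holds because the persistent homology of \cref{defhomrel} satisfies \cref{as1,as2,as3}, and uniqueness holds because any pair $(\mathrm H,\partial)$ obeying \cref{as1,as2,as3} then obeys \cref{ax1,ax2,ax3,ax4,ax5,ax6,ax7} and is therefore isomorphic to the standard theory.

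Five of the seven implications are immediate. \cref{ax1,ax2} assert precisely that $\mathrm H$ preserves identities and composites, so they are contained in the hypothesis that $\mathrm H$ is a functor, and \cref{ax3} is the commutativity of the naturality square of $\partial$, which holds because $\partial$ is a natural transformation. \cref{ax4} is obtained by unrolling the exact triangle of \cref{as2}: its exactness at the three vertices is, term by term, exactness of the long exact sequence at $\mathrm H^I_n(\mathbb X)$, $\mathrm H^I_n(\mathbb X,\mathbb A)$, and $\mathrm H^I_{n-1}(\mathbb A)$. \cref{ax6} is the case $q=0$ of \cref{as3}, a one-point filtered set with value $\alpha$ being the simplex $\mathbb S^0_\alpha$. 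And \cref{ax7} is \cref{as1} under the relabelling $\mathbb X'=\mathbb X$, $\mathbb A=\mathbb Y$, since then $\mathbb X'\cup\mathbb A=\mathbb X\cup\mathbb Y$ and $\mathbb A'=\mathbb X'\cap\mathbb A=\mathbb X\cap\mathbb Y$, so the two excision isomorphisms coincide.

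The substantial step, which I expect to be the main obstacle, is deriving the contiguity invariance of \cref{ax5} from \cref{as1,as2,as3}. I would argue level-wise: for each $\alpha\in I$ the contiguous maps $f_\alpha,g_\alpha:(K^\alpha,L^\alpha)\to(M^\alpha,N^\alpha)$ send every simplex $\sigma$ into a common simplex $\tau$ of $M^\alpha$, with $\tau\subset N^\alpha$ when $\sigma\in L^\alpha$. Assigning to $\sigma$ the subcomplex spanned by $\tau$ gives a common carrier for $f_\alpha$ and $g_\alpha$ whose values are simplices, hence acyclic by \cref{as3}. Feeding this acyclicity into an acyclic-carrier (acyclic-models) construction produces a chain homotopy between the induced chain maps, so $f^I_*=g^I_*$ on persistent homology. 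The delicate points are that the carrier and the resulting homotopy must commute with the structure maps $k^I_n$ so that the homotopy descends to the persistent groups, and that the relative case carries $L^\alpha$ into $N^\alpha$; here \cref{as3} supplies the acyclicity that in the classical theory comes from contractibility, and the persistence parameter $I$ must be tracked throughout.

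With \cref{ax5} established the full list holds, and the reduction to the theorem for \cref{ax1,ax2,ax3,ax4,ax5,ax6,ax7} finishes both existence and uniqueness. The converse implications, which yield the stated equivalence of the two systems, are routine: \cref{as1} is the instance of \cref{ax7} above, \cref{as2} repackages the long exact sequence of \cref{ax4} using \cref{ax3}, and \cref{as3} follows from \cref{ax6} and \cref{ax5}, because the identity of a simplex is contiguous to the constant map at any vertex, so the simplex has the persistent homology of a point.
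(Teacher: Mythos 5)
Your reduction of the easy implications matches the paper exactly (\cref{ax1,ax2,ax3} from functoriality and naturality, \cref{ax4} $=$ \cref{as2}, \cref{ax6} as a special case of \cref{as3}, \cref{ax7} $\equiv$ \cref{as1}), and you correctly identify that the whole content is \cref{as1,as2,as3} $\Rightarrow$ \cref{ax5}. But your acyclic-carrier argument for that step has a genuine gap: it presupposes a chain complex on which $f$ and $g$ ``induce chain maps'' and on which a chain homotopy can be built. The theorem quantifies over an \emph{arbitrary} pair $(\mathrm{H},\partial)$ satisfying \cref{as1,as2,as3}; such an $\mathrm{H}$ comes with no simplicial chain model. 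The only candidate chain groups available are the abstract ones $\mathcal{C}^I_q(\mathbb{X},\mathbb{A}) = \mathrm{H}^I_q(\mathbb{X}^{(q)}\cup\mathbb{A},\mathbb{X}^{(q-1)}\cup\mathbb{A})$ built later in the paper, and their development (via Theorem \ref{pers-homo-simplex}, Lemma \ref{homo-trivial-lemma}, Theorem \ref{thm:contequi-to-point}) already invokes contiguity invariance, i.e.\ \cref{ax5} itself --- so running acyclic models on them here would be circular. Relatedly, \cref{as3} does not by itself deliver the acyclicity your carriers need: it computes only the constant-filtration simplex $\mathbb{S}^q_\alpha$, whereas the carrier of $\sigma$ is a filtered subset of $\mathbb{Y}$ with induced, level-varying filtration, and the witnessing simplex $\tau$ of the contiguity condition may change with the level $\alpha\in I$. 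You flag both issues as ``delicate points,'' but in this axiomatic setting they are the crux, not bookkeeping.

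The paper resolves precisely these difficulties by staying entirely at the level of filtered sets and the abstract groups: from \cref{as1,as2} it extracts a Mayer--Vietoris sequence (via proper triads), proves in Lemma \ref{lemma2.2} that star-shaped filtered sets have the persistent homology of a point (this is the correct replacement for your appeal to \cref{as3}, and it handles the varying filtrations), and in Lemma \ref{lemma2.3} that the pair sequence splits over a star-shaped subset. It then builds a combinatorial cylinder $\mathbb{X}^{cyl}$ simplex by simplex; at each stage $\mathbb{X}_{k+1}=\mathbb{X}_k\cup\sigma_{ij}$ the intersection $\mathbb{X}_k\cap\sigma_{ij}$ is star shaped, so Mayer--Vietoris and the Five Lemma show each inclusion induces an isomorphism, whence the two end inclusions $h_0,h_1$ satisfy $(h_0)_\ast = k_\ast^{-1} = (h_1)_\ast$ for the common retraction $k$; factoring $f=G h_0$, $g=G h_1$ through $G:(\mathbb{X}^{cyl},\mathbb{A}^{cyl})\to(\mathbb{Y},\mathbb{B})$ gives $f_\ast^I=g_\ast^I$ with no chain-level input whatsoever. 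To repair your proposal you would essentially have to reprove Lemma \ref{lemma2.2} and then either adopt the cylinder argument or first reconstruct chains from the axioms without using \cref{ax5}; as written, the homotopy you invoke does not exist in the category where the theorem lives.
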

\smallskip

We will show that \cref{ax1,ax2,ax3,ax4,ax5,ax6,ax7} for persistent homology can be obtained from the simplified axioms.
In pure theoretical sense, the relevance of this equivalence is to show that \cref{ax5} is a consequence of the other axioms. Another point of the simplified axioms for a persistent homology theory  is that: given a candidate $\mathrm{H}^{I}$ for a persistent homology theory, it is sufficient to check whether it satisfies the simplified ones.

\begin{theo}\label{theoequiv}
    \cref{ax1,ax2,ax3,ax4,ax5,ax6,ax7} are equivalent to \cref{as1,as2,as3}.
\end{theo}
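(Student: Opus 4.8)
The statement is an equivalence of two axiom systems, so the plan is to prove the two implications separately, matching the axioms essentially one-to-one and isolating the single genuinely nontrivial passage. I would first observe that both theorem statements already postulate that $\mathrm{H}$ is a functor on $\mathscr{RF}$ and that $\partial$ is a natural transformation lowering degree by one; this standing hypothesis is precisely the content of \cref{ax1,ax2,ax3} (identity, composition, naturality of $\partial$). Hence those three axioms hold in both systems for free, and the real task reduces to relating \cref{ax4,ax5,ax6,ax7} to \cref{as1,as2,as3}.

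Most of these correspondences are formal and go through in both directions at once. \cref{ax4} and \cref{as2} are the same datum packaged differently: unrolling the exact triangle of \cref{as2} yields the long exact sequence of the pair, and conversely. \cref{ax7} and \cref{as1} are literally the same statement after the relabelling $(\mathbb{X}',\mathbb{A}')=(\mathbb{X},\mathbb{X}\cap\mathbb{Y})$ and $(\mathbb{X},\mathbb{A})=(\mathbb{X}\cup\mathbb{Y},\mathbb{Y})$, since the hypotheses $\mathbb{X}\cup\mathbb{Y}=\mathbb{X}'\cup\mathbb{A}$ and $\mathbb{X}\cap\mathbb{Y}=\mathbb{X}'\cap\mathbb{A}$ hold by the very definition of union and intersection of filtered sets. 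For the dimension axioms, \cref{ax6} is the case $q=0$ of \cref{as3}, because a one-point filtered set with value $\alpha$ is exactly a $(0,\alpha)$-simplex; for the reverse I would use that in a full $(q,\alpha)$-simplex every face $\sigma$ satisfies $\sigma\cup\{v\}\in\mathbb{S}^q_\alpha$ for a fixed vertex $v$, so the constant map onto $v$ and the identity are contiguous at every nonempty level $K^\varepsilon$ ($\varepsilon\ge\alpha$); by \cref{ax5} the vertex inclusion then induces an isomorphism $\mathrm{H}^I(\{v\})\xrightarrow{\sim}\mathrm{H}^I(\mathbb{S}^q_\alpha)$, whose source is computed by \cref{ax6}, giving exactly the values in \cref{as3}. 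Thus both directions are complete except for deriving \cref{ax5} from \cref{as1,as2,as3}.

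The crux of the theorem is this derivation of contiguity invariance, the only place where \cref{ax5} is not assumed. Given contiguous filtration-preserving maps $f,g\colon(\mathbb{X},\mathbb{A})\to(\mathbb{Y},\mathbb{B})$, the plan is the classical cylinder argument adapted to filtered sets. I would build a filtered prism $\mathbb{X}\times\Delta^1$ on the vertex set $X\times\{0,1\}$, assigning each prism simplex the filtration value of its $X$-shadow, so that the projection $p$ and the two end inclusions $\iota_0,\iota_1\colon\mathbb{X}\to\mathbb{X}\times\Delta^1$ satisfy $p\circ\iota_0=p\circ\iota_1=\mathrm{id}_{\mathbb{X}}$. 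The contiguity-in-$I$ condition provides, for each $\sigma$, a simplex $\tau\in\mathbb{Y}$ with $f(\sigma)\cup g(\sigma)\subseteq\tau$ and $F_Y(\tau)\le F_X(\sigma)$, which is exactly what makes the standard prism map $H\colon\mathbb{X}\times\Delta^1\to\mathbb{Y}$ (sending $v^0\mapsto f(v)$, $v^1\mapsto g(v)$) filtration preserving and satisfy $H\iota_0=f$, $H\iota_1=g$. By functoriality it then suffices to show $(\iota_0)_*=(\iota_1)_*$, and since each is a one-sided inverse of $p_*$ this reduces to proving $p_*$ is an isomorphism. I would establish the latter by induction on the number of simplices of $\mathbb{X}$: the inductive step removes a top simplex and compares the two resulting Mayer--Vietoris squares, using the excision isomorphism of \cref{as1}, the long exact sequences of \cref{as2}, and the five lemma. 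The base case, $\mathbb{X}$ a single $(q,\alpha)$-simplex, amounts to showing the prism over a simplex is acyclic, itself a Mayer--Vietoris induction over the $q+1$ top cells of the prism (each a simplex handled by \cref{as3}). The relative statement is then recovered from the absolute cases for $f,g$ on $\mathbb{X}$ and on $\mathbb{A}$ by the five lemma applied to the long exact sequences of the pairs.

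The hard part will be the filtered bookkeeping in this last argument. One must check that the chosen filtration on $\mathbb{X}\times\Delta^1$ makes contiguity \emph{in $I$} coincide exactly with filtration-preservation of $H$ at every level, and that the pieces arising in the induction are genuine filtered subsets whose unions and intersections realize the hypotheses of \cref{as1} at each $\varepsilon$. Concretely, verifying that the sublevel complexes $(\mathbb{X}\times\Delta^1)^\varepsilon$ decompose compatibly with the filtration, so that the excision square is valid for every $\varepsilon$ and all induced maps commute with the persistence structure maps $k^I$, is where the genuine work lies; the remaining steps are either formal or a direct transcription of the classical simplicial prism computation into the persistent setting.
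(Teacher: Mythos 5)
Your overall strategy is the paper's own. The formal identifications (\cref{ax1,ax2,ax3} from functoriality and naturality of $\partial$; \cref{ax4} the same as \cref{as2}; \cref{ax7} equivalent to \cref{as1}; \cref{ax6} a special case of \cref{as3}) are exactly those in the paper's proof, and the crux --- deriving \cref{ax5} from \cref{as1,as2,as3} --- is in both cases the classical cylinder argument: a filtered prism with two end inclusions and a collapsing map, Mayer--Vietoris obtained from \cref{as1,as2}, excision, and the Five Lemma. The differences are organizational rather than substantive: the paper builds its cylinder $\mathbb{X}^{cyl}$ cell by cell, showing each inclusion $\mathbb{X}_k\subset\mathbb{X}_k\cup\sigma_{ij}$ induces an isomorphism because $\mathbb{X}_k\cap\sigma_{ij}$ is star shaped (Lemma \ref{lemma2.2}), and then concludes $(h_0)_\ast=k_\ast^{-1}=(h_1)_\ast$ from the common left inverse $k$; you instead reduce to showing the projection $p_\ast$ is an isomorphism and induct on the simplices of $\mathbb{X}$. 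These are mirror images of the same computation. One genuine addition on your side: you explicitly derive \cref{as3} from \cref{ax1,ax2,ax3,ax4,ax5,ax6,ax7} via contiguity of the vertex-constant map with the identity, a converse direction the paper's proof leaves implicit (it only records that \cref{ax6} is a particular case of \cref{as3}).

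One step of your sketch is under-specified in a way that matters. In your inductive step, the removed top simplex, viewed as a filtered subset of $\mathbb{X}$, inherits values on its faces that are in general strictly smaller than its top value; at intermediate $\varepsilon$ its sublevel complex can be an arbitrary subcomplex of the boundary (e.g.\ two disjoint vertices), so it is \emph{not} the uniform-value $(q,\alpha)$-simplex of \cref{as3}, and ``each a simplex handled by \cref{as3}'' does not literally apply outside the uniform case. The paper resolves precisely this point by the min-union decomposition $\mathbb{X}=\mathbb{S}^q_\alpha\cup\mathbb{Y}$ (the union of filtered sets takes minima, so replacing the top simplex by its uniform-value version while its faces remain in $\mathbb{Y}$ reproduces $F_X$), combined with the star-shaped Lemma \ref{lemma2.2}, whose proof is the induction you would have to carry out. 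In other words, what you filed under ``filtered bookkeeping'' is a genuine lemma rather than bookkeeping; once it is inserted, your argument closes and coincides with the paper's proof in a slightly different order.
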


\smallskip

For the proof of this theorem the following results are required:

\smallskip

\begin{lemma} \cref{as1,as2} imply 
  that the Mayer-Vietoris sequence
\[
\cdots \rightarrow 
\mathrm{H}_n^I (\mathbb{X}_1\cap \mathbb{X}_2)
\rightarrow  
\mathrm{H}_n^I (\mathbb{X}_1) \oplus \mathrm{H}_n^I(\mathbb{X}_2) 
\rightarrow 
\mathrm{H}_n^I(\mathbb{X}_1\cup \mathbb{X}_2) 
\rightarrow 
\mathrm{H}_{n-1}^I(\mathbb{X}_1\cap \mathbb{X}_2) 
\rightarrow \cdots 
\]
is exact for any two filtered sets  $\mathbb{X}_1$ and $\mathbb{X}_2$.
\end{lemma}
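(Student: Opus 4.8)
The plan is to obtain the Mayer--Vietoris sequence by gluing together the two long exact sequences of pairs supplied by \cref{as2} along the excision isomorphism of \cref{as1}, via the purely algebraic Barratt--Whitehead construction. Throughout I fix an interval $I\in\mathrm{Int}(\mathbb{R})$. Since each $\mathrm{H}_n^I$ is a functor into $\mathscr{A}b$ and $\partial$ is a natural transformation, every object below is an honest abelian group and every arrow an honest homomorphism, so the classical argument transfers without modification.

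First I would record the two long exact sequences produced by \cref{as2} (unrolling the exact triangle), one for the pair $(\mathbb{X}_1,\mathbb{X}_1\cap\mathbb{X}_2)$ and one for the pair $(\mathbb{X}_1\cup\mathbb{X}_2,\mathbb{X}_2)$:
\[
\cdots \to \mathrm{H}_n^I(\mathbb{X}_1\cap\mathbb{X}_2)\to \mathrm{H}_n^I(\mathbb{X}_1)\to \mathrm{H}_n^I(\mathbb{X}_1,\mathbb{X}_1\cap\mathbb{X}_2)\xrightarrow{\partial}\mathrm{H}_{n-1}^I(\mathbb{X}_1\cap\mathbb{X}_2)\to\cdots
\]
\[
\cdots \to \mathrm{H}_n^I(\mathbb{X}_2)\to \mathrm{H}_n^I(\mathbb{X}_1\cup\mathbb{X}_2)\to \mathrm{H}_n^I(\mathbb{X}_1\cup\mathbb{X}_2,\mathbb{X}_2)\xrightarrow{\partial}\mathrm{H}_{n-1}^I(\mathbb{X}_2)\to\cdots
\]
These assemble into a commutative ladder whose vertical arrows are induced by the inclusions $\mathbb{X}_1\cap\mathbb{X}_2\hookrightarrow\mathbb{X}_2$, $\mathbb{X}_1\hookrightarrow\mathbb{X}_1\cup\mathbb{X}_2$, and the inclusion of pairs $(\mathbb{X}_1,\mathbb{X}_1\cap\mathbb{X}_2)\hookrightarrow(\mathbb{X}_1\cup\mathbb{X}_2,\mathbb{X}_2)$. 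The two squares not involving $\partial$ commute because $\mathrm{H}_n^I$ is a functor and the corresponding triangles of inclusions commute, while the square involving $\partial$ commutes by the naturality of the natural transformation $\partial$ applied to that inclusion of pairs. The decisive input of \cref{as1} is then that the \emph{third} vertical arrow, namely $\mathrm{H}_n^I(\mathbb{X}_1,\mathbb{X}_1\cap\mathbb{X}_2)\to\mathrm{H}_n^I(\mathbb{X}_1\cup\mathbb{X}_2,\mathbb{X}_2)$, is an isomorphism.

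With this ladder in hand I would invoke the Barratt--Whitehead lemma: in any commutative ladder of two long exact sequences in which every arrow between the relative (``$C$'') terms is an isomorphism $\gamma_n$, the sequence
\[
\cdots \to \mathrm{H}_n^I(\mathbb{X}_1\cap\mathbb{X}_2)\to \mathrm{H}_n^I(\mathbb{X}_1)\oplus\mathrm{H}_n^I(\mathbb{X}_2)\to \mathrm{H}_n^I(\mathbb{X}_1\cup\mathbb{X}_2)\xrightarrow{\;\partial_{\mathrm{MV}}\;}\mathrm{H}_{n-1}^I(\mathbb{X}_1\cap\mathbb{X}_2)\to\cdots
\]
is exact, where the first map is $\alpha\mapsto((i_1)_*\alpha,(i_2)_*\alpha)$, the second is $(\beta_1,\beta_2)\mapsto (j_1)_*\beta_1-(j_2)_*\beta_2$ (the $i_\bullet,j_\bullet$ being the evident inclusions, with signs arranged to force exactness at the middle term), and the connecting map is the composite $\partial_{\mathrm{MV}}=\partial\circ\gamma_n^{-1}\circ q$, with $q$ the quotient $\mathrm{H}_n^I(\mathbb{X}_1\cup\mathbb{X}_2)\to\mathrm{H}_n^I(\mathbb{X}_1\cup\mathbb{X}_2,\mathbb{X}_2)$. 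Applying this with $\gamma_n$ the excision isomorphism from the previous step yields precisely the claimed Mayer--Vietoris sequence.

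The main obstacle will be the Barratt--Whitehead lemma itself: either it must be proved by a direct diagram chase verifying exactness at each of the three spots, where one repeatedly exploits exactness of both rows together with the fact that $\gamma_n$ is an isomorphism, or it must be cited as a standard homological fact. I expect the chase at the middle term and the verification that $\partial_{\mathrm{MV}}$ is well defined (independent of the chosen preimage under $\gamma_n$) to be the most delicate points, and the signs appearing in the two central maps will require care; everything else is routine once the commuting ladder and the excision isomorphism are in place.
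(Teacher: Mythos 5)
Your proof is correct, but it takes a genuinely different route from the paper. The paper's proof is a two-line reduction: it observes that \cref{as1,as2} make $(\mathbb{X}_1\cup \mathbb{X}_2; \mathbb{X}_1, \mathbb{X}_2)$ a \emph{proper triad} (note that \cref{as1} applied twice, with the roles of the two sets exchanged, gives both excision isomorphisms $k_{1\ast}$ and $k_{2\ast}$ required by that definition), and then cites the Eilenberg--Steenrod result \cite[I.15.3]{foundations} that the Mayer--Vietoris sequence of a proper triad is exact --- machinery the paper partially redevelops for filtered sets in its appendix on filtered triads. You instead run the Barratt--Whitehead ladder argument directly: the two long exact sequences of the pairs $(\mathbb{X}_1,\mathbb{X}_1\cap\mathbb{X}_2)$ and $(\mathbb{X}_1\cup\mathbb{X}_2,\mathbb{X}_2)$ from \cref{as2}, connected by inclusion-induced verticals (commuting by functoriality and by naturality of $\partial$), with the single excision isomorphism from \cref{as1} as the rung between relative terms. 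What each buys: the paper's proof is shorter but outsources all content to the proper-triad formalism, whose applicability in the filtered setting must itself be checked; yours is self-contained modulo one purely algebraic lemma, needs only one of the two excision isomorphisms, and yields an explicit formula $\partial_{\mathrm{MV}}=\partial\circ\gamma_n^{-1}\circ q$ for the connecting homomorphism. One small simplification to your closing worries: since $\gamma_n$ is an isomorphism, the preimage is unique, so there is no well-definedness issue for $\partial_{\mathrm{MV}}$ to check --- only the three-spot diagram chase and the sign in the middle map require care, exactly as you anticipate.
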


\proof 
{\cref{as1,as2} imply that $(\mathbb{X}_1\cup \mathbb{X}_2; \mathbb{X}_1, \mathbb{X}_2)$ is a proper triad relative to the persistent homology $\mathrm{H}_\ast^I$. Then by \cite[I.15.3]{foundations} the Mayer-Vietoris sequence is exact.}
\qed

\bigskip

Recall that a simplicial complex $K$ is star shaped with respect to a vertex $v$ if every simplex of $K$ is a face of another simplex which has $v$ as one of its vertices.

\begin{defi}
We will say that a filtered set $\mathbb{X}$ is \emph{star shaped} at $a\in X$ in $I$ if for each $\alpha\in I$, $K_\alpha$ is {either} star shaped at $a$ in the {standard} sense or it is the empty set.
\end{defi}

\begin{defi}
Let $\mathbb{X}= (X, F_X)$ be a filtered set. 
A \emph{filtered subset} of $\mathbb{X}$ is a filtered set $\mathbb{Y}= (Y, F_Y)$,
with $Y\subset X$
 and $F_Y(\sigma)\geq F_X(\sigma)$ for all $\sigma \subset Y$.
A filtered subset $\mathbb{Y}$ is said \emph{proper} in  $I$ if $\mathbb{X}$ contains a simplex in  $I$ that does not occur on $\mathbb{Y}$ in $I$.

Let $\mathbb{X} = (X, F_X)$ be a filtered set and let $a\in X$. In this context, the filtered set (filtered subset) $(\{a\}, F|_{a})$ {will be denoted by
$\mathbb{Q}_{a}$}.
\end{defi}

\begin{lemma}\label{lemma2.2}
 \cref{as1,as2,as3} imply that the inclusion of a vertex {$\mathbb{Q}_{a}$} into a filtered set $\mathbb{X}$ which is star shaped at $a$ in $I$ induces an isomorphism in persistent homology
\[
\mathrm{H}^I(\mathbb{Q}_a)\longrightarrow \mathrm{H}^I(\mathbb{X}).
\]
\end{lemma}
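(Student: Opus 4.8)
The plan is to prove the statement by induction on the number of simplices of $\mathbb{X}$, using the Mayer--Vietoris lemma established above together with the dimension axiom \cref{as3} as the base case. The geometric input I would isolate first is that star-shapedness at $a$ is equivalent to $\mathbb{X}$ being a simplicial cone with apex $a$, i.e.\ $\sigma\cup\{a\}$ is a simplex of every level complex $K_\alpha$ whenever $\sigma$ is. Fixing a vertex $b\neq a$, I would introduce three filtered subsets of $\mathbb{X}$: the deletion $\mathrm{del}(b)$ (the filtered set on $X\setminus\{b\}$ with the restricted filtration), the closed star $\overline{\mathrm{st}}(b)$ (carrying the filtration $\tau\mapsto\min\{F_X(\sigma): \tau\subseteq\sigma,\ b\in\sigma\}$), and the link $\mathrm{lk}(b)=\mathrm{del}(b)\cap\overline{\mathrm{st}}(b)$. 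A short check shows these are designed so that their level complexes are exactly the classical $\mathrm{del}_{K_\alpha}(b)$, $\overline{\mathrm{st}}_{K_\alpha}(b)$ and $\mathrm{lk}_{K_\alpha}(b)$, so that $\mathbb{X}=\mathrm{del}(b)\cup\overline{\mathrm{st}}(b)$ and $\mathrm{lk}(b)=\mathrm{del}(b)\cap\overline{\mathrm{st}}(b)$ as filtered sets in the sense of the union/intersection defined earlier, and --- crucially --- that each of the three remains star shaped at $a$ in $I$, since the cone condition is preserved by all three operations at every level.

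For the inductive step I would argue that if the top complex of $\mathbb{X}$ is not a full simplex then, because a complex that is star shaped at every one of its vertices is a full simplex, there is a vertex $b\neq a$ at which $\mathbb{X}$ fails to be star shaped; equivalently $\mathrm{del}(b)\supsetneq\mathrm{lk}(b)$. For such a $b$ all three pieces $\mathrm{del}(b)$, $\overline{\mathrm{st}}(b)$, $\mathrm{lk}(b)$ have strictly fewer simplices than $\mathbb{X}$ (the closed star loses every simplex of $\mathrm{del}(b)$ lying outside the link), so by the induction hypothesis the inclusions $\mathbb{Q}_a\hookrightarrow\mathrm{del}(b)$, $\mathbb{Q}_a\hookrightarrow\overline{\mathrm{st}}(b)$ and $\mathbb{Q}_a\hookrightarrow\mathrm{lk}(b)$ are all isomorphisms, hence each piece has the persistent homology of a point. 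Feeding this into the Mayer--Vietoris sequence of $(\mathrm{del}(b),\overline{\mathrm{st}}(b))$, the gluing map in degree $0$ is injective and the higher terms vanish, forcing $\mathrm{H}^I_n(\mathbb{X})=0$ for $n\geq 1$ and $\mathrm{H}^I_0(\mathbb{X})\cong\mathbb{Z}$ in the appropriate range. Moreover the inclusion $\mathbb{Q}_a\hookrightarrow\mathbb{X}$ factors through $\mathrm{del}(b)$, which already maps isomorphically, so $\mathbb{Q}_a\hookrightarrow\mathbb{X}$ is an isomorphism.

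The base case is when the top complex is a full simplex. Then $\mathbb{X}$ is, up to the data of its filtration, a $(q,\alpha)$-simplex $\mathbb{S}^q_\alpha$, so \cref{as3} gives directly that $\mathbb{X}$ has the persistent homology of a point for $\varepsilon\geq\alpha$, and connectedness identifies the class of $a$ with the generator, so $\mathbb{Q}_a\hookrightarrow\mathbb{X}$ is an isomorphism.

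I expect the main obstacle to be two intertwined technical points rather than the homological bookkeeping. First, one must set up the filtered closed star and link so that their sublevel sets reproduce the classical closed star and link at every threshold and so that star-shapedness in $I$ is genuinely inherited; this is where the persistence index $I$ interacts with the purely combinatorial cone argument and must be handled uniformly in $\alpha$. Second, one must ensure that the induction terminates at objects to which \cref{as3} literally applies: once the top-dimensional complex is already a full simplex the deletion/star decomposition no longer reduces the simplex count, so the reduction to $\mathbb{S}^q_\alpha$ --- and the treatment of full simplices whose faces are born at different filtration values --- is the delicate step that carries the real weight of the argument.
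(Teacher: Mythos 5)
Your deletion/closed-star/link Mayer--Vietoris scheme is a genuinely different decomposition from the paper's, and much of it does go through (the cone property does pass to all three pieces levelwise, and once $\mathrm{lk}(b)\hookrightarrow\overline{\mathrm{st}}(b)$ induces an isomorphism, Mayer--Vietoris forces $\mathrm{del}(b)\hookrightarrow\mathbb{X}$ to be one). But there is a genuine gap, exactly where you flag it, and it is not a cosmetic technicality: your induction does not terminate at objects to which \cref{as3} applies. Two things fail together. First, the inductive measure: your choice of $b$ ``at which $\mathbb{X}$ fails to be star shaped'' conflates failure at the top complex with failure at some level in $I$. If the top complex is a cone at $b$ (in particular, a full simplex) but some lower level complex is not, then $\overline{\mathrm{st}}(b)$ contains every simplex of $\mathbb{X}$ --- only its filtration changes, to $\tau\mapsto F_X(\tau\cup\{b\})$ --- so the simplex count does not drop and the recursion stalls. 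Second, the base case: a filtered full simplex need not be any $\mathbb{S}^q_\alpha$. Take $X=\{a,b\}$ with $F_X(\{a\})=0$ and $F_X(\{b\})=F_X(\{a,b\})=1$: this is star shaped at $a$ at every level and its top complex is a full $1$-simplex, but the filtration is not constant, so \cref{as3} does not compute its persistent homology; ``up to the data of its filtration'' is precisely the missing content, and your closing sentence concedes that this step carries the real weight. (A secondary wrinkle: the maps $\mathbb{Q}_a\hookrightarrow\overline{\mathrm{st}}(b)$ and $\mathbb{Q}_a\hookrightarrow\mathrm{lk}(b)$ with the ambient birth time $F_X(\{a\})$ are not filtration preserving, since inside the star the vertex $a$ is born only at $F_X(\{a,b\})\geq F_X(\{a\})$; the induction hypothesis must be invoked with this later birth value. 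That is fixable, but as written the statement is wrong.)

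For contrast, the paper's proof inducts over proper star shaped filtered subsets and peels off \emph{one} maximal simplex at a time: it writes $\mathbb{X}=\mathbb{Y}\cup\mathbb{S}^q_\alpha$, where the peeled simplex carries the constant value $\alpha$ while its faces survive in $\mathbb{Y}$ with their original, possibly earlier, values (the minimum in the definition of the union of filtered sets reconstitutes $F_X$), and it cones the simplex with the base point, setting $\sigma'=\mathbb{S}^q_\alpha\cup\mathbb{Q}_a$ with $\mathrm{H}^I(\sigma')\cong\mathrm{H}^I(\mathbb{Q}_a)$ proved from \cref{as1,as2}, precisely to handle the sub-intervals of $I$ on which the new simplex is absent; then it runs Mayer--Vietoris as you do. This one-simplex-at-a-time peeling is exactly what dissolves your problematic terminal objects: a filtered full simplex with inhomogeneous birth values is a union of constant-value simplices, each within reach of \cref{as3}. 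Your argument could likely be repaired in the same spirit (for instance, a secondary induction on the number of simplices $\tau$ with $F_X(\tau\cup\{b\})\neq F_X(\tau)$, noting that iterated closed stars eventually have constant filtration), but as submitted the proof is incomplete at its base case.
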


\begin{proof}

Let $\mathbb{X}$ be a filtered set that is star shaped at $a$ in $I$.

The lemma is trivially true for $X= \{a\}$.

The filtered subsets of $\mathbb{X}$ that are star shaped at $a$ in $I$ have a partial order by inclusion.
We shall show inductively that the lemma is true for $\mathbb{X}$ if it is true for every proper filtered subset of $\mathbb{X}$.

Notice that 
$\mathbb{X}$ may be expressed as the union of a simplex $\mathbb{S}_{\alpha}^{q}$ and a proper filtered subset $\mathbb{Y}\subset \mathbb{X}$ which is star shaped at $a$ in $I$.

Let $\sigma'= \mathbb{S}_{\alpha}^{q}\cup {\mathbb{Q}_{a}}$. Later we will give an explanation of why $\sigma'$ is defined and it will be given a proof for the assumed isomorphism $\mathrm{H}_n^I(\sigma')\equiv \mathrm{H}_n^I(\{a\})$ {(Remark \ref{remark})}.

\smallskip

Let $i:\mathbb{Y}\cap \sigma' \hookrightarrow \mathbb{Y}$, $j:\mathbb{Y}\cap\sigma'\hookrightarrow \sigma'$ and $k:{\mathbb{Q}_{a}}\hookrightarrow \mathbb{Y}\cap \sigma'$ be inclusion maps.

\smallskip

If $F_X(a) > \varepsilon$, in Mayer-Vietoris sequence of $\mathbb{X} = \mathbb{Y}\cup \sigma'$ we have by hypothesis that 
$\mathrm{H}_n^I(\mathbb{Y}) 
= \mathrm{H}_n^I(\sigma')
=\mathrm{H}_n^I(\mathbb{Y}\cap \sigma') 
= \mathrm{H}_n^I(\{a\}) 
=0$ for all $n$.
This implies that $\mathrm{H}_n^I (\mathbb{X}) = 0$ for all $n$. Thus  $i_\ast:\mathrm{H}_n^I(\{a\})\to \mathrm{H}_n^I (\mathbb{X})$ is an isomorphism for all $n$.

Then let us assume that $F_X(a)\leq \varepsilon$.
 By hypothesis, $k_\ast$ and $(i\circ k)_\ast$ are isomorphisms; hence $i_\ast$ is also an isomorphism.

 The constant map $t:\sigma'\to {\mathbb{Q}_{a}}$ is a left inverse to $j\circ k$; thus, $(j\circ k)_\ast:\mathbb{Z}\to \mathbb{Z}$ is a split monomorphism and so it is an isomorphism. Therefore, $j_\ast$ is also an isomorphism.

Then in Mayer-Vietoris sequence for $\mathbb{X}=\mathbb{Y}\cup \sigma'$: 
\begin{alignat*}{2}
\mathrm{H}_1^I(\mathbb{Y})\oplus \mathrm{H}_1^I(\sigma')
    &  \longrightarrow 
    \mathrm{H}_1^I(\mathbb{X}) 
     \stackrel{\alpha}{\longrightarrow}
     \mathrm{H}_0^I(\mathbb{Y}\cap \sigma') 
        \stackrel{i_*\oplus j_\ast}{\longrightarrow}  
	  &\quad
    \\        &
        \stackrel{i_*\oplus j_\ast}{\longrightarrow}
         \mathrm{H}_0^I(\mathbb{Y}) \oplus \mathrm{H}_0^I(\sigma')
         \stackrel{\beta}{\longrightarrow}
         \mathrm{H}_0^I(\mathbb{X})    
       &  
       \longrightarrow 0
\end{alignat*}
we deduce that $\mathrm{Im}(\alpha) = 0$ and $\mathrm{Im}(\beta)=\mathbb{Z}$,
since $i_\ast $ and $j_\ast$ are isomorphisms, $i_\ast\oplus j_\ast$ is injective.

Therefore $\mathrm{H}_1^I(\mathbb{X}) = 0$ and $\mathrm{H}_0^{I}(\mathbb{X}) = \mathbb{Z}$.

\bigskip

For $n>1$, by using the Mayer-Vietoris sequence, one has $\mathrm{H}_n^I(\mathbb{X}) =0{ = \mathrm{H}^{I}_n(\mathbb{Q}_a)}$.

\end{proof}

\begin{remark}\label{remark}
{In the proof of Lemma \ref{lemma2.2} }we needed to define $\sigma'$ to avoid defining maps on empty sets. If $\mathbb{S}_\alpha^{q}$ is not empty in   the interval $I$, then $\sigma'$ is not necessary. 
\end{remark}

We also need to show that the inclusion map 
$i:{\mathbb{Q}_{a}}\to \mathbb{S}_{\alpha}^{q}\cup {\mathbb{Q}_{a}}$ induces an isomorphism in persistent homology in $I$.

Using \cref{as2} on the pair $(\mathbb{S}_{\alpha}^{q}, \mathbb{S}_{\alpha}^{q}\cap{\mathbb{Q}_{a}})$, and remembering that the simplex $\mathbb{S}_\alpha^{q}$ is empty in some sub-interval of $I$, we have
that both ends of the sequence bellow are null for all $n$, that is $\mathrm{H}_n^I(\mathbb{S}_{\alpha}^{q}, \mathbb{S}_{\alpha}^{q}\cap \mathbb{Q}_{a})=0$.
\[
\begin{tikzcd}[row sep=1ex]
	\mathrm{H}_n^I(\mathbb{S}_{\alpha}^{q}) \arrow[r] 
		& \mathrm{H}_n^I(\mathbb{S}_{\alpha}^{q}, \mathbb{S}_{\alpha}^{q}\cap \mathbb{Q}_{a}) \arrow[r] 
		& \mathrm{H}_{n-1}^I(\mathbb{S}_{\alpha}^{q}\cap \mathbb{Q}_{a}) .
\end{tikzcd} 
\]

By \cref{as1}, for all $n$,
\[
\mathrm{H}_n^I(\mathbb{S}_{\alpha}^{q}\cup \mathbb{Q}_{a}, \mathbb{Q}_{a}) = \mathrm{H}_n^I(\mathbb{S}_{\alpha}^{q}, \mathbb{S}_{\alpha}^{q}\cap \mathbb{Q}_{a}) = 0  .
\]

Using \cref{as2} in the pair $(\mathbb{S}_{\alpha}^{q}\cup {\mathbb{Q}_{a}}, {\mathbb{Q}_{a}})$ 
with $n\geq 1$ we also have that
$\mathrm{H}_n^I(\mathbb{S}_{\alpha}^{q}\cup \mathbb{Q}_{a}) = 0$.

Then for $n=0$, the following sequence implies that
$  \mathrm{H}_0^I(\mathbb{Q}_{a}) $ is isomorphic to {${\mathrm{H}_{0}^I(\mathbb{S}_{\alpha}^{q}\cup \mathbb{Q}_{a})}$}
\[
\begin{tikzcd}[row sep=1ex]
	\mathrm{H}_{1}^I(\mathbb{S}_{\alpha}^{q}\cup \mathbb{Q}_{a}, \mathbb{Q}_{a}) 
 \arrow[r] 
		& 
  \mathrm{H}_0^I(\mathbb{Q}_{a}) 
  \arrow[r] 
		& 
  \mathrm{H}_{0}^I(\mathbb{S}_{\alpha}^{q}\cup \mathbb{Q}_{a}\mathbb{Q}_{a}) 
  \arrow[r] 
		& 
  \mathrm{H}_{0}^I(\mathbb{S}_{\alpha}^{q}\cup \mathbb{Q}_{a}, \mathbb{Q}_{a}) 
  .
\end{tikzcd} 
\]

With this, we conclude that the inclusion map 
$i:{\mathbb{Q}_{a}}\to \mathbb{S}_{\alpha}^{q}\cup{\mathbb{Q}_{a}}$ induces an isomorphism in persistent homology in $I$.

\begin{lemma}\label{lemma2.3}
Let $\mathbb{X}$  be a filtered set and $\mathbb{A}$ a star shaped filtered subset of $\mathbb{X}$ in $I$.
Then
$\partial = 0$ in the {diagram} of \cref{as2} for the pair $(\mathbb{X}, \mathbb{A})$, and the resulting short exact sequence
\[
\begin{tikzcd}
	0 \arrow[r]  
		& \mathrm{H}_n^I(\mathbb{A})\arrow[r] 
		& \mathrm{H}_n^I(\mathbb{X}) \arrow[r] 
		& \mathrm{H}_n^I(\mathbb{X}, \mathbb{A})\arrow[r] 
		& 0
\end{tikzcd}
\]
splits for every $n$.    
\end{lemma}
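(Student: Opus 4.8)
The plan is to compute $\mathrm{H}^I_*(\mathbb{A})$ from star-shapedness, substitute it into the exact triangle of \cref{as2}, deduce that the connecting map vanishes, and then produce a splitting. First I would invoke \cref{lemma2.2}: since $\mathbb{A}$ is star shaped at $a$ in $I$, the inclusion $\mathbb{Q}_a\hookrightarrow\mathbb{A}$ induces an isomorphism $\mathrm{H}^I_n(\mathbb{Q}_a)\xrightarrow{\cong}\mathrm{H}^I_n(\mathbb{A})$, so \cref{as3} yields $\mathrm{H}^I_n(\mathbb{A})\cong\mathbb{Z}$ when $n=0$ and $\varepsilon\geq\alpha$ (with $\alpha=F_X(a)$), and $\mathrm{H}^I_n(\mathbb{A})=0$ in every other case.

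Next I would unwind the exact triangle of \cref{as2} for $(\mathbb{X},\mathbb{A})$ into the long exact sequence
\[
\cdots \to \mathrm{H}^I_n(\mathbb{A}) \xrightarrow{i_*} \mathrm{H}^I_n(\mathbb{X}) \xrightarrow{j_*} \mathrm{H}^I_n(\mathbb{X},\mathbb{A}) \xrightarrow{\partial} \mathrm{H}^I_{n-1}(\mathbb{A}) \to \cdots .
\]
The map $\partial\colon\mathrm{H}^I_n(\mathbb{X},\mathbb{A})\to\mathrm{H}^I_{n-1}(\mathbb{A})$ vanishes automatically whenever $n-1\geq 1$, because its target is $0$, and also for $n=0$ since $\mathrm{H}^I_{-1}(\mathbb{A})=0$. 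Hence the only boundary map left to control is $\partial\colon\mathrm{H}^I_1(\mathbb{X},\mathbb{A})\to\mathrm{H}^I_0(\mathbb{A})$, and by exactness $\mathrm{Im}(\partial)=\ker\bigl(i_*\colon\mathrm{H}^I_0(\mathbb{A})\to\mathrm{H}^I_0(\mathbb{X})\bigr)$. So it suffices to prove that $i_*$ is injective in degree $0$.

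For that step I would factor the inclusion as $\mathbb{Q}_a\xrightarrow{k}\mathbb{A}\xrightarrow{i}\mathbb{X}$ and, since $k_*$ is an isomorphism, reduce to the injectivity of $\iota_*\colon\mathrm{H}^I_0(\mathbb{Q}_a)\cong\mathbb{Z}\to\mathrm{H}^I_0(\mathbb{X})$, where $\iota=i\circ k$. Using that $X$ is finite, I would pick a one-point filtered set $\mathbb{P}=(\{p\},F_p)$ with $F_p(p)\leq\min_\sigma F_X(\sigma)$, so that the constant map $c\colon\mathbb{X}\to\mathbb{P}$ is filtration preserving and $\mathbb{P}$ is a $(0,F_p(p))$-simplex. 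The composite $c\circ\iota\colon\mathbb{Q}_a\to\mathbb{P}$ is a map of one-point sets, so by \cref{ax1,ax2} and \cref{as3} it induces an isomorphism $\mathbb{Z}\to\mathbb{Z}$ on $\mathrm{H}^I_0$. Thus $c_*\circ\iota_*$ is an isomorphism, forcing $\iota_*$, and therefore $i_*$, to be injective; this gives $\partial=0$ in all degrees and breaks the long exact sequence into the asserted short exact sequences $0\to\mathrm{H}^I_n(\mathbb{A})\to\mathrm{H}^I_n(\mathbb{X})\to\mathrm{H}^I_n(\mathbb{X},\mathbb{A})\to 0$.

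Finally, for the splitting I would treat the degrees separately. For $n\geq 1$ we have $\mathrm{H}^I_n(\mathbb{A})=0$, so $j_*$ is already an isomorphism and the sequence splits trivially. For $n=0$ the same augmenting point does the job: $c_*\circ i_*$ is the isomorphism $\mathrm{H}^I_0(\mathbb{A})\xrightarrow{\cong}\mathrm{H}^I_0(\mathbb{P})$ found above, so setting $s:=(c_*\circ i_*)^{-1}\circ c_*\colon\mathrm{H}^I_0(\mathbb{X})\to\mathrm{H}^I_0(\mathbb{A})$ gives $s\circ i_*=\mathrm{id}$, a retraction that splits the sequence. The main obstacle is precisely the degree-$0$ injectivity of $i_*$: in positive degrees star-shapedness collapses everything, but in degree $0$ one must exclude torsion in the image of the generating class, which is exactly what an augmentation to a sufficiently low point supplies; the only technical care needed is to confirm that such a point $\mathbb{P}$ and the constant map $c$ are legitimate objects and morphisms of $\mathscr{RF}$, which is where the finiteness of $X$ enters.
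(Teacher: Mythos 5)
Your proposal follows the paper's skeleton (identify $\mathrm{H}^I_*(\mathbb{A})$ with that of a point via Lemma \ref{lemma2.2} and \cref{as3}, reduce $\partial=0$ to degree-$0$ injectivity of $i_*$, then split with a retraction built from a constant map), but the pivotal degree-$0$ step has a genuine gap. The composite $c\circ\iota\colon\mathbb{Q}_a\to\mathbb{P}$ is a morphism between two one-point filtered sets carrying \emph{different} filtration values ($\alpha=F_X(a)$ versus $F_p(p)\leq\min_{\sigma}F_X(\sigma)$), and nothing in \cref{ax1,ax2} or \cref{as3} forces it to induce an isomorphism on $\mathrm{H}_0^I$: \cref{as3} computes the groups of $(q,\alpha)$-simplices but says nothing about induced maps between simplices with different parameters, and since filtration preserving maps can only go from larger to smaller filtration values, there is no morphism $\mathbb{P}\to\mathbb{Q}_a$ in $\mathscr{RF}$ at all when $F_p(p)<\alpha$, so no left- or right-inverse trick is available. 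Worse, the claim is simply false whenever $F_p(p)\leq\varepsilon<\alpha$: there $\mathrm{H}_0^I(\mathbb{Q}_a)=0$ while $\mathrm{H}_0^I(\mathbb{P})\cong\mathbb{Z}$. In the only regime where it could hold, $\varepsilon\geq\alpha$, it is true in the standard theory of Section \ref{sec:prelim}, but for an \emph{abstract} theory satisfying only \cref{as1,as2,as3} it is precisely the kind of statement that is not yet available: the two tools that would deliver it — contiguity invariance (\cref{ax5}) and the Uniqueness Theorem — are both proved downstream of this very lemma, so invoking either would be circular. This step is therefore a missing idea, not the routine verification you describe in your closing remark.

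Compare with the paper's proof: there one collapses onto the star point $a$ itself, $t\colon\mathbb{X}\to\mathbb{Q}_a$, so that $t\circ i\circ k=\mathrm{id}_{\mathbb{Q}_a}$ holds on the nose; injectivity of $(i\circ k)_*$, and hence of $i_*$ (since $k_*$ is an isomorphism by Lemma \ref{lemma2.2}), follows from bare functoriality, and $k_*\circ t_*$ is the degree-$0$ retraction splitting the sequence — no comparison between points at different filtration levels ever arises. Your instinct that the constant map must be checked to be a legitimate morphism of $\mathscr{RF}$ is sound: the paper's $t$ is filtration preserving only when $\alpha\leq F_X(\sigma)$ for the simplices relevant in $I$, a hypothesis the paper leaves implicit. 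But your repair — lowering the target point beneath $\min_{\sigma}F_X(\sigma)$ — destroys exactly the identity composite that powers the argument, trading a harmless implicit hypothesis for an unprovable isomorphism claim. A correct fix should keep the collapse target at level $\alpha$ (that is, collapse to $\mathbb{Q}_a$ and address the filtration-preservation condition directly), rather than descending to a lower level and then asserting an isomorphism that the simplified axioms cannot yet certify.
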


\begin{proof}

For $n>1$, by Lemma \ref{lemma2.2} and \cref{as2} we have the short exact sequence
\[
\begin{tikzcd}[row sep=1ex]
	\mathrm{H}_{n}^I(\mathbb{A}) \arrow[r] 
		& \mathrm{H}_n^I(\mathbb{X}) \arrow[r] 
		& \mathrm{H}_{n}^I(\mathbb{X}, \mathbb{A}) \arrow[r] 
		& \mathrm{H}_{n-1}^I(\mathbb{A}) 
\end{tikzcd}  
\]
with $\mathrm{H}_{n}^I(\mathbb{A})=0$ and $\mathrm{H}_{n-1}^I(\mathbb{A})=0$, and the result is immediate.
\smallskip

For the remaining cases, consider the exact sequence
\[
\begin{tikzcd}[column sep=4ex , row sep=1ex]
	0 \arrow[r, "i_\ast"] 
		& \mathrm{H}_1^I(\mathbb{X}) \arrow[r, "j_\ast"] 
		& \mathrm{H}_{1}^I(\mathbb{X}, \mathbb{A}) \arrow[r, "\partial"] 
		& \mathrm{H}_{0}^I(\mathbb{A}) \arrow[r, "i_\ast"] 
		& \mathrm{H}_{0}^I(\mathbb{X})\arrow[r, "j_\ast"] 
		& \mathrm{H}_{0}^I(\mathbb{X}, \mathbb{A})\arrow[r, "\partial"] 
		& 0
\end{tikzcd}  
\]
where $\mathrm{H}_{0}^I(\mathbb{A})\cong \mathbb{Z}$.

Let $a$ be the point about which $\mathbb{A}$ is star shaped, with inclusion map 
$k:\{a\}\to \mathbb{A}$. By Lemma \ref{lemma2.2}, $k_\ast$ is an isomorphism. 

Let $t:\mathbb{X}\to \{a\}$ be the collapsing map. Observe that $t$ is a left inverse to $i\circ k$, so $(i\circ k)_\ast:\mathrm{H}_0^I(\{a\})\to \mathrm{H}_0^I(\mathbb{X})$ and $i_\ast : \mathrm{H}_0^I(\mathbb{A})\to \mathrm{H}_0^I(\mathbb{X})$ are monomorphisms.

Therefore, $\partial: \mathrm{H}_1^I(\mathbb{X}, \mathbb{A})\to \mathrm{H}_0^I(\mathbb{A})$ is the zero map, providing the lemma for $n=1$ and $n=0$.

\end{proof}

\begin{proof}[Proof of Theorem\ref{theoequiv}]\

 \cref{ax1,ax2,ax3}  follow by the requiring that $\mathrm{H}^{I}$ is a functor and that $\partial$ is a natural transformation.
Axiom \ref{ax4} and \cref{as2}  are the same,
Axiom \ref{ax6} is a particular case of \cref{as3}, when $X$ is a single point set, and
Axiom \ref{ax7} is equivalent to \cref{as1}.

It remains to prove \cref{ax5}, that is, if  $f, g:\mathbb{X}\to \mathbb{Y}$ are contiguous maps in $I$, then $f_\ast^{I} = g_\ast^{I}$.

To prove this result, we first construct  a structure  analogous to a cylinder one in complexes.

Let $\mathbb{X} = (X, F_X)$ be a filtered set and assume that the vertices are ordered. The new filtered set 
$\mathbb{X}^{cyl}= (X^{cyl}, F_X^{cyl})$ is defined as follows:

The finite set $X^{cyl} = X \sqcup X'$ is the disjoint union of two copies of $X$ with the dictionary order.
If  $\sigma_i = (s_{i,1}, \ldots , s_{i,j}, s_{i,j+1}, \ldots , s_{i,k})$ is a subset of $X$ whose vertices appear on the given order, let $\sigma_{i,j}\subset X^{cyl}$ 
denoting $\sigma_{ij} = (s_{i,1}', \ldots , s_{i,j}', s_{i,j}, s_{i,j+1}, \ldots , s_{i,k})$, and
define
$F_X^{cyl}(\sigma_{ij}) :=  F_X(\sigma_i)$. 
If $\sigma\subset X^{cyl}$, $\sigma = (s_1', \ldots , s_i', s_{i+1} , \ldots , s_k )$, is a subset of some $\sigma_{ij}$, $s_i\neq s_{i+1}$, we define 
$
F_X^{cyl}(\sigma) :=  F_X(\tau) 
$,
 where $\tau = (s_1,\ldots,s_i, s_{i+1},\ldots, s_k)$. Notice that it is the lower value possible where there is a $\sigma_{i,j}$ containing $\sigma$.

On the other simplices not addressed previously, let $F_X^{cyl}(\sigma): = M$.

Let $h_0: s  \mapsto s $ be the inclusion map $X\subset X^{cyl}$, and let $h_1: s\mapsto s'$ be the inclusion map on the other copy of $X$; we will show that $h_0$ and $h_1$ induce the same map in persistence homology.

\smallskip

Assume that the subsets $\sigma_i\subset X $  are ordered in a way that all subsets of order $n$ come before all subsets of order $n+1$. We will construct $\mathbb{X}^{cyl}$ from $\mathbb{X}$ by adding all the subsets $\sigma_{i,j}$, in lexicographic
order;
that is, $\mathbb{X}_0 = \mathbb{X}$, $\mathbb{X}_1 = \mathbb{X}_0\cup \sigma_{1,1}$, $\mathbb{X}_2 = \mathbb{X}_1\cup \sigma_{2,1}$ and so on.

\smallskip

At each stage in the construction, if $\mathbb{X}_{k+1} = \mathbb{X}_k\cup \sigma_{ij}$, then $\mathbb{X}_k\cap \sigma_{i,j}$ is the union (of filtered complexes) of the faces of $\sigma_{ij}$ containing the vertex $s_{i,j}$. This is star shaped at $s_{ij}$.

\smallskip
Now let us consider the Mayer-Vietoris sequence of $\mathbb{X}_k\cup \sigma_{ij}$. Observe that for
 $n>1$, by Lemma \ref{lemma2.2} we have that 
$\mathrm{H}_{n}^I(\mathbb{X}_k\cap\sigma_{ij})=0$, 
$\mathrm{H}_n^I(\sigma_{ij}) = 0$ and  
$\mathrm{H}_{n-1}^I(\mathbb{X}_k\cap\sigma_{ij}) = 0$.
\[
\begin{tikzcd}[column sep=4ex , row sep=1ex]
	\mathrm{H}_{n}^I(\mathbb{X}_k\cap\sigma_{ij})  \arrow[r] 
		& \mathrm{H}_n^I(\mathbb{X}_k) \oplus \mathrm{H}_n^I(\sigma_{ij})  \arrow[r] 
		& \mathrm{H}_n^I(\mathbb{X}_k\cup\sigma_{ij})         \arrow[r] 
    & \mathrm{H}_{n-1}^I(\mathbb{X}_k\cap\sigma_{ij})
\end{tikzcd}  .
\]

Therefore the inclusion map $\mathbb{X}_k\subset \mathbb{X}_k\cup \sigma_{ij}$ induces an isomorphism.
\smallskip

For $n=1$, we have that 
$\mathrm{H}_{1}^I(\mathbb{X}_k\cap\sigma_{ij})=0$,
$\mathrm{H}_1^I(\sigma_{ij}) =0$,
$\mathrm{H}_{0}^I(\mathbb{X}_k\cap\sigma_{ij}) 
  \cong \mathbb{Z}$ and
$\mathrm{H}_0^I(\sigma_{ij})\cong \mathbb{Z}$.
\begin{multline*}
\mathrm{H}_{1}^I(\mathbb{X}_k\cap\sigma_{ij}) 
\longrightarrow 
\mathrm{H}_1^I(\mathbb{X}_k) \oplus  \mathrm{H}_1^I(\sigma_{ij}) 
\longrightarrow 
\mathrm{H}_1^I(\mathbb{X}_k\cup\sigma_{ij})
\longrightarrow 
\mathrm{H}_{0}^I(\mathbb{X}_k\cap\sigma_{ij}) 
\longrightarrow  
\\
\longrightarrow
 \mathrm{H}_0^I(\mathbb{X}_k) \oplus  \mathrm{H}_0^I(\sigma_{ij})
\longrightarrow
 \mathrm{H}_0^I(\mathbb{X}_k\cup\sigma_{ij})
\longrightarrow
0 .
\end{multline*}

Since $\mathrm{H}_0^I(\mathbb{X}_k\cap\sigma_{ij})\to \mathrm{H}_0^I(\sigma_{ij})$ is an isomorphism,  
$\mathrm{H}_1^I(\mathbb{X}_k\cup \sigma_{ij})\to \mathrm{H}_0^I(\mathbb{X}_k\cap\sigma_{ij})$ is the null homomorphism. Then 
$\mathrm{H}_1^I(\mathbb{X}_k)\to \mathrm{H}_1^I(\mathbb{X}_k\cup \sigma_{ij})$ is an isomorphism.
\smallskip

Applying Lemma \ref{lemma2.3} for the pairs $(\mathbb{X}_k\cup\sigma_{ij}, \sigma_{ij})$ and $(\mathbb{X}_k, \mathbb{X}_k\cap \sigma_{ij})$ and considering the natural inclusions we have the following commutative diagram:
\[
\begin{tikzcd}[column sep=4ex , row sep=4ex]
 0  \arrow[r] 
    & \mathrm{H}_n^I(\mathbb{X}_k\cap \sigma_{ij})    
        \arrow[r] \arrow[d, "(i_1)_\ast"]
    & \mathrm{H}_n^I(\mathbb{X}_k)                
        \arrow[r] \arrow[d, "(i_2)_\ast"]
    & \mathrm{H}_{n}^I(\mathbb{X}_k,\mathbb{X}_k\cap \sigma_{ij}) 
        \arrow[r]\arrow[d, "(i_3)_\ast"]
    & 0
			\\
 0  \arrow[r] 
    & \mathrm{H}_n^I(\sigma_{ij})
        \arrow[r] 
    & \mathrm{H}_n^I(\mathbb{X}_k\cup\sigma_{ij})
        \arrow[r] 
    & \mathrm{H}_{n}^I(\mathbb{X}_k\cup\sigma_{ij}, \sigma_{ij}) 
        \arrow[r] 
    & 0
\end{tikzcd}  
\]

By Lemma \ref{lemma2.2} $(i_1)_\ast$ is an isomorphism, and by \cref{as1} $(i_3)_\ast$ is also an isomorphism. Then by the Five Lemma, $(i_2)_\ast$ is an isomorphism.

Thus the inclusion map $\mathbb{X}_k\subset \mathbb{X}_{k+1}$ induces an isomorphism in persistence homology at each stage. Then the composition $h_0$ of all inclusion maps induces an isomorphism in persistence homology.
An analogous construction shows that $h_1$ also induces isomorphism in persistence {homology.}

\smallskip

By using the long exact sequences of $(\mathbb{X}, \mathbb{A})$ and $(\mathbb{X}^{cyl}, \mathbb{A}^{cyl})$, we have the following commutative diagram:
\[
\begin{tikzcd}[column sep=2ex , row sep=4ex]
 \quad  \arrow[r] 
    & \mathrm{H}_n^I(\mathbb{A})                       
        \arrow[r] \arrow[d, "(h_i)_\ast"]
    & \mathrm{H}_n^I(\mathbb{X})                
        \arrow[r] \arrow[d, "(h_i)_\ast"]
    & \mathrm{H}_{n}^I(\mathbb{X}, \mathbb{A}) 
        \arrow[r]\arrow[d,"(h_i)_\ast"]
    & \mathrm{H}_{n-1}^I(\mathbb{A}) 
		\arrow[r]\arrow[d, "(h_i)_\ast"]
		& \mathrm{H}_{n-1}^I(\mathbb{X}) 
            \arrow[r]\arrow[d, "(h_i)_\ast"]
    & \qquad
 \\
  \quad  \arrow[r] 
    & \mathrm{H}_n^I(\mathbb{A}^{cyl})                       
				\arrow[r]  
    & \mathrm{H}_n^I(\mathbb{X}^{cyl})                
        \arrow[r]  
    & \mathrm{H}_{n}^I(\mathbb{X}^{cyl}, \mathbb{A}^{cyl}) 
        \arrow[r] 
    & \mathrm{H}_{n-1}^I(\mathbb{A}^{cyl}) 
				\arrow[r]
		& \mathrm{H}_{n-1}^I(\mathbb{X}^{cyl}) 
         \arrow[r]
    & \qquad
\end{tikzcd}  
\]

By the Five Lemma we have that $(h_i)_\ast:\mathrm{H}_n^I(\mathbb{X}, \mathbb{A})\to \mathrm{H}_n^I(\mathbb{X}^{cyl}, \mathbb{A}^{cyl})$ is an isomorphism for all $n$, $i=0,1$.

\smallskip

Let $k:(\mathbb{X}^{cyl}, \mathbb{A}^{cyl})\to (\mathbb{X}, \mathbb{A})$ be defined by $k(s_{ij}) := k( s_{ij}')= s_{ij}$.
It is a filtration preserving  map and left inverse to both $h_0$ and $h_1$. Then 
\[
(h_0)_\ast = k_\ast^{-1} = (h_1)_\ast .
\]

Given two contiguous maps $f, g:(\mathbb{X}, \mathbb{A})\to (\mathbb{Y}, \mathbb{B})$, let $G:(\mathbb{X}^{cyl}, \mathbb{A}^{cyl})\to (\mathbb{Y}, \mathbb{B})$ be defined by
$G(s_{ij}) := f(s_{ij})$, $G(s_{ij}') := g(s_{ij})$.
Notice that $G$ is a well defined filtration preserving map because $f$ and $g$ are both contiguous.

Now $f = Gh_0$ and $g = Gh_1$. Then $(f)_\ast  = G_\ast (h_0)_\ast = G_\ast (h_1)_\ast = (g)_\ast$.

\end{proof}

\section{The Uniqueness Theorem}\label{sec:uniq}

In this section, we present results involving maps between persistent homology sequences and introduce the notion of coefficient group of a persistent homology theory.

\subsection{The Main Isomorphism}

Let $\mathrm{H}^I$ be a persistent homology theory. We will define new groups of $q$-chains and $q$-dimensional persistent homology.

\begin{defi}
 Let $(\mathbb{X},\mathbb{A})$ be a relative filtered set. The group of  $q$-chains of $(\mathbb{X}, \mathbb{A})$, denoted by $\mathcal{C}^{I}_q(\mathbb{X}, {A})$, is defined by 
\[
 \mathcal{C}^{I}_q(\mathbb{X}, \mathbb{A}):=\mathrm{H}^I_q
 (\mathbb{X}^{(q)}\cup \mathbb{A}, \mathbb{X}^{(q-1)}\cup \mathbb{A}).
\]

 The kernel of $\partial_q: \mathcal{C}^{I}_q(\mathbb{X}, \mathbb{A})\longrightarrow \mathcal{C}^{I}_{q-1}(\mathbb{X}, \mathbb{A})$ is called \textbf{the group of $q$-cycles of $(\mathbb{X}, \mathbb{A})$} and it is denoted by $\mathcal{Z}^{I}_q(\mathbb{X}, \mathbb{A})$. 
 The image of $\partial_{q+1}: \mathcal{C}^{I}_{q+1}(\mathbb{X}, \mathbb{A})\longrightarrow \mathcal{C}^{I}_{q}(\mathbb{X}, \mathbb{A})$ is called \textbf{the group of $q$-boundaries of $(\mathbb{X}, \mathbb{A})$ } and it is denoted by $\mathcal{B}^{I}_q(\mathbb{X}, \mathbb{A})$. 
 Since $\partial_q\circ\partial_{q+1}=0$, $\mathcal{B}^{I}_q(\mathbb{X}, \mathbb{A})$ is a subgroup of $\mathcal{Z}^{I}_q(\mathbb{X}, \mathbb{A})$.

The factor group  $\mathcal{H}^I_q(\mathbb{X}, \mathbb{A}):=\dfrac{\mathcal{Z}^{I}_q(\mathbb{X}, \mathbb{A})}{\mathcal{B}^{I}_q(\mathbb{X}, \mathbb{A})}$ is called the \textbf{relative $q$-dimensional persistent homology of $(\mathbb{X}, \mathbb{A})$} associated to $I$.

If $f:(\mathbb{X}, \mathbb{A})\longrightarrow (\mathbb{X}', \mathbb{A}')$ is a filtration preserving map, then $f^{I}_q$ maps $\mathcal{Z}^{I}_q(\mathbb{X}, \mathbb{A})$ into $\mathcal{Z}^{I}_q(\mathbb{X}', \mathbb{A}')$ and $\mathcal{B}^{I}_q(\mathbb{X}, \mathbb{A})$ into $\mathcal{B}^{I}_q(\mathbb{X}', \mathbb{A}')$, thereby inducing homomorphisms
$$ f^{I}_*: \mathcal{H}^I_q(\mathbb{X}, \mathbb{A})\longrightarrow \mathcal{H}^I_{q}(\mathbb{X}', \mathbb{A}')$$
\end{defi}

\begin{lemma}\label{lemma:homo-induced}

Let $j_q:\mathcal{C}^{I}_q(\mathbb{X}) \longrightarrow \mathcal{C}^{I}_q(\mathbb{X}, \mathbb{A})$ be the {chain map induced by the inclusion}
 $\mathbb{X}\stackrel{j}{\hookrightarrow}(\mathbb{X},\mathbb{A})$.

If we define 
$\overline{\mathcal{Z}}_q^{I}(\mathbb{X}, \mathbb{A}):=j_q^{-1}[\mathcal{Z}^{I}_q(\mathbb{X}, \mathbb{A})]$,
 $\overline{\mathcal{B}}_q^{I} (\mathbb{X}, \mathbb{A}):=j_q^{-1}[\mathcal{B}^{I}_q(\mathbb{X}, \mathbb{A})]$ 
 and $\overline{\mathcal{H}}^I_q(\mathbb{X}, \mathbb{A}):= \dfrac{\overline{\mathcal{Z}}^{I}_q(\mathbb{X}, \mathbb{A})}{\overline{\mathcal{B}}^{I}_q(\mathbb{X}, \mathbb{A})}$, then
$$\overline{\mathcal{Z}}^{I}_q(\mathbb{X}, \mathbb{A})
= \partial_q^{-1}[i_{q-1} \mathcal{C}^{I}_{q-1}(\mathbb{A})] 
\qquad \mbox{ and }\qquad
\overline{\mathcal{B}}^{I}_q(\mathbb{X}, \mathbb{A})
=\mathcal{B}^{I}_q(\mathbb{X}, \mathbb{A})+ i_q[\mathcal{C}^{I}_q(\mathbb{A})],$$

\noindent where $+$ is the operation which gives the smallest subgroup of $\mathcal{C}^{I}_q(\mathbb{X})$ containing the two groups. Further, the homomorphism $j_q$ induces isomorphism
$$\overline{j}_q:\overline{\mathcal{H}}^I_q(\mathbb{X}, \mathbb{A})\longrightarrow \mathrm{\mathcal{H}}^I_q(\mathbb{X}, \mathbb{A}).$$
\end{lemma}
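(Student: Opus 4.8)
The plan is to derive all three assertions from the short exact sequence of chain complexes
\[
0 \longrightarrow \mathcal{C}^I_*(\mathbb{A}) \xrightarrow{\,i_*\,} \mathcal{C}^I_*(\mathbb{X}) \xrightarrow{\,j_*\,} \mathcal{C}^I_*(\mathbb{X}, \mathbb{A}) \longrightarrow 0,
\]
in which $j_*$ is a surjective chain map with $\ker j_* = \operatorname{Im} i_*$. Once this sequence is available, each statement becomes a short diagram chase, and the only substantial input is the exactness itself.

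For the description of $\overline{\mathcal{Z}}^I_q$, I would use that $j$ is a chain map, so $\partial_q j_q = j_{q-1}\partial_q$. Hence for $c \in \mathcal{C}^I_q(\mathbb{X})$ we have $c \in \overline{\mathcal{Z}}^I_q = j_q^{-1}[\mathcal{Z}^I_q(\mathbb{X},\mathbb{A})]$ if and only if $\partial_q(j_q c)=0$, i.e. $j_{q-1}(\partial_q c)=0$, i.e. $\partial_q c \in \ker j_{q-1}$. By exactness $\ker j_{q-1} = \operatorname{Im} i_{q-1} = i_{q-1}\mathcal{C}^I_{q-1}(\mathbb{A})$, which yields $\overline{\mathcal{Z}}^I_q = \partial_q^{-1}[i_{q-1}\mathcal{C}^I_{q-1}(\mathbb{A})]$. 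For $\overline{\mathcal{B}}^I_q$, take $c$ with $j_q c \in \mathcal{B}^I_q(\mathbb{X},\mathbb{A})$, so $j_q c = \partial_{q+1} d$ for some relative $(q{+}1)$-chain $d$; using surjectivity of $j_{q+1}$ lift $d = j_{q+1} e$, whence $j_q(c - \partial_{q+1} e) = 0$ and therefore $c - \partial_{q+1}e \in \ker j_q = i_q\mathcal{C}^I_q(\mathbb{A})$. This gives $c \in \mathcal{B}^I_q(\mathbb{X}) + i_q\mathcal{C}^I_q(\mathbb{A})$ as a subgroup of $\mathcal{C}^I_q(\mathbb{X})$, and the reverse inclusion is immediate since $j_q$ annihilates $i_q\mathcal{C}^I_q(\mathbb{A})$ and carries absolute boundaries to relative boundaries.

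For the induced isomorphism, note first that by the very definitions $j_q(\overline{\mathcal{Z}}^I_q) \subseteq \mathcal{Z}^I_q(\mathbb{X},\mathbb{A})$ and $j_q(\overline{\mathcal{B}}^I_q)\subseteq \mathcal{B}^I_q(\mathbb{X},\mathbb{A})$, so $j_q$ descends to $\overline{j}_q$ on quotients. Surjectivity follows by lifting a relative cycle $z \in \mathcal{Z}^I_q(\mathbb{X},\mathbb{A})$ to some $c$ with $j_q c = z$ (such $c$ lies in $\overline{\mathcal{Z}}^I_q$ automatically, and $\overline{j}_q[c]=[z]$). Injectivity is immediate from the definition of $\overline{\mathcal{B}}^I_q$ as a full preimage: if $\overline{j}_q[c]=0$ then $j_q c \in \mathcal{B}^I_q(\mathbb{X},\mathbb{A})$, so $c \in j_q^{-1}[\mathcal{B}^I_q(\mathbb{X},\mathbb{A})] = \overline{\mathcal{B}}^I_q$ and hence $[c]=0$.

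The main obstacle is justifying the short exact sequence in this axiomatic setting, namely that $j_q$ is onto with kernel exactly $\operatorname{Im} i_q$: unlike the classical simplicial case, where relative chains are literally a quotient, here the chain groups are defined through the functor $\mathrm{H}^I$ evaluated on the skeletal pairs $(\mathbb{X}^{(q)}\cup\mathbb{A}, \mathbb{X}^{(q-1)}\cup\mathbb{A})$, so exactness must be extracted from the axioms. I expect it to follow from the excision-type axioms \cref{as1} and \cref{ax7} together with the exactness axiom \cref{ax4}/\cref{as2}, applied to the relevant skeletal triples. A secondary point to keep straight is that the sum ``$+$'' is formed inside $\mathcal{C}^I_q(\mathbb{X})$, so the term written $\mathcal{B}^I_q(\mathbb{X},\mathbb{A})$ on the right of the second identity must be read as the group of absolute boundaries $\mathcal{B}^I_q(\mathbb{X})$.
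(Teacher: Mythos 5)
Your proposal is correct and takes essentially the same route as the paper: the paper's proof likewise rests entirely on the short exact sequence $0\to \mathcal{C}^{I}_q(\mathbb{A})\xrightarrow{i_q} \mathcal{C}^{I}_q(\mathbb{X})\xrightarrow{j_q} \mathcal{C}^{I}_q(\mathbb{X},\mathbb{A})\to 0$ (which it obtains by invoking Theorem \ref{similar-1}, the axiomatic substitute for the classical quotient description you flag as the main obstacle), and then carries out the same two diagram chases, with the final isomorphism dispatched by the isomorphism theorem just as in your explicit well-definedness/injectivity/surjectivity argument. Your reading of the summand $\mathcal{B}^{I}_q(\mathbb{X},\mathbb{A})$ as the absolute boundaries inside $\mathcal{C}^{I}_q(\mathbb{X})$ is exactly what the paper's chase produces.
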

\begin{proof}
According to Theorem \ref{similar-1}, the inclusion maps 
$$ \mathbb{A}\stackrel{i}{\longrightarrow} \mathbb{X} \stackrel{j}{\longrightarrow} (\mathbb{X},\mathbb{A})$$

\noindent induce an exact sequence
$$0 \longrightarrow \mathcal{C}^{I}_q(\mathbb{A}) \stackrel{i_q}{\longrightarrow} \mathcal{C}^{I}_q(\mathbb{X}) \stackrel{j_q}{\longrightarrow} \mathcal{C}^{I}_q(\mathbb{X}, \mathbb{A})\longrightarrow 0.$$

Assume $c\in\overline{\mathcal{B}}^{I}_q(\mathbb{X}, \mathbb{A})$. Then $j_q(c)\in \mathcal{B}^{I}_q(\mathbb{X}, \mathbb{A})$ and for some $b\in \mathcal{C}^{I}_{q+1}(\mathbb{X}, \mathbb{A})$ we have that $j_q(c)=\partial_{q+1}(b)$. Since the sequence is exact, there is $m\in \mathcal{C}^{I}_{q+1}(\mathbb{X})$ such that $j_{q+1}(m)=b$. Therefore
$$j_q(c-\partial_{q+1}(m))=j_q(c)-\partial_{q+1}\circ j_{q+1}(m)=j_q(c)-\partial_{q+1}(b)=0 .$$
Again, by exactness of the sequence, there is an $n\in \mathcal{C}^{I}_q(\mathbb{A})$ such that $i_q(n)=c-\partial_{q+1}(m)$, 
and it follows that $c=\partial_{q+1}(m)+i_q(n)\in \mathcal{B}^{I}_q(\mathbb{X}, \mathbb{A})
+
i_q[\mathcal{C}^{I}_q(\mathbb{A})]$.

If $c=\partial_{q+1}(m)+i_q(n)$, then $j_q(c)=j_q(\partial_{q+1}(m)+i_q(n))= j_q\circ \partial_{q+1}(m)+ j_q(i_q(n))$. Since the sequence is exact, we have that $j_q(i_q(n))=0$ and, from the fact that $j_q\circ \partial_{q+1}=\partial_{q+1}\circ j_{q+1}$, it follows that 
$$j_q(c)=\partial_{q+1}\circ j_{q+1}(m) \text{ and } j_q(c)\in{\mathcal{B}}^{I}_q(\mathbb{X}, \mathbb{A}).$$

Assuming $e\in \overline{\mathcal{Z}}^{I}_q(\mathbb{X}, \mathbb{A})$ we have that $\partial_q\circ j_q(e)=0$. Since $\partial_q\circ j_q=j_{q-1}\circ \partial_q$, $j_{q-1}\circ \partial_q(e)=0$. From exactness of the sequence, $\ker(j_{q-1})=\mathrm{Im}(i_{q-1}).$

Hence,
$$\partial_q(e)\in i_{q-1}[\mathcal{C}^{I}_{q-1}(\mathbb{A})] \text{ which means } e\in \partial^{-1}_q(i_{q-1}[\mathcal{C}^{I}_{q-1}(\mathbb{A})]).$$ 

{The last part of the lemma follows as a consequence of the isomorphism theorem.}

\end{proof}

The following lemma is a consequence of Lemma \ref{lemma:homo-induced}.

\begin{lemma}
The boundary homomorphism $\partial_q: \mathcal{C}^{I}_q(\mathbb{X}, \mathbb{A})\longrightarrow \mathcal{C}^{I}_{q-1}(\mathbb{X}, \mathbb{A})$ defines homomorphisms 
$$\overline{\mathcal{Z}}^{I}_q(\mathbb{X}, \mathbb{A})\longrightarrow i_{q-1}[\mathcal{Z}^{I}_{q}(\mathbb{A})],$$
$$\overline{\mathcal{B}}^{I}_q(\mathbb{X}, \mathbb{A})\longrightarrow i_{q-1}[\mathcal{B}^{I}_{q}(\mathbb{A})].$$

Since the kernel of $i_{q-1}$ is zero, the composition $i_{q-1}^{-1}\circ \partial_q$ defines homomorphisms
$$\overline{\mathcal{Z}}^{I}_q(\mathbb{X}, \mathbb{A})\longrightarrow \mathcal{Z}^{I}_{q-1}(\mathbb{A}),$$
$$\overline{\mathcal{B}}^{I}_q(\mathbb{X}, \mathbb{A})\longrightarrow \mathcal{B}^{I}_{q-1}(\mathbb{A})$$

\noindent and thereby induces a homomorphism 
$$\alpha^{I}: \overline{\mathcal{H}}^I_q(\mathbb{X}, \mathbb{A})\longrightarrow \mathcal{H}^I_{q-1}(\mathbb{X}, \mathbb{A}).$$
\end{lemma}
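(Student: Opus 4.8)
The plan is to establish all three assertions by a direct diagram chase inside the chain complex $\mathcal{C}^I_\bullet$, relying on exactly two structural facts and nothing deeper: that $\partial$ is a differential, so $\partial_{q-1}\circ\partial_q=0$, and that $i$ is a chain map whose components $i_q$ are injective (their kernels vanish by the short exact sequence $0\to\mathcal{C}^I_q(\mathbb{A})\xrightarrow{i_q}\mathcal{C}^I_q(\mathbb{X})\xrightarrow{j_q}\mathcal{C}^I_q(\mathbb{X},\mathbb{A})\to 0$ from the proof of Lemma \ref{lemma:homo-induced}). Throughout, $\partial_q$ denotes the differential of $\mathcal{C}^I_\bullet(\mathbb{X})$ restricted to the subgroups $\overline{\mathcal{Z}}^I_q,\overline{\mathcal{B}}^I_q\subseteq\mathcal{C}^I_q(\mathbb{X})$, I write $\partial^{\mathbb{A}}$ for the differential of $\mathcal{C}^I_\bullet(\mathbb{A})$, and the injectivity of the $i_q$ is what makes the partial inverse $i_{q-1}^{-1}$ meaningful on $\mathrm{Im}(i_{q-1})$.

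First I would handle the cycles. By Lemma \ref{lemma:homo-induced} one has $\overline{\mathcal{Z}}^I_q(\mathbb{X},\mathbb{A})=\partial_q^{-1}[i_{q-1}\mathcal{C}^I_{q-1}(\mathbb{A})]$, so for $e\in\overline{\mathcal{Z}}^I_q(\mathbb{X},\mathbb{A})$ there is a unique (by injectivity of $i_{q-1}$) element $w\in\mathcal{C}^I_{q-1}(\mathbb{A})$ with $\partial_q(e)=i_{q-1}(w)$. Applying $\partial_{q-1}$ and using $\partial_{q-1}\partial_q=0$ gives $i_{q-2}(\partial^{\mathbb{A}}_{q-1}(w))=\partial_{q-1}(i_{q-1}(w))=0$, whence $\partial^{\mathbb{A}}_{q-1}(w)=0$ by injectivity of $i_{q-2}$; that is, $w\in\mathcal{Z}^I_{q-1}(\mathbb{A})$. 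This shows $\partial_q$ sends $\overline{\mathcal{Z}}^I_q(\mathbb{X},\mathbb{A})$ into $i_{q-1}[\mathcal{Z}^I_{q-1}(\mathbb{A})]$, and $w=i_{q-1}^{-1}\partial_q(e)$ then defines the homomorphism $\overline{\mathcal{Z}}^I_q(\mathbb{X},\mathbb{A})\to\mathcal{Z}^I_{q-1}(\mathbb{A})$.

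Next I would handle the boundaries, using the description $\overline{\mathcal{B}}^I_q(\mathbb{X},\mathbb{A})=\partial_{q+1}[\mathcal{C}^I_{q+1}(\mathbb{X})]+i_q[\mathcal{C}^I_q(\mathbb{A})]$ also furnished by Lemma \ref{lemma:homo-induced}. On an element $\partial_{q+1}(m)$ of the first summand, $\partial_q\partial_{q+1}(m)=0$; on an element $i_q(n)$ of the second summand, the chain-map identity $\partial_q i_q=i_{q-1}\partial^{\mathbb{A}}_q$ yields $\partial_q(i_q(n))=i_{q-1}(\partial^{\mathbb{A}}_q(n))$ with $\partial^{\mathbb{A}}_q(n)\in\mathcal{B}^I_{q-1}(\mathbb{A})$ by definition of the boundaries. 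Hence $\partial_q$ carries $\overline{\mathcal{B}}^I_q(\mathbb{X},\mathbb{A})$ into $i_{q-1}[\mathcal{B}^I_{q-1}(\mathbb{A})]$, and therefore $i_{q-1}^{-1}\partial_q$ maps $\overline{\mathcal{B}}^I_q(\mathbb{X},\mathbb{A})$ into $\mathcal{B}^I_{q-1}(\mathbb{A})$.

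Finally, combining the two previous steps, the homomorphism $i_{q-1}^{-1}\partial_q\colon\overline{\mathcal{Z}}^I_q(\mathbb{X},\mathbb{A})\to\mathcal{Z}^I_{q-1}(\mathbb{A})$ carries the subgroup $\overline{\mathcal{B}}^I_q(\mathbb{X},\mathbb{A})$ into $\mathcal{B}^I_{q-1}(\mathbb{A})$, so it passes to the quotients and induces the desired map $\alpha^I\colon\overline{\mathcal{H}}^I_q(\mathbb{X},\mathbb{A})\to\mathcal{H}^I_{q-1}(\mathbb{A})$, which is the natural codomain produced by the construction. I do not expect any genuine obstacle here; the whole argument is routine algebra. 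The only point demanding care is keeping the three differentials (on $\mathbb{X}$, on $\mathbb{A}$, and on the pair) notationally and conceptually separate, and invoking the chain-map compatibility $\partial\,i=i\,\partial^{\mathbb{A}}$ together with the injectivity of the $i_q$ at precisely the right places, since it is exactly this injectivity that renders $i_{q-1}^{-1}$ well defined on the relevant images.
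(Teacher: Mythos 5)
Your proof is correct and follows exactly the route the paper intends: the paper offers no written argument beyond the remark that the lemma ``is a consequence of Lemma \ref{lemma:homo-induced}'', and your diagram chase is precisely that consequence made explicit --- the descriptions $\overline{\mathcal{Z}}^{I}_q(\mathbb{X},\mathbb{A})=\partial_q^{-1}[i_{q-1}\mathcal{C}^{I}_{q-1}(\mathbb{A})]$ and $\overline{\mathcal{B}}^{I}_q(\mathbb{X},\mathbb{A})=\mathcal{B}^{I}_q(\mathbb{X})+i_q[\mathcal{C}^{I}_q(\mathbb{A})]$ combined with $\partial_{q-1}\circ\partial_q=0$ and the chain-map identity $\partial_q i_q=i_{q-1}\partial^{\mathbb{A}}_q$ (Theorem \ref{boundary-chain}) and the injectivity of $i_q$ coming from the exact sequence of Theorem \ref{similar-1}. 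You were also right to place $\overline{\mathcal{Z}}^{I}_q$ and $\overline{\mathcal{B}}^{I}_q$ inside $\mathcal{C}^{I}_q(\mathbb{X})$, to read the degree in the first two displayed targets as $q-1$, and to take the codomain of $\alpha^{I}$ to be $\mathcal{H}^I_{q-1}(\mathbb{A})$; the statement's $\mathcal{Z}^{I}_{q}(\mathbb{A})$, $\mathcal{B}^{I}_{q}(\mathbb{A})$ and $\mathcal{H}^I_{q-1}(\mathbb{X},\mathbb{A})$ are typos, as the construction itself makes clear.
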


\begin{defi}
The boundary homomorphism
$$\partial_*: \mathcal{H}^I_q(\mathbb{X}, \mathbb{A})\longrightarrow \mathcal{H}^I_{q-1}(\mathbb{A})$$
is defined to be the composition $\partial_q\circ\overline{j}_q^{-1}$.
\end{defi}

\begin{theo}[{Main Isomorphism Theorem}]\label{main.iso}
The group $\mathrm{H}^I_q(\mathbb{X}, \mathbb{A})$ is isomorphic to the group
$\mathcal{H}^I_q(\mathbb{X}, \mathbb{A})$.

\end{theo}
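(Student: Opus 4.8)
The plan is to compute the homology of the chain complex $(\mathcal{C}^I_\bullet,\partial)$ through the skeletal filtration of $\mathbb{X}$ and match the outcome with $\mathrm{H}^I_q(\mathbb{X},\mathbb{A})$, carrying out inside the abstract theory $\mathrm{H}^I$ the classical Eilenberg--Steenrod uniqueness computation. Since $X$ is finite, the tower $\mathbb{A}=\mathbb{X}^{(-1)}\cup\mathbb{A}\subseteq\mathbb{X}^{(0)}\cup\mathbb{A}\subseteq\cdots\subseteq\mathbb{X}^{(N)}\cup\mathbb{A}=\mathbb{X}$ is finite; I abbreviate $P_p:=\mathbb{X}^{(p)}\cup\mathbb{A}$ and $H_q(p):=\mathrm{H}^I_q(P_p,\mathbb{A})$. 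Recall that $\mathcal{C}^I_q(\mathbb{X},\mathbb{A})=\mathrm{H}^I_q(P_q,P_{q-1})$ and that its boundary is the composite $\partial_q=j_{q-1}\circ\tilde\partial_q$, where $\tilde\partial_q\colon\mathcal{C}^I_q\to H_{q-1}(q-1)$ is the connecting homomorphism of the triple $(P_q,P_{q-1},\mathbb{A})$ from \cref{as2} and $j_{q-1}\colon H_{q-1}(q-1)\to\mathcal{C}^I_{q-1}$ is induced by the inclusion $(P_{q-1},\mathbb{A})\hookrightarrow(P_{q-1},P_{q-2})$.

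First I would establish three facts about the tower, all from the exact sequence of \cref{as2} applied to triples together with the lower axioms. Fact (i): $\mathrm{H}^I_n(P_p,P_{p-1})=0$ for $n\neq p$. I would prove this by excising $\mathbb{A}$: applying \cref{as1} to the triad $(P_p;\mathbb{X}^{(p)},P_{p-1})$ identifies $\mathrm{H}^I_n(P_p,P_{p-1})$ with the relative group of $\mathbb{X}^{(p)}$ modulo $\mathbb{X}^{(p-1)}$ together with the $p$-simplices lying in $\mathbb{A}$, which additivity splits as a direct sum, over the $p$-simplices $\mathbb{S}^p_\alpha$ of $\mathbb{X}$ not contained in $\mathbb{A}$, of the groups $\mathrm{H}^I_n(\mathbb{S}^p_\alpha,\partial\mathbb{S}^p_\alpha)$; the sequence of \cref{as2} together with the simplex axiom \cref{as3} then shows each summand is concentrated in degree $n=p$. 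Fact (ii): $H_n(p)=0$ for $n>p$, by induction on $p$ (base $P_{-1}=\mathbb{A}$) using the triple sequence $H_n(p-1)\to H_n(p)\to\mathrm{H}^I_n(P_p,P_{p-1})$ and fact (i). Fact (iii): the inclusion induces an isomorphism $H_n(p)\cong H_n(p+1)$ whenever $n<p$, from the triple sequence of $(P_{p+1},P_p,\mathbb{A})$ and (i); by finiteness of the tower, $H_n(p)\cong\mathrm{H}^I_n(\mathbb{X},\mathbb{A})$ for all $p\geq n+1$.

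With these in hand the identification is a diagram chase. By (ii) the relativization maps $j_q\colon H_q(q)\to\mathcal{C}^I_q$ and $j_{q-1}\colon H_{q-1}(q-1)\to\mathcal{C}^I_{q-1}$ are injective, so from $\partial_q=j_{q-1}\circ\tilde\partial_q$ and the exactness of $H_q(q)\xrightarrow{j_q}\mathcal{C}^I_q\xrightarrow{\tilde\partial_q}H_{q-1}(q-1)$ I obtain $\mathcal{Z}^I_q=\ker\partial_q=\ker\tilde\partial_q=\mathrm{Im}\,j_q\cong H_q(q)$. Writing $\partial_{q+1}=j_q\circ\tilde\partial_{q+1}$ and using the triple sequence $\mathcal{C}^I_{q+1}\xrightarrow{\tilde\partial_{q+1}}H_q(q)\to H_q(q+1)$ gives $\mathcal{B}^I_q=\mathrm{Im}\,\partial_{q+1}=j_q\bigl(\ker(H_q(q)\to H_q(q+1))\bigr)$. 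Hence $\mathcal{H}^I_q=\mathcal{Z}^I_q/\mathcal{B}^I_q\cong H_q(q)/\ker(H_q(q)\to H_q(q+1))\cong\mathrm{Im}\bigl(H_q(q)\to H_q(q+1)\bigr)=H_q(q+1)$, the last equality because (i) forces $H_q(q)\to H_q(q+1)$ to be onto. Finally (iii) yields $H_q(q+1)\cong\mathrm{H}^I_q(\mathbb{X},\mathbb{A})$, which is the asserted isomorphism.

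I expect the main obstacle to be fact (i): while its classical analogue is routine, here the excision and the simplexwise splitting must be performed in the filtered category $\mathscr{RF}$, tracking the threshold behaviour (the parameter $\alpha$ of each $\mathbb{S}^p_\alpha$ against the interval $I$) so that each summand really is concentrated in degree $p$ for every $I$. A secondary point requiring care is verifying that the abstract boundary $\partial_q$ and the map induced in Lemma \ref{lemma:homo-induced} agree with the factorization $j_{q-1}\circ\tilde\partial_q$ used above, so that the isomorphism produced is natural. Once that compatibility is in place, an alternative route is available: prove the absolute case $\mathbb{A}=\emptyset$ first and then bootstrap to the relative case by applying the Five Lemma to the abstract exact sequence of \cref{ax4} and the cellular sequence assembled from the boundary $\partial_*$.
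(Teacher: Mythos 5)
Your proposal is correct and follows essentially the same route as the paper: your fact (i) is the paper's Lemma \ref{lema5} (proved there via Theorem \ref{injective-representation} and Theorem \ref{pers-homo-simplex}), your facts (ii)--(iii) are exactly the paper's analysis of $i_*: \mathrm{H}^I_q(\mathbb{X}^{(p)}\cup\mathbb{A},\mathbb{A})\to \mathrm{H}^I_q(\mathbb{X}^{(p+1)}\cup\mathbb{A},\mathbb{A})$ via triple sequences, and your diagram chase reproduces the paper's claims (a) and (b) identifying $\mathcal{Z}^I_q=\mathrm{Im}\,j_*$ and $\mathcal{B}^I_q=j_*[\ker l_*]$. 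The only cosmetic differences are that you assemble the conclusion as a chain of abstract isomorphisms through $H_q(q+1)$ while the paper constructs the explicit map $\Theta=\tau\circ\eta^{-1}\circ j_*\circ l_*^{-1}$, and your factorization $\partial_q=j_{q-1}\circ\tilde\partial_q$ through the triple rel $\mathbb{A}$ agrees with the paper's definition of $\partial_q$ via the triple $(\mathbb{X}^{(q)}\cup\mathbb{A},\mathbb{X}^{(q-1)}\cup\mathbb{A},\mathbb{X}^{(q-2)}\cup\mathbb{A})$ by functoriality, as you correctly flag.
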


\begin{proof} 
To define the isomorphism, we consider first the following diagram:
$$ {H}^I_q(\mathbb{X}, \mathbb{A})\stackrel{l_*}{\longleftarrow} 
{H}^I_q(\mathbb{X}^{(q)}\cup \mathbb{A}, \mathbb{A}) 
\stackrel{j_*}{\longrightarrow} 
{H}^I_q(\mathbb{X}^{(q)}\cup \mathbb{A}, \mathbb{X}^{(q-1)}\cup \mathbb{A})$$

\noindent where the inclusions $l:(\mathbb{X}^{(q)}\cup \mathbb{A}, \mathbb{A})\longrightarrow (\mathbb{X}, \mathbb{A})$ and $j: (\mathbb{X}^{(q)}\cup \mathbb{A}, \mathbb{A})\longrightarrow (\mathbb{X}^{(q)}\cup \mathbb{A}, \mathbb{X}^{(q-1)}\cup \mathbb{A})$ are relative filtration preserving maps. The following relations hold:

\begin{enumerate} [(a).]
    \item  $j_*$ is a monomorphism and its image is $\mathcal{Z}^{I}_q(\mathbb{X}, \mathbb{A})$;  
    \item $l_*$ is an epimorphism, whose kernel is $j^{-1}_*[\mathcal{B}_q^{I}(\mathbb{X}, \mathbb{A})]$.
\end{enumerate}

In fact, if $p\neq q$, then by Lemma \ref{lema5}, one has:
\begin{equation}\label{trivial-pdifq}
{\mathrm{H}}_q^I(\mathbb{X}^{(p)}\cup \mathbb{A}, \mathbb{X}^{(p-1)}\cup \mathbb{A})=0.\end{equation} We claim some properties of the homomorphism
$$i_*: {\mathrm{H}}_q^I(\mathbb{X}^{(p)}\cup \mathbb{A}, \mathbb{A})\longrightarrow {\mathrm{H}}_q^I(\mathbb{X}^{(p+1)}\cup \mathbb{A}, \mathbb{A})$$
induced by the inclusion $(\mathbb{X}^{(p)}\cup \mathbb{A}, \mathbb{A}) \stackrel{i}{\hookrightarrow} (\mathbb{X}^{(p+1)}\cup \mathbb{A}, \mathbb{A})$.

Consider the homology sequence of the triple $(\mathbb{X}^{(p+1)}\cup \mathbb{A}, \mathbb{X}^{(p)}\cup \mathbb{A}, \mathbb{A})$ given by
\begin{multline}
         {\mathrm{H}}^I_{q+1} (\mathbb{X}^{(p+1)}\cup \mathbb{A}, \mathbb{X}^{(p)}\cup \mathbb{A}) 
    \longrightarrow
     {\mathrm{H}}^I_q (\mathbb{X}^{(p)}\cup \mathbb{A}, \mathbb{A})     \stackrel{i_\ast}{\longrightarrow}  
       \\
    \stackrel{i_\ast}{\longrightarrow}  
    {\mathrm{H}}^I_q (\mathbb{X}^{(p+1)}\cup \mathbb{A}, \mathbb{A}) \longrightarrow
    {\mathrm{H}}^I_{q} (\mathbb{X}^{(p+1)}\cup \mathbb{A}, \mathbb{X}^{(p)}\cup \mathbb{A}) . 
\end{multline}

Note that, if $q\neq p$, then (\ref{trivial-pdifq}) implies that ${\mathrm{H}}^I_{q+1} (\mathbb{X}^{(p+1)}\cup \mathbb{A}, \mathbb{X}^{(p)}\cup \mathbb{A})=0$ and then by the exactness of the homology sequence of the triple we have that $i_*$ has trivial kernel.

If $q\neq p+1$, then (\ref{trivial-pdifq}) implies that ${\mathrm{H}}^I_{q} (\mathbb{X}^{(p+1)}\cup \mathbb{A}, \mathbb{X}^{(p)}\cup \mathbb{A})=0$ and then, again by the exactness of the homology sequence of the triple, we have that $i_*$ is surjective.

Assuming $q\neq p,p+1$ we have that $i_*$ is an isomorphism.

We also claim that 
\begin{equation}\label{homology-q-1esque}
{\mathrm{H}}_q^I(\mathbb{X}^{(q-1)}\cup \mathbb{A}, \mathbb{A})=0.
\end{equation}

Note that, using that previous properties about $i_*$ one has that ${\mathrm{H}}_q^I(\mathbb{X}^{(q-1)}\cup \mathbb{A}, \mathbb{A})\cong {\mathrm{H}}_q^I(\mathbb{X}^{(q-r)}\cup \mathbb{A}, \mathbb{A})$, for $r=2,\dots,q+1$. 
Since $\mathbb{X}^{-1}=\emptyset$,  then $\mathbb{X}^{-1}\cup \mathbb{A} = \mathbb{A}$ and therefore, ${\mathrm{H}}_q^I(\mathbb{X}^{(q-1)}\cup \mathbb{A}, \mathbb{A})={\mathrm{H}}_q^I(\mathbb{A}, \mathbb{A})$ which is zero by Lemma \ref{lemma:PH-trivial}.

Let us now prove our claims. Consider the following part of the homology sequence of the triple $(\mathbb{X}^{(q)}\cup \mathbb{A}, \mathbb{X}^{(q-1)}\cup \mathbb{A}, \mathbb{A})$ given by 
\[
 {\mathrm{H}}^I_{q} (\mathbb{X}^{(q-1)}\cup \mathbb{A}, \mathbb{A}) \stackrel{i_*}{\longrightarrow} 
 {\mathrm{H}}^I_q (\mathbb{X}^{(q)}\cup \mathbb{A}, \mathbb{A}) \stackrel{j_*}{\longrightarrow} 
 {\mathrm{H}}^I_q (\mathbb{X}^{(q)}\cup \mathbb{A}, \mathbb{X}^{(q-1)}\cup \mathbb{A}) \stackrel{\alpha^{I}}{\longrightarrow} 
 {\mathrm{H}}^I_{q-1} (\mathbb{X}^{(q-1)}\cup \mathbb{A}, \mathbb{A}) .
 \]
 
 From (\ref{homology-q-1esque}) we have that $\mathrm{H}_q^I (\mathbb{X}^{(q-1)}\cup \mathbb{A}, \mathbb{A})=0$
 and then, by exactness of the homology sequence of the triple $(\mathbb{X}^{(q)}\cup \mathbb{A}, \mathbb{X}^{(q-1)}\cup \mathbb{A}, \mathbb{A})$, it follows that the kernel of $j_*$ is zero.
 
 The second part of $(a)$ follows by the following commutative diagram:
 \[
 \resizebox{\linewidth}{!}{
\xymatrix
{
	{\mathrm{H}}_{q}^I(\mathbb{X}^{(q)}\cup \mathbb{A}, \mathbb{A}) \ar[rr]^{j_*} 
        &  
        & {\mathrm{H}}_{q}^I(\mathbb{X}^{(q)}\cup \mathbb{A}, \mathbb{X}^{(q-1)}\cup \mathbb{A}) \ar[dr]_{\partial_q} \ar[rr]^{\alpha^{I}} 
        &  
        & {\mathrm{H}}_{q-1}^I(\mathbb{X}^{(q-1)}\cup \mathbb{A}, \mathbb{A}) \ar[dl]^{j_*}
    \\
        &
        &
        &  \mathrm{H}^I_{q-1}(\mathbb{X}^{(q-1)}\cup \mathbb{A}, \mathbb{X}^{(q-2)}\cup \mathbb{A}).
        & 
}}
\]

Since $\mathrm{Im} j_*=\ker\alpha^{I}$ and the kernel of $j_*$ is trivial, 
$$\ker\alpha^{I}=\ker(j_*\circ\alpha^{I})=\ker\partial_q=\mathcal{Z}^{I}_q (\mathbb{X}, \mathbb{A}).$$
 
In order to prove the statement $(b)$, note that we can decompose the homomorphism $l_*:\mathrm{H}_q^I(\mathbb{X}^{(q)}\cup \mathbb{A}, \mathbb{A})\longrightarrow\mathrm{H}_q^I(\mathbb{X}, \mathbb{A})$ into 
\[\mathrm{H}_q^I(\mathbb{X}^{(q)}\cup \mathbb{A}, \mathbb{A}) \stackrel{l_{1*}}{\longrightarrow}\mathrm{H}_q^I(\mathbb{X}^{(q+1)}\cup \mathbb{A}, \mathbb{A}) \longrightarrow\cdots\stackrel{l_{r*}}{\longrightarrow} \mathrm{H}_q^I(\mathbb{X}^{(q+r)}\cup \mathbb{A}, \mathbb{A})=\mathrm{H}_q^I(\mathbb{X}, \mathbb{A})\]
\noindent where $q+r=\dim(X)$ and $l_1,\dots,l_r$ are inclusion maps. Using the properties of $i_*:\mathrm{H}_q^I(\mathbb{X}^{(p)}\cup \mathbb{A}, \mathbb{A})\longrightarrow\mathrm{H}_q^I(\mathbb{X}^{(p+1)}\cup \mathbb{A}, \mathbb{A})$  proved previously, we have that $l_{1*}$ is onto and $l_{2*},\dots,l_{r*}$ are isomorphisms.
Therefore, from $l_*=l_{r*}\circ\cdots\circ l_{1*}$ one has that $j_*$ is an epimorphism.

To conclude, let us consider the following commutative diagram:
 \[
\xymatrix
{
    \mathcal{H}_{q+1}^I(\mathbb{X}^{(q+1)}\cup \mathbb{A}, \mathbb{X}^{(q)}\cup \mathbb{A})\ar[d]^{\alpha^{I}} \ar[drr]^{\partial_{q+1}} 
        &    &   \\
    \mathcal{H}_{q}^I(\mathbb{X}^{(q)}\cup \mathbb{A}, \mathbb{A})\ar[d]^{l_{1*}} \ar[rr]^{j_*} 
        &  
        & \mathcal{H}_{q}^I(\mathbb{X}^{(q)}\cup \mathbb{A}, \mathbb{X}^{(q-1)}\cup \mathbb{A})
        \\
    \mathcal{H}_{q}^I(\mathbb{X}^{(q+1)}\cup \mathbb{A}, \mathbb{A}) 
        &  
        &
}
\]

Note that the vertical line is part of a homology sequence of a triple. Therefore, from the exactness and $\ker l_*=\ker l_{1*}$, one {concludes}
$$j_*[\ker l_*]= j_*[\ker l_{1*}]=j_*[\mathrm{Im}\alpha^{I}]=\mathrm{Im}(j_*\circ\alpha^{I})=\mathrm{Im}\partial_{q+1}=\mathcal{B}_q^{I}(\mathbb{X}, \mathbb{A}).$$

Since $\ker j_*$ is zero, then the kernel of $l_*$ is $j^{-1}_*[\mathcal{B}_q^{I}(\mathbb{X}, \mathbb{A})]$, which conclude the statement $(b)$.

To define the isomorphism, consider the diagram
\[
\mathrm{H}^I_q(\mathbb{X}, \mathbb{A})\stackrel{l_*}{\longleftarrow} \mathrm{H}^I_q(\mathbb{X}^{(q)}\cup \mathbb{A}, \mathbb{A}) \stackrel{j_*}{\longrightarrow} \mathcal{C}^{I}_q(\mathbb{X}, \mathbb{A}) \stackrel{\eta}{\longleftarrow} \mathcal{Z}^{I}_q(\mathbb{X}, \mathbb{A}) \stackrel{\tau}{\longrightarrow} \mathcal{H}^I_q(\mathbb{X}, \mathbb{A})
\]
where $j_*$ and $l_*$ are given before, $\eta$ is an inclusion map and $\tau$ the natural map. 

The isomorphism is given by $\Theta: =\tau\circ \eta^{-1}\circ j_*\circ l_*^{-1}$.
\end{proof}

\begin{theo}
$\mathcal{H}^I_q(\mathbb{X}, \mathbb{A})=0$, for $q<0$.
\end{theo}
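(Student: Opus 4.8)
The plan is to show that the relevant chain group already vanishes for $q<0$, so that the persistent homology $\mathcal{H}^I_q(\mathbb{X},\mathbb{A})$, being a subquotient of it, has no choice but to be trivial. First I would unwind the definition
$$\mathcal{C}^{I}_q(\mathbb{X}, \mathbb{A})=\mathrm{H}^I_q(\mathbb{X}^{(q)}\cup \mathbb{A}, \mathbb{X}^{(q-1)}\cup \mathbb{A})$$
and invoke the skeletal convention that the $p$-skeleton of a simplicial complex is empty whenever $p<0$, exactly as was already used in the proof of the Main Isomorphism Theorem (Theorem \ref{main.iso}), where one writes $\mathbb{X}^{-1}=\emptyset$. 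For $q<0$ both indices $q$ and $q-1$ are negative, so $\mathbb{X}^{(q)}=\mathbb{X}^{(q-1)}=\emptyset$, and hence
$$\mathbb{X}^{(q)}\cup\mathbb{A}=\mathbb{A}=\mathbb{X}^{(q-1)}\cup\mathbb{A}.$$

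Consequently I would identify $\mathcal{C}^I_q(\mathbb{X},\mathbb{A})=\mathrm{H}^I_q(\mathbb{A},\mathbb{A})$, which is precisely the degenerate relative group already encountered in the proof of Theorem \ref{main.iso}; it is zero by Lemma \ref{lemma:PH-trivial}. Since by definition $\mathcal{Z}^I_q(\mathbb{X},\mathbb{A})=\ker\partial_q$ and $\mathcal{B}^I_q(\mathbb{X},\mathbb{A})=\mathrm{Im}\,\partial_{q+1}$ are both subgroups of $\mathcal{C}^I_q(\mathbb{X},\mathbb{A})=0$, they are themselves trivial, and therefore
$$\mathcal{H}^I_q(\mathbb{X}, \mathbb{A})=\frac{\mathcal{Z}^I_q(\mathbb{X}, \mathbb{A})}{\mathcal{B}^I_q(\mathbb{X}, \mathbb{A})}=0.$$

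I do not anticipate any genuine obstacle: the argument is a direct computation of chain groups in negative degree. The only point demanding a little care is making the skeletal convention explicit (that $\mathbb{X}^{(p)}=\emptyset$ for all $p<0$, not merely $p=-1$), so that the reduction $\mathbb{X}^{(q)}\cup\mathbb{A}=\mathbb{X}^{(q-1)}\cup\mathbb{A}=\mathbb{A}$ is justified for every $q<0$ simultaneously, together with a clean citation of Lemma \ref{lemma:PH-trivial} for the vanishing of $\mathrm{H}^I_q(\mathbb{A},\mathbb{A})$.
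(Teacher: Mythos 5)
Your proof is correct and takes essentially the same route as the paper: the paper's proof simply cites Remark \ref{theo-chain} (that $\mathcal{C}^{I}_q(\mathbb{X},\mathbb{A})=0$ for $q<0$, itself stated there as a consequence of Lemma \ref{lemma:PH-trivial}) and concludes that $\mathcal{Z}^{I}_q(\mathbb{X},\mathbb{A})=0$ and hence $\mathcal{H}^I_q(\mathbb{X},\mathbb{A})=0$. Your argument merely spells out the content of that remark --- the skeletal reduction $\mathbb{X}^{(q)}\cup\mathbb{A}=\mathbb{X}^{(q-1)}\cup\mathbb{A}=\mathbb{A}$ for $q<0$ and the vanishing of $\mathrm{H}^I_q(\mathbb{A},\mathbb{A})$ --- so it matches the paper's proof in substance.
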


\begin{proof}
By Remark \ref{theo-chain}, we have that $\mathcal{C}^{I}_q(\mathbb{X}, \mathbb{A})=0$ for $q<0$. This implies that $\mathcal{Z}^{I}_q(\mathbb{X}, \mathbb{A})=0$ and thus $\mathcal{H}^I_q(\mathbb{X}, \mathbb{A})=0.$
\end{proof}

\subsection{Uniqueness}

\begin{theo}[Uniqueness Theorem]
Given $\mathrm{H}$ and $\overline{\mathrm{H}}$ two persistent homology theories satisfying \cref{as1,as2,as3}
and  
$h_0^{I}:\mathrm{H}_0^I(\mathbb{P}_{\alpha})\longrightarrow \overline{\mathrm{H}}_0^I(\mathbb{P}_{\alpha})$ 
(where $\alpha\leq\varepsilon\leq\varepsilon'$) homomorphisms between their coefficient groups, there exists a unique set of homomorphisms
$$ 
\mathrm{H}^I
(q,\mathbb{X}, \mathbb{A}):\mathrm{H}_q^I(\mathbb{X}, \mathbb{A})\longrightarrow \overline{\mathrm{H}}_q^I(\mathbb{X}, \mathbb{A})
$$
defined for each relative filtered pair $((X,A),(F_X,F_A))$, for any integer $q$
one has:

\begin{enumerate}
    \item \begin{equation*}\label{cond-1}h_0^{I}=\mathrm{H}^I(0,\mathbb{P}_\alpha,\emptyset)\end{equation*}
    
    \item If $f:(\mathbb{X}, \mathbb{A})\longrightarrow(\mathbb{X}_1, \mathbb{A}_1)$ is a filtration preserving map, where     
    $((X,A),(F_X,F_A))$ and \linebreak $((X_1,A_1),(F_{X_1},F_{A_1}))$ are relative filtered pairs, then the following diagram is commutative for all $q$ and $I$:
\begin{equation*}\label{cond-2}    
\xymatrix
{
	\mathrm{H}_{q}^I(\mathbb{X}, \mathbb{A})     \ar[d]_{f_*^{I}} \ar[rr]^{\mathrm{H}^I(q,\mathbb{X}, \mathbb{A})} 
        & 
        & \overline{\mathrm{H}}_{q}^I(\mathbb{X}, \mathbb{A}) \ar[d]^{\overline{f}_*^{I}}  
    \\
	\mathrm{H}_{q}^I(\mathbb{X}_1, \mathbb{A}_1) \ar[rr]^{\mathrm{H}^I(q, \mathbb{X}_1,\mathbb{A}_1)} 
        & 
        &  \overline{\mathrm{H}}_{q}^I(\mathbb{X}_1, \mathbb{A}_1)
}
\end{equation*}

that is, $\overline{f}_*^{I}\circ \mathrm{H}^I(q,\mathbb{X}, \mathbb{A})=\mathrm{H}^I(q,\mathbb{X}_1, \mathbb{A}_1)\circ f_*^{I}$;

    \item The commutativity relation $\overline{\partial}\circ \mathrm{H}^I(q, \mathbb{X}, \mathbb{A})=\mathrm{H}^I(q-1, \mathbb{A},{\emptyset})\circ \partial$ holds in the diagram:
    
\begin{equation*}\label{cond-3}
\xymatrix
{
	\mathrm{H}_{q}^I(\mathbb{X}, \mathbb{A})     \ar[d]_{\partial} \ar[rr]^{\mathrm{H}^I(q, \mathbb{X}, \mathbb{A})} 
        & 
        & \overline{\mathrm{H}}_{q}^I(\mathbb{X}, \mathbb{A}) \ar[d]^{\overline{\partial}}
    \\
	\mathrm{H}_{q-1}^I(\mathbb{A}) \ar[rr]^{\mathrm{H}^I(q-1, \mathbb{A},\emptyset)} 
        & 
        &  \overline{\mathrm{H}}_{q-1}^I(\mathbb{A}) 
}
\end{equation*}

If $h_0:\mathrm{H}_0^I(\mathbb{P}_{\alpha})\longrightarrow \overline{\mathrm{H}}_0^I(\mathbb{P}_{\alpha})$ is an isomorphism, then each $\mathrm{H}^I(q,\mathbb{X}, \mathbb{A})$ is also an isomorphism.

\end{enumerate}
\end{theo}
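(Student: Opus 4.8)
The plan is to reduce everything to the chain-level description furnished by the Main Isomorphism Theorem (Theorem \ref{main.iso}) and then to build the comparison transformation one skeleton at a time. By Theorem \ref{main.iso} it suffices to produce natural homomorphisms $\mathcal{H}^I(q,\mathbb{X},\mathbb{A}):\mathcal{H}_q^I(\mathbb{X},\mathbb{A})\to\overline{\mathcal{H}}_q^I(\mathbb{X},\mathbb{A})$ and then to conjugate by the isomorphisms $\Theta$ of that theorem, setting $\mathrm{H}^I(q,\mathbb{X},\mathbb{A}):=\overline{\Theta}^{-1}\circ\mathcal{H}^I(q,\mathbb{X},\mathbb{A})\circ\Theta$, where $\overline{\Theta}$ is the corresponding isomorphism for $\overline{\mathrm{H}}$. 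Since $\mathcal{H}_q^I$ is the homology of the chain complex $(\mathcal{C}_\bullet^I,\partial)$ with $\mathcal{C}_q^I(\mathbb{X},\mathbb{A})=\mathrm{H}_q^I(\mathbb{X}^{(q)}\cup\mathbb{A},\mathbb{X}^{(q-1)}\cup\mathbb{A})$, the first task is to define a chain map $h_\bullet:\mathcal{C}_\bullet^I(\mathbb{X},\mathbb{A})\to\overline{\mathcal{C}}_\bullet^I(\mathbb{X},\mathbb{A})$ out of the single datum $h_0^{I}$.

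First I would establish that each chain group splits as a direct sum indexed by the $q$-simplices of $\mathbb{X}$ that do not lie in $\mathbb{A}$. Excision (\cref{as1}) together with the dimension axiom (\cref{as3}) identifies the contribution of a single simplex $\mathbb{S}_\alpha^q$ with a copy of the coefficient group $\mathrm{H}_0^I(\mathbb{P}_\alpha)$, via the canonical isomorphism $\mathrm{H}_q^I(\mathbb{S}_\alpha^q,\partial\mathbb{S}_\alpha^q)\cong\mathrm{H}_0^I(\mathbb{P}_\alpha)$ obtained from the long exact sequence of \cref{as2}. On this summand I define $h_q$ to be the transport of $h_0^{I}$ through these canonical isomorphisms (using the same construction in $\overline{\mathrm{H}}$), and then extend additively. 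That $h_\bullet$ commutes with $\partial$ follows from the naturality of the connecting homomorphism in both theories, because the boundary of a generator is expressed through the incidence maps of simplices, which are filtration-preserving and hence intertwined with $h_0^{I}$ by naturality. Consequently $h_\bullet$ is a chain map and induces $\mathcal{H}^I(q,\mathbb{X},\mathbb{A})$.

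Next I would verify the three required properties. Condition (1) holds by construction, since on a point $\mathbb{P}_\alpha$ the chain complex is concentrated in degree $0$ and $h_0^{I}$ is exactly the prescribed map. Condition (2) is the naturality statement: a filtration-preserving $f$ sends $q$-simplices of $\mathbb{X}$ to simplices or faces of $\mathbb{X}_1$, and on each summand the induced map is again governed by $h_0^{I}$ through the canonical isomorphisms, so the square commutes summand by summand and hence on homology after conjugating by $\Theta$, $\overline{\Theta}$. Condition (3) is immediate from the chain-map property, since the persistent boundary $\partial_*$ is defined as $\partial_q\circ\overline{j}_q^{-1}$ and $h_\bullet$ was arranged to commute with $\partial_q$.

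The uniqueness and the isomorphism claim are the delicate parts. For uniqueness, I would argue by induction on the skeleta: conditions (1) and (2) force the value of any admissible transformation on a single simplex, by naturality of the maps out of $\mathbb{P}_\alpha$; condition (3) then determines it on the relative homology of successive skeletal pairs; and the five lemma applied to the long exact sequences of the triples $(\mathbb{X}^{(p+1)}\cup\mathbb{A},\mathbb{X}^{(p)}\cup\mathbb{A},\mathbb{A})$ propagates the agreement to all of $\mathrm{H}_q^I(\mathbb{X},\mathbb{A})$. For the isomorphism claim, if $h_0^{I}$ is an isomorphism then each summand map is an isomorphism, hence $h_\bullet$ is an isomorphism of chain complexes and induces isomorphisms on $\mathcal{H}_q^I$; transporting through $\Theta$ finishes it. The main obstacle I expect is rigorously establishing the simplexwise direct-sum splitting of $\mathcal{C}_q^I$ in the filtered setting and checking that the canonical identification with the coefficient group is natural in the pair $(\mathbb{X},\mathbb{A})$, since one must track the threshold $\alpha$ of each simplex and ensure that the splitting, the boundary formula, and the comparison map are all simultaneously compatible with filtration-preserving maps.
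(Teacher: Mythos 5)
Your proposal is, in substance, the paper's own proof: you define the comparison on the chain groups $\mathcal{C}^{I}_q(\mathbb{X},\mathbb{A})=\mathrm{H}^I_q(\mathbb{X}^{(q)}\cup\mathbb{A},\mathbb{X}^{(q-1)}\cup\mathbb{A})$ by transporting $h_0^{I}$ across the simplexwise direct-sum decomposition (the paper's Theorem~\ref{injective-representation}, Lemma~\ref{lema5} and Theorem~\ref{description-chain}, with each summand identified with the coefficient group via Corollary~\ref{correspondence-ordered-simplex}), you check compatibility with induced maps and with $\partial_q$ (the paper's Theorems~\ref{filt-pres-chain} and~\ref{boundary-chain}), and you pass to $\mathrm{H}^I_q$ through the Main Isomorphism Theorem~\ref{main.iso}; this is precisely the paper's definition $\eta^{I}_q(gA^0\cdots A^q):=h_0^{I}(g)A^0\cdots A^q$. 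Your handling of the final isomorphism claim deviates harmlessly: you note that an invertible $h_0^{I}$ makes the chain map a summandwise isomorphism, whereas the paper constructs the transformation attached to $(h_0^{I})^{-1}$ and invokes the already-proved uniqueness to see that both composites are identities; your version is shorter and equally valid.

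The one step that needs repair is the uniqueness propagation. The five lemma concludes that a \emph{single} map between exact sequences is an isomorphism; it cannot show that \emph{two} natural transformations coincide, so ``the five lemma applied to the long exact sequences of the triples'' does not accomplish what you need as stated. The mechanism that works --- and the one the paper uses --- is: (i) conditions (1)--(3) together with the incidence isomorphisms $[\mathbb{S}^q_\alpha:\mathbb{S}^{q-1}_\alpha]=(j_*^{I})^{-1}\circ i_*^{I}\circ\partial$ force, by induction on $q$, the value of any admissible transformation on $(\mathbb{S}^q_\alpha,\dot{\mathbb{S}}^q_\alpha)$ and hence on every chain group; note that for $q>0$ naturality of maps out of $\mathbb{P}_\alpha$ alone is not enough --- condition (3) must enter already at the single-simplex stage, through the incidence isomorphism, not only at the skeletal pairs; (ii) since $\overline{l}_\ast$ attached to the inclusion $(\mathbb{X}^{(q)},\mathbb{A})\hookrightarrow(\mathbb{X}^{(q)},\mathbb{X}^{(q-1)})$ has zero kernel, composing with it transfers agreement from the chain groups to $\mathrm{H}^I_q(\mathbb{X}^{(q)},\mathbb{A})$; (iii) since $j_\ast:\mathrm{H}^I_q(\mathbb{X}^{(q)},\mathbb{A})\to\mathrm{H}^I_q(\mathbb{X},\mathbb{A})$ is onto, a second composition transfers it to the target. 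The injectivity and surjectivity invoked here are exactly the statements (a) and (b) established inside the proof of Theorem~\ref{main.iso}, so the repair costs nothing and your skeletal induction has the right shape; but the tool is these mono/epi facts, not the five lemma.
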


\begin{proof}
Let $g\in G^I:=\mathrm{H}^I_0(\mathbb{P}_\alpha)$ and let $gA^0\cdots A^q$ be a $q$-chain in $\mathrm{H}^I_q(\mathbb{X})$. Define
$$\eta_q^{I}(gA^0\cdots A^q):=h_{0}^{I}(g)A^0\cdots A^q$$
and for each $q$ we claim that $ \eta_q^{I}:\mathcal{C}^{I}_q(\mathbb{X})\longrightarrow \overline{\mathcal{C}}^{I}_q(\mathbb{X})$ is a homomorphism.

Indeed, if $q<0$ or $q>\#X$, follows from Remark \ref{theo-chain}, that both, $\mathcal{C}^{I}_q(\mathbb{X}, \mathbb{A})$ and $\overline{\mathcal{C}}^{I}_q(\mathbb{X}, \mathbb{A})$ are zero, and then $\eta_q^{I}$ is the null homomorphism.

If $0\leq q\leq \#X$, $\eta_q^{I}$ is defined to be map that commutes the following diagram:
\[
\xymatrix
{
	G^I \ar[dr] \ar[r]^{h_0^{I}} 
        & \overline{G}^I \ar[r] 
        & \overline{\mathcal{C}}^{I}_q(\mathbb{X})   
    \\
        & \mathcal{C}^{I}_q(\mathbb{X})  \ar[ur]_{\eta_q^{I}} 
        &     \\
}
\]

From Theorem \ref{description-chain}, each $q$-chain $c$ of $(\mathbb{X}, \mathbb{A})$ can be written uniquely in the form 
\linebreak 
$ c = \sum^{s}_{i=1} g_i A^0_i \cdots A_i^q , g_i\in G^I$ and, from Theorem \ref{permutation-of-vertex}, the maps $G^I\longrightarrow \mathcal{C}^{I}_q(\mathbb{X}, \mathbb{A})$ and $\overline{G}^I\longrightarrow \overline{\mathcal{C}}^{I}_q(\mathbb{X}, \mathbb{A})$ are homomorphisms. Then, $ \eta_q^{I}:\mathcal{C}^{I}_q(\mathbb{X})\longrightarrow \overline{\mathcal{C}}^{I}_q(\mathbb{X})$ is a homomorphism.

From Theorems \ref{filt-pres-chain} and \ref{boundary-chain} it follows that $\eta_q^{I}$ commutes with $f_q^{I}$ and $\partial^{I}_q$. 

Consequently, $\eta_q^{I}$ defines homomorphisms 
$$ \mathcal{Z}^{I}_q(\mathbb{X}, \mathbb{A})\longrightarrow \overline{\mathcal{Z}}^{I}_q(\mathbb{X}, \mathbb{A}) \text{ and } \mathcal{B}^{I}_q(\mathbb{X}, \mathbb{A})\longrightarrow \overline{\mathcal{B}}^{I}_q(\mathbb{X}, \mathbb{A})$$

\noindent and thus induce homomorphisms on their respective quotient groups:

$\mathrm{H}^I(q,\mathbb{X}, \mathbb{A}): 
\mathcal{H}^I_q(\mathbb{X}, \mathbb{A})\longrightarrow \overline{\mathcal{H}}^I_q(\mathbb{X}, \mathbb{A})$ which satisfy (1)-(3).

We will prove now that the homomorphisms $\mathrm{H}^I(q,\mathbb{X}, \mathbb{A})$ which satisfy the conditions \ref{cond-1}.-\ref{cond-3}. are unique.\vspace{0.3cm}

We claim that, if $(S^q_\alpha,F_0)$ is an ordered $q$-simplex and $g\in G^{I}$ ,then 
\begin{equation}\label{prop-1-uniq}
\mathrm{H}^I (q, \mathbb{S}_\alpha^q, \dot{\mathbb{S}}_\alpha^{q}) (gS^q_\alpha) = h_0^{I}(g)S_\alpha^{q}.\end{equation}

Indeed, let us prove by induction in $q$. Assume $q=0$, since $\mathbb{S}^0_\alpha$ is a $0$-simplex, it has empty boundary  and, by \ref{cond-1}., $h_0^{I}=\mathrm{H}^I(0,\mathbb{P}_\alpha,\emptyset)$.
Then we have that 
$$\mathrm{H}^I(0,\mathbb{S}_\alpha^0,\dot{\mathbb{S}}_\alpha^0)(gS^0_\alpha)=\mathrm{H}^I(0,\mathbb{P}_\alpha,\emptyset)(gS^0_\alpha)=h_0^{I}(g)S^0_\alpha.$$

Now assume $q>0$ and that (\ref{prop-1-uniq}) holds for $q-1$.
Let $A^0<\cdots<A^q$ be the set of vertices of $\mathbb{S}_\alpha^q$ and let $\mathbb{S}_\alpha^{q-1}$ be the face whose vertices are $A^1<\cdots<A^q$. By, Corollary \ref{correspondence-ordered-simplex}, we have that  $[\mathbb{S}_\alpha^q:\mathbb{S}_\alpha^{q-1}]gS_\alpha^q=gS_\alpha^{q-1}$. 
In Definition \ref{incidence-iso}, we define $[\mathbb{S}_\alpha^q: \mathbb{S}_\alpha^{q-1}]$ to be the incidence isomorphism $\alpha^{I}$ of Theorem \ref{incidence-isomorphism}, then explicitly, it may be defined by $[\mathbb{S}_\alpha^q:\mathbb{S}_\alpha^{q-1}]:=(j_*^{I})^{-1}\circ i_*^{I}\circ\partial$,
\[ 
\mathrm{H}^I_{p} (\mathbb{S}_\alpha^q,\dot{\mathbb{S}}_\alpha^{q})
\stackrel{\partial}{\longrightarrow} 
\mathrm{H}^I_{p-1} (\dot{\mathbb{S}}_\alpha^q) 
\stackrel{i_*}{\longrightarrow} 
\mathrm{H}^I_{p-1} (\dot{\mathbb{S}}_\alpha^{q},\mathbb{C}_\alpha^{q-1})
\stackrel{j_*}{\longleftarrow} 
\mathrm{H}^I_{p-1} (\mathbb{S}_\alpha^{q-1},\dot{\mathbb{S}}_\alpha^{q-1}),\]
where $i:(\dot{\mathbb{S}}_\alpha^q)\longrightarrow (\dot{\mathbb{S}}_\alpha^{q}, \mathbb{C}_\alpha^{q-1})$ and $j:(\mathbb{S}_\alpha^{q-1}, \dot{\mathbb{S}}_\alpha^{q-1})\longrightarrow (\dot{\mathbb{S}}_\alpha^{q}, \mathbb{C}_\alpha^{q-1})$ are inclusion maps.

Then, by \ref{cond-2}. and \ref{cond-3}., we have that 
\[
\mathrm{H}^I(q-1,\mathbb{S}_\alpha^{q-1},\dot{\mathbb{S}}_\alpha^{q-1})[\mathbb{S}_\alpha^q:\mathbb{S}_\alpha^{q-1}]=\overline{[\mathbb{S}_\alpha^q:\mathbb{S}_\alpha^{q-1}]}\mathrm{H}^I(q,\mathbb{S}_\alpha^q,\dot{\mathbb{S}}_\alpha^q).
\]

Therefore,
\[
\overline{[\mathbb{S}_\alpha^q:\mathbb{S}_\alpha^{q-1}]}\mathrm{H}^I(q,\mathbb{S}_\alpha^q,\dot{\mathbb{S}}_\alpha^q)
=h_0^{I}(g)S_\alpha^{q-1}
=\overline{[\mathbb{S}_\alpha^q:\mathbb{S}_\alpha^{q-1}]}h_0^{I}(g)S_\alpha^{q}
\]
and, since $\overline{[\mathbb{S}_\alpha^q:\mathbb{S}_\alpha^{q-1}]}$ is an isomorphism by definition, the result follows.

In addition, let $gA^0\cdots A^q\in \mathrm{C}^{I}_q(\mathbb{X},\mathbb{A})$. We claim that
\begin{equation}\label{prop-2-uniq}
\mathrm{H}^I(q,\mathbb{X},\mathbb{X})(gA^0\cdots A^q)=h^{I}_0(g)A^0\cdots A^q.
\end{equation}

By Definition \ref{defi:vertices}, we have that $gA^0\dots A^q=f^{I}_*(gS_\alpha^q)$, where $\mathbb{S}_\alpha^q$ is an ordered $q$-simplex and  $f:(\mathbb{S}_\alpha^q,\dot{\mathbb{S}}_\alpha^q)\longrightarrow (\mathbb{X}, \mathbb{X})$ is the filtration preserving map of the definition. Thus, by $\ref{cond-2}.$ and $(\ref{prop-1-uniq})$, we have
\begin{align*} 
\mathrm{H}^I(q,\mathbb{X}, \mathbb{X}) (gA^0\cdots A^q)
    & =\mathrm{H}^I(q,\mathbb{X}, \mathbb{X})(f^{I}_*(gS_\alpha^q)) 
    \\
    & \stackrel{\ref{cond-2}.}{=}\overline{f}^{I}_*(\mathrm{H}^I(q,\mathbb{X}, \mathbb{X}))(gS_\alpha^q)
    \\
    & \stackrel{(\ref{prop-1-uniq})}{=}h_0^{I}(g)A^0\cdots A^q
\end{align*}

Now let $\{\mathrm{H}^I \}$ and $\{ \mathrm{H}'^I\}$ two families of homomorphisms satisfying \ref{cond-1}.-\ref{cond-3}.. 

Let $((X,A),(F_X,F_A))$ be a relative filtered pair. We will use abbreviations, one associated to the relative filtered pair $((X^{(q)},X^{(q-1)}),(F^{(q)},F^{(q-1)}))$(see \ref{qskltn}),
$$ h_1^{I}=\mathrm{H}^I(q,\mathbb{X}^{(q)}, \mathbb{X}^{(q-1)})$$

\noindent and other associated to the relative  filtered pair $((X^{(q)},A),(F^{(q)},F_A))$,
$$ h_2^{I}=\mathrm{H}^I(q, \mathbb{X}^{(q)}, \mathbb{A}).$$

Similarly, we define $(h'_1)^{I}$ and $(h'_2)^{I}$, and by (\ref{prop-2-uniq}), we have that ${h}^{I}_1=(h'_1)^{I}$.

Let $l:((X^{(q)},A),(F^{(q)},F_A))\longrightarrow ((X^{(q)},X^{(q-1)}),(F^{(q)},F^{(q-1)})$ be the inclusion map. Then we have, by (2),
$$
\overline{l}_*\circ h_2^{I}
= h_1^{I}\circ l_*
= (h'_1)^{I}\circ l_*=\overline{l}_*\circ (h_2')^{I}.$$

Since the kernel of $\overline{l}_*$ is zero, it follows that $h_2^{I}=(h_2')^{I}$. Now considering the inclusion map  $j:((X^{(q)},A),(F^{(q)},F_A))\longrightarrow ((X,A),(F_X,F_A))$, we have 
$$ 
\mathrm{H}^I(q,\mathbb{X}, \mathbb{A})\circ j_*
=\overline{j}_*\circ h_2^{I}
=\overline{j}_*\circ (h'_2)^{I}=(\mathrm{H}')^I(q,\mathbb{X}, \mathbb{A})\circ j_*.
$$

Since $j_*$ is onto, it follows that $\mathrm{H}^I(q,\mathbb{X}, \mathbb{A})=(\mathrm{H}')^I(q,\mathbb{X}, \mathbb{A})$ for any $q$. Then, $\mathrm{H}^I=(\mathrm{H}')^I$ and uniqueness has been proved.

Now let $h_0^{I}:G^I\rightarrow \overline{G}^I$ be an isomorphism and let $\overline{h}_0^{I}:\overline{G}^I\rightarrow G^I$ be the inverse isomorphism. Let $\overline{h}(q,\mathbb{X}, \mathbb{A}):\overline{\mathcal{H}}^I_q(\mathbb{X}, \mathbb{A})\longrightarrow \mathcal{H}^I_q(\mathbb{X}, \mathbb{A})$ be the homomorphism satisfying (1)-(3) relative to $\overline{h}_0$. Then
$$ \overline{h}^{I}(q,\mathbb{X}, \mathbb{A})\circ \mathrm{H}^I(q,\mathbb{X}, \mathbb{A}):\mathcal{H}^I_q(\mathbb{X}, \mathbb{A})\longrightarrow \mathcal{H}^I_q\mathbb{X}, \mathbb{A})$$
 are homomorphisms satisfying (1)-(3) relative to the identity map $G^I\longrightarrow G^I$.

Therefore, by the uniqueness property, we have $\overline{h}^{I}(q,\mathbb{X}, \mathbb{A})\circ \mathrm{H}^I(q,\mathbb{X}, \mathbb{A})$ is the identity. 

Similarly, $\mathrm{H}^I(q,\mathbb{X}, \mathbb{A})\circ \overline{h}^{I}(q,\mathbb{X}, \mathbb{A})$ is also the identity. Thus $\mathrm{H}^I(q,\mathbb{X}, \mathbb{A})$ is an isomorphism with $\overline{h}^{I}(q,\mathbb{X}, \mathbb{A})$ as its inverse. The results follow using the isomorphisms of Theorem \ref{main.iso}.
\end{proof}

\section*{Acknowledgements}\

The authors are deeply grateful to Facundo M\'emoli for many discussions and suggestions which improved this paper.

\newpage
\appendix

\section{Theories on Persistent Homology}
\label{ap1}

\subsection{ The Reduced Persistent Homology}

The following theorem gives   a homomorphism between the persistent homology sequences of $\mathbb{(X,A)}$ and of $ \mathbb{(Y,B)}.$ 

\begin{theo}\label{thm:rest-to-homo}
Given a filtration preserving map of relative filtered sets $f:\mathbb{(X,A)}\longrightarrow \mathbb{(Y,B)}$, with inclusion maps  $i: \mathbb{A}\longrightarrow \mathbb{X}$, $j :\mathbb{X}\longrightarrow \mathbb{(X,A)}$,  $i': \mathbb{B}\longrightarrow \mathbb{Y}$ and $j' :\mathbb{Y}\longrightarrow \mathbb{(Y,B)}$, and  restriction maps 
 $f_1:\mathbb{X}\longrightarrow \mathbb{Y} \text{ and } f_2:\mathbb{A}\longrightarrow \mathbb{B}$, the following diagram is commutative:
\[
\xymatrix
{
\cdots \ar[r]	& \mathrm{H}_{n+1}^I(\mathbb{X},\mathbb{A}) \ar[r]^{\partial^{I}_X }\ar[d]^{f_*}& \mathrm{H}_{n}^I(\mathbb{A})     \ar[d]^{f_{2*}} \ar[r]^{i_*} & \mathrm{H}_{n}^I(\mathbb{X}) \ar[r]^{j_*} \ar[d]^{f_{1*}} & \mathrm{H}_{n}^I(\mathbb{X},\mathbb{A}) \ar[d]^{f_*} \ar[r]  & \cdots \\
\cdots \ar[r]	& \mathrm{H}_{n+1}^I(\mathbb{Y},\mathbb{B}) \ar[r]^{\partial^{I}_Y}& \mathrm{H}_{n}^I(\mathbb{B}) \ar[r]^{i'_*} & \mathrm{H}_{n}^I(\mathbb{Y}) \ar[r]^{j'_*}& \mathrm{H}_{n}^I(\mathbb{Y},\mathbb{B}) \ar[r] & \cdots
}
\]
\end{theo}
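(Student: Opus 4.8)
The plan is to prove commutativity of the ladder square by square. The diagram is built from three kinds of squares: those whose horizontal arrows are the inclusion-induced maps $i_*$, those whose horizontal arrows are $j_*$, and those whose horizontal arrows are the connecting homomorphisms $\partial^I$. The first two kinds commute for purely formal reasons, so the genuine content of the theorem lies in the naturality of $\partial^I$, which I would establish at the chain level using Definition \ref{defhomrel}.

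First I would record the two compatibility identities satisfied by the restrictions of $f$. Since $f_1=f|_{\mathbb{X}}$ and $f_2=f|_{\mathbb{A}}$ are restrictions of a single filtration preserving map of pairs sending $A$ into $B$, one has $f_1\circ i = i'\circ f_2$ as maps $\mathbb{A}\to\mathbb{Y}$ and $f\circ j = j'\circ f_1$ as maps $\mathbb{X}\to(\mathbb{Y},\mathbb{B})$; both sides send an element to its image under $f$, viewed in the appropriate (relative) filtered set. Applying functoriality of the induced homomorphisms, which for the explicit theory follows from the chain-level identity $(g\circ h)_\#=g_\#\circ h_\#$, these identities give $f_{1*}\circ i_* = i'_*\circ f_{2*}$ and $f_*\circ j_* = j'_*\circ f_{1*}$, i.e.\ exactly the commutativity of the $i_*$-square and the $j_*$-square.

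The remaining square asserts $f_{2*}\circ\partial^I_X = \partial^I_Y\circ f_*$, and for this I would unwind the definition of the persistent homology boundary homomorphism. A class in $\mathrm{H}^I_n(\mathbb{X},\mathbb{A})$ is represented by a relative cycle $c\in k^I_n(Z^\varepsilon_n(\mathbb{X},\mathbb{A}))$; choosing a lift $\tilde c\in C_n(K^{\varepsilon'})$, the map $\partial^I_X$ sends the class of $c$ to the class of $\partial^{\varepsilon'}_n\tilde c$, which lies in $C_{n-1}(A^{\varepsilon'})$ precisely because $c$ is a relative cycle. Now the induced chain map $f^I_\#$ is an honest chain map, so it commutes with the simplicial boundary $\partial^{\varepsilon'}_n$ and with the transition maps $k^I_n$; moreover, because $f|_{\mathbb{A}}=f_2$, its restriction to chains supported on $A^{\varepsilon'}$ coincides with $(f_2)^I_\#$. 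Hence $f^I_\#\tilde c$ is a lift of the pushforward of the class of $c$, and $\partial^{\varepsilon'}_n(f^I_\#\tilde c)=f^I_\#(\partial^{\varepsilon'}_n\tilde c)=(f_2)^I_\#(\partial^{\varepsilon'}_n\tilde c)$. Passing to homology classes yields $\partial^I_Y(f_*[c]) = f_{2*}(\partial^I_X[c])$, as required.

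I expect the main obstacle to be exactly this $\partial^I$-square, specifically the bookkeeping needed to show that the chain-level lift used to define $\partial^I$ is respected by $f$ and that the construction is independent of the chosen lift. I would handle well-definedness by the standard argument: two lifts of $c$ differ by a chain in $C_n(A^{\varepsilon'})$, whose boundary is a boundary in $A^{\varepsilon'}$, so the induced classes in $\mathrm{H}^I_{n-1}(\mathbb{A})$ agree; applying $f^I_\#$ and using that it carries $C_n(A^{\varepsilon'})$ into $C_n(B^{\varepsilon'})$ shows the same difference persists after pushforward, giving compatibility with $f_{2*}$. The $i_*$- and $j_*$-squares, by contrast, require no such care and follow immediately from the two compatibility identities together with functoriality.
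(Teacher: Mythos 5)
Your proposal is correct, but on the decisive square it takes a genuinely different route from the paper. For the $i_*$- and $j_*$-squares you and the paper agree: the identities $f_1\circ i = i'\circ f_2$ and $f\circ j = j'\circ f_1$ plus functoriality (the paper's Axiom~\ref{ax2}) give commutativity. For the $\partial$-square, however, the paper's entire proof is a citation of Axiom~\ref{ax3}, which \emph{is} the naturality statement $(f|_{A})_*\circ\partial^{I}_{A}=\partial^{I}_{B}\circ f_*$ — so in the paper the ``genuine content'' you located there costs nothing, by design. Your chain-level unwinding via Definition~\ref{defhomrel} (lifting a relative cycle, commuting $f^{I}_{\#}$ past $\partial^{\varepsilon'}_n$ and the transition maps, checking independence of the lift) is a valid argument, but note what it proves: it verifies the $\partial$-square for the \emph{specific} simplicial model of Section~2, i.e.\ it is in effect a proof that this model satisfies Axiom~\ref{ax3}. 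The theorem lives in Appendix~A, whose results are consequences of the axioms alone and are later applied in the Uniqueness Theorem to two \emph{arbitrary} theories $\mathrm{H}$ and $\overline{\mathrm{H}}$ satisfying Axioms S1--S3; in that abstract setting there are no chains to lift, so your argument is unavailable and one must argue as the paper does. In short: your proof buys a self-contained verification for the concrete theory (useful when checking that the construction of Section~2 really is a persistent homology theory), while the paper's two-line proof buys the generality the rest of the paper actually needs. If you intend your proof to stand in for the paper's, you should either restrict the claim to the explicit theory or replace the chain computation by an appeal to the naturality axiom.
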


\begin{proof}

Observe that in the considered diagram,   rows are  exact sequences of relative sets and vertical arrows are induced maps of $f, f_1$ and $f_2$, respectively. The commutativity of the diagram follows from {Axioms \ref{ax2} and \ref{ax3}}.
 
\end{proof}

\begin{coro}\label{homo-filt-iso}
Given a filtration preserving map of relative filtered sets $f:\mathbb{(X,A)}\longrightarrow \mathbb{(Y,B)}$, let us assume that  the induced maps $f_{1*}: \mathrm{H}^I_n(\mathbb{X})\longrightarrow \mathrm{H}^I_n(\mathbb{Y})$ and $f_{2*}: \mathrm{H}^I_n(\mathbb{A})\longrightarrow \mathrm{H}^I_n(\mathbb{B})$ are isomorphisms. Then $f_*: \mathrm{H}^I_n\mathbb{(X,A)}\longrightarrow \mathrm{H}^I_n\mathbb{(Y,B)}$ is an isomorphism.
\end{coro}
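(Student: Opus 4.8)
The plan is to reduce the statement to a single application of the Five Lemma, using the ladder of long exact sequences already produced in Theorem \ref{thm:rest-to-homo}. First I would apply that theorem to the given map $f:(\mathbb{X},\mathbb{A})\longrightarrow(\mathbb{Y},\mathbb{B})$, obtaining a commutative diagram whose top row is the persistent homology long exact sequence of the pair $(\mathbb{X},\mathbb{A})$, whose bottom row is that of $(\mathbb{Y},\mathbb{B})$, and whose vertical arrows are $f_{2*}$ (on absolute groups of $\mathbb{A},\mathbb{B}$), $f_{1*}$ (on absolute groups of $\mathbb{X},\mathbb{Y}$), and $f_*$ (on the relative groups), arranged in the pattern dictated by the sequence. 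By Axiom \ref{ax4} both rows are exact.

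Next I would isolate the five-term window of the ladder centered at the relative group, namely
\[
\mathrm{H}_n^I(\mathbb{A})\longrightarrow \mathrm{H}_n^I(\mathbb{X})\longrightarrow \mathrm{H}_n^I(\mathbb{X},\mathbb{A})\longrightarrow \mathrm{H}_{n-1}^I(\mathbb{A})\longrightarrow \mathrm{H}_{n-1}^I(\mathbb{X})
\]
together with the corresponding bottom window for $(\mathbb{Y},\mathbb{B})$, the two connected by the vertical maps $f_{2*},\,f_{1*},\,f_*,\,f_{2*},\,f_{1*}$ in degrees $n,\,n,\,n,\,n-1,\,n-1$ respectively. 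The four outer vertical arrows are thus instances of $f_{1*}$ and $f_{2*}$, which are isomorphisms in every degree by hypothesis.

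Finally, the Five Lemma applied to this window yields that the middle vertical arrow $f_*:\mathrm{H}_n^I(\mathbb{X},\mathbb{A})\longrightarrow\mathrm{H}_n^I(\mathbb{Y},\mathbb{B})$ is an isomorphism; since $n$ was arbitrary, this holds for all $n$. I do not expect a substantive obstacle here: the content is entirely carried by Theorem \ref{thm:rest-to-homo} and Axiom \ref{ax4}. The only care required is the bookkeeping of degrees, namely checking that the hypotheses furnish isomorphisms for $f_{1*}$ and $f_{2*}$ in \emph{both} degrees $n$ and $n-1$, so that all four maps surrounding $f_*$ in the window genuinely qualify as isomorphisms before the Five Lemma is invoked.
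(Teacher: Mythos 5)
Your proposal is correct and follows essentially the same route as the paper, whose proof simply cites Theorem \ref{thm:rest-to-homo} together with the Five Lemma; you have merely made explicit the five-term window and the degree bookkeeping that the paper leaves implicit. No gap to report.
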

\begin{proof}
The result is a consequence of Theorem \ref{thm:rest-to-homo} and the Five Lemma. 

\end{proof}

\begin{theo}\label{thm:cont-to-iso}
A contiguity $f$ from $(\mathbb{X},\mathbb{A})$ onto $(\mathbb{Y},\mathbb{B})$ induces an isomorphism $f^{I}_{*}: \mathrm{H}^I_n\mathbb{(X,A)}\longrightarrow \mathrm{H}^I_n\mathbb{(Y,B)}$.  
\end{theo}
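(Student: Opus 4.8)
The plan is to exploit the formal structure already encoded in the definition of contiguity together with Axioms \ref{ax1}, \ref{ax2} and \ref{ax5}; no geometric input is needed beyond what is already absorbed into those axioms. By definition (\cref{eqcontiguous}), saying that $f$ is a contiguity from $(\mathbb{X},\mathbb{A})$ onto $(\mathbb{Y},\mathbb{B})$ means that these two relative filtered sets are equi-contiguous, so there is a filtration preserving map $g:(\mathbb{Y},\mathbb{B})\to(\mathbb{X},\mathbb{A})$ for which $g\circ f$ is contiguous to $\mathop{id}_{(\mathbb{X},\mathbb{A})}$ and $f\circ g$ is contiguous to $\mathop{id}_{(\mathbb{Y},\mathbb{B})}$. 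The whole argument then reduces to pushing these two contiguity relations through the functor $\mathrm{H}^I$.

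First I would apply Axiom \ref{ax5} to the pair of filtration preserving maps $g\circ f$ and $\mathop{id}_{(\mathbb{X},\mathbb{A})}$, viewed as maps $(\mathbb{X},\mathbb{A})\to(\mathbb{X},\mathbb{A})$; since they are contiguous by hypothesis, this yields $(g\circ f)^I_* = (\mathop{id}_{(\mathbb{X},\mathbb{A})})^I_*$. By functoriality (Axiom \ref{ax2}) the left-hand side factors as $g^I_*\circ f^I_*$, while by Axiom \ref{ax1} the right-hand side is the identity homomorphism on $\mathrm{H}^I_n(\mathbb{X},\mathbb{A})$. Hence $g^I_*\circ f^I_* = \mathrm{id}$. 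Symmetrically, applying Axiom \ref{ax5} to $f\circ g$ and $\mathop{id}_{(\mathbb{Y},\mathbb{B})}$, and then Axioms \ref{ax2} and \ref{ax1}, gives $f^I_*\circ g^I_* = \mathrm{id}$ on $\mathrm{H}^I_n(\mathbb{Y},\mathbb{B})$.

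Combining the two identities shows that $f^I_*$ and $g^I_*$ are mutually inverse, so $f^I_*:\mathrm{H}^I_n(\mathbb{X},\mathbb{A})\to\mathrm{H}^I_n(\mathbb{Y},\mathbb{B})$ is an isomorphism, as claimed. I do not expect a genuine obstacle here: all the real content has already been discharged into Axiom \ref{ax5}, whose proof (via the cylinder construction $\mathbb{X}^{cyl}$ on filtered sets together with the Mayer--Vietoris and Five Lemma arguments of the equivalence theorem) is where the difficulty actually resides. The only point meriting a line of care is purely bookkeeping: one should note that $g\circ f$ and $f\circ g$ are again filtration preserving maps of pairs, so that Axiom \ref{ax5} is legitimately applicable to them, and this is immediate since filtration preserving maps are closed under composition.
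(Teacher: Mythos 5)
Your proposal is correct and follows essentially the same route as the paper's own proof: the paper likewise takes the contiguity inverse $g$, applies Axiom \ref{ax5} to the contiguous pairs $g\circ f \sim \mathop{id}_{(\mathbb{X},\mathbb{A})}$ and $f\circ g \sim \mathop{id}_{(\mathbb{Y},\mathbb{B})}$, and then uses Axioms \ref{ax1} and \ref{ax2} to conclude that $f^{I}_{*}$ and $g^{I}_{*}$ are mutually inverse. Your remark that the real content lives in Axiom \ref{ax5} (proved later via the cylinder construction) is accurate and consistent with how the paper organizes the material.
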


\begin{proof}
From the definition of contiguity and using {\cref{ax5}}, we have that 
$(g\circ f)_{*}^{I}=(\mathop{id}_{(\mathbb{X},\mathbb{A})})_{*}^{I}$ and 
$(f\circ g)_{*}^{I}=(\mathop{id}_{(\mathbb{Y},\mathbb{B})})_{*}^{I}$,
where   $(\mathop{id}_{(\mathbb{X},\mathbb{A})})_{*}^{I}$ and $(\mathop{id}_{(\mathbb{Y},\mathbb{B})})_{*}^{I}$ are identity homomorphisms by Axiom \ref{ax1}.

From Axiom \ref{ax2},  $(f\circ g)^{I}_{*}=f^{I}_{*}\circ g^{I}_{*}$ and $(g\circ f)^{I}_{*}=g^{I}_{*}\circ f^{I}_{*}$, which {conclude} the proof. 
\end{proof}

The next goal is to define the reduced persistent homology associated to a persistent homology theory. In order to do this we will need some definitions and results.

\begin{defi}\label{def-persist-group}
{
Let $p_0$ be a fixed point. For each $\alpha\geq 0$, let $\mathbb{P}_\alpha = (P_\alpha, F_{P_\alpha})$ be representing the filtered set where
$P_\alpha = \{p_0\}$ and $F_{P_\alpha}(P_\alpha) = \alpha$.
 Given a persistent homology theory, \textbf{the persistent coefficient group associated to $I$} and $\alpha$ is defined as the group
 $G^I:=\mathrm{H}_0^I(\mathbb{P}_\alpha)$.
}
 \smallskip

Let $\mathbb{X}=(X, F_X)$ be a filtered set.
If $x\in X$, $\alpha = F_X(\{x\})$ and $g\in G^I$, then $(gx)_X$ will denote the image of $g$ in $\mathrm{H}^I_0(\mathbb{X})$ under the homomorphism induced by the map $f:\mathbb{P}_\alpha\longrightarrow \mathbb{X}$ defined by $f(p_0):=x$. The image of $G^I$ in $\mathrm{H}_0^I(\mathbb{X})$ under $f_*^{I}$ is denoted by $(G^Ix)_X$.
\end{defi}

\begin{remark}
An important remark here is that the map $f$ is a filtration preserving map, since $F_{P_\alpha} (P_\alpha) = F_X (f(P_\alpha))$.
\end{remark}

\begin{theo}\label{thm:filt-to-onto}
If  $f:\mathbb{X}\longrightarrow \mathbb{Y}$  is a filtration preserving map, then for  $x\in X$ and $g\in G^I$,
$f^{I}_*$ maps $(G^Ix)_X$ onto  $(G^If(x))_Y$.

\end{theo}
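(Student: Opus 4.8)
The plan is to reduce everything to the functoriality of $\mathrm{H}^I$ (Axiom \ref{ax2}) by factoring maps of one-point filtered sets. First I would fix notation: write $y:=f(x)$, $\alpha:=F_X(\{x\})$ and $\beta:=F_Y(\{y\})$; since $f$ is filtration preserving we have $F_X(\{x\})\ge F_Y(f(\{x\}))$, i.e. $\alpha\ge\beta$. Recalling Definition \ref{def-persist-group}, the map $\iota_x:\mathbb{P}_\alpha\to\mathbb{X}$, $p_0\mapsto x$, gives $(G^Ix)_X=(\iota_x)^I_*(G^I)=\mathrm{Im}\,(\iota_x)^I_*$, and similarly $\iota_y:\mathbb{P}_\beta\to\mathbb{Y}$, $p_0\mapsto y$, gives $(G^If(x))_Y=(\iota_y)^I_*(G^I)=\mathrm{Im}\,(\iota_y)^I_*$. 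Thus the claim is the equality of subgroups $f^I_*\big(\mathrm{Im}\,(\iota_x)^I_*\big)=\mathrm{Im}\,(\iota_y)^I_*$.

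The key step is to introduce the comparison map $\pi:\mathbb{P}_\alpha\to\mathbb{P}_\beta$, $p_0\mapsto p_0$, which is filtration preserving precisely because $\alpha\ge\beta$ (so $F_{P_\alpha}(p_0)=\alpha\ge\beta=F_{P_\beta}(\pi(p_0))$). The decisive observation is the identity of morphisms
\[
f\circ\iota_x=\iota_y\circ\pi:\mathbb{P}_\alpha\longrightarrow\mathbb{Y},
\]
valid because both composites are filtration preserving maps sending $p_0\mapsto y$, and a filtration preserving map is determined by its underlying set map. Applying $\mathrm{H}^I$ and Axiom \ref{ax2} yields $f^I_*\circ(\iota_x)^I_*=(\iota_y)^I_*\circ\pi^I_*$, whence
\[
f^I_*\big((G^Ix)_X\big)=f^I_*\big((\iota_x)^I_*(G^I)\big)=(\iota_y)^I_*\big(\pi^I_*(G^I)\big).
\]
It therefore suffices to show that $\pi^I_*:\mathrm{H}^I_0(\mathbb{P}_\alpha)\to\mathrm{H}^I_0(\mathbb{P}_\beta)$ is onto, for then $\pi^I_*(G^I)=G^I$ and the right-hand side collapses to $(\iota_y)^I_*(G^I)=(G^If(x))_Y$.

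Surjectivity of $\pi^I_*$ is the last point, and I expect it to be the only place that needs genuine care. Under the standing convention $\alpha\le\varepsilon$ (as in the Uniqueness Theorem, where $\alpha\le\varepsilon\le\varepsilon'$) we have $\beta\le\alpha\le\varepsilon$, so by the Dimension axiom (Axiom \ref{ax6}) both $\mathrm{H}^I_0(\mathbb{P}_\alpha)$ and $\mathrm{H}^I_0(\mathbb{P}_\beta)$ are copies of the coefficient group $G^I$, each generated by the class of the single vertex $p_0$; since $\pi$ fixes $p_0$, the induced map carries generator to generator (this is transparent from the explicit construction in Definition \ref{defhomrel}, where $\pi$ acts as the identity on $C_0$), so $\pi^I_*$ is an isomorphism and in particular onto. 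This gives $f^I_*((G^Ix)_X)=(G^If(x))_Y$. In the degenerate range $\alpha>\varepsilon$ one has $G^I=\mathrm{H}^I_0(\mathbb{P}_\alpha)=0$, so both sides are trivially zero; the entire argument beyond this is a purely formal consequence of $\mathrm{H}^I$ being a functor.
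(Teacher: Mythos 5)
Your proposal is correct in its main line and follows the same route as the paper's proof: factor the vertex classes through one-point filtered sets and apply functoriality (\cref{ax2}). Where you genuinely differ is in precision. The paper's proof uses the single filtered point $\mathbb{P}_\alpha$, $\alpha=F_X(\{x\})$, as the domain of \emph{both} $T_x$ and $T_y$, and then silently identifies $\mathrm{Im}\,(T_y)_\ast$ with $(G^If(x))_Y$, even though Definition \ref{def-persist-group} defines the latter via $\mathbb{P}_\beta$ with $\beta=F_Y(\{f(x)\})$, which may be strictly smaller than $\alpha$. Your comparison map $\pi:\mathbb{P}_\alpha\to\mathbb{P}_\beta$ and the factorization $f\circ\iota_x=\iota_y\circ\pi$ make exactly this identification explicit; this is a real improvement over the printed proof, which simply elides the mismatch.

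Two caveats. First, your justification that $\pi^{I}_\ast$ is onto appeals to the chain-level model of Definition \ref{defhomrel} (``$\pi$ acts as the identity on $C_0$''). That argument is only valid for the concrete simplicial theory, whereas this theorem belongs to the axiomatic development and is later applied to an \emph{arbitrary} theory $\overline{\mathrm{H}}$ in the Uniqueness Theorem; the Dimension axiom (\cref{ax6}) fixes the groups $\mathrm{H}_0^I(\mathbb{P}_\alpha)$ and $\mathrm{H}_0^I(\mathbb{P}_\beta)$ but says nothing about the induced map between them, so functoriality alone does not force $\pi^{I}_\ast\neq 0$. An axiomatic repair is available: embed $\pi$ as $c\circ v$ through the two-point filtered set $\mathbb{Z}$ on $\{a,b\}$ with $F(\{a\})=\beta$ and $F(\{b\})=F(\{a,b\})=\alpha$ (this is the paper's $\mathbb{S}^1_\alpha\cup\mathbb{Q}_a$ configuration from Lemma \ref{lemma2.2} and Remark \ref{remark}); the constant-to-$a$ map of $\mathbb{Z}$ is contiguous to the identity, so \cref{ax5} makes $c_\ast$ an isomorphism, and the exact sequence of the pair $(\mathbb{Z},\{b\})$ together with the axiomatic computation $\mathrm{H}_0^I(\mathbb{Z},\{b\})=0$ gives that $v_\ast$ is onto. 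Some argument of this kind is needed to stay within the axioms. Second, your closing remark about the degenerate range is wrong as stated: if $\beta\leq\varepsilon<\alpha$, then $f^I_\ast\bigl((G^Ix)_X\bigr)=0$ while $(G^If(x))_Y$, being defined through $\mathbb{P}_\beta$, need not vanish, so the two sides are not ``trivially'' equal there. The statement genuinely requires the standing convention $\alpha\leq\varepsilon$, which you correctly invoke in the main case; simply delete the degenerate-case sentence rather than claim it is formal.
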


\begin{proof}

Let $\alpha = F_X(\{x\})$ and let $\mathbb{P}_\alpha$ be the filtered set given in Definition \ref{def-persist-group}.

Consider the following commutative diagram: 
\[
\xymatrix
{
    \mathbb{P}_\alpha \ar[dr]_{T_{y}} \ar[rr]^{T_{x}} &  & \mathbb{X} \ar[dl]^{f}   \\
	& \mathbb{Y}  &  \\
}
\]
where $T_{x}:\mathbb{P}_\alpha\longrightarrow \mathbb{X}$ is defined by $T_{x}(p_0):=x$ and $T_{y}:\mathbb{P}_\alpha\longrightarrow \mathbb{Y}$ is defined by $T_{y}(p_0):=y = f(x)$. Then by Axiom \ref{ax2} the  diagram commutes:
\[
\xymatrix
{
	\mathrm{H}_*^I(\mathbb{P}_\alpha) \ar[dr]_{(T_{y})_\ast} \ar[rr]^{(T_{x})_\ast} &  & \mathrm{H}_*^I(\mathbb{X}) \ar[dl]^{f^{I}_*}   \\
	& \mathrm{H}_*^I(\mathbb{Y})  &  \\
}
\]
which implies that $f^{I}_*((gx)_X)=(gy)_Y$.
  Then we have that $f^{I}_*$ maps $(G^{I} x)_X$ onto $(G^Iy)_Y$.
\end{proof}

\begin{defi}
Let  $\mathbb{P}_\alpha$ be the filtered set where $P_\alpha$ is a single-point set with filtration map $F_{P_\alpha}$ defined by $F_{P_\alpha}(P_\alpha):=\alpha$ and let $f: \mathbb{X}\longrightarrow \mathbb{P}_\alpha$ be a filtration preserving map. We define the \textbf{reduced persistent homology group} 
$$ 
\tilde{\mathrm{H}}^I_n\mathbb{(X)}:=\ker (f^{I}_{*}: \mathrm{H}^I_n \mathbb{(X)} \longrightarrow \mathrm{H}^I_n (\mathbb{P}_\alpha)) .
$$

\begin{remark}\label{Remark-3}
For $n>0$   we have that $\mathrm{H}^I_n (\mathbb{P}_\alpha)=0$ 
and $\tilde{\mathrm{H}}^I_n(\mathbb{X})=\mathrm{H}^I_n(\mathbb{X})$.
\end{remark}

\end{defi}

\begin{theo}
If $P$ is a single-point set,
 with the filtration map $F_P$, $F_P(P) :=\alpha$, then 
$$\tilde{\mathrm{H}}^I_0(\mathbb{P})=0 \text{ and } \mathrm{H}^I_0(\mathbb{P})=(G^IP)_P .$$
\end{theo}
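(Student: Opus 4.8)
The plan is to observe that the single-point filtered set $\mathbb{P}$ with $F_P(P)=\alpha$ is isomorphic, inside the category $\mathscr{RF}$, to the model filtered set $\mathbb{P}_\alpha$ that is used to define both the coefficient group $G^I$ and the reduced homology, and then to transport everything through functoriality. No genuinely homological input is needed beyond Axioms \ref{ax1} and \ref{ax2}.

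First I would exhibit the canonical bijection $f\colon\mathbb{P}\to\mathbb{P}_\alpha$ sending the point of $P$ to $p_0$, together with its set-theoretic inverse $g\colon\mathbb{P}_\alpha\to\mathbb{P}$, $g(p_0):=P$. Since each set carries filtration value $\alpha$ at its unique point, we have $F_P=F_{P_\alpha}\circ f$ and $F_{P_\alpha}=F_P\circ g$; in particular both maps are filtration preserving and satisfy $g\circ f=\mathop{id}_{\mathbb{P}}$ and $f\circ g=\mathop{id}_{\mathbb{P}_\alpha}$. Applying Axiom \ref{ax1} and Axiom \ref{ax2} then shows that $f^{I}_*$ and $g^{I}_*$ are mutually inverse isomorphisms between $\mathrm{H}_0^I(\mathbb{P})$ and $G^I=\mathrm{H}_0^I(\mathbb{P}_\alpha)$.

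For the first identity I would simply unwind the definition of reduced homology: by construction $\tilde{\mathrm{H}}^I_0(\mathbb{P})=\ker\bigl(f^{I}_*\colon\mathrm{H}_0^I(\mathbb{P})\to\mathrm{H}_0^I(\mathbb{P}_\alpha)\bigr)$, and since $f^{I}_*$ is an isomorphism its kernel is trivial, giving $\tilde{\mathrm{H}}^I_0(\mathbb{P})=0$. For the second identity I would read off Definition \ref{def-persist-group}: the subgroup $(G^IP)_P$ is by definition the image of $G^I=\mathrm{H}_0^I(\mathbb{P}_\alpha)$ under the map $g^{I}_*$ induced by $g(p_0):=P$; because $g^{I}_*$ is surjective (being an isomorphism), this image is all of $\mathrm{H}_0^I(\mathbb{P})$, whence $\mathrm{H}_0^I(\mathbb{P})=(G^IP)_P$.

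The hard part is essentially bookkeeping rather than mathematics: one must verify that both $f$ and $g$ are genuinely filtration preserving (so that the isomorphism $\mathbb{P}\cong\mathbb{P}_\alpha$ really lives in $\mathscr{RF}$) and, crucially, confirm that the map defining $(G^IP)_P$ in Definition \ref{def-persist-group} is exactly the inverse $g^{I}_*$ of the map $f^{I}_*$ appearing in the definition of $\tilde{\mathrm{H}}^I_0$. Once these identifications are in place both claims follow immediately, and they are consistent with the dimension axiom, which already predicts $\mathrm{H}_0^I(\mathbb{P})\cong G^I$ for $\varepsilon\geq\alpha$.
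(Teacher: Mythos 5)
Your proof is correct, and it follows the same overall skeleton as the paper's: identify $\mathbb{P}$ with the model point $\mathbb{P}_\alpha$, conclude that the induced map on $\mathrm{H}_0^I$ is an isomorphism, read off $\tilde{\mathrm{H}}^I_0(\mathbb{P})=0$ as a trivial kernel, and read off $\mathrm{H}^I_0(\mathbb{P})=(G^IP)_P$ from Definition \ref{def-persist-group} as a full image. The one genuine difference is how the isomorphism is obtained. The paper observes that $f:(P,F_P)\to\mathbb{P}_\alpha$ is a \emph{contiguity} and appeals (implicitly) to Theorem \ref{thm:cont-to-iso}, whose proof rests on \cref{ax1,ax2,ax5}; you instead note that for single-point sets the composites $g\circ f$ and $f\circ g$ are \emph{literally} the identity maps, not merely contiguous to them, so \cref{ax1,ax2} alone --- pure functoriality --- already force $f^{I}_*$ and $g^{I}_*$ to be mutually inverse. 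In a paper whose point is tracking axiom dependencies (in particular, that \cref{ax5} is redundant given the simplified axioms), your route is a small but real improvement: it shows this theorem needs no contiguity input whatsoever. What the paper's phrasing buys in exchange is uniformity, since the surrounding results (e.g.\ Theorem \ref{thm:contequi-to-point}) are all channelled through the same contiguity lemma. Your closing bookkeeping remarks are also on target: the identification of the map in Definition \ref{def-persist-group} with the inverse $g^{I}_*$ of the map defining the reduced group is exactly the step the paper's displayed equation $(G^IP)_P=h^{I}_*(G^I)=h^{I}_*(\mathrm{H}_0^I(\mathbb{P}_\alpha))=\mathrm{H}_0^I(\mathbb{P})$ performs.
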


\begin{proof}
Considering $f: (P,F_P)\longrightarrow \mathbb{P}_\alpha$ and $h: \mathbb{P}_\alpha\longrightarrow (P,F_P)$ well defined maps between single-point sets one has that $f$ is a contiguity. Therefore,
\[
f^{I}_{*}: \mathrm{H}^I_n(\mathbb{P})\longrightarrow \mathrm{H}^I_n(\mathbb{P}_\alpha) 
\]
is an isomorphism and
$\tilde{\mathrm{H}}^I_0(\mathbb{P})=0$.
Also 
\[
(Gx)P = h^{I}_{*}(G^I) =
{h}^{I}_{*}(\mathrm{H}_0^I(\mathbb{P}_\alpha)) = 
\mathrm{H}_0^I(\mathbb{P}).
\]
\end{proof}

\begin{theo}
Let $\mathbb{X}$ be a filtered set and $x\in X$. Then $\mathrm{H}^I_0(\mathbb{X})$ decomposes into the direct sum
\[
\mathrm{H}^I_0(\mathbb{X})=\tilde{\mathrm{H}}^I_0\mathbb{(X)}\oplus (G^Ix)_X
\]
and the correspondence $g\longrightarrow (gx)_X$ maps $G^I$ isomorphically onto $(G^Ix)_X$.
\end{theo}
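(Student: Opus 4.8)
The plan is to split $\mathrm{H}_0^I(\mathbb{X})$ by producing a section and a retraction that both factor through one-point filtered sets. Writing $\alpha = F_X(\{x\})$, let $s = T_x : \mathbb{P}_\alpha \to \mathbb{X}$ be the filtration preserving map $p_0 \mapsto x$ of Definition \ref{def-persist-group}. By construction $s^I_*(g) = (gx)_X$, so the correspondence $g \mapsto (gx)_X$ \emph{is} the homomorphism $s^I_* : G^I \to \mathrm{H}_0^I(\mathbb{X})$, and its image is $(G^Ix)_X$ by definition. Let $r : \mathbb{X} \to \mathbb{P}_\beta$ be the collapsing map $y \mapsto p_0$ used to define the reduced homology, where $\beta \le \min_{y} F_X(\{y\}) \le \alpha$ is forced in order that $r$ be filtration preserving; thus $\tilde{\mathrm{H}}_0^I(\mathbb{X}) = \ker r^I_*$.

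First I would prove that $\phi := r^I_* \circ s^I_*$ is an isomorphism of $G^I$. By Axiom \ref{ax2}, $\phi = (r \circ s)^I_*$, where $r \circ s : \mathbb{P}_\alpha \to \mathbb{P}_\beta$ sends $p_0$ to $p_0$. Applying Theorem \ref{thm:filt-to-onto} to $r \circ s$ shows that $\phi$ maps $(G^Ip_0)_{\mathbb{P}_\alpha} = G^I$ onto $(G^Ip_0)_{\mathbb{P}_\beta}$, and by the preceding theorem on single-point sets the latter is all of $\mathrm{H}_0^I(\mathbb{P}_\beta) = G^I$; hence $\phi$ is surjective. Since the dimension axiom (Axiom \ref{ax6}, equivalently \cref{as3}) shows $G^I$ is cyclic, a surjection of $G^I$ onto $G^I$ is automatically injective, so $\phi$ is an isomorphism.

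With $\phi$ invertible both assertions follow by the standard splitting. As $\phi = r^I_* \circ s^I_*$ is injective, so is $s^I_*$; being onto $(G^Ix)_X$ by construction, it realizes the desired isomorphism $G^I \xrightarrow{\ \sim\ } (G^Ix)_X$, $g \mapsto (gx)_X$. For the decomposition, given $c \in \mathrm{H}_0^I(\mathbb{X})$ put $c' := s^I_*\bigl(\phi^{-1}(r^I_*(c))\bigr) \in (G^Ix)_X$; then $r^I_*(c - c') = r^I_*(c) - \phi\bigl(\phi^{-1}(r^I_*(c))\bigr) = 0$, so $c - c' \in \tilde{\mathrm{H}}_0^I(\mathbb{X})$ and $\mathrm{H}_0^I(\mathbb{X}) = \tilde{\mathrm{H}}_0^I(\mathbb{X}) + (G^Ix)_X$. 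If $c$ lies in the intersection, then $c = s^I_*(g)$ with $r^I_*(c) = 0$, whence $\phi(g) = 0$ and $g = 0$, giving $c = 0$; the sum is therefore direct.

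The step I expect to be the main obstacle is the middle one, namely keeping the various incarnations of the coefficient group honest. Because $r$ must collapse to a one-point set of birth time $\beta \le \alpha$ rather than to $\mathbb{P}_\alpha$ itself, the section and the retraction a priori involve different single-point sets, so one cannot simply say $r \circ s = \mathrm{id}$; instead one must verify that $r \circ s$ still induces a surjection of $G^I$ onto $G^I$ (and not merely into a proper subgroup). Theorem \ref{thm:filt-to-onto} together with the single-point computation supplies exactly this surjectivity, and the finiteness of $G^I$ coming from the dimension axiom then upgrades it to an isomorphism; I would write these identifications out with care.
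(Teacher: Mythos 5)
Your overall skeleton (a section $s = T_x$ and a retraction $r$ through one-point filtered sets, followed by the standard idempotent splitting) is exactly the paper's strategy, and your final splitting paragraph is correct \emph{granted} that $\phi = r^{I}_*\circ s^{I}_*$ is an isomorphism. But that middle step, which you yourself flag as the main obstacle, is where the argument breaks. Your appeal to Theorem \ref{thm:filt-to-onto} is circular: by Definition \ref{def-persist-group}, $(G^I p_0)_{\mathbb{P}_\beta}$ is \emph{defined} as the image of $G^I=\mathrm{H}_0^I(\mathbb{P}_\alpha)$ under the induced map, so the theorem only restates that the image of $\phi$ is the image of $\phi$; it does not identify that image with all of $\mathrm{H}_0^I(\mathbb{P}_\beta)$. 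The single-point theorem you then invoke identifies $\mathrm{H}_0^I(\mathbb{P}_\beta)$ with the coefficient group \emph{at birth value $\beta$}, i.e.\ $G^I_\beta$, not with $G^I_\alpha$ --- in this theory the coefficient group genuinely depends on the birth parameter, and conflating the two is precisely the identification that needs proof. Indeed, surjectivity of $\phi$ is false in general: if $I=[\varepsilon,\varepsilon']$ with $\beta\le\varepsilon<\alpha$, the Dimension Axiom (\cref{ax6}) gives $\mathrm{H}_0^I(\mathbb{P}_\alpha)=0$ while $\mathrm{H}_0^I(\mathbb{P}_\beta)=\mathbb{F}\neq 0$, so no surjection $G^I_\alpha\to \mathrm{H}_0^I(\mathbb{P}_\beta)$ can exist. (Two smaller points: "finiteness of $G^I$" is not what the dimension axiom gives --- $\mathbb{F}$ or $\mathbb{Z}$ need not be finite; a Hopfian/cyclic argument is what you'd want, and it would suffice only if surjectivity held. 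And when $\varepsilon\ge\alpha>\beta$ the map $\mathrm{H}_0^I(\mathbb{P}_\alpha)\to\mathrm{H}_0^I(\mathbb{P}_\beta)$ \emph{is} an isomorphism, but proving this axiomatically requires real work, e.g.\ the star-shaped machinery of Lemma \ref{lemma2.2}, not the one-line citation you give.)

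The paper sidesteps all of this by collapsing to the \emph{same} one-point set: it takes $f_1=s:\mathbb{P}_\alpha\to\mathbb{X}$ and $f_2:\mathbb{X}\to\mathbb{P}_\alpha$ the constant map, so that $f_2\circ f_1$ is literally the identity of $\mathbb{P}_\alpha$ and Axioms \ref{ax1} and \ref{ax2} give $f^{I}_{2*}\circ f^{I}_{1*}=\mathrm{id}$ on the nose; the splitting then follows exactly as in your last paragraph, with $m'=f^I_{1*}f^I_{2*}(m)$. Of course this tacitly assumes the constant map $\mathbb{X}\to\mathbb{P}_\alpha$ is filtration preserving, which forces $\alpha=F_X(\{x\})\le F_X(\sigma)$ for all $\sigma$, i.e.\ $x$ realizes the minimal filtration value. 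You correctly noticed this hidden hypothesis --- that is a genuine observation about the statement --- but the statement (and its proof) should be repaired by making that hypothesis explicit (so that one may take $\beta=\alpha$ and $r\circ s=\mathrm{id}$), not by allowing $\beta<\alpha$ and asserting an unproved, and in the regime $\varepsilon<\alpha$ false, surjectivity of $(r\circ s)^{I}_*$.
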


\begin{proof}
Consider $h= f_2\circ f_1$, where $f_1:\mathbb{P}_\alpha\longrightarrow \mathbb{X}$ is the filtration preserving map defined by $f_1(P_\alpha):=x$ and  $f_2:\mathbb{X}\longrightarrow \mathbb{P}_\alpha$ is the constant filtration preserving map.
Observe that $h$ is the identity map of $P_\alpha$, and by Axiom \ref{ax1} and Axiom \ref{ax2}, the composition of $f_{1*}^{I}:G^I\longrightarrow \mathrm{H}^I_0\mathbb{(X)}$ and $f_{2*}^{I}:\mathrm{H}^I_0\mathbb{(X)}\longrightarrow G^I$ is the identity map. 
Therefore, $f_1$ maps $G^I$ isomorphically onto $(G^Ix)_X$.

Since $\tilde{\mathrm{H}}^I_0\mathbb{(X)}=\ker f^{I}_{2*}$, one has that $(G^Ix)_X\cap \tilde{\mathrm{H}}^I_0\mathbb{(X)}=0$.
Taking $m\in \mathrm{H}^I_0\mathbb{(X)}$ and defining $m':= f_{1*}\circ f_{2*}(m)$ and $m''=m-m'$, it is straightforward that $m = m' +m''$. The result follows.

\end{proof}

\begin{theo}\label{thm:iso-onto}
If $f:\mathbb{X}\longrightarrow \mathbb{Y}$ is a filtration preserving map , $x\in X$, $y\in Y$ and $y=f(x)$, then $f^{I}_*$ maps
$\tilde{\mathrm{H}}^I_0\mathbb{(X)}$ into $\tilde{\mathrm{H}}^I_0\mathbb{(Y)}$ and maps $(G^Ix)_X$ isomorphically onto $(G^Iy)_Y$. 
\end{theo}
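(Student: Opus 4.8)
The plan is to establish the two assertions separately. The statement about the subgroups $(G^Ix)_X$ and $(G^Iy)_Y$ I would derive from Theorem~\ref{thm:filt-to-onto} together with the direct-sum decomposition of $\mathrm{H}_0^I$ proved in the preceding theorem, whereas the statement about the reduced groups I would obtain purely from functoriality (Axiom~\ref{ax2}).

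For the claim that $f_*^I$ carries $(G^Ix)_X$ isomorphically onto $(G^Iy)_Y$, I would first recall from Theorem~\ref{thm:filt-to-onto} that $f_*^I$ already maps $(G^Ix)_X$ \emph{onto} $(G^If(x))_Y=(G^Iy)_Y$, so surjectivity is free and only injectivity remains. For this I would use the two isomorphisms furnished by the preceding decomposition theorem, namely $\varphi_x\colon G^I\to (G^Ix)_X$, $g\mapsto (gx)_X$, and $\varphi_y\colon G^I\to (G^Iy)_Y$, $g\mapsto (gy)_Y$. The computation carried out inside the proof of Theorem~\ref{thm:filt-to-onto} shows that $f_*^I\big((gx)_X\big)=(gy)_Y$, that is, $f_*^I\circ\varphi_x=\varphi_y$. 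Hence the restriction of $f_*^I$ to $(G^Ix)_X$ equals $\varphi_y\circ\varphi_x^{-1}$, a composite of two isomorphisms, and is therefore itself an isomorphism onto $(G^Iy)_Y$.

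For the claim that $f_*^I$ sends $\tilde{\mathrm{H}}_0^I(\mathbb{X})$ into $\tilde{\mathrm{H}}_0^I(\mathbb{Y})$, I would fix a collapsing map $c_Y\colon\mathbb{Y}\to\mathbb{P}_\beta$ onto a one-point set that defines $\tilde{\mathrm{H}}_0^I(\mathbb{Y})=\ker c_{Y*}^I$. Since $f$ is filtration preserving, for every $x$ one has $F_X(\{x\})\ge F_Y(\{f(x)\})\ge\beta$, and by monotonicity $F_X(\sigma)\ge\beta$ for every nonempty $\sigma$; thus the composite $c_Y\circ f\colon\mathbb{X}\to\mathbb{P}_\beta$ is again a filtration preserving collapsing map, and I would use it to compute $\tilde{\mathrm{H}}_0^I(\mathbb{X})$. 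Then for any $z\in\tilde{\mathrm{H}}_0^I(\mathbb{X})$, functoriality (Axiom~\ref{ax2}) gives $c_{Y*}^I\big(f_*^I(z)\big)=(c_Y\circ f)_*^I(z)=0$, so $f_*^I(z)\in\ker c_{Y*}^I=\tilde{\mathrm{H}}_0^I(\mathbb{Y})$, as required.

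The one point that must be justified, and which I expect to be the main (if mild) obstacle, is that $\tilde{\mathrm{H}}_0^I(\mathbb{X})$ does not depend on which collapsing map onto a one-point set is used, so that computing it with $c_Y\circ f$ is legitimate. I would resolve this by observing that any two such collapsing maps differ by post- or pre-composition with a map $\mathbb{P}_\alpha\to\mathbb{P}_\beta$ between one-point sets; such a map is a contiguity and hence induces an isomorphism in persistent homology by Theorem~\ref{thm:cont-to-iso}, and since an isomorphism has trivial kernel, the kernel defining the reduced group is unaffected by the choice. Combining the two parts then yields the theorem.
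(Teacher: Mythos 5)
Your proposal is correct in its two main parts and follows essentially the paper's route. For the containment $f_*^I\big(\tilde{\mathrm{H}}_0^I(\mathbb{X})\big)\subseteq\tilde{\mathrm{H}}_0^I(\mathbb{Y})$ your argument is the paper's verbatim: the paper takes a constant map $f_2:\mathbb{Y}\to\mathbb{P}_\alpha$, declares $\tilde{\mathrm{H}}_0^I(\mathbb{X})=\ker(f_2\circ f)_*^I$ ``by definition'', and computes $f_*^I\big(\ker(f_{2*}^I\circ f_*^I)\big)\subseteq\ker f_{2*}^I=\tilde{\mathrm{H}}_0^I(\mathbb{Y})$, which is exactly your kernel-plus-functoriality step. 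For the isomorphism onto $(G^Iy)_Y$ you differ only mildly: the paper observes $f_2\circ f\circ f_1=\mathop{id}_{P_\alpha}$, so by Axioms \ref{ax1} and \ref{ax2} the composite $G^I\to(G^Ix)_X\to(G^Iy)_Y\to G^I$ is the identity, and combines the resulting injectivity with the surjectivity of each arrow from Theorem~\ref{thm:filt-to-onto}; you instead factor the restriction of $f_*^I$ as $\varphi_y\circ\varphi_x^{-1}$ using the decomposition theorem. Both versions hinge on the same identity $f_*^I\big((gx)_X\big)=(gy)_Y$ established in the proof of Theorem~\ref{thm:filt-to-onto}, so this is a cosmetic variant, and a perfectly sound one.

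The one defective step is your volunteered justification that the reduced group is independent of the collapsing map. A map $\iota:\mathbb{P}_\alpha\to\mathbb{P}_\beta$ with $\alpha>\beta$ is \emph{not} a contiguity: the only candidate inverse $\mathbb{P}_\beta\to\mathbb{P}_\alpha$ fails to be filtration preserving (it would need $\beta\geq\alpha$), so Theorem~\ref{thm:cont-to-iso} does not apply; and indeed by the dimension axiom (Axiom~\ref{ax6}) the induced map $\mathrm{H}_0^I(\mathbb{P}_\alpha)\to\mathrm{H}_0^I(\mathbb{P}_\beta)$ is the map $0\to\mathbb{F}$ whenever $\beta\leq\varepsilon<\alpha$, hence not an isomorphism. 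What your argument actually requires is weaker, namely that $\iota_*$ be injective on the image of $c_{1*}$, so that $\ker(\iota\circ c_1)_*^I=\ker c_{1*}^I$; this is trivial when $\varepsilon<\alpha$ (the middle group vanishes, and then both kernels are all of $\mathrm{H}_0^I(\mathbb{X})$), but for $\varepsilon\geq\alpha$ it does not follow from the axioms by the contiguity route you propose, and would need a separate argument (it holds in the concrete theory of Section~\ref{sec:prelim}, and in general only via the uniqueness machinery). To be fair, the paper never addresses this independence either --- its definition fixes an arbitrary collapsing map and its proof silently substitutes $f_2\circ f$ --- so your main argument sits at the paper's own level of rigor; only the proof of the extra lemma you added is wrong, not the theorem-level reasoning.
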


\begin{proof}
Let $f_1:\mathbb{P}_\alpha\longrightarrow \mathbb{X}$ be the filtration preserving map defined by $f_1(P_\alpha):=x$ and let $f_2:\mathbb{Y}\longrightarrow \mathbb{P}_\alpha$ be the constant map. Then $f_2\circ f\circ f_1$ is the identity map of $P_\alpha$ and from Axiom \ref{ax1} and Axiom \ref{ax2},  the composition 
$$  \mathrm{H}^I_0 (\mathbb{P}_\alpha) \stackrel{f^{I}_{1*}}{\longrightarrow} \mathrm{H}^I_0 \mathbb{(X)} \stackrel{f^{I}_{*}}{\longrightarrow} \mathrm{H}^I_0 \mathbb{(Y)} \stackrel{f^{I}_{2*}}{\longrightarrow} \mathrm{H}^I_0 (\mathbb{P}_\alpha) $$

\noindent and particularly
$$  G^I \stackrel{f^{I}_{1*}}{\longrightarrow} (G^Ix)_X \stackrel{f^{I}_{*}}{\longrightarrow} (G^Iy)_Y \stackrel{f^{I}_{2*}}{\longrightarrow} G^I $$

\noindent are  identity maps.

Since each homomorphism is injective and, by Theorem \ref{thm:filt-to-onto}, each one is also surjective, then we have that each homomorphism is an isomorphism, which proves that $f^{I}_*$ maps $(G^Ix)_X$ isomorphically onto $(G^Iy)_Y$.

Since  $f_2\circ f$ is a map from $\mathbb{X}$ into $\mathbb{P}_\alpha$,  by definition we have that $\tilde{\mathrm{H}}^I_0\mathbb{(X)}=\ker (f_2\circ f)_*^{I}$. Then to prove that $f^{I}_*$ maps
$\tilde{\mathrm{H}}^I_0\mathbb{(X)}$ into $\tilde{\mathrm{H}}^I_0\mathbb{(Y)}$ note that 
$$f^{I}_*(\tilde{\mathrm{H}}^I_0\mathbb{(X)})=f^{I}_*(\ker (f_2\circ f)_*^{I})=f^{I}_*(\ker (f_{2*}^{I}\circ  f_*^{I}))\subseteq \ker (f_{2*}^{I})=\tilde{\mathrm{H}}^I_0\mathbb{(Y)}.$$
\end{proof}

{The restriction of the map $f_*^{I}$ to the domain $\tilde{\mathrm{H}}^I_0\mathbb{(X)}$ and the codomain $\tilde{\mathrm{H}}^I_0\mathbb{(Y)}$  will be denoted by $\tilde{f}_\ast^{I}:\tilde{\mathrm{H}}^I_0\mathbb{(X)}\to  \tilde{\mathrm{H}}^I_0\mathbb{(Y)}$ .}

\begin{lemma}\label{lemma:PH-trivial}
For each filtered set $\mathbb{X}=(X, F_X)$ and each non-negative integer $q$, $$\mathrm{H}^I_q \mathbb{(X,X)}=0.$$
\end{lemma}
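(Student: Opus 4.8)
The plan is to deduce the vanishing purely from the persistent Eilenberg--Steenrod axioms, by feeding the diagonal pair $(\mathbb{X},\mathbb{X})$ into the long exact sequence of Axiom \ref{ax4}. The crucial observation is that for this pair the inclusion $i:\mathbb{A}=\mathbb{X}\hookrightarrow\mathbb{X}$ is literally the identity map $\mathop{id}_{\mathbb{X}}$, so Axiom \ref{ax1} guarantees that the induced homomorphism $i^I_*:\mathrm{H}_q^I(\mathbb{X})\to\mathrm{H}_q^I(\mathbb{X})$ is the identity, and in particular an isomorphism, in every degree.

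Writing out the relevant segment of the exact sequence of $(\mathbb{X},\mathbb{X})$,
\[
\mathrm{H}_q^I(\mathbb{X})\xrightarrow{\;i^I_*\;}\mathrm{H}_q^I(\mathbb{X})\xrightarrow{\;j^I_*\;}\mathrm{H}_q^I(\mathbb{X},\mathbb{X})\xrightarrow{\;\partial^I\;}\mathrm{H}_{q-1}^I(\mathbb{X})\xrightarrow{\;i^I_*\;}\mathrm{H}_{q-1}^I(\mathbb{X}),
\]
I would argue as follows. Since the first $i^I_*$ is surjective, exactness forces $\ker j^I_*=\mathrm{Im}\,i^I_*=\mathrm{H}_q^I(\mathbb{X})$, hence $j^I_*=0$. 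Exactness at $\mathrm{H}_q^I(\mathbb{X},\mathbb{X})$ then gives $\ker\partial^I=\mathrm{Im}\,j^I_*=0$, so $\partial^I$ is injective. On the other hand the second $i^I_*$ is injective, so $\mathrm{Im}\,\partial^I=\ker i^I_*=0$, i.e.\ $\partial^I=0$. A homomorphism that is simultaneously injective and zero has trivial domain, whence $\mathrm{H}_q^I(\mathbb{X},\mathbb{X})=0$. The argument is uniform in $q$ and in particular covers every non-negative integer.

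There is essentially no obstacle here: the only point deserving a word is the identification of $i$ with $\mathop{id}_{\mathbb{X}}$ as a filtration preserving map, after which the conclusion is a formal diagram chase. Alternatively, one could bypass the axioms and argue directly from Definition \ref{defhomrel}: for the diagonal pair the two filtrations coincide, so $A^\varepsilon=K^\varepsilon$ for every $\varepsilon$, the relative chain groups $C_n(K^\varepsilon,A^\varepsilon)=C_n(K^\varepsilon)/C_n(A^\varepsilon)$ all vanish, and with them the cycles, the boundaries, and the quotient defining $\mathrm{H}_q^I(\mathbb{X},\mathbb{X})$. I would present the axiomatic derivation as the primary argument, since it remains valid for an arbitrary theory satisfying \cref{as1,as2,as3}, which is precisely the setting in which the lemma is later invoked.
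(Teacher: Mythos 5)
Your primary argument is exactly the paper's proof: the same long exact sequence of the pair $(\mathbb{X},\mathbb{X})$ from Axiom \ref{ax4}, with both copies of $i_*$ identified as identity isomorphisms via Axiom \ref{ax1}, forcing $\mathrm{Im}\, j_*^I = 0 = \ker \partial^I$ and hence the vanishing of $\mathrm{H}_q^I(\mathbb{X},\mathbb{X})$. The chain-level alternative you sketch is a valid extra observation for the specific theory of Definition \ref{defhomrel}, but as you correctly note, the axiomatic chase is the right primary argument since the lemma must hold for any theory satisfying the axioms.
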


\begin{proof}
Let $i:\mathbb{X}\longrightarrow \mathbb{X}$ and $j:\mathbb{X}\longrightarrow (\mathbb{X}, \mathbb{X})$ be inclusion maps and consider the  following part of the persistent  homology exact sequence of the filtered relative set $\mathbb{(X,X)}$:
$$\mathrm{H}_{q}^I\mathbb{(X)} \stackrel{i_*}{\longrightarrow} \mathrm{H}_{q}^I\mathbb{(X)}\stackrel{j_*}{\longrightarrow} \mathrm{H}_{q}^I\mathbb{(X,X)}\stackrel{\alpha^{I}_{X}}{\longrightarrow} \mathrm{H}_{q-1}^I\mathbb{(X)} \stackrel{i_*}{\longrightarrow} \mathrm{H}_{q-1}^I\mathbb{(X)} .$$

Since each $i_*$ is an isomorphism, it follows by exactness that $0=\ker (i_*)=\mathrm{Im} (\alpha^{I}_X)$. Therefore, $\mathrm{H}^I_q\mathbb{(X,X)}=\ker (\alpha^{I}_X)$. Similarly $\mathrm{H}^I_q\mathbb{(X)}=\mathrm{Im} (i_*)=\ker (j_*)$ and $\mathrm{Im} (j_*)=0$. Also $\mathrm{Im} (j_*)=\ker (\alpha^{I}_X)$ and the result follows. 
\end{proof}

\begin{theo}
 $\alpha^{I}_A$ maps $\mathrm{H}_1^I\mathbb{(X,A)}$ into $\tilde{\mathrm{H}}_0^I\mathbb{(A)}$ in the exact sequence of the relative filtered set $\mathbb{(X,A)}$
\end{theo}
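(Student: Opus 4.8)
The plan is to prove the stronger set-theoretic inclusion $\mathrm{Im}(\alpha^{I}_A)\subseteq \tilde{\mathrm{H}}_0^I(\mathbb{A})$ by combining functoriality of the augmentation map defining reduced homology with exactness of the long exact sequence. Recall that, by definition, $\tilde{\mathrm{H}}_0^I(\mathbb{A})=\ker\big((c_{\mathbb{A}})_*^{I}\big)$, where $c_{\mathbb{A}}:\mathbb{A}\to\mathbb{P}_\alpha$ is the constant map collapsing every vertex to $p_0$. So it suffices to show that the composite $(c_{\mathbb{A}})_*^{I}\circ\alpha^{I}_A$ is the zero homomorphism.

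First I would fix $\alpha$ small enough that the two constant maps $c_{\mathbb{A}}:\mathbb{A}\to\mathbb{P}_\alpha$ and $c_{\mathbb{X}}:\mathbb{X}\to\mathbb{P}_\alpha$ are \emph{both} filtration preserving; concretely, any $\alpha\leq\min_{x\in X}F_X(\{x\})$ works, and because the compatibility condition in the definition of a relative filtered set gives $F_A(\sigma)\geq F_X(\sigma)$ for all $\sigma\subset A$, the same $\alpha$ also makes $c_{\mathbb{A}}$ filtration preserving. I would then observe that the constant map factors through the inclusion $i:\mathbb{A}\hookrightarrow\mathbb{X}$, namely $c_{\mathbb{A}}=c_{\mathbb{X}}\circ i$, since both maps send every simplex of $\mathbb{A}$ to the single vertex $p_0$. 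Applying the functoriality of $\mathrm{H}^{I}$ (\cref{ax2}) yields $(c_{\mathbb{A}})_*^{I}=(c_{\mathbb{X}})_*^{I}\circ i_*^{I}$.

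Next, by exactness of the sequence of $(\mathbb{X},\mathbb{A})$ (\cref{ax4}) at the term $\mathrm{H}_0^I(\mathbb{A})$, one has $\mathrm{Im}(\alpha^{I}_A)=\ker(i_*^{I})$, and in particular $i_*^{I}\circ\alpha^{I}_A=0$. Combining the two identities gives
\[
(c_{\mathbb{A}})_*^{I}\circ\alpha^{I}_A=(c_{\mathbb{X}})_*^{I}\circ i_*^{I}\circ\alpha^{I}_A=0,
\]
so every element of $\mathrm{Im}(\alpha^{I}_A)$ lies in $\ker\big((c_{\mathbb{A}})_*^{I}\big)=\tilde{\mathrm{H}}_0^I(\mathbb{A})$, which is exactly the assertion that $\alpha^{I}_A$ maps $\mathrm{H}_1^I(\mathbb{X},\mathbb{A})$ into $\tilde{\mathrm{H}}_0^I(\mathbb{A})$.

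I expect the only genuine subtlety to be the bookkeeping in the first step: ensuring that a \emph{single} value $\alpha$ simultaneously renders both constant maps filtration preserving, which is precisely what the relation $F_A\geq F_X$ guarantees. Once the factorization $c_{\mathbb{A}}=c_{\mathbb{X}}\circ i$ is justified at the level of filtration preserving maps, the remainder is a formal consequence of \cref{ax2,ax4}, with no further computation required.
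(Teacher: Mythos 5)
Your proof is correct, but it takes a genuinely different route from the paper's. The paper proves the statement by applying the naturality of the boundary operator (\cref{ax3}) to the constant map of pairs $f:(\mathbb{X},\mathbb{A})\to(\mathbb{P}_0,\mathbb{P}_0)$: since $\mathrm{H}_1^I(\mathbb{P}_0,\mathbb{P}_0)=0$ by Lemma \ref{lemma:PH-trivial}, the commutative square $(f|_A)_*\circ\alpha^I_A=\alpha^I_{P_0}\circ f_*$ forces $(f|_A)_*\circ\alpha^I_A=0$, which is the desired conclusion. You instead avoid \cref{ax3} and the triviality lemma altogether: you factor the augmentation as $c_{\mathbb{A}}=c_{\mathbb{X}}\circ i$, invoke functoriality (\cref{ax2}) to get $(c_{\mathbb{A}})_*=(c_{\mathbb{X}})_*\circ i_*$, and then use only the exactness relation $i_*\circ\alpha^I_A=0$ from \cref{ax4}; in effect your argument is the one-line observation $\mathrm{Im}(\alpha^I_A)=\ker(i_*)\subseteq\ker\big((c_{\mathbb{X}})_*\circ i_*\big)=\tilde{\mathrm{H}}_0^I(\mathbb{A})$. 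What your approach buys is economy of hypotheses -- it needs neither the naturality axiom for $\partial$ nor any computation of relative groups of a point pair -- while the paper's approach is the standard Eilenberg--Steenrod naturality argument and reuses machinery (Lemma \ref{lemma:PH-trivial}) already established for other purposes. A further small merit of your write-up is the explicit choice of a single $\alpha\leq\min_{x\in X}F_X(\{x\})$ making both constant maps filtration preserving (using $F_A\geq F_X$); the paper works with $\mathbb{P}_0$ and leaves this admissibility implicit, which is only valid under the tacit assumption that all filtration values are nonnegative.
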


\begin{proof}
Let $f:\mathbb{(X,A)}\longrightarrow (\mathbb{P}_0,\mathbb{P}_0)$ be the constant map.
If $h\in \mathrm{H}_1^I\mathbb{(X,A)}$, then $f_*^{I}(h)\in \mathrm{H}_1^I(\mathbb{P}_0,\mathbb{P}_0)=0$, by Lemma \ref{lemma:PH-trivial}. 

From Axiom \ref{ax3}  the following diagram commutes: 
\[
\xymatrix
{
	\mathrm{H}_{1}^I\mathbb{(X,A)}     \ar[d]_{f_*} \ar[rr]^{\alpha^{I}_A} & & \mathrm{H}_{0}^I\mathbb{(A)} \ar[d]^{(f|_A)_*}  \\
	\mathrm{H}_{1}^I(\mathbb{P}_0,\mathbb{P}_0) \ar[rr]^{\alpha^{I}_{P_0}} & &  \mathrm{H}_{0}^I(\mathbb{P}_0)
}
\]

\noindent Therefore $(f|_A)_*\circ \alpha^{I}_A=\alpha^{I}_B\circ f_*(h)=0$ and then  by definition, $\alpha^{I}_A(h)\in \tilde{\mathrm{H}}_0^I\mathbb{(A)}$.
\end{proof}

\begin{defi}
Let $f:\mathbb{(X,A)}\longrightarrow (\mathbb{P}_0,\mathbb{P}_0)$ be a filtration preserving map. The reduced homology sequence of $\mathbb{(X,A)}$ is the sequence composed by the kernels of the induced maps defined in Theorem \ref{thm:rest-to-homo}.
\end{defi}

\begin{theo}\label{theo:red-seq}
The reduced homology sequence of $\mathbb{(X,A)}$ differs from the homology sequence of $\mathbb{(X,A)}$ only on the part  
$$\mathrm{H}_{1}^I\mathbb{(X,A)} \stackrel{\alpha^{I}_A}{\longrightarrow} \mathrm{H}_{0}^I\mathbb{(A)}\stackrel{i_*}{\longrightarrow} \mathrm{H}_{0}^I\mathbb{(X)}\stackrel{j_*}{\longrightarrow} \mathrm{H}_{0}^I\mathbb{(X,A)}$$
\noindent which is replaced by
$$
\mathrm{H}_{1}^I\mathbb{(X,A)} 
\stackrel{\tilde{\alpha}^{I}_A}{\longrightarrow} 
\tilde{\mathrm{H}}_{0}^I\mathbb{(A)}
\stackrel{\tilde{i}_*}{\longrightarrow} 
\tilde{\mathrm{H}}_{0}^I\mathbb{(X)}
\stackrel{\tilde{j}_*}{\longrightarrow}
\tilde{\mathrm{H}}_{0}^I\mathbb{(X,A)},
$$
\noindent where, $\tilde{i}_*$,$\tilde{j}_*$ and $\tilde{\alpha}^{I}_A$ are the maps induced by $i_*$,$j_*$ and $\alpha^{I}_A$, respectively.
\end{theo}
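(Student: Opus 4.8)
The plan is to obtain the reduced sequence directly from the commutative ladder of Theorem~\ref{thm:rest-to-homo}, specialised to the constant filtration preserving map $f:(\mathbb{X},\mathbb{A})\to(\mathbb{P}_0,\mathbb{P}_0)$, and then to compare it term by term with the ordinary homology sequence of $(\mathbb{X},\mathbb{A})$. By the definition immediately preceding the statement, the reduced sequence is the sequence of kernels of the vertical maps $f_{2*}$, $f_{1*}$, $f_{*}$ of that ladder, so the first step is to name these kernels. Directly from the definition of the reduced groups one has $\ker f_{2*}=\tilde{\mathrm{H}}^I_n(\mathbb{A})$ and $\ker f_{1*}=\tilde{\mathrm{H}}^I_n(\mathbb{X})$; for the relative term the target is $\mathrm{H}^I_n(\mathbb{P}_0,\mathbb{P}_0)$, which vanishes by Lemma~\ref{lemma:PH-trivial}, so the corresponding kernel is the whole group $\mathrm{H}^I_n(\mathbb{X},\mathbb{A})=\tilde{\mathrm{H}}^I_n(\mathbb{X},\mathbb{A})$.

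Next I would dispose of every degree $n\geq 1$ at once. Remark~\ref{Remark-3} gives $\mathrm{H}^I_n(\mathbb{P}_0)=0$ for $n>0$, hence $\tilde{\mathrm{H}}^I_n(\mathbb{A})=\mathrm{H}^I_n(\mathbb{A})$ and $\tilde{\mathrm{H}}^I_n(\mathbb{X})=\mathrm{H}^I_n(\mathbb{X})$; together with the relative terms being untouched in all degrees, every group of the reduced sequence except $\mathrm{H}^I_0(\mathbb{A})$ and $\mathrm{H}^I_0(\mathbb{X})$ coincides with the corresponding group of the ordinary sequence. Since those two changed groups are replaced by their subgroups $\tilde{\mathrm{H}}^I_0(\mathbb{A})$ and $\tilde{\mathrm{H}}^I_0(\mathbb{X})$, the only arrows that can be affected are the three incident to them: the connecting map $\mathrm{H}_1^I(\mathbb{X},\mathbb{A})\to\mathrm{H}_0^I(\mathbb{A})$, the map $\mathrm{H}_0^I(\mathbb{A})\to\mathrm{H}_0^I(\mathbb{X})$, and the map $\mathrm{H}_0^I(\mathbb{X})\to\mathrm{H}_0^I(\mathbb{X},\mathbb{A})$. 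This isolates exactly the displayed four-term stretch as the only place where the two sequences differ.

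It then remains to check that the three replaced arrows are genuinely the restrictions of $\alpha^I_A$, $i_*$, $j_*$ and that they take values in the claimed subgroups. The bottom row of the ladder is the homology sequence of $(\mathbb{P}_0,\mathbb{P}_0)$, whose only nonzero map in the relevant range is the identity of $G^I$; commutativity then forces $f_{1*}\circ i_*=f_{2*}$, so $i_*$ carries $\ker f_{2*}=\tilde{\mathrm{H}}^I_0(\mathbb{A})$ into $\ker f_{1*}=\tilde{\mathrm{H}}^I_0(\mathbb{X})$, which defines $\tilde{i}_*$ (in agreement with Theorem~\ref{thm:iso-onto}). The map $\tilde{j}_*$ is simply the restriction of $j_*$ to the subdomain $\tilde{\mathrm{H}}^I_0(\mathbb{X})$, its codomain $\mathrm{H}^I_0(\mathbb{X},\mathbb{A})$ being unchanged, so no extra condition is needed there. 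Finally, that $\alpha^I_A$ already maps $\mathrm{H}_1^I(\mathbb{X},\mathbb{A})$ into $\tilde{\mathrm{H}}_0^I(\mathbb{A})$, which is what yields the corestriction $\tilde{\alpha}^I_A$, is exactly the content of the theorem immediately preceding this one, so that step is available for free.

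The argument is almost entirely bookkeeping, and the single point that deserves attention is the well-definedness of the three modified arrows as restrictions or corestrictions into the reduced groups. The potentially delicate one, namely that the connecting homomorphism lands in $\tilde{\mathrm{H}}_0^I(\mathbb{A})$, is supplied verbatim by the preceding theorem; the other two follow immediately from the commutativity of the ladder and from the fact that the relative groups are left unchanged.
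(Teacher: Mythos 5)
Your proposal is correct and follows essentially the same route as the paper's own proof: identify the reduced sequence as the kernel sequence of the ladder from Theorem~\ref{thm:rest-to-homo} for the constant map to $(\mathbb{P}_0,\mathbb{P}_0)$, use Remark~\ref{Remark-3} (together with Lemma~\ref{lemma:PH-trivial} for the relative terms) to see that nothing changes in positive degrees, and observe that the degree-zero modifications are forced by the definitions. You are in fact somewhat more careful than the paper, which leaves the well-definedness of $\tilde{i}_*$, $\tilde{j}_*$ and the corestriction $\tilde{\alpha}^{I}_A$ implicit, whereas you verify them via the ladder's commutativity and the theorem immediately preceding the statement.
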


\begin{proof}
From Remark \ref{Remark-3}, it follows  that $\mathrm{H}^I_0 (\mathbb{P}_0)=0$ for $n>0$. Then $\ker (f^{I}_{*})=\mathrm{H}^I_n {(\mathbb{P}_0)}$. 

For  $n=0$ we have changes in the sequence, but by definition $\tilde{\mathrm{H}}^I_n\mathbb{(X)}:=\ker (f^{I}_{*}: \mathrm{H}^I_n \mathbb{(X)} \longrightarrow \mathrm{H}^I_n (\mathbb{P}_0))$ and also for the pair $(X,A)$  and for $A$, which concludes the proof.
\end{proof}

\begin{defi}A filtered set $\mathbb{X}$ is said to be \textbf{homologically trivial} if  $\tilde{\mathrm{H}}_q^I\mathbb{(X)}=0$, for all $q$.
If $A\neq\emptyset$, $\mathbb{(X,A)}$ is said to be homologically trivial if $\tilde{\mathrm{H}}_q^I\mathbb{(X,A)}=0$ for all $q$.
\end{defi}

\begin{theo}\label{thm: homotriv-1}
If $\mathbb{(X,A)}$ is homologically trivial and $A\neq\emptyset$, then the homomorphisms 
$i_*:\mathrm{H}_q^I\mathbb{(A)}\to \mathrm{H}_q^I\mathbb{(X)}$, $q>0$, and 
$\tilde{i}_*:\tilde{\mathrm{H}}_0^I\mathbb{(A)}\to \tilde{\mathrm{H}}_0^I\mathbb{(X)}$, induced by the inclusion, are isomorphisms.
\end{theo}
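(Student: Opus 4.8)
The plan is to read the conclusion off the reduced homology long exact sequence of the pair, once we observe that homological triviality forces all the relative groups to vanish. First I would record that, because $A\neq\emptyset$, the constant filtration preserving map $(\mathbb{X},\mathbb{A})\to(\mathbb{P}_0,\mathbb{P}_0)$ lands in groups that are zero by Lemma \ref{lemma:PH-trivial}; hence the reduced relative group $\tilde{\mathrm{H}}_q^I(\mathbb{X},\mathbb{A})$, defined as the kernel of the induced map (Theorem \ref{thm:rest-to-homo} together with the definition preceding Theorem \ref{theo:red-seq}), coincides with $\mathrm{H}_q^I(\mathbb{X},\mathbb{A})$ for every $q$. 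Consequently the hypothesis that $(\mathbb{X},\mathbb{A})$ is homologically trivial is precisely the assertion that $\mathrm{H}_q^I(\mathbb{X},\mathbb{A})=0$ for all $q$.

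Next I would invoke Theorem \ref{theo:red-seq}, which provides the exact reduced homology sequence
$$\cdots\longrightarrow \mathrm{H}_{q+1}^I(\mathbb{X},\mathbb{A})\longrightarrow \tilde{\mathrm{H}}_q^I(\mathbb{A})\stackrel{\tilde{i}_*}{\longrightarrow}\tilde{\mathrm{H}}_q^I(\mathbb{X})\stackrel{\tilde{j}_*}{\longrightarrow}\mathrm{H}_q^I(\mathbb{X},\mathbb{A})\longrightarrow\cdots,$$
in which the relative terms are unaltered by reduction and $\tilde{\mathrm{H}}_q^I=\mathrm{H}_q^I$ for $q>0$ by Remark \ref{Remark-3}. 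Substituting $\mathrm{H}_{q+1}^I(\mathbb{X},\mathbb{A})=0=\mathrm{H}_q^I(\mathbb{X},\mathbb{A})$ collapses each portion of the sequence to
$$0\longrightarrow \tilde{\mathrm{H}}_q^I(\mathbb{A})\stackrel{\tilde{i}_*}{\longrightarrow}\tilde{\mathrm{H}}_q^I(\mathbb{X})\longrightarrow 0,$$
so exactness makes $\tilde{i}_*$ an isomorphism for every $q\geq 0$.

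Finally I would translate this back into the stated maps: for $q>0$ we have $\tilde{\mathrm{H}}_q^I=\mathrm{H}_q^I$, so $i_*\colon \mathrm{H}_q^I(\mathbb{A})\to\mathrm{H}_q^I(\mathbb{X})$ is an isomorphism, while for $q=0$ the displayed short exact sequence directly yields that $\tilde{i}_*\colon\tilde{\mathrm{H}}_0^I(\mathbb{A})\to\tilde{\mathrm{H}}_0^I(\mathbb{X})$ is an isomorphism. I do not anticipate a genuine obstacle here; the only steps requiring care are justifying the identification $\tilde{\mathrm{H}}_q^I(\mathbb{X},\mathbb{A})=\mathrm{H}_q^I(\mathbb{X},\mathbb{A})$ (so that the hypothesis really annihilates the relative groups) and confirming that Theorem \ref{theo:red-seq} assembles into a genuine long exact sequence in all degrees, after which the conclusion is a purely formal diagram chase.
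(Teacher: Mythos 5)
Your proposal is correct and takes essentially the same route as the paper: both arguments note that homological triviality annihilates the relative groups and then read the isomorphisms off the long exact sequence of the pair, the paper using the ordinary sequence for $q>0$ and the reduced sequence of Theorem \ref{theo:red-seq} at $q=0$, while you simply run every degree through the reduced sequence and recover the unreduced statement for $q>0$ via Remark \ref{Remark-3}. Your explicit justification that $\tilde{\mathrm{H}}_q^I(\mathbb{X},\mathbb{A})=\mathrm{H}_q^I(\mathbb{X},\mathbb{A})$ (via Lemma \ref{lemma:PH-trivial} applied to the constant map to $(\mathbb{P}_0,\mathbb{P}_0)$, which is where $A\neq\emptyset$ is used) makes precise a step the paper's two-line proof leaves implicit.
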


\begin{proof}
Since  $\mathbb{(X,A)}$ is homologically trivial, the exact persistent homology sequence is 
$$0\longrightarrow \mathrm{H}_{q}^I\mathbb{(A)}\stackrel{i_*}{\longrightarrow} \mathrm{H}_{q}^I\mathbb{(X)}\longrightarrow 0$$
\noindent and then $i_*$ is an isomorphism, for all $q$.

The second isomorphism follows from the definition of $\tilde{\mathrm{H}}^I_n\mathbb{(X)}$ and from the fact that $(\mathbb{X},\mathbb{A})$ is homologically trivial, which implies that $\tilde{\mathrm{H}}_0^I\mathbb{(X,A)}=0$.

\end{proof}

\begin{theo}\label{thm: homotriv-2}
If $\mathbb{A}$ is homologically trivial and $\emptyset\neq \mathbb{A}\subset \mathbb{X}$, 
then the homomorphisms $j_*:\mathrm{H}_q^I\mathbb{(X)}\to \mathrm{H}_q^I\mathbb{(X,A)}$, $q>0$ and $\tilde{j}_*:\tilde{H}_0^I\mathbb{(X)}\to \tilde{H}_0^I\mathbb{(X,A)}$, induced by the inclusion  $j:(\mathbb{X},\emptyset)\longrightarrow \mathbb{(X,A)}$, are isomorphisms.
\end{theo}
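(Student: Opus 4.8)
The plan is to read off both statements directly from the reduced persistent homology long exact sequence of $(\mathbb{X},\mathbb{A})$ furnished by Theorem \ref{theo:red-seq}, using the homological triviality of $\mathbb{A}$ to annihilate the neighbouring terms. Recall that $\mathbb{A}$ being homologically trivial means $\tilde{\mathrm{H}}_q^I(\mathbb{A})=0$ for all $q$; by Remark \ref{Remark-3} this yields $\mathrm{H}_q^I(\mathbb{A})=\tilde{\mathrm{H}}_q^I(\mathbb{A})=0$ for every $q>0$, whereas in degree zero it only asserts that the reduced group $\tilde{\mathrm{H}}_0^I(\mathbb{A})$ vanishes (the unreduced $\mathrm{H}_0^I(\mathbb{A})$ does not). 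This distinction is precisely the reason the reduced sequence, rather than the ordinary one, is the right tool in low degrees.

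For $q>0$ I would isolate the segment
\[
\mathrm{H}_q^I(\mathbb{A}) \stackrel{i_*}{\longrightarrow} \mathrm{H}_q^I(\mathbb{X}) \stackrel{j_*}{\longrightarrow} \mathrm{H}_q^I(\mathbb{X},\mathbb{A}) \stackrel{\alpha_A^I}{\longrightarrow} \mathrm{H}_{q-1}^I(\mathbb{A})
\]
of the exact sequence. When $q>1$ both outer groups vanish (as $q$ and $q-1$ are positive), so exactness forces $j_*$ to be simultaneously injective and surjective, hence an isomorphism. For $q=1$ the group on the right is replaced, by Theorem \ref{theo:red-seq}, with $\tilde{\mathrm{H}}_0^I(\mathbb{A})$, which is again zero, while $\mathrm{H}_1^I(\mathbb{A})=0$ on the left; the same exactness argument then shows $j_*$ is an isomorphism. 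Since $\mathrm{H}_q^I=\tilde{\mathrm{H}}_q^I$ for $q>0$ by Remark \ref{Remark-3}, this $j_*$ coincides with the map named in the statement.

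For the degree-zero assertion I would invoke the tail of the reduced sequence,
\[
\mathrm{H}_1^I(\mathbb{X},\mathbb{A}) \stackrel{\tilde{\alpha}_A^I}{\longrightarrow} \tilde{\mathrm{H}}_0^I(\mathbb{A}) \stackrel{\tilde{i}_*}{\longrightarrow} \tilde{\mathrm{H}}_0^I(\mathbb{X}) \stackrel{\tilde{j}_*}{\longrightarrow} \tilde{\mathrm{H}}_0^I(\mathbb{X},\mathbb{A}) \longrightarrow 0,
\]
whose low-degree terms are exactly the replacement described in Theorem \ref{theo:red-seq}. Because $\tilde{\mathrm{H}}_0^I(\mathbb{A})=0$, exactness immediately gives that $\tilde{j}_*$ is injective and surjective, hence an isomorphism. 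I do not anticipate a genuine obstacle: the argument is pure diagram chasing in an exact sequence, and the only point requiring care is the bookkeeping of passing to the reduced sequence in degrees $0$ and $1$ (so that the hypothesis on $\mathbb{A}$ can legitimately be applied where $\mathrm{H}_0^I(\mathbb{A})$ itself is nonzero). This is the companion of Theorem \ref{thm: homotriv-1}, and the two proofs are entirely parallel.
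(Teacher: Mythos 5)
Your proof is correct and follows essentially the same route as the paper: the long exact sequence of the pair $(\mathbb{X},\mathbb{A})$, with homological triviality of $\mathbb{A}$ annihilating the flanking terms, and a switch to the reduced sequence of Theorem \ref{theo:red-seq} in low degrees. If anything, your explicit handling of the $q=1$ case --- replacing $\mathrm{H}_0^I(\mathbb{A})$ by $\tilde{\mathrm{H}}_0^I(\mathbb{A})$ before invoking the hypothesis --- is more careful than the paper's own write-up, which glosses over this step and even asserts $\mathrm{H}_0^I(\mathbb{A})=0$, a vanishing that does not follow from homological triviality (only the reduced group vanishes in degree zero).
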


\begin{proof}
Since  $\mathbb{A}$ is homologically trivial, 
$\mathrm{H}_q^I\mathbb{(A)}=0$ for $q\neq 0$ and $\tilde{H}_0^I\mathbb{(X)}=0$. Therefore, the persistent homology sequence is 
\[
0\longrightarrow \mathrm{H}_{q}^I\mathbb{(X)}\stackrel{i_*}{\longrightarrow} \mathrm{H}_{q}^I\mathbb{(X,A)}\longrightarrow 0.
\]
\noindent By exactness,
$$i_*:\mathrm{H}_q^I\mathbb{(X)}\approx \mathrm{H}_q^I\mathbb{(X,A)}, \forall q.$$

For the second isomorphism, we consider the following part of the reduced persistent homology sequence:
$$\cdots\longrightarrow \tilde{\mathrm{H}}_{0}^I\mathbb{(A)}\stackrel{\tilde{i}_*}{\longrightarrow} \tilde{\mathrm{H}}_{0}^I\mathbb{(X)}\stackrel{\tilde{j}_*}{\longrightarrow} \mathrm{H}_{0}^I\mathbb{(X,A)}\longrightarrow 0 .$$

Since $\mathrm{H}_{0}^I\mathbb{(X,A)}=\tilde{\mathrm{H}}_{0}^I\mathbb{(X,A)}$ and $\tilde{\mathrm{H}}_{0}^I\mathbb{(A)}\subseteq \mathrm{H}_{0}^I\mathbb{(A)}=0$, by definition of homologically trivial, the sequence becomes 
\[
 \longrightarrow \tilde{\mathrm{H}}_{0}^I\mathbb{(X)}\stackrel{\tilde{j}_*}{\longrightarrow} \tilde{\mathrm{H}}_{0}^I\mathbb{(X,A)}\longrightarrow 0
 \]
\noindent and by exactness we obtain the required isomorphism.
\end{proof}

\begin{theo}\label{thm: homotriv-3}
Let $\mathbb{(X,A)}$ be a relative filtered set. If $\mathbb{X}$ is homologically trivial and $\mathbb{A}\neq\emptyset$, then the boundary homomorphisms 
$\alpha^{I}_{A*}:\mathrm{H}_q^I\mathbb{(X,A)}\to \mathrm{H}_{q-1}^I\mathbb{(A)}$, $q>1$, and $\tilde{\alpha^{I}}_A: \mathrm{H}_1^I\mathbb{(X,A)}\to \tilde{\mathrm{H}}_0^I\mathbb{(A)}$ are isomorphisms.
\end{theo}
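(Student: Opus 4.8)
The plan is to feed the hypothesis that $\mathbb{X}$ is homologically trivial directly into the exact persistent homology sequence of the pair $(\mathbb{X},\mathbb{A})$, exactly as in the proofs of Theorems \ref{thm: homotriv-1} and \ref{thm: homotriv-2}: the range $q>1$ is handled with the ordinary sequence, and the borderline case $q=1$ with the reduced sequence of Theorem \ref{theo:red-seq}. The only inputs needed are that homological triviality of $\mathbb{X}$, together with Remark \ref{Remark-3}, yields $\mathrm{H}_q^I(\mathbb{X})=\tilde{\mathrm{H}}_q^I(\mathbb{X})=0$ for every $q>0$, and $\tilde{\mathrm{H}}_0^I(\mathbb{X})=0$ in degree zero.

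For $q>1$ I would isolate the fragment
\[
\mathrm{H}_q^I(\mathbb{X}) \stackrel{j_*}{\longrightarrow} \mathrm{H}_q^I(\mathbb{X},\mathbb{A}) \stackrel{\alpha^{I}_A}{\longrightarrow} \mathrm{H}_{q-1}^I(\mathbb{A}) \stackrel{i_*}{\longrightarrow} \mathrm{H}_{q-1}^I(\mathbb{X})
\]
of the exact sequence of $(\mathbb{X},\mathbb{A})$. Since $q>1$ forces $q-1\geq 1>0$, both flanking groups $\mathrm{H}_q^I(\mathbb{X})$ and $\mathrm{H}_{q-1}^I(\mathbb{X})$ vanish, so exactness sandwiches $\alpha^{I}_A$ between two trivial groups and makes it an isomorphism.

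For $q=1$ the target must be $\tilde{\mathrm{H}}_0^I(\mathbb{A})$ rather than $\mathrm{H}_0^I(\mathbb{A})$, because the latter splits off a coefficient summand $(G^Ix)_A$ and need not vanish; this is the only point requiring care, and it is precisely what the reduced sequence of Theorem \ref{theo:red-seq} is designed to accommodate. I would invoke the reduced fragment
\[
\mathrm{H}_1^I(\mathbb{X}) \stackrel{j_*}{\longrightarrow} \mathrm{H}_1^I(\mathbb{X},\mathbb{A}) \stackrel{\tilde{\alpha}^{I}_A}{\longrightarrow} \tilde{\mathrm{H}}_0^I(\mathbb{A}) \stackrel{\tilde{i}_*}{\longrightarrow} \tilde{\mathrm{H}}_0^I(\mathbb{X}),
\]
in which $\mathrm{H}_1^I(\mathbb{X})=0$ (as $1>0$) and $\tilde{\mathrm{H}}_0^I(\mathbb{X})=0$ by homological triviality. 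Exactness then collapses this to $0\to \mathrm{H}_1^I(\mathbb{X},\mathbb{A})\to \tilde{\mathrm{H}}_0^I(\mathbb{A})\to 0$, delivering the second isomorphism. No genuine obstacle arises beyond correctly replacing the ordinary tail of the long exact sequence by its reduced counterpart at the $q=1$ level, so that the connecting map lands in the reduced group where the vanishing of $\tilde{\mathrm{H}}_0^I(\mathbb{X})$ can be applied.
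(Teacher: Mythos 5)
Your proposal is correct and is exactly the argument the paper leaves implicit: its proof of Theorem \ref{thm: homotriv-3} says only that it is ``analogous to the previous two theorems,'' and what you write out --- the ordinary exact sequence of $(\mathbb{X},\mathbb{A})$ with vanishing flanking groups $\mathrm{H}_q^I(\mathbb{X})$ and $\mathrm{H}_{q-1}^I(\mathbb{X})$ for $q>1$, and the reduced sequence of Theorem \ref{theo:red-seq} with $\tilde{\mathrm{H}}_0^I(\mathbb{X})=0$ for the case $q=1$ --- is precisely that analogy carried out. You also correctly identify why the connecting map must land in $\tilde{\mathrm{H}}_0^I(\mathbb{A})$ rather than $\mathrm{H}_0^I(\mathbb{A})$, which is the one point of care the paper's terse proof glosses over.
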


\begin{proof}
The proof of this theorem is analogous to the previous two theorems.
\end{proof}

\begin{coro}\label{coro-homo-triv}
The relative filtered set $\mathbb{(X,A)}$ is homologically trivial if both $\mathbb{X}$ and $\mathbb{A}$ are homologically trivial.
\end{coro}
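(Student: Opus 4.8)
The plan is to obtain the conclusion directly from the exactness of the reduced persistent homology sequence of the pair $(\mathbb{X},\mathbb{A})$ provided by Theorem~\ref{theo:red-seq}. First I would unwind the hypotheses: by definition of homological triviality, the assumption that both $\mathbb{X}$ and $\mathbb{A}$ are homologically trivial means $\tilde{\mathrm{H}}_q^I(\mathbb{X})=0$ and $\tilde{\mathrm{H}}_q^I(\mathbb{A})=0$ for every $q$. I would also record the identification $\tilde{\mathrm{H}}_q^I(\mathbb{X},\mathbb{A})=\mathrm{H}_q^I(\mathbb{X},\mathbb{A})$ for all $q$: since $\mathrm{H}_q^I(\mathbb{P}_0,\mathbb{P}_0)=0$ by Lemma~\ref{lemma:PH-trivial}, the map $f_*^{I}$ defining the reduction has zero codomain, so its kernel is the whole group. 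Consequently, proving that $(\mathbb{X},\mathbb{A})$ is homologically trivial is the same as proving $\mathrm{H}_q^I(\mathbb{X},\mathbb{A})=0$ for all $q$.

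Next I would isolate, inside the reduced sequence, the three consecutive terms
$$
\tilde{\mathrm{H}}_q^I(\mathbb{X}) \longrightarrow \mathrm{H}_q^I(\mathbb{X},\mathbb{A}) \longrightarrow \tilde{\mathrm{H}}_{q-1}^I(\mathbb{A}),
$$
and observe that both flanking groups vanish by hypothesis. Exactness then squeezes the middle term: the image of the left map is $0$, so the right map is injective, while its target is $0$, so the middle group is $0$. This works uniformly in $q$, and in particular handles the low-degree end (where the sequence reads $\tilde{\mathrm{H}}_1^I(\mathbb{X})\to\mathrm{H}_1^I(\mathbb{X},\mathbb{A})\to\tilde{\mathrm{H}}_0^I(\mathbb{A})$ and $\tilde{\mathrm{H}}_0^I(\mathbb{X})\to\mathrm{H}_0^I(\mathbb{X},\mathbb{A})\to 0$) in the same way. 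Combining this with the identification from the first step yields $\tilde{\mathrm{H}}_q^I(\mathbb{X},\mathbb{A})=0$ for all $q$, which is exactly the assertion.

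The argument is a two-sided squeeze in an exact sequence, so there is essentially no hard step once the reduced sequence is in hand. The only point demanding attention is the transition near degrees $0$ and $1$, where the reduced groups $\tilde{\mathrm{H}}_0^I$ genuinely differ from the unreduced ones; I would lean on Theorem~\ref{theo:red-seq} to guarantee that this portion of the reduced sequence is still exact and that the neighbors of $\mathrm{H}_q^I(\mathbb{X},\mathbb{A})$ are still the vanishing reduced groups, rather than re-deriving exactness by hand. The hypothesis $\mathbb{A}\neq\emptyset$, implicit in the definition of homological triviality for a pair, is what makes the reduced sequence of $(\mathbb{X},\mathbb{A})$ available in the first place.
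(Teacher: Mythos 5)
Your proof is correct and rests on exactly the machinery the paper uses: its one-line proof cites Theorems \ref{thm: homotriv-1}, \ref{thm: homotriv-2} and \ref{thm: homotriv-3}, whose content is precisely the exactness of the (reduced) persistent homology sequence of the pair together with the vanishing of the flanking groups, i.e.\ the two-sided squeeze you carry out directly, including the same identification $\tilde{\mathrm{H}}_q^I(\mathbb{X},\mathbb{A})=\mathrm{H}_q^I(\mathbb{X},\mathbb{A})$ via Lemma \ref{lemma:PH-trivial} and the same care at degrees $0$ and $1$ via Theorem \ref{theo:red-seq}. The only cosmetic difference is that you inline the argument, using both vanishing hypotheses at once, where the paper factors through the isomorphisms of Theorem \ref{thm: homotriv-2}.
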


\begin{proof}
The proof follows from Theorems \ref{thm: homotriv-1}, \ref{thm: homotriv-2} and \ref{thm: homotriv-3}.
\end{proof}

\subsection{Filtered Triads}

\begin{defi}Assume that
the inclusion maps
$\overline{i}: \mathbb{(A,B)}\subset \mathbb{(X,B)}$ and $\overline{j}:\mathbb{(X,B)}\subset \mathbb{(X,A)}$
are filtration preserving maps. The set $((X,A,B),(F_X,F_A,F_B))$, denoted by $\mathbb{(X,A,B)}$, is called \textbf{relative filtered triple}. The inclusion maps and boundary homomorphisms associated with the relative filtered sets $\mathbb{(X,A)}$, $\mathbb{(X,B)}$ and $\mathbb{(A,B)}$ are, respectively, denoted by \smallskip

$i:\mathbb{A}\longrightarrow \mathbb{X},\hspace{0.5cm} j:\mathbb{X}\longrightarrow \mathbb{(X,A)}, \hspace{0.5cm} \alpha^{I}:\mathrm{H}_q^I\mathbb{(X,A)}\longrightarrow \mathrm{H}_{q-1}^I\mathbb{(A)}$  \smallskip

$i':\mathbb{B}\longrightarrow \mathbb{X},\hspace{0.5cm} j':\mathbb{X}\longrightarrow \mathbb{(X,B)}, \hspace{0.5cm} {\alpha'}^{I}:\mathrm{H}_q^I\mathbb{(X,B)}\longrightarrow \mathrm{H}_{q-1}^I\mathbb{(B)}$ \smallskip

$i'':\mathbb{B}\longrightarrow \mathbb{A},\hspace{0.5cm} j'':\mathbb{A}\longrightarrow \mathbb{(A,B)}, \hspace{0.5cm} {\alpha''}^{I}:\mathrm{H}_q^I\mathbb{(A,B)}\longrightarrow \mathrm{H}_{q-1}^I\mathbb{(B)}$.

The homomorphism 
$\overline{\alpha^{I}}=j_*''\alpha^{I}$ is called \textbf{the connecting  operator} of the triple $\mathbb{(X,A,B)}$:
$$ \overline{\alpha^{I}}: \mathrm{H}_q^I\mathbb{(X,A)}\longrightarrow \mathrm{H}_{q-1}^I\mathbb{(A,B)}$$
\noindent and the sequence of groups
$$  \cdots\longrightarrow \mathrm{H}^I_q \mathbb{(A,B)} \stackrel{\overline{i}_*}{\longrightarrow} \mathrm{H}^I_q \mathbb{(X,B)} \stackrel{\overline{j}_*}{\longrightarrow} \mathrm{H}^I_q \mathbb{(X,A)} \stackrel{\overline{\alpha^{I}}}{\longrightarrow} \mathrm{H}^I_{q-1} \mathbb{(A,B)}\longrightarrow\cdots $$
\noindent is called \textbf{the  persistent homology sequence of the triple } $\mathbb{(X,A,B)}$.
\end{defi}

\begin{theo}
The persistent homology sequence of a triple is exact.
\end{theo}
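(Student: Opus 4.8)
The plan is to deduce the exactness of the triple sequence from the three long exact sequences of the pairs $(\mathbb{X},\mathbb{A})$, $(\mathbb{X},\mathbb{B})$ and $(\mathbb{A},\mathbb{B})$, which are supplied by Axiom \ref{ax4}, using only the functoriality of $\mathrm{H}^I$ (Axiom \ref{ax2}) and the naturality of the boundary operator (Axiom \ref{ax3}). First I would record the purely formal identities coming from factorizations of inclusions of pairs. Functoriality gives $\overline{j}_*\,j'_*=j_*$ (from $\overline{j}\circ j'=j$), $i'_*=i_*\,i''_*$ (from $i'=i\circ i''$) and $\overline{i}_*\,j''_*=j'_*\,i_*$ (both composites being the inclusion $(\mathbb{A},\emptyset)\hookrightarrow(\mathbb{X},\mathbb{B})$). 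Since the composite $\overline{j}\circ\overline{i}\colon(\mathbb{A},\mathbb{B})\to(\mathbb{X},\mathbb{A})$ factors through $(\mathbb{A},\mathbb{A})$, and $\mathrm{H}^I_q(\mathbb{A},\mathbb{A})=0$ by Lemma \ref{lemma:PH-trivial}, we also get $\overline{j}_*\,\overline{i}_*=0$. Finally, applying the naturality square of Axiom \ref{ax3} to the maps of pairs $\overline{i}$ and $\overline{j}$ (whose restrictions to subspaces are $\mathrm{id}_\mathbb{B}$ and $i''$, respectively) yields ${\alpha'}^{I}\circ\overline{i}_*={\alpha''}^{I}$ and $\alpha^{I}\circ\overline{j}_*=i''_*\circ{\alpha'}^{I}$.

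With these relations established, exactness is verified at each of the three positions by a short diagram chase through the pair sequences. For instance, at $\mathrm{H}^I_q(\mathbb{X},\mathbb{B})$ the inclusion $\mathrm{Im}\,\overline{i}_*\subseteq\ker\overline{j}_*$ is immediate from $\overline{j}_*\,\overline{i}_*=0$; for the reverse inclusion I take $y\in\ker\overline{j}_*$, note that $i''_*\,{\alpha'}^{I}(y)=\alpha^{I}\,\overline{j}_*(y)=0$ so that exactness of the $(\mathbb{A},\mathbb{B})$-sequence gives $w$ with ${\alpha'}^{I}(y)={\alpha''}^{I}(w)={\alpha'}^{I}\,\overline{i}_*(w)$, then exactness of the $(\mathbb{X},\mathbb{B})$-sequence gives $y-\overline{i}_*(w)=j'_*(u)$, and a final application of $\overline{j}_*$ together with $\ker j_*=\mathrm{Im}\,i_*$ and $\overline{i}_*\,j''_*=j'_*\,i_*$ lets me absorb $u$ into $\mathrm{Im}\,\overline{i}_*$. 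The verifications at $\mathrm{H}^I_q(\mathbb{X},\mathbb{A})$ (using $\overline{\alpha^{I}}=j''_*\,\alpha^{I}$, $\ker j''_*=\mathrm{Im}\,i''_*$ and $\ker\alpha^{I}=\mathrm{Im}\,j_*$) and at $\mathrm{H}^I_{q-1}(\mathbb{A},\mathbb{B})$ (using $\ker{\alpha''}^{I}=\mathrm{Im}\,j''_*$ and $\ker i_*=\mathrm{Im}\,\alpha^{I}$) follow the same three-step pattern, each combining one application of a factorization identity with two applications of exactness.

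I expect the only real difficulty to be bookkeeping: each factorization and each instance of naturality of $\partial$ must be set up with the correct source and target pairs so that the eight inclusion-induced maps and three boundary operators interlock consistently; once these identities are certified, every inclusion and its converse is a routine two- or three-step chase. If a fully self-contained argument is not wanted, an alternative is to observe that the hypotheses of the classical exactness theorem for the homology sequence of a triple hold verbatim in the category $\mathscr{RF}$ and to invoke \cite{foundations} directly, exactly as was done above for the Mayer--Vietoris sequence.
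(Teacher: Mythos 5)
Your proposal is correct, and it is exactly the argument the paper has in mind: the paper's own ``proof'' is a one-line deferral (``similar to the exactness of the relative persistent homology sequence of the pair''), by which it means the classical Eilenberg--Steenrod diagram chase deducing the triple sequence from the three pair sequences via functoriality (Axiom \ref{ax2}), naturality of $\partial$ (Axiom \ref{ax3}), and $\mathrm{H}^I_q(\mathbb{A},\mathbb{A})=0$ (Lemma \ref{lemma:PH-trivial}). Your write-up in fact supplies the formal identities and chases that the paper omits, so it is a fleshed-out version of the intended proof rather than a different route.
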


\begin{proof}
The proof is similar to the exactness of the relative persistent homology sequence of the pair. 
\end{proof}

\begin{theo}\label{thm:induced-triple}
A filtration preserving map $g:\mathbb{(X,A,B)}\longrightarrow \mathbb{(X',A',B')}$ induces a homomorphism from the persistent homology sequence of $\mathbb{(X,A,B)}$ into the persistent homology sequence of $\mathbb{(X',A',B')}$.
\end{theo}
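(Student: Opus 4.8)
The plan is to follow the pattern of Theorem \ref{thm:rest-to-homo}, building a commutative ladder between the two triple sequences. Since $g$ is a filtration preserving map of triples it satisfies $g(\mathbb{A})\subset \mathbb{A'}$ and $g(\mathbb{B})\subset \mathbb{B'}$ and is filtration preserving on each level; hence it restricts to filtration preserving maps of relative filtered sets
\[
g:\mathbb{(X,A)}\longrightarrow \mathbb{(X',A')},\qquad
g:\mathbb{(X,B)}\longrightarrow \mathbb{(X',B')},\qquad
g|_{\mathbb{A}}:\mathbb{(A,B)}\longrightarrow \mathbb{(A',B')}.
\]
Each of these induces a homomorphism in relative persistent homology, and these three families will serve as the vertical arrows connecting the homology sequence of $\mathbb{(X,A,B)}$ to that of $\mathbb{(X',A',B')}$. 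It then remains to check that the resulting ladder commutes square by square.

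The squares built from the inclusion-induced maps $\overline{i}_*$ and $\overline{j}_*$ commute purely by functoriality. For the $\overline{i}$-square, the two composites $\overline{i}'\circ (g|_{\mathbb{A}})$ and $g\circ \overline{i}$ coincide as maps of pairs $\mathbb{(A,B)}\to \mathbb{(X',B')}$, so Axiom \ref{ax2} gives $\overline{i}'_*\circ (g|_{\mathbb{A}})_* = g_*\circ \overline{i}_*$; the $\overline{j}$-square is handled identically, using $\overline{j}'\circ g = g\circ \overline{j}$ as maps $\mathbb{(X,B)}\to\mathbb{(X',A')}$.

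The one square requiring genuine argument, which I expect to be the crux, is the one containing the connecting operator $\overline{\alpha^{I}}=j''_*\alpha^{I}$. Write $\widehat{\alpha}^{I}$ for the pair-boundary $\mathrm{H}_q^I\mathbb{(X',A')}\to \mathrm{H}_{q-1}^I\mathbb{(A')}$ and $\widehat{j}'':\mathbb{A'}\to\mathbb{(A',B')}$ for the corresponding inclusion, so that the connecting operator of the primed triple is $\widehat{j}''_*\widehat{\alpha}^{I}$. Unwinding the definitions I would compute
\[
\widehat{j}''_*\,\widehat{\alpha}^{I}\circ g_*
= \widehat{j}''_*\circ (g|_{\mathbb{A}})_*\circ \alpha^{I}
= (g|_{\mathbb{A}})_*\circ j''_*\circ \alpha^{I}
= (g|_{\mathbb{A}})_*\circ \overline{\alpha^{I}},
\]
where the first equality is the naturality of the pair-boundary (Axiom \ref{ax3}) applied to $g:\mathbb{(X,A)}\to\mathbb{(X',A')}$, and the second is Axiom \ref{ax2} together with the identity $\widehat{j}''\circ (g|_{\mathbb{A}})=(g|_{\mathbb{A}})\circ j''$ of maps $\mathbb{A}\to\mathbb{(A',B')}$ (here the two occurrences of $(g|_{\mathbb{A}})_*$ are the maps induced on $\mathrm{H}_{q-1}^I\mathbb{(A)}$ and on $\mathrm{H}_{q-1}^I\mathbb{(A,B)}$, respectively). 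Since all squares commute, the three families of induced maps constitute a homomorphism from the homology sequence of $\mathbb{(X,A,B)}$ into that of $\mathbb{(X',A',B')}$, which is the assertion.
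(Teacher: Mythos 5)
Your proposal is correct and follows essentially the same route as the paper: decompose $g$ into the three restriction maps $g_1:(\mathbb{A},\mathbb{B})\to(\mathbb{A}',\mathbb{B}')$, $g_2:(\mathbb{X},\mathbb{B})\to(\mathbb{X}',\mathbb{B}')$, $g_3:(\mathbb{X},\mathbb{A})\to(\mathbb{X}',\mathbb{A}')$ and verify the ladder commutes using Axioms \ref{ax2} and \ref{ax3}. In fact you supply more detail than the paper, which simply asserts that commutativity follows ``in an analogous way'' to Theorem \ref{thm:rest-to-homo}, whereas you explicitly check the connecting-operator square via the factorization $\overline{\alpha^{I}}=j''_*\alpha^{I}$ --- a worthwhile addition, since that square is the only one not immediate from functoriality.
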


\begin{proof}

A filtration preserving map $g:\mathbb{(X,A,B)}\longrightarrow \mathbb{(X',A',B')}$ defines three filtration preserving maps
$$ g_1:\mathbb{(A,B)}\longrightarrow \mathbb{(A',B')} \text{ , } g_2:\mathbb{(X,B)}\longrightarrow \mathbb{(X',B')}  \text{ and } g_3:\mathbb{(X,A)}\longrightarrow \mathbb{(X',A')}, $$

\noindent obtained from $g$. To prove the theorem, consider the following diagram
\[
\xymatrix
{
 \ar[r]
    & \mathrm{H}_{n}^I\mathbb{(A,B)} \ar[r]^{\overline{i}_*} \ar[d]^{g_{1*}}
    & \mathrm{H}_{n}^I\mathbb{(X,B)}     \ar[d]^{g_{2*}} \ar[r]^{\overline{j}_*} 
    & \mathrm{H}_{n}^I\mathbb{(X,A)} \ar[r]^{\overline{\alpha^{I}}} \ar[d]^{g_{3*}} 
    & \mathrm{H}_{n-1}^I\mathbb{(A,B)} \ar[d]^{g_{1*}} \ar[r] 
    & \\
 \ar[r]	
    & \mathrm{H}_{n}^I\mathbb{(A',B')} \ar[r]^{\overline{i'}_*}
    & \mathrm{H}_{n}^I\mathbb{(X',B')} \ar[r]^{\overline{j'}_*} 
    & \mathrm{H}_{n-1}^I\mathbb{(X',A')} \ar[r]^{\overline{{\alpha'}^{I}}}
    & \mathrm{H}_{n-1}^I\mathbb{(A',B')} \ar[r] 
    &
}
\]

\noindent where $\overline{i},\overline{j}, \overline{i'}, \overline{j'}$ are  appropriate inclusion maps, whose commutativity follows in a analogous way to the proof of Theorem \ref{thm:rest-to-homo}.
\end{proof}

\begin{theo}\label{4.14}
If $B\neq \emptyset$  and  one of the relative filtered sets $\mathbb{(X,A)}$, $\mathbb{(X,B)}$ or $\mathbb{(A,B)}$ is homologically trivial, then the persistent homology group of the remaining two relative filtered sets are isomorphic under the maps of the persistent homology sequence of $\mathbb{(X,A,B)}$.

\begin{enumerate}
    \item If $\mathrm{H}_q^I\mathbb{(X,A)}=0$ for each $q$, then $\overline{i}_*:\mathrm{H}_q^I\mathbb{(A,B)}\approx \mathrm{H}_q^I\mathbb{(X,B)}$ for each $q$.
    
    \item If $\mathrm{H}_q^I\mathbb{(X,B)}=0$ for each $q$, then $\overline{\alpha^{I}}:\mathrm{H}_q^I\mathbb{(X,A)}\approx \mathrm{H}_{q-1}^I\mathbb{(A,B)}$ for each $q$.
    
    \item If $\mathrm{H}_q^I\mathbb{(A,B)}=0$ for each $q$, then $\overline{j}_*:\mathrm{H}_q^I\mathbb{(X,B)}\approx \mathrm{H}_q^I\mathbb{(X,A)}$ for each $q$.
\end{enumerate}

Conversely, any one of the three statements implies the corresponding hypothesis.
\end{theo}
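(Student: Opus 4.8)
The plan is to deduce all three isomorphisms, together with their converses, directly from the exactness of the persistent homology sequence of the triple $\mathbb{(X,A,B)}$ proved just above, namely
$$\cdots\longrightarrow \mathrm{H}^I_q \mathbb{(A,B)} \stackrel{\overline{i}_*}{\longrightarrow} \mathrm{H}^I_q \mathbb{(X,B)} \stackrel{\overline{j}_*}{\longrightarrow} \mathrm{H}^I_q \mathbb{(X,A)} \stackrel{\overline{\alpha^{I}}}{\longrightarrow} \mathrm{H}^I_{q-1} \mathbb{(A,B)}\longrightarrow\cdots.$$
No new machinery is required: each assertion follows by isolating the appropriate four-term stretch of this sequence and observing that the two groups flanking the map in question vanish by hypothesis. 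The assumption $B\neq\emptyset$ is needed only to ensure that the triple and its persistent homology sequence are defined.

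For statement (1), assume $\mathrm{H}_q^I\mathbb{(X,A)}=0$ for all $q$ and read off the segment
$$\mathrm{H}_{q+1}^I\mathbb{(X,A)} \stackrel{\overline{\alpha^{I}}}{\longrightarrow} \mathrm{H}_q^I\mathbb{(A,B)} \stackrel{\overline{i}_*}{\longrightarrow} \mathrm{H}_q^I\mathbb{(X,B)} \stackrel{\overline{j}_*}{\longrightarrow} \mathrm{H}_q^I\mathbb{(X,A)}.$$
Both outer groups are zero, so $\ker\overline{i}_*=\mathrm{Im}\,\overline{\alpha^{I}}=0$ and $\mathrm{Im}\,\overline{i}_*=\ker\overline{j}_*=\mathrm{H}_q^I\mathbb{(X,B)}$; hence $\overline{i}_*$ is an isomorphism. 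Statement (2) is identical after an index shift: assuming $\mathrm{H}_q^I\mathbb{(X,B)}=0$ for all $q$, the stretch $\mathrm{H}_q^I\mathbb{(X,B)}\to\mathrm{H}_q^I\mathbb{(X,A)}\stackrel{\overline{\alpha^{I}}}{\to}\mathrm{H}_{q-1}^I\mathbb{(A,B)}\to\mathrm{H}_{q-1}^I\mathbb{(X,B)}$ has vanishing ends, forcing $\overline{\alpha^{I}}$ to be both injective and surjective. Statement (3) is the same argument with $\overline{j}_*$ flanked by $\mathrm{H}_q^I\mathbb{(A,B)}=0$ and $\mathrm{H}_{q-1}^I\mathbb{(A,B)}=0$.

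For the converses I would run exactness in the opposite direction. For instance, if $\overline{i}_*\colon\mathrm{H}_q^I\mathbb{(A,B)}\to\mathrm{H}_q^I\mathbb{(X,B)}$ is an isomorphism for every $q$, then surjectivity in degree $q$ gives $\ker\overline{j}_*=\mathrm{Im}\,\overline{i}_*=\mathrm{H}_q^I\mathbb{(X,B)}$, so $\overline{j}_*=0$ and therefore $\ker\overline{\alpha^{I}}=\mathrm{Im}\,\overline{j}_*=0$; injectivity in degree $q-1$ gives $\mathrm{Im}\,\overline{\alpha^{I}}=\ker\overline{i}_*=0$. Thus $\overline{\alpha^{I}}\colon\mathrm{H}_q^I\mathbb{(X,A)}\to\mathrm{H}_{q-1}^I\mathbb{(A,B)}$ is simultaneously injective and zero, whence $\mathrm{H}_q^I\mathbb{(X,A)}=0$. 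The converses of (2) and (3) follow from the same two-sided reading of the sequence.

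The content is genuinely routine once the exact sequence of the triple is available; there is no real obstacle beyond bookkeeping. The only point demanding care is keeping the degree indices aligned so that, in each case, the flanking groups that are forced to vanish are precisely the ones assumed trivial (and in the converses, so that the injectivity/surjectivity of $\overline{i}_*$, $\overline{\alpha^{I}}$, or $\overline{j}_*$ is invoked in the correct degrees). I expect the write-up to consist of three nearly identical short paragraphs for the direct statements and three for the converses, each amounting to a one-line exactness observation.
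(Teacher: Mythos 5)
Your proposal is correct and follows essentially the same route as the paper, which proves the theorem by the standard exactness argument in the persistent homology sequence of the triple $(\mathbb{X},\mathbb{A},\mathbb{B})$ (the paper phrases this as ``analogous to Theorems \ref{thm: homotriv-1}, \ref{thm: homotriv-2} and \ref{thm: homotriv-3} by just changing the notation''). Your write-up is in fact slightly more complete, since you spell out the converse direction explicitly --- showing, e.g., that $\overline{\alpha^{I}}$ is simultaneously injective and zero --- where the paper leaves this to the analogy.
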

 
\begin{proof}
The proof of this theorem is analogous to the proof of Theorems \ref{thm: homotriv-1}, \ref{thm: homotriv-2} and \ref{thm: homotriv-3} by just changing the notation. Note that in that proof, we used the homologically trivial property to obtain that $\mathrm{H}_q^I\mathbb{(X,A)}=0$, $\mathrm{H}_q^I\mathbb{(X,B)}=0$ and $\mathrm{H}_q^I\mathbb{(A,B)}=0$, which we have by hypothesis.
\end{proof}

\begin{theo}
Let $\mathbb{(X,A,B)}$ be a relative filtered triple. If the inclusion $\mathbb{B}\subset \mathbb{A}$ induces an isomorphism $\mathrm{H}_q^I\mathbb{(B)}\approx \mathrm{H}_q^I\mathbb{(A)}$ for all values of $q$, then the inclusion map $\mathbb{(X,B)}\subset \mathbb{(X,A)}$ also induces an isomorphism $\mathrm{H}_q^I\mathbb{(X,B)}\approx \mathrm{H}_q^I\mathbb{(X,A)}$ for all $q$. Similarly, if $\mathbb{A}\subset \mathbb{X}$ induces $\mathrm{H}_q^I\mathbb{(A)}\approx \mathrm{H}_q^I\mathbb{(X)}$ for all $q$, then $\mathbb{(A,B)}\subset \mathbb{(X,B)}$ induces $\mathrm{H}_q^I\mathbb{(A,B)}\approx \mathrm{H}_q^I\mathbb{(X,B)}$ for all $q$.
\end{theo}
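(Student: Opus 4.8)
The plan is to deduce each of the two isomorphism statements from Theorem \ref{4.14} by first establishing that an appropriate relative persistent homology group vanishes. In both cases the vanishing will be read off from the long exact persistent homology sequence of a pair (Axiom \ref{ax4}), where the hypothesis that a certain inclusion induces isomorphisms on the absolute groups is exactly what kills the relevant maps. This reduces the whole statement to two applications of the triple isomorphism theorem, both of which require $B\neq\emptyset$ (which is part of the data of a relative filtered triple).

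For the first statement, suppose the inclusion $\mathbb{B}\subset\mathbb{A}$ induces isomorphisms $i''_*:\mathrm{H}_q^I\mathbb{(B)}\approx\mathrm{H}_q^I\mathbb{(A)}$ for all $q$. I would first show that $\mathrm{H}_q^I\mathbb{(A,B)}=0$ for every $q$. Consider the long exact sequence of the pair $\mathbb{(A,B)}$ with the inclusion-induced maps $i''$, $j''$ and boundary ${\alpha''}^{I}$ of the triple definition:
$$\cdots\to \mathrm{H}_q^I\mathbb{(B)}\xrightarrow{i''_*}\mathrm{H}_q^I\mathbb{(A)}\xrightarrow{j''_*}\mathrm{H}_q^I\mathbb{(A,B)}\xrightarrow{{\alpha''}^{I}}\mathrm{H}_{q-1}^I\mathbb{(B)}\xrightarrow{i''_*}\mathrm{H}_{q-1}^I\mathbb{(A)}\to\cdots$$
Surjectivity of $i''_*$ in degree $q$ forces $j''_*=0$ by exactness at $\mathrm{H}_q^I\mathbb{(A)}$, while injectivity of $i''_*$ in degree $q-1$ forces ${\alpha''}^{I}=0$ by exactness at $\mathrm{H}_{q-1}^I\mathbb{(B)}$; exactness at $\mathrm{H}_q^I\mathbb{(A,B)}$ then gives $\mathrm{H}_q^I\mathbb{(A,B)}=\mathrm{Im}(j''_*)=0$. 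With this vanishing in hand, Theorem \ref{4.14}(3) applied to the triple $\mathbb{(X,A,B)}$ immediately yields the desired isomorphism $\overline{j}_*:\mathrm{H}_q^I\mathbb{(X,B)}\approx\mathrm{H}_q^I\mathbb{(X,A)}$ for all $q$.

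The second statement follows by the identical template. Assuming that $\mathbb{A}\subset\mathbb{X}$ induces isomorphisms $i_*:\mathrm{H}_q^I\mathbb{(A)}\approx\mathrm{H}_q^I\mathbb{(X)}$ for all $q$, the same diagram chase applied to the long exact sequence of the pair $\mathbb{(X,A)}$ (with maps $i$, $j$, $\alpha^{I}$) produces $\mathrm{H}_q^I\mathbb{(X,A)}=0$ for every $q$. Theorem \ref{4.14}(1) then delivers the isomorphism $\overline{i}_*:\mathrm{H}_q^I\mathbb{(A,B)}\approx\mathrm{H}_q^I\mathbb{(X,B)}$ for all $q$.

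Since both halves reduce to routine diagram chasing in the pair sequences followed by a single invocation of Theorem \ref{4.14}, I do not anticipate a serious obstacle. The only points requiring care are bookkeeping the correct inclusion-induced maps of the triple $\mathbb{(X,A,B)}$ in the two pair sequences, and verifying that the nonemptiness hypothesis on $B$ is in force so that Theorem \ref{4.14} applies. The conceptual crux worth recording explicitly is that the hypothesis on the \emph{absolute} groups is precisely the condition that makes the connecting maps in the pair sequence vanish, which is what converts it into the relative-vanishing input of Theorem \ref{4.14}.
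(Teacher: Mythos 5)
Your proof is correct. For the first assertion it is essentially the paper's own argument: the paper also derives the isomorphism $\overline{j}_*:\mathrm{H}_q^I(\mathbb{X},\mathbb{B})\approx\mathrm{H}_q^I(\mathbb{X},\mathbb{A})$ from the hypothesis together with exactness of the triple sequence, which amounts to exactly your chase --- kill $\mathrm{H}_q^I(\mathbb{A},\mathbb{B})$ via the pair sequence of $(\mathbb{A},\mathbb{B})$, then invoke the triple machinery (your citation of Theorem \ref{4.14}(3) just packages that exactness consequence). For the second assertion, however, you diverge from the paper: you repeat the same template, showing $\mathrm{H}_q^I(\mathbb{X},\mathbb{A})=0$ and applying Theorem \ref{4.14}(1), whereas the paper avoids the triple sequence entirely and simply applies Corollary \ref{homo-filt-iso} (the Five Lemma applied to the homomorphism of pair sequences) to the inclusion $(\mathbb{A},\mathbb{B})\subset(\mathbb{X},\mathbb{B})$, whose restrictions $\mathbb{A}\to\mathbb{X}$ and $\mathrm{id}:\mathbb{B}\to\mathbb{B}$ are isomorphisms on absolute groups by hypothesis. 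Your route buys uniformity --- both halves become one vanishing computation plus one citation of the triple isomorphism theorem, making the symmetry of the statement transparent --- while the paper's Five Lemma route for the second half is shorter and, notably, does not pass through Theorem \ref{4.14} at all. One small correction to your bookkeeping: $B\neq\emptyset$ is \emph{not} part of the paper's definition of a relative filtered triple (the definition only requires that $\overline{i}$ and $\overline{j}$ be filtration preserving), so your reliance on Theorem \ref{4.14} imports a hypothesis the statement itself does not make. This is harmless in substance, since the enumerated implications (1)--(3) of Theorem \ref{4.14} are pure exactness consequences of the triple sequence and do not actually use $B\neq\emptyset$ (and in the degenerate case $B=\emptyset$ the conclusions follow directly from the pair sequence), but you should either note this or run the chase in the triple sequence directly rather than citing the theorem verbatim.
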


\begin{proof}
The first part of the theorem follows from the hypothesis that $\mathrm{H}_q^I\mathbb{(B)}\approx \mathrm{H}_q^I\mathbb{(A)}$ for all values of $q$ and by the exactness of the persistent homology sequence of a triple $\mathbb{(X,A,B)}$. 

Applying Corollary\ref{homo-filt-iso} to the inclusion $\mathbb{(X,B)}\subset \mathbb{(X,A)}$, we obtain the proof of the second assertion.
\end{proof}

To continue, it will be important to remember the definitions of contiguous and  contiguously equivalence (\cref{contiguous},\ref{seqcontiguous},\ref{eqcontiguous}).

\begin{theo}\label{thm:contequi-to-iso}
If $f:(\mathbb{X},\mathbb{A})\longrightarrow(\mathbb{Y},\mathbb{B})$ and $g:(\mathbb{Y},\mathbb{B})\longrightarrow(\mathbb{X},\mathbb{A})$ 
are contiguously equivalent, then 
$f$ induces an isomorphism
$f_*:\mathrm{H}_q^I\mathbb{(X,A)}$ $\longrightarrow \mathrm{H}_q^I\mathbb{(Y,B)}$ and $(f_*)^{-1}=g_*$, for all $q$.
\end{theo}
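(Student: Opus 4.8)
The plan is to obtain this as a purely formal consequence of the structural axioms, exactly parallel to the argument behind Theorem \ref{thm:cont-to-iso}. The essential content — that contiguous maps induce equal homomorphisms (Axiom \ref{ax5}) — was the substantive step, and it was already secured through the cylinder construction in the proof of Theorem \ref{theoequiv}. Consequently, no new geometric input is required here; the argument amounts to composing induced maps and invoking functoriality.

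First I would unwind the hypothesis using Definition \ref{eqcontiguous}: to say that $f$ and $g$ are contiguously equivalent is precisely to say that $g\circ f$ is contiguous to $\mathop{id}_{(\mathbb{X},\mathbb{A})}$ and that $f\circ g$ is contiguous to $\mathop{id}_{(\mathbb{Y},\mathbb{B})}$, both as filtration preserving maps of relative filtered sets.

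Next I would apply Axiom \ref{ax5} to each contiguous pair, giving
\[
(g\circ f)_*^I = (\mathop{id}_{(\mathbb{X},\mathbb{A})})_*^I
\qquad\text{and}\qquad
(f\circ g)_*^I = (\mathop{id}_{(\mathbb{Y},\mathbb{B})})_*^I .
\]
By Axiom \ref{ax1} the two right-hand sides are the identity homomorphisms of $\mathrm{H}_q^I(\mathbb{X},\mathbb{A})$ and $\mathrm{H}_q^I(\mathbb{Y},\mathbb{B})$, while by the functoriality of Axiom \ref{ax2} the left-hand sides factor as $g_*^I\circ f_*^I$ and $f_*^I\circ g_*^I$. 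Combining these identities yields $g_*^I\circ f_*^I = \mathop{id}$ and $f_*^I\circ g_*^I = \mathop{id}$, so $f_*^I$ is invertible with two-sided inverse $g_*^I$. This establishes both conclusions simultaneously, for every $q$.

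The step carrying the real weight is the appeal to Axiom \ref{ax5}, but that difficulty is already discharged upstream, so the only point demanding care is bookkeeping. Specifically, I would make sure that Definition \ref{eqcontiguous} supplies contiguity at the level of the relative pairs and not merely of the underlying filtered sets, and that the naturality of the induced maps with respect to the relative long exact sequences — as recorded in Theorem \ref{thm:rest-to-homo} and Corollary \ref{homo-filt-iso} — is consistent with the identifications above. I anticipate no genuine obstacle beyond verifying these compatibilities.
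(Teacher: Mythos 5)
Your proof is correct and follows essentially the same route as the paper's: unwind Definition \ref{eqcontiguous}, apply Axiom \ref{ax5} to both contiguous pairs, and conclude via Axioms \ref{ax1} and \ref{ax2} that $g_*^I$ is a two-sided inverse of $f_*^I$. If anything, your version is slightly cleaner, since the paper's proof also invokes the Five Lemma, which plays no actual role in the argument, and your closing remarks about naturality (Theorem \ref{thm:rest-to-homo}, Corollary \ref{homo-filt-iso}) are harmless but likewise not needed here.
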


\begin{proof} From hypothesis, one has that $g\circ f$ and $\mathop{id}_{(\mathbb{X},\mathbb{A})}$ are contiguous and $f\circ g$ and $\mathop{id}_{(\mathbb{Y},\mathbb{B})}$ are contiguous. 

Therefore  
by Axiom \ref{ax5} and the Five Lemma, $(f\circ g)_*=\mathop{id}_*:\mathrm{H}_q^I\mathbb{(X,A)}\to \mathrm{H}_q^I\mathbb{(X,A)}$ and  by Axiom $2$ one has that $(f\circ g)_*=f_*\circ g_*$ and then $f_*\circ g_*=\mathop{id}_\ast$. Similarly, $g_*\circ f_*=\mathop{id}_\ast :\mathrm{H}_q^I\mathbb{(Y,B)}\to \mathrm{H}_q^I\mathbb{(Y,B)}$.

Then, $f_*:\mathrm{H}_q^I\mathbb{(X,A)}\approx \mathrm{H}_q^I\mathbb{(Y,B)}$ is an isomorphism with inverse $(f_*)^{-1}=g_*$.
\end{proof}

\begin{coro}\label{coro:contequi-to-iso}
A contiguity $f$ from $(\mathbb{X},\mathbb{A})$ to $(\mathbb{Y},\mathbb{B})$ induces an isomorphism of the ordinary and reduced persistent homology sequences of $\mathbb{(X,A)}$ with the corresponding sequence of $\mathbb{(Y,B)}$.
\end{coro}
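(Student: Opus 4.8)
The plan is to reduce the statement to the isomorphisms already established for contiguities, and then assemble them along the long exact sequences via the naturality of Theorem \ref{thm:rest-to-homo}. First I would record that a contiguity $f:(\mathbb{X},\mathbb{A})\to(\mathbb{Y},\mathbb{B})$ comes, by Definition \ref{eqcontiguous}, with a map $g:(\mathbb{Y},\mathbb{B})\to(\mathbb{X},\mathbb{A})$ such that $g\circ f$ and $f\circ g$ are contiguous to the respective identities. Restricting $f$ and $g$ to the total sets and to the subspaces yields $f_1,g_1$ on $\mathbb{X},\mathbb{Y}$ and $f_2,g_2$ on $\mathbb{A},\mathbb{B}$. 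The first step is to check that $f_1$ and $f_2$ are again contiguities: this is immediate from Definition \ref{contiguous}, since the contiguity condition for maps of pairs requires, for each $\sigma\in K^\alpha$, a common enlargement $\tau$ with the extra demand that $\tau\subset N^\alpha$ whenever $\sigma\in L^\alpha$. Dropping the $L^\alpha$-constraint gives the contiguity of $f_1,g_1$, and keeping only it gives that of $f_2,g_2$, so that $g_1\circ f_1$ is contiguous to $\mathop{id}_\mathbb{X}$, $f_1\circ g_1$ to $\mathop{id}_\mathbb{Y}$, and similarly for the subspace pair.

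With this in hand, Theorem \ref{thm:cont-to-iso} (applied to $f$, and to $f_1$, $f_2$ regarded as maps of pairs with empty second coordinate, so that $\mathrm{H}_n^I(\mathbb{X},\emptyset)=\mathrm{H}_n^I(\mathbb{X})$) shows that $f_*$, $f_{1*}$ and $f_{2*}$ are isomorphisms in every degree. By Theorem \ref{thm:rest-to-homo} the map $f$ induces a commutative ladder between the ordinary persistent homology sequence of $(\mathbb{X},\mathbb{A})$ and that of $(\mathbb{Y},\mathbb{B})$, whose vertical arrows are precisely $f_*$, $f_{1*}$ and $f_{2*}$. Since all verticals are isomorphisms, this ladder is an isomorphism of the ordinary sequences; here the Five Lemma, as invoked in Corollary \ref{homo-filt-iso} and Theorem \ref{thm:contequi-to-iso}, is not even required, only the commutativity already furnished by Theorem \ref{thm:rest-to-homo}.

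It remains to treat the reduced sequence. By Theorem \ref{theo:red-seq} the reduced sequence of a pair coincides with the ordinary one except in the terminal segment, where the absolute groups $\mathrm{H}_0^I(\mathbb{A})$ and $\mathrm{H}_0^I(\mathbb{X})$ are replaced by $\tilde{\mathrm{H}}_0^I(\mathbb{A})$ and $\tilde{\mathrm{H}}_0^I(\mathbb{X})$, the relative term being unchanged because $\tilde{\mathrm{H}}_0^I(\mathbb{X},\mathbb{A})=\mathrm{H}_0^I(\mathbb{X},\mathbb{A})$ for $\mathbb{A}\neq\emptyset$. By Theorem \ref{thm:iso-onto}, $f_{1*}$ carries $\tilde{\mathrm{H}}_0^I(\mathbb{X})$ into $\tilde{\mathrm{H}}_0^I(\mathbb{Y})$ and restricts to an isomorphism on the complementary summand $(G^Ix)_X\to(G^If(x))_Y$; since $f_{1*}$ is an isomorphism of the whole group $\mathrm{H}_0^I(\mathbb{X})=\tilde{\mathrm{H}}_0^I(\mathbb{X})\oplus(G^Ix)_X$ respecting this direct-sum decomposition, the induced map $\tilde{f}_{1*}:\tilde{\mathrm{H}}_0^I(\mathbb{X})\to\tilde{\mathrm{H}}_0^I(\mathbb{Y})$ is an isomorphism, and likewise $\tilde{f}_{2*}$. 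Restricting the ladder of the previous paragraph to the reduced sequences, using that $\tilde{i}_*,\tilde{j}_*,\tilde{\alpha}^I_A$ are the corestrictions of $i_*,j_*,\alpha^I_A$, therefore yields the desired isomorphism of reduced sequences.

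The main obstacle is organizational rather than computational: one must verify carefully that restricting a contiguity of pairs produces contiguities on both the subspace and the total space, so that Theorem \ref{thm:cont-to-iso} applies to $f_1$ and $f_2$, and that the contiguity respects the augmentation to a point, so that the induced maps send reduced groups into reduced groups. Both points are supplied by Definition \ref{contiguous} and Theorem \ref{thm:iso-onto}, after which the naturality of Theorem \ref{thm:rest-to-homo} assembles everything into an isomorphism of the full ordinary and reduced sequences.
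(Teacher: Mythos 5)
Your proof is correct and takes essentially the same route as the paper's: restrict the contiguous equivalence to the total sets and subspaces, apply the contiguity-induces-isomorphism result (the paper invokes Theorem \ref{thm:contequi-to-iso}, you the equivalent Theorem \ref{thm:cont-to-iso}), assemble the ordinary sequences via the ladder of Theorem \ref{thm:rest-to-homo}, and handle the reduced sequence with Theorems \ref{thm:iso-onto} and \ref{theo:red-seq}, exactly as the paper does. Your write-up merely supplies details the paper leaves implicit, namely the verification that restrictions of a contiguity are again contiguities and the direct-sum argument showing that $\tilde{f}_{1\ast}:\tilde{\mathrm{H}}^I_0(\mathbb{X})\to\tilde{\mathrm{H}}^I_0(\mathbb{Y})$ is an isomorphism.
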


\begin{proof}
Let $f,g$ be contiguously equivalent from $\mathbb{(X,A)}$ into $\mathbb{(Y,B)}$. Then it is defined  contiguously equivalences $f_1,g_1$ from $\mathbb{X}$ into $\mathbb{Y}$ and $f_2,g_2$ from $\mathbb{A}$ into $\mathbb{B}$.

On the one hand, Corollary \ref{coro:contequi-to-iso} implies that $f,f_1$ and $f_2$ give rise to isomorphisms $f_*:\mathrm{H}_q^I\mathbb{(X,A)}\approx \mathrm{H}_q^I\mathbb{(Y,B)}$, ${f_1}_*:\mathrm{H}_q^I\mathbb{(X)}\approx \mathrm{H}_q^I\mathbb{(Y)}$ and ${f_2}_*:\mathrm{H}_q^I\mathbb{(A)}\approx \mathrm{H}_q^I\mathbb{(B)}$. In the other hand, the same result implies that $g,g_1$ and $g_2$ give rise to inverse isomorphisms $g_*:\mathrm{H}_q^I\mathbb{(Y,B)}\approx \mathrm{H}_q^I\mathbb{(X,A)}$, ${g_1}_*:\mathrm{H}_q^I\mathbb{(Y)}\approx \mathrm{H}_q^I\mathbb{(X)}$ and ${g_2}_*:\mathrm{H}_q^I\mathbb{(B)}\approx \mathrm{H}_q^I\mathbb{(A)}$. Then, we have an isomorphism between the ordinary persistent homology sequence of $\mathbb{(X,A)}$ with the corresponding sequence of $\mathbb{(Y,B)}$.

The same argument holds for the reduced persistent homology sequence, but instead of Theorem \ref{thm:contequi-to-iso}, in that case, we would have to invoke Theorem \ref{thm:iso-onto} and Theorem \ref{theo:red-seq}.

\end{proof}

\begin{theo}\label{thm:contequi-to-point}
Every filtered set contiguously equivalent to a point is homologically trivial.
\end{theo}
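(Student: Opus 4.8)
The plan is to reduce the statement to the corresponding fact for a single point, which we already know to be homologically trivial, and then to transport that property along the contiguity using the invariance of \emph{reduced} persistent homology under contiguities.

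First I would fix a single-point filtered set $\mathbb{P}$ to which $\mathbb{X}$ is contiguously equivalent, together with filtration preserving maps $f:\mathbb{X}\to\mathbb{P}$ and $g:\mathbb{P}\to\mathbb{X}$ witnessing the contiguous equivalence. The first step is to record that $\mathbb{P}$ itself is homologically trivial. By the dimension axiom (\cref{ax6}) together with Remark \ref{Remark-3}, one has $\tilde{\mathrm{H}}_q^I(\mathbb{P})=\mathrm{H}_q^I(\mathbb{P})=0$ for all $q>0$, while the theorem on the reduced homology of a single-point set gives $\tilde{\mathrm{H}}_0^I(\mathbb{P})=0$. Hence $\tilde{\mathrm{H}}_q^I(\mathbb{P})=0$ for every $q$.

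Next I would invoke Corollary \ref{coro:contequi-to-iso}: since $f$ is a contiguity, it induces an isomorphism of the reduced persistent homology sequences of $\mathbb{X}$ and $\mathbb{P}$, and in particular isomorphisms $\tilde{f}_\ast^{I}:\tilde{\mathrm{H}}_q^I(\mathbb{X})\xrightarrow{\ \cong\ }\tilde{\mathrm{H}}_q^I(\mathbb{P})$ for every $q$. Combining this with the previous step yields $\tilde{\mathrm{H}}_q^I(\mathbb{X})\cong\tilde{\mathrm{H}}_q^I(\mathbb{P})=0$ for all $q$, which is exactly the assertion that $\mathbb{X}$ is homologically trivial.

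The argument is essentially bookkeeping once Corollary \ref{coro:contequi-to-iso} is in hand, so the only genuine subtlety is matching the filtration preserving map to $\mathbb{P}$ used in the \emph{definition} of $\tilde{\mathrm{H}}^I(\mathbb{X})$ with the contiguity $f$; this compatibility is precisely what the reduced-sequence part of that corollary guarantees, and it is the point I would be most careful to state explicitly.
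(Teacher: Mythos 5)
Your proposal is correct and follows essentially the same route as the paper: both arguments transport the homological triviality of the single-point filtered set $\mathbb{P}$ along the contiguity, using the isomorphism of the ordinary and reduced persistent homology sequences (Corollary \ref{coro:contequi-to-iso}) together with $\mathrm{H}_q^I(\mathbb{P})=0$ for $q\neq 0$ and $\tilde{\mathrm{H}}_0^I(\mathbb{P})=0$. If anything, your citation is slightly more precise than the paper's, which attributes the reduced-sequence isomorphism to Theorem \ref{thm:contequi-to-iso} when it is really the corollary that covers the reduced case.
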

\begin{proof}
A filtered set $\mathbb{X}$ is said to be homologically trivial if $\mathrm{H}_q^I\mathbb{(X)}=0$ for $q\neq 0$ and $\tilde{\mathrm{H}}_0^I\mathbb{(X)}=0$.

Since the set is contiguously equivalent to a point $P_0$, using Theorem \ref{thm:contequi-to-iso}, there exists an isomorphism between their respective ordinary and reduced persistent homology sequences. 

Also considering that $\mathrm{H}_q^I(\mathbb{P}_0)=0$ for $q\neq 0$ and $\tilde{\mathrm{H}}_0^I(\mathbb{P}_0)=0$, the result follows.
\end{proof}

\begin{defi}
Let $\mathbb{(X',A')}$ and $\mathbb{(X,A)}$ be relative filtered sets with $(X',A')\subset (X,A)$ and $F_{X'}(\sigma)\leq F_{X}(\sigma)$ {for all $\sigma\subset X'$}. The relative set  $\mathbb{(X',A')}$ is a \textbf{retract} of the relative set $\mathbb{(X,A)}$ if there exists a filtration preserving map $f:\mathbb{(X,A)}\longrightarrow \mathbb{(X',A')}$, which we call \textbf{retraction}, such that $f(x')=x'$ for each $x'\in X'$.
It is called a \textbf{deformation retract} if there is a retraction $f$ such that the composition of $f$ with the inclusion map $\mathbb{(X',A')}\subset \mathbb{(X,A)}$ and the identity map of $\mathbb{(X,A)}$are contiguous maps.
\end{defi}

\begin{lemma}\label{lemma:deformation}
If $\mathbb{(X',A')}$ is a deformation retract of $\mathbb{(X,A)}$, the inclusion map $i:\mathbb{(X',A')}\subset\mathbb{(X,A)}$ and the retraction $f$ are  contiguously equivalent maps.
\end{lemma}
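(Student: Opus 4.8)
The plan is to unwind the definitions, since \Cref{lemma:deformation} turns out to follow almost immediately once the two sides are matched against the definition of contiguously equivalent maps. By that definition, to show that $i:\mathbb{(X',A')}\hookrightarrow\mathbb{(X,A)}$ and the retraction $f:\mathbb{(X,A)}\to\mathbb{(X',A')}$ are contiguously equivalent I must establish exactly two things: first, that $f\circ i$ and $\mathop{id}_{(\mathbb{X}',\mathbb{A}')}$ are contiguous maps, and second, that $i\circ f$ and $\mathop{id}_{(\mathbb{X},\mathbb{A})}$ are contiguous maps. The whole content of the lemma is that these two conditions hold, so I would simply verify them one at a time, taking care to keep track of which composition lives on $\mathbb{(X',A')}$ and which on $\mathbb{(X,A)}$.

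First I would treat the composition $f\circ i:\mathbb{(X',A')}\to\mathbb{(X',A')}$. For any $x'\in X'$ the inclusion gives $i(x')=x'$, and since $f$ is a retraction we have $f(x')=x'$; hence $(f\circ i)(x')=x'$, which shows that $f\circ i=\mathop{id}_{(\mathbb{X}',\mathbb{A}')}$ on the nose. It then suffices to note that any filtration preserving map is contiguous to itself: for each $\alpha$ and each $\sigma\in K^\alpha$ one may take $\tau=f_\alpha(\sigma)$ in \Cref{contiguous}, and the required condition on the subcomplexes holds because $f$ respects the pair structure. Consequently $f\circ i$ is (trivially) contiguous to $\mathop{id}_{(\mathbb{X}',\mathbb{A}')}$, and the first condition is satisfied.

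For the second condition, the composition $i\circ f:\mathbb{(X,A)}\to\mathbb{(X,A)}$ is precisely the composite of the retraction $f$ with the inclusion $i$, and the hypothesis that $\mathbb{(X',A')}$ is a \emph{deformation} retract of $\mathbb{(X,A)}$ is exactly the statement that this composite is contiguous to $\mathop{id}_{(\mathbb{X},\mathbb{A})}$. Thus the second condition is nothing more than the defining property of a deformation retract, and no additional work is needed. Combining the two verifications, $i$ and $f$ meet both requirements in \Cref{eqcontiguous}, so they are contiguously equivalent maps, completing the proof. The only subtlety — and it is a purely bookkeeping one rather than a genuine obstacle — is keeping the two compositions straight, since one collapses to the identity on the retract while the other is literally the hypothesis; there is no hard analytic or algebraic step involved.
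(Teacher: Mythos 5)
Your proof is correct and takes essentially the same route as the paper's: both verify that $f\circ i=\mathop{id}_{(\mathbb{X}',\mathbb{A}')}$ holds exactly because $f$ is a retraction (hence the two maps are trivially contiguous), and both observe that the contiguity of $i\circ f$ with $\mathop{id}_{(\mathbb{X},\mathbb{A})}$ is precisely the defining property of a deformation retract. Your explicit check that a filtration preserving map is contiguous to itself (taking $\tau=f_\alpha(\sigma)$) only makes precise a step the paper leaves implicit.
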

\begin{proof}
In order to prove the result, we must show that
$i\circ f$ and $\mathop{id}_{\mathbb{(X,A)}}$ are contiguous maps and $f\circ i$ and $\mathop{id}_{\mathbb{(X',A')}}$ are contiguous maps.

Since $f$ is a retraction, $f$ is a filtration preserving map such that $f(x)=x$ for each $x\in X'$, then $f\circ i=\mathop{id}_{\mathbb{(X',A')}}$ and, in particular, the maps are contiguous.

By definition, deformation retract means that the composition of $f$ with $i$ is contiguous to the identity map of $\mathbb{(X,A)}$, which ends the proof.
\end{proof}

\begin{theo}
If $\mathbb{(X',A')}$ is a deformation retract of $\mathbb{(X,A)}$, the inclusion map $\mathbb{(X',A')}\subset\mathbb{(X,A)}$ induces an isomorphism from the persistent homology sequence of $\mathbb{(X',A')}$ onto that of $\mathbb{(X,A)}$.
\end{theo}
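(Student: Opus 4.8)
The plan is to reduce the statement entirely to the contiguity machinery already assembled. First I would invoke Lemma \ref{lemma:deformation}: since $\mathbb{(X',A')}$ is a deformation retract of $\mathbb{(X,A)}$, the inclusion map $i:\mathbb{(X',A')}\subset\mathbb{(X,A)}$ and the retraction $f:\mathbb{(X,A)}\longrightarrow\mathbb{(X',A')}$ are contiguously equivalent. By Definition \ref{eqcontiguous}, this means precisely that $f\circ i$ is contiguous to $\mathop{id}_{\mathbb{(X',A')}}$ and that $i\circ f$ is contiguous to $\mathop{id}_{\mathbb{(X,A)}}$; in particular the inclusion $i$ is a \textbf{contiguity} in the sense of that definition.

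Next I would simply apply Corollary \ref{coro:contequi-to-iso}, which asserts that any contiguity induces an isomorphism of both the ordinary and the reduced persistent homology sequences. Applying it to the contiguity $i$ produces, for each $q$, isomorphisms $i_\ast:\mathrm{H}_q^I\mathbb{(X',A')}\to\mathrm{H}_q^I\mathbb{(X,A)}$ together with the corresponding isomorphisms on the absolute groups $\mathrm{H}_q^I(\mathbb{X}')$ and $\mathrm{H}_q^I(\mathbb{A}')$, all compatible with the inclusion-induced maps $i_\ast$, $j_\ast$ and with the boundary operator $\alpha^I$. This compatibility is exactly the assertion that $i$ carries the long exact sequence of $\mathbb{(X',A')}$ isomorphically onto that of $\mathbb{(X,A)}$, which is the content of the theorem.

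Since the substantive work has already been done upstream, namely the contiguity invariance encoded in \cref{ax5} and propagated through the long exact sequence in Theorem \ref{thm:contequi-to-iso} and Corollary \ref{coro:contequi-to-iso}, there is no genuine obstacle left to overcome in this statement. The only point that deserves a moment of care is to confirm that the deformation-retract hypothesis yields a \emph{two-sided} contiguous equivalence rather than merely a one-sided relation; this is settled by Lemma \ref{lemma:deformation}, where the retraction identity $f\circ i=\mathop{id}_{\mathbb{(X',A')}}$ handles one side and the defining homotopy-through-contiguity of the deformation retract handles the other. With that confirmed, the proof is a direct citation of the corollary.
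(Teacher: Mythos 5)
Your proposal is correct and follows exactly the paper's own argument: Lemma \ref{lemma:deformation} upgrades the deformation-retract hypothesis to a contiguous equivalence between the inclusion and the retraction, and Corollary \ref{coro:contequi-to-iso} then delivers the isomorphism of persistent homology sequences. Your extra remark about checking two-sidedness is a sensible precaution but introduces nothing beyond what the lemma already supplies.
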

\begin{proof}
If $\mathbb{(X',A')}$ is a deformation retract of $\mathbb{(X,A)}$, by Lemma \ref{lemma:deformation}, the inclusion map $i:\mathbb{(X',A')}\subset\mathbb{(X,A)}$ and the retraction $f$ give a contiguously equivalence of $\mathbb{(X,A)}$ and $\mathbb{(X',A')}$. 

Applying Corollary \ref{coro:contequi-to-iso}, we have that this contiguously equivalence induces isomorphism of the persistent homology sequence of $\mathbb{(X',A')}$ with the corresponding sequence of $\mathbb{(X,A)}$.

\end{proof}

\begin{defi}
A $\textbf{filtered triad}$ $((X;X_1,X_2),(F_X;F_{X_1},F_{X_2}))$, denoted by $(\mathbb{X};\mathbb{X}_1,\mathbb{X}_2)$, consists of  filtered sets $\mathbb{X}$ , $\mathbb{X}_1$ and $\mathbb{X}_2$, with $\mathbb{X}_i\subset \mathbb{}$, $i=1,2$.
The filtered triad $(\mathbb{X};\mathbb{X}_1,\mathbb{X}_2)$ is called \textbf{proper} if the inclusion maps  $k_1: (\mathbb{X}_1, \mathbb{X}_1\cap \mathbb{X}_2)\subset (\mathbb{X}_1\cup \mathbb{X}_2, \mathbb{X}_2)$ and $k_2: (\mathbb{X}_2, \mathbb{X}_1\cap \mathbb{X}_2)\subset (\mathbb{X}_1\cup \mathbb{X}_2, \mathbb{X}_1)$ induce {isomorphisms} of  persistent homology groups in all dimensions.
\end{defi}

A useful observation is that if $(\mathbb{X};\mathbb{X}_1,\mathbb{X}_2)$ is a proper triad, then $(\mathbb{X}_1\cup \mathbb{X}_2;\mathbb{X}_1,\mathbb{X}_2)$ is proper as well.

\begin{theo}\label{thm:injec-rep}
A filtered triad $(\mathbb{X};\mathbb{X}_1,\mathbb{X}_2)$ is proper if and only if, the inclusion maps $i_l:(\mathbb{X}_j,\mathbb{X}_1\cap \mathbb{X}_2)\longrightarrow (\mathbb{X}_1\cup \mathbb{X}_2,\mathbb{X}_1\cap \mathbb{X}_2)$, $l=1,2$, give, for each $q$, an injective representation of  $\mathrm{H}^I_q(\mathbb{X}_1\cup \mathbb{X}_2,\mathbb{X}_1\cap \mathbb{X}_2)$ as a direct sum
{
that is,
every $u\in \mathrm{H}^{I}_q(\mathbb{X}_1\cup \mathbb{X}_2,\mathbb{X}_1\cap \mathbb{X}_2)$ can be expressed uniquely as $u=i_{1*}u_1+i_{2*}u_2$ for $u_j\in
 \mathrm{H}^{I}_q(\mathbb{X}_j,\mathbb{X}_1\cap \mathbb{X}_2)$, $j=1,2$.
}
\end{theo}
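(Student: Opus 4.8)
The plan is to reduce everything to the union and intersection and then read the whole statement off a single long exact sequence. Note first that both conditions in the theorem are intrinsic to $\mathbb{U}:=\mathbb{X}_1\cup\mathbb{X}_2$ and $\mathbb{A}:=\mathbb{X}_1\cap\mathbb{X}_2$: the maps $k_1,k_2$ defining properness and the inclusions $i_{1},i_{2}$ defining the direct sum involve only $\mathbb{U}$ and $\mathbb{A}$, never $\mathbb{X}$ itself. So I may work with $(\mathbb{U};\mathbb{X}_1,\mathbb{X}_2)$ throughout. Introduce the inclusions of pairs $p_1:(\mathbb{U},\mathbb{A})\hookrightarrow(\mathbb{U},\mathbb{X}_1)$ and $p_2:(\mathbb{U},\mathbb{A})\hookrightarrow(\mathbb{U},\mathbb{X}_2)$. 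The single observation driving the argument is the factorization of maps of relative filtered pairs $k_1=p_2\circ i_1$ and $k_2=p_1\circ i_2$, whence by functoriality (\cref{ax2}) one has $k_{1*}^I=p_{2*}^I\circ i_{1*}^I$ and $k_{2*}^I=p_{1*}^I\circ i_{2*}^I$. Everything else comes from the persistent homology sequence of the triple $(\mathbb{U},\mathbb{X}_1,\mathbb{A})$, which is exact and has the form
\[
\cdots\to \mathrm{H}_q^I(\mathbb{X}_1,\mathbb{A})\xrightarrow{i_{1*}} \mathrm{H}_q^I(\mathbb{U},\mathbb{A})\xrightarrow{p_{1*}} \mathrm{H}_q^I(\mathbb{U},\mathbb{X}_1)\xrightarrow{\overline{\alpha^I}}\mathrm{H}_{q-1}^I(\mathbb{X}_1,\mathbb{A})\to\cdots .
\]

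For necessity, I would assume the triad is proper, so $k_{2*}^I$ is an isomorphism. From $k_{2*}^I=p_{1*}^I\circ i_{2*}^I$ I read off at once that $p_{1*}^I$ is surjective and $i_{2*}^I$ is injective. Surjectivity of $p_{1*}^I$ together with exactness forces $\overline{\alpha^I}=0$, so the long exact sequence breaks into short exact sequences
\[
0\to\mathrm{H}_q^I(\mathbb{X}_1,\mathbb{A})\xrightarrow{i_{1*}}\mathrm{H}_q^I(\mathbb{U},\mathbb{A})\xrightarrow{p_{1*}}\mathrm{H}_q^I(\mathbb{U},\mathbb{X}_1)\to 0,
\]
so in particular $i_{1*}^I$ is injective. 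The homomorphism $i_{2*}^I\circ(k_{2*}^I)^{-1}$ is then a section of $p_{1*}^I$, so this short exact sequence splits and $\mathrm{H}_q^I(\mathbb{U},\mathbb{A})=\operatorname{im} i_{1*}^I\oplus\operatorname{im} i_{2*}^I$, with both summands injective images since $k_{2*}^I$ is an isomorphism. This is exactly the asserted unique decomposition $u=i_{1*}u_1+i_{2*}u_2$.

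For sufficiency, I would assume the direct-sum representation holds in every degree; in particular $i_{1*}^I$ and $i_{2*}^I$ are injective for all $q$. Injectivity of $i_{1*}^I$ in all degrees makes $\overline{\alpha^I}=0$ in the triple sequence, so again one obtains the short exact sequence above and hence $\ker p_{1*}^I=\operatorname{im} i_{1*}^I$. Since $\mathrm{H}_q^I(\mathbb{U},\mathbb{A})=\operatorname{im} i_{1*}^I\oplus\operatorname{im} i_{2*}^I$, the restriction of $p_{1*}^I$ to the complementary summand $\operatorname{im} i_{2*}^I$ is an isomorphism onto $\mathrm{H}_q^I(\mathbb{U},\mathbb{X}_1)$ (injective because it avoids $\ker p_{1*}^I$, surjective because $p_{1*}^I$ is onto and kills the first summand). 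Composing with the isomorphism $i_{2*}^I:\mathrm{H}_q^I(\mathbb{X}_2,\mathbb{A})\to\operatorname{im} i_{2*}^I$ and using $k_{2*}^I=p_{1*}^I\circ i_{2*}^I$ shows that $k_{2*}^I$ is an isomorphism; interchanging the indices $1\leftrightarrow 2$ gives the same for $k_{1*}^I$, so the triad is proper.

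Once the factorizations $k_l=p_{3-l}\circ i_l$ are in place the proof is mostly bookkeeping with exact sequences, and the step I expect to require the most care — the genuine obstacle — is the splitting argument: justifying that $\overline{\alpha^I}$ really vanishes and that the section built from $i_{2*}^I\circ(k_{2*}^I)^{-1}$ identifies $\operatorname{im} i_{2*}^I$ as a true complement of $\operatorname{im} i_{1*}^I$, so that the internal direct sum exhausts $\mathrm{H}_q^I(\mathbb{U},\mathbb{A})$ and the representation is unique. This is precisely where I must lean on exactness of the triple sequence together with functoriality (\cref{ax2}) rather than on an abstract splitting statement, and it is the hinge connecting the two equivalent conditions.
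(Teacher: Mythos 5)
Your proof is correct, and structurally it matches the paper's: both arguments take place inside the exact persistent homology sequences of the triples $(\mathbb{X}_1\cup\mathbb{X}_2,\mathbb{X}_1,\mathbb{X}_1\cap\mathbb{X}_2)$ and $(\mathbb{X}_1\cup\mathbb{X}_2,\mathbb{X}_2,\mathbb{X}_1\cap\mathbb{X}_2)$, and both hinge on factoring $k_l$ through the middle pair --- your $p_1,p_2$ are the paper's inclusions $j_l:(\mathbb{X}_1\cup \mathbb{X}_2, \mathbb{X}_1\cap \mathbb{X}_2)\hookrightarrow(\mathbb{X}_1\cup \mathbb{X}_2,\mathbb{X}_l)$, so your identity $k_{l*}=p_{(3-l)*}\circ i_{l*}$ is the same composite the paper chases around its diamond diagram. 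Where you genuinely depart is the necessity direction: after recording $\ker j_{l*}=\mathrm{Im}\,i_{l*}$ and that properness makes $k_{1*},k_{2*}$ isomorphisms, the paper simply invokes the abstract direct-sum lemma I.13.1 of \cite{foundations}, whereas you re-derive that lemma in situ: $k_{2*}$ iso forces $p_{1*}$ epi in every degree, hence $\overline{\alpha^{I}}=0$ throughout, hence short exact sequences split by the explicit section $i_{2*}\circ(k_{2*})^{-1}$, yielding the internal decomposition $\mathrm{Im}\,i_{1*}\oplus\mathrm{Im}\,i_{2*}$ with both $i_{l*}$ injective. This buys a self-contained proof and exposes a refinement the citation hides, namely that the injective direct-sum representation already follows from just \emph{one} of the two properness isomorphisms (you never use $k_{1*}$ in that direction). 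Your sufficiency argument is the paper's diagram chase in different packaging: the paper proves $k_{1*}$ epi by writing $m=j_{1*}n$ and expanding $n=i_{1*}n_1+i_{2*}n_2$, and mono by applying the uniqueness clause to $i_{1*}(-n)+i_{2*}m=0$; your element-free formulation, that $p_{1*}$ restricts to an isomorphism on the complementary summand $\mathrm{Im}\,i_{2*}$ and that $k_{2*}$ is the composite of two isomorphisms, is the same chase and is arguably cleaner --- it also sidesteps the paper's small slip where the conclusion ``$u=0$'' should read $m=0$, with $n=0$ then following from injectivity of $i_{1*}$.
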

\begin{proof}
Let $j_l:(\mathbb{X}_1\cup \mathbb{X}_2, \mathbb{X}_1\cap \mathbb{X}_2)\longrightarrow (\mathbb{X}_1\cup \mathbb{X}_2,\mathbb{X}_l)$, $k_l:(\mathbb{X}_l,\mathbb{X}_1\cap \mathbb{X}_2)\longrightarrow (\mathbb{X}_1\cup \mathbb{X}_2,\mathbb{X}_{3-l})$, $l=1,2,$ be the inclusion maps.

Consider the diagram 
\[
\xymatrix
{
    \mathrm{H}_{n}^I(\mathbb{X}_1\cup \mathbb{X}_2,\mathbb{X}_1)    
        & 
        & \mathrm{H}_{n}^I(\mathbb{X}_1\cup \mathbb{X}_2,\mathbb{X}_2) 
    \\
        & \mathrm{H}_{n}^I(\mathbb{X}_1\cup \mathbb{X}_2,\mathbb{X}_1\cap \mathbb{X}_2) \ar[ur]^{j_{1*}} \ar[ul]^{j_{2*}}   
        & 
    \\
    \mathrm{H}_{n}^I(\mathbb{X}_2,\mathbb{X}_1\cap \mathbb{X}_2) \ar[uu]^{k_{1*}} \ar[ur]^{i_{2*}}  
        & 
        & \mathrm{H}_{n-1}^I(\mathbb{X}_1,\mathbb{X}_1\cap \mathbb{X}_2) \ar[uu]^{k_{2*}} \ar[ul]^{i_{1*}} 
}
\]

From the exactness of the persistent homology sequence of  triples  $(\mathbb{X}_1\cup \mathbb{X}_2,\mathbb{X}_1,\mathbb{X}_1\cap \mathbb{X}_2)$ and $(\mathbb{X}_1\cup \mathbb{X}_2,\mathbb{X}_2,\mathbb{X}_1\cap \mathbb{X}_2)$ we have that $\ker(j_{1*})=\mathrm{Im}(i_{1*})$ and $\ker(j_{2*})=\mathrm{Im}(i_{2*})$. 
Since the triad is proper, one has by definition that $k_{1*}$ and $k_{2*}$ are isomorphisms. Therefore  13.1 of \cite{foundations} implies a decomposition as a direct sum of $\mathrm{H}^I_q(\mathbb{X}_1\cup \mathbb{X}_2,\mathbb{X}_1\cap \mathbb{X}_2)$. 

Assuming the direct sum condition, we have that $\ker(i_{1*})=0$ and then, from the persistent homology sequence of the triple $(\mathbb{X}_1\cup \mathbb{X}_2,\mathbb{X}_1,\mathbb{X}_1\cap \mathbb{X}_2)$ we have $\overline{\alpha^{I}}=0$. By exactness of this sequence, $j_{1*}$ is surjective.
We must prove that $k_{1*}$ is an isomorphism. Let $m\in \mathrm{H}^I_q(\mathbb{X}_1\cup \mathbb{X}_2,\mathbb{X}_1)$, then $m=j_{1*}n$, for some $n\in \mathrm{H}^I_q(\mathbb{X}_1\cup \mathbb{X}_2,\mathbb{X}_1\cap \mathbb{X}_2)$. By hypothesis, $n=i_{1*}n_1+i_{2*}n_2$ and therefore $m=j_{1*}n=j_{1*}(i_{1*}n_1+i_{2*}n_2) = j_{1*}\circ i_{1*}(n_1)+j_{1*}\circ i_{2*}(n_2)=k_{1*}n_2$ which implies that $k_{1*}$ is onto.

Assuming now that $m\in \mathrm{H}^I_q(\mathbb{X}_2,\mathbb{X}_1\cap \mathbb{X}_2)$ and $k_{1*}m=0$, we have that $j_{1*}\circ i_{2*}(m)=0$. By exactness, there exists an $n\in \mathrm{H}^I_q(\mathbb{X}_1,\mathbb{X}_1\cap \mathbb{X}_2)$ with $i_{2*}m=i_{1*}n$. Then $i_{1*}(-n)+i_{2*}m=0$ and, by the direct sum condition, $u=0$, which implies that $k_{1*}$ is a monomorphism. 

 The same argument prove that $k_{2*}$ is also an isomorphism.
Therefore, the triad $(\mathbb{X};\mathbb{X}_1,\mathbb{X}_2)$ is proper.

\end{proof}

\begin{defi}
Let $(\mathbb{X};\mathbb{X}_1,\mathbb{X}_2)$ be a filtered proper triad. The composition of the homomorphisms
\[
\mathrm{H}^I_q (\mathbb{X},\mathbb{X}_1\cup \mathbb{X}_2) \stackrel{\alpha^{I}}{\longrightarrow} \mathrm{H}^I_{q-1} (\mathbb{X}_1\cup \mathbb{X}_2) \stackrel{l_{2*}}{\longrightarrow} \mathrm{H}^I_{q-1} (\mathbb{X}_1\cup \mathbb{X}_2,\mathbb{X}_2) \stackrel{k^{-1}_{2*}}{\longrightarrow} \mathrm{H}^I_{q-1} (\mathbb{X}_1,\mathbb{X}_1\cap \mathbb{X}_2), 
\]

\noindent where ${k_2}_\ast$ and ${l_2}_\ast$ are induced by inclusion maps, is called the \textbf{boundary operator} of the filtered proper triad and is also denoted, by $\partial$. The sequence 
\[
\cdots \longrightarrow
\mathrm{H}^I_q (\mathbb{X}_1,\mathbb{X}_1\cap \mathbb{X}_2) \stackrel{i_*}{\longrightarrow} \mathrm{H}^I_{q} (\mathbb{X},\mathbb{X}_2) \stackrel{j_*}{\longrightarrow} \mathrm{H}^I_{q} (\mathbb{X},\mathbb{X}_1\cup \mathbb{X}_2) \stackrel{\partial}{\longrightarrow} \mathrm{H}^I_{q-1} (\mathbb{X}_1,\mathbb{X}_1\cap \mathbb{X}_2)
\longrightarrow \cdots
\]

\noindent where $i_\ast$ and $j_\ast$ are induced by inclusion maps,
is called \textbf{persistent homology sequence} of the filtered proper triad.
\end{defi}

\begin{theo}
The persistent homology sequence of a filtered proper triad is exact.
\end{theo}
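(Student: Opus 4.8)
The plan is to recognize the displayed triad sequence as nothing other than the persistent homology sequence of the triple $(\mathbb{X}, \mathbb{X}_1\cup \mathbb{X}_2, \mathbb{X}_2)$, read off through the proper-triad isomorphism, and then transport exactness across that isomorphism. Exactness of the homology sequence of a triple has already been established (the theorem immediately preceding this one), so the entire argument reduces to a bookkeeping identification of the three maps $i_\ast$, $j_\ast$, $\partial$ of the triad sequence with the three maps of the triple sequence.

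Concretely, write $\kappa$ for the inclusion-induced map
\[
\kappa:\mathrm{H}^I_q(\mathbb{X}_1, \mathbb{X}_1\cap \mathbb{X}_2)\longrightarrow \mathrm{H}^I_q(\mathbb{X}_1\cup \mathbb{X}_2, \mathbb{X}_2),
\]
which is an isomorphism precisely because the triad is proper (this is the isomorphism denoted $k_{2\ast}$ in the definition of the triad boundary operator). Denote by $a_\ast$, $b_\ast$, $\bar\alpha$ the maps of the triple sequence of $(\mathbb{X}, \mathbb{X}_1\cup \mathbb{X}_2, \mathbb{X}_2)$, namely
\[
\mathrm{H}^I_q(\mathbb{X}_1\cup\mathbb{X}_2, \mathbb{X}_2)\xrightarrow{a_\ast}\mathrm{H}^I_q(\mathbb{X}, \mathbb{X}_2)\xrightarrow{b_\ast}\mathrm{H}^I_q(\mathbb{X}, \mathbb{X}_1\cup\mathbb{X}_2)\xrightarrow{\bar\alpha}\mathrm{H}^I_{q-1}(\mathbb{X}_1\cup\mathbb{X}_2, \mathbb{X}_2).
\]
First I would verify the three factorizations. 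The identity $i_\ast = a_\ast\circ \kappa$ holds because the inclusion $(\mathbb{X}_1, \mathbb{X}_1\cap \mathbb{X}_2)\hookrightarrow(\mathbb{X}, \mathbb{X}_2)$ factors through $(\mathbb{X}_1\cup \mathbb{X}_2, \mathbb{X}_2)$, so this is pure functoriality (Axiom \ref{ax2}). The identity $j_\ast = b_\ast$ is immediate, both being the map induced by $(\mathbb{X}, \mathbb{X}_2)\hookrightarrow(\mathbb{X}, \mathbb{X}_1\cup \mathbb{X}_2)$. Finally $\partial = \kappa^{-1}\circ \bar\alpha$ is exactly the defining formula $\partial = k_{2\ast}^{-1}\circ l_{2\ast}\circ \alpha^I$, once one recognizes that $\bar\alpha = l_{2\ast}\circ \alpha^I$ is the connecting operator $\overline{\alpha^I}$ of the triple.

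These identities assemble the triad sequence and the triple sequence into a commutative ladder whose vertical arrows are $\kappa$ at the terms $\mathrm{H}^I_\ast(\mathbb{X}_1, \mathbb{X}_1\cap \mathbb{X}_2)$ and the identity at every other term, all of them isomorphisms; since the bottom row is exact, exactness transfers to the top row. Explicitly, using repeatedly that $\kappa$ is an isomorphism, one computes $\mathrm{Im}\,i_\ast = a_\ast(\mathrm{Im}\,\kappa) = \mathrm{Im}\,a_\ast = \ker b_\ast = \ker j_\ast$, then $\ker\partial = \ker\bar\alpha = \mathrm{Im}\,b_\ast = \mathrm{Im}\,j_\ast$, and finally $\mathrm{Im}\,\partial = \kappa^{-1}(\mathrm{Im}\,\bar\alpha) = \kappa^{-1}(\ker a_\ast) = \ker(a_\ast\circ \kappa) = \ker i_\ast$. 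I expect the only genuinely delicate point to be the commutativity of the square carrying $\partial$ and $\bar\alpha$: one must unwind the definition of the triple's connecting operator and reconcile the $k_1/k_2$ labelling of the proper-triad inclusions with the formula defining the triad boundary operator, so that $\kappa$ and $k_{2\ast}$ denote the same isomorphism. Everything else in the argument is formal.
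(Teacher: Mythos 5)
Your proposal is correct and follows essentially the same route as the paper: the paper likewise identifies the triad sequence with the homology sequence of the triple $(\mathbb{X}, \mathbb{X}_1\cup\mathbb{X}_2, \mathbb{X}_2)$ by exchanging $\mathrm{H}_n^I(\mathbb{X}_1\cup\mathbb{X}_2,\mathbb{X}_2)$ for $\mathrm{H}_n^I(\mathbb{X}_1,\mathbb{X}_1\cap\mathbb{X}_2)$ under the proper-triad isomorphism $k_{2\ast}$, with $\partial$ defined by $k_{2\ast}\circ\partial=\overline{\alpha}^{I}$, and then transports exactness. Your version merely spells out the factorizations and the diagram chase that the paper leaves implicit (and your caution about reconciling the $k_1/k_2$ labelling is well taken, as the paper's own notation is inconsistent on this point).
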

\begin{proof}
Looking to the following diagram
\[
\xymatrix
{
 \mathrm{H}_{n}^I(\mathbb{X},\mathbb{X}_2)
    & \mathrm{H}_{n}^I(\mathbb{X}_1\cup \mathbb{X}_2,\mathbb{X}_2)     \ar[l]_{\overline{i}_\ast} 
    & \mathrm{H}_{n}^I(\mathbb{X},\mathbb{X}_1\cup \mathbb{X}_2) \ar[dl]_{\alpha^{I}} \ar[l]_{\overline{\alpha}^{I}} 
    & \mathrm{H}_{n-1}^I(\mathbb{X},\mathbb{X}_2) \ar[l]_{\overline{j}_*}
\\
    & \mathrm{H}_{n}^I(\mathbb{X}_1, \mathbb{X}_1\cap \mathbb{X}_2) \ar[u]^{k_{2*}} \ar[ul]^{i_*} 
    & 
    &
}
\]

\noindent we can note that the persistent homology sequence of the triad $(\mathbb{X};\mathbb{X}_1, \mathbb{X}_2)$ is obtained from the persistent homology sequence of the triple $(\mathbb{X}, \mathbb{X}_1\cup \mathbb{X}_2, \mathbb{X}_2)$ by exchanging the group $\mathrm{H}_{n}^I(\mathbb{X}_1\cup \mathbb{X}_2,\mathbb{X}_2)$ by the group $\mathrm{H}_{n}^I(\mathbb{X}_1, \mathbb{X}_1\cap \mathbb{X}_2)$ under $k_{2*}$ and defining $\alpha^{I}$ such that $k_{2*}\circ \alpha^{I}=\overline{\alpha}^{I}$.

Since the triad is proper, $k_{2*}$ is an isomorphism and then the persistent homology sequences are isomorphic. From the fact that the persistent homology sequence of a triple is exact, it follows that the persistent homology sequence of a filtered proper triad is exact, as desired.

\end{proof}

Note that, if $(\mathbb{X};\mathbb{X}_1,\mathbb{X}_2)$ is a filtered triad with $\mathbb{X}_2\subset \mathbb{X}_1$, then $(\mathbb{X}; \mathbb{X}_1, \mathbb{X}_2)$ is a filtered proper triad and its persistent homology sequence reduces to the homology sequence of the triple $(\mathbb{X},\mathbb{X}_1,\mathbb{X}_2)$.

A map of filtered triads $f:(\mathbb{X};\mathbb{X}_1,\mathbb{X}_2)\to (\mathbb{Y};\mathbb{Y}_1,\mathbb{Y}_2)$ is a map $f:X\to Y$ such that $f:\mathbb{X}\to \mathbb{Y}$ and the restrictions $f_{\mathbb{X}_1}:\mathbb{X}_1\to \mathbb{Y}_1$ and $f_{\mathbb{X}_2}:\mathbb{X}_2\to \mathbb{Y}_2$ are filtration preserving maps.

\begin{theo}\label{incidence-triple}
If $(\mathbb{X};\mathbb{X}_1,\mathbb{X}_2)$ and $(\mathbb{Y};\mathbb{Y}_1,\mathbb{Y}_2)$ are filtered proper triads, and if $f:(\mathbb{X};\mathbb{X}_1,\mathbb{X}_2)\longrightarrow (\mathbb{Y};\mathbb{Y}_1,\mathbb{Y}_2)$ is a map of filtered triads, then $f$ induces a homomorphism $f_{*}$ from the persistent homology sequence of $(\mathbb{X};\mathbb{X}_1,\mathbb{X}_2)$ into that of $(\mathbb{Y};\mathbb{Y}_1,\mathbb{Y}_2)$. In particular, the boundary homomorphism for filtered  proper triads commutes with the respective induced homomorphism.
\end{theo}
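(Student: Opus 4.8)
The plan is to build the ladder between the two triad sequences out of the induced maps $f_*$ and to check commutativity of its three recurring types of squares, isolating the one square where properness of the triads is genuinely used. Throughout I write primes for the analogous inclusion-induced maps and boundary operators of $(\mathbb{Y};\mathbb{Y}_1,\mathbb{Y}_2)$.

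First I would note that, because $f$ is a map of filtered triads, it carries every subset relation among $\mathbb{X},\mathbb{X}_1,\mathbb{X}_2$ to the corresponding one among $\mathbb{Y},\mathbb{Y}_1,\mathbb{Y}_2$; in particular $f(\mathbb{X}_1\cap\mathbb{X}_2)\subset \mathbb{Y}_1\cap\mathbb{Y}_2$ and $f(\mathbb{X}_1\cup\mathbb{X}_2)\subset \mathbb{Y}_1\cup\mathbb{Y}_2$. Hence $f$ restricts to a filtration preserving map of each relative filtered pair occurring in the triad sequence, and these induce the vertical homomorphisms $f_*$ that constitute the ladder. The squares built from the inclusion-induced maps $i_*$ and $j_*$ then commute purely by functoriality: the two composites around each such square are the induced maps of one and the same filtration preserving map of pairs, so Axiom \ref{ax2} gives $f_*\circ i_* = i_*'\circ f_*$ and $f_*\circ j_* = j_*'\circ f_*$.

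The only delicate square is the one containing the triad boundary operator, since by its definition that operator is the composite $\partial = k_{2*}^{-1}\circ l_{2*}\circ \alpha^I$ and therefore involves the inverse of an isomorphism. I would treat its three factors in turn. The pair boundary $\alpha^I$ is natural with respect to $f_*$ by Axiom \ref{ax3} (this is exactly the commuting boundary square of Theorem \ref{thm:rest-to-homo} applied to the pair $(\mathbb{X},\mathbb{X}_1\cup\mathbb{X}_2)$), and $l_{2*}$, being inclusion-induced, is natural by Axiom \ref{ax2}. For the factor $k_{2*}^{-1}$ I would first record the naturality of the inclusion-induced map $k_{2*}$, namely $f_*\circ k_{2*} = k_{2*}'\circ f_*$; since both triads are proper, $k_{2*}$ and $k_{2*}'$ are isomorphisms, and this identity rearranges to
\[
f_*\circ k_{2*}^{-1} = (k_{2*}')^{-1}\circ f_*.
\]
Composing the three naturality relations gives $f_*\circ\partial = \partial'\circ f_*$, which is the commutativity of the boundary square and, in particular, the final assertion of the theorem.

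I expect this inversion step to be the main obstacle, and indeed the only place where the hypothesis that both triads are proper is essential: without properness $k_{2*}$ need not be invertible, so neither the triad boundary operator nor the claimed naturality would even make sense. Assembling the three families of commuting squares produces the desired homomorphism $f_*$ from the persistent homology sequence of $(\mathbb{X};\mathbb{X}_1,\mathbb{X}_2)$ to that of $(\mathbb{Y};\mathbb{Y}_1,\mathbb{Y}_2)$. As an alternative one can instead deduce everything from Theorem \ref{thm:induced-triple} applied to the triple $(\mathbb{X},\mathbb{X}_1\cup\mathbb{X}_2,\mathbb{X}_2)$ and then transport along the isomorphism $k_{2*}$, which recovers the same naturality identities.
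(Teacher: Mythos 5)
Your proposal is correct and takes essentially the same route as the paper's proof: a ladder of restriction-induced vertical maps whose inclusion squares commute by functoriality (Axiom~\ref{ax2}), with the boundary square settled by factoring $\partial$ through the connecting operator of the triple $(\mathbb{X},\mathbb{X}_1\cup\mathbb{X}_2,\mathbb{X}_2)$ (the paper's left sub-square, via Theorem~\ref{thm:induced-triple}) and then conjugating by the isomorphism $k_{2*}$ supplied by properness. Your explicit rearrangement $f_*\circ k_{2*}^{-1}=(k'_{2*})^{-1}\circ f_*$ from the naturality of $k_{2*}$ is in fact a cleaner justification of the right-hand sub-square than the paper's appeal to Theorem~\ref{thm:injec-rep}.
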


\begin{proof}
Let us consider the following restriction maps $ f_1:(\mathbb{X}_1,\mathbb{X}_1\cap \mathbb{X}_2)\longrightarrow (\mathbb{Y}_1,\mathbb{Y}_1\cap \mathbb{Y}_2)$, \linebreak {$f_2:(\mathbb{X},\mathbb{X}_2)\longrightarrow (\mathbb{Y},\mathbb{Y}_2)$} and $f_3:(\mathbb{X},\mathbb{X}_1\cup \mathbb{X}_2)\longrightarrow (\mathbb{Y},\mathbb{Y}_1\cup \mathbb{Y}_2)$.

In the following diagram:
\[
\resizebox{\linewidth}{!}{
\xymatrix
{
 \ar[r]	
    & \mathrm{H}^I_q (\mathbb{X}_1,\mathbb{X}_1\cap \mathbb{X}_2) \ar[r]^{i_*} \ar[d]^{f_{1*}}
    & \mathrm{H}^I_{q-1} (\mathbb{X},\mathbb{X}_2)     \ar[d]^{f_{2*}} \ar[r]^{j_*} 
    & \mathrm{H}^I_{q-1} (\mathbb{X},\mathbb{X}_1\cup \mathbb{X}_2) \ar[r]^{\partial} \ar[d]^{f_{3*}} 
    & \mathrm{H}^I_{q-1} (\mathbb{X}_1,\mathbb{X}_1\cap \mathbb{X}_2) \ar[d]^{f_{1*}} \ar[r] 
    & \\
\ar[r]	
    & \mathrm{H}^I_q (\mathbb{Y}_1,\mathbb{Y}_1\cap \mathbb{Y}_2) \ar[r]^{i'_*} 
    & \mathrm{H}^I_{q-1} (\mathbb{Y},\mathbb{Y}_2)     \ar[r]^{j'_*} 
    & \mathrm{H}^I_{q-1} (\mathbb{Y},\mathbb{Y}_1\cup \mathbb{Y}_2) \ar[r]^{\partial'}
    & \mathrm{H}^I_{q-1} (\mathbb{Y}_1,\mathbb{Y}_1\cap \mathbb{Y}_2) \ar[r] 
    &
}}
\]
since $i_*$, $i'_*$, $j_*$ and $j'_*$ are inclusion maps,  we have the commutativity of the first two squares. Let us prove the commutativity of the last one. We can extend the last square as 
\[
\xymatrix
{
\mathrm{H}^I_{q-1} (\mathbb{X},\mathbb{X}_1\cup \mathbb{X}_2)     \ar[d]^{f_{3*}} \ar[r]^{\alpha^{I}} & \mathrm{H}^I_{q-1} (\mathbb{X}_1\cup \mathbb{X}_2,\mathbb{X}_2) \ar[r]^{k^{-1}_{*}} \ar[d]^{f_{4*}} & \mathrm{H}^I_{q-1} (\mathbb{X}_1,\mathbb{X}_1\cap \mathbb{X}_2) \ar[d]^{f_{1*}} &\\
\mathrm{H}^I_{q-1} (\mathbb{Y},\mathbb{Y}_1\cup \mathbb{Y}_2)     \ar[r]^{{\alpha'}^{I}} & \mathrm{H}^I_{q-1} (\mathbb{Y}_1\cup \mathbb{Y}_2,\mathbb{Y}_2) \ar[r]^{k^{'-1}_{*}} & \mathrm{H}^I_{q-1} (\mathbb{Y}_1,\mathbb{Y}_1\cap \mathbb{Y}_2) &
}
\]

\noindent where $f_4:(\mathbb{X}_1\cup \mathbb{X}_2,\mathbb{X}_2)\longrightarrow (\mathbb{Y}_1\cup \mathbb{Y}_2,\mathbb{Y}_2)$ is a restriction of $f$. The commutative of the left square follows using Theorem \ref{thm:induced-triple} and, using that $k$ and $k'$ are inclusions and Theorem \ref{thm:injec-rep}, we have the commutativity of the right one. Therefore  the commutativity of the last square of first diagram follows.

\end{proof}

\subsection{Persistent Homology Theory on Simplicial Complexes}

\begin{theo}\label{injective-representation}
{Let $r\geq 1$ be an integer and} let $\mathbb{X} = (X, F)$ and $(X_1,F_1),\dots,$ $(X_r,F_r),(A,\overline{F})$ be filtered sets such that
$$\mathbb{X}=(X_1\cup\dots\cup X_r\cup A,F_1\cup\dots\cup F_r\cup\overline{F})$$
\noindent and the inclusion maps $k_{i, j}:\mathbb{X}_i\cap \mathbb{X}_j\subset \mathbb{A}$ are filtration preserving maps
for $i\neq j$. For $i=1,\dots,r$, taking  $(A_i,\overline{F}_i)=(X_i\cap A,F_i\cap\overline{F})$ and $k_i:(\mathbb{X}_i, \mathbb{A}_i)\subset (\mathbb{X}, \mathbb{A})$,
then for all $q\in \mathbb{Z}$,
the homomorphism $k_{i*}:\mathrm{H}_{q}^I(\mathbb{X}_i,\mathbb{A}_i)\longrightarrow \mathrm{H}_{q}^I(\mathbb{X},\mathbb{A})$  gives  an injective representation of $\mathrm{H}_{q}^I\mathbb{(X,A)}$ as a direct sum, 
{
that is, each $u\in \mathrm{H}_{q}^I\mathbb{(X,A)}$ can be expressed uniquely as $u = \sum_{i} k_{i\ast} (u_i)$, where $u_i\in \mathrm{H}_{q}^I(\mathbb{X}_i,\mathbb{A}_i)$.
}
\end{theo}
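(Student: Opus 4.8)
The plan is to argue by induction on the number $r$ of pieces, reducing each step to the Excision axiom (\cref{as1}) together with the long exact sequence of the relative filtered triple (whose exactness was established earlier). Throughout I write $\mathbb{A}_i:=\mathbb{X}_i\cap\mathbb{A}$ and let $k_i:(\mathbb{X}_i,\mathbb{A}_i)\hookrightarrow(\mathbb{X},\mathbb{A})$ be the inclusions. For the base case $r=1$ one has $\mathbb{X}=\mathbb{X}_1\cup\mathbb{A}$ and $\mathbb{A}_1=\mathbb{X}_1\cap\mathbb{A}$, and an injective representation as a direct sum over a single index means exactly that $k_{1*}$ is an isomorphism; this is precisely \cref{as1} applied to $(\mathbb{X}_1,\mathbb{X}_1\cap\mathbb{A})\hookrightarrow(\mathbb{X}_1\cup\mathbb{A},\mathbb{A})$.

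For the inductive step I would assume the statement for $r-1$ pieces and set $\mathbb{X}':=\mathbb{X}_1\cup\cdots\cup\mathbb{X}_{r-1}\cup\mathbb{A}$ with its union filtration, so that $\mathbb{X}=\mathbb{X}'\cup\mathbb{X}_r$ and $\mathbb{A}\subset\mathbb{X}'\subset\mathbb{X}$; thus $(\mathbb{X},\mathbb{X}',\mathbb{A})$ is a relative filtered triple. The hypothesis $\mathbb{X}_i\cap\mathbb{X}_j\subset\mathbb{A}$ for $i\neq j$ forces $\mathbb{X}'\cap\mathbb{X}_r=\mathbb{A}_r$: each $\mathbb{X}_i\cap\mathbb{X}_r$ with $i<r$ lies in $\mathbb{A}$, hence in $\mathbb{A}_r=\mathbb{A}\cap\mathbb{X}_r$, while $\mathbb{A}\cap\mathbb{X}_r=\mathbb{A}_r$ directly. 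Consequently \cref{as1} applies to $(\mathbb{X}_r,\mathbb{X}_r\cap\mathbb{X}')\hookrightarrow(\mathbb{X}_r\cup\mathbb{X}',\mathbb{X}')$, showing that the inclusion $e:(\mathbb{X}_r,\mathbb{A}_r)\hookrightarrow(\mathbb{X},\mathbb{X}')$ induces an isomorphism $e_*:\mathrm{H}_q^I(\mathbb{X}_r,\mathbb{A}_r)\to\mathrm{H}_q^I(\mathbb{X},\mathbb{X}')$ for every $q$.

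The key step will be to show that the connecting homomorphism of the triple vanishes. In the long exact sequence of $(\mathbb{X},\mathbb{X}',\mathbb{A})$,
$$\cdots\to\mathrm{H}_q^I(\mathbb{X}',\mathbb{A})\xrightarrow{\overline{i}_*}\mathrm{H}_q^I(\mathbb{X},\mathbb{A})\xrightarrow{\overline{j}_*}\mathrm{H}_q^I(\mathbb{X},\mathbb{X}')\xrightarrow{\overline{\alpha^I}}\mathrm{H}_{q-1}^I(\mathbb{X}',\mathbb{A})\to\cdots,$$
one has $e=\overline{j}\circ k_r$ as maps of pairs, so by functoriality (\cref{ax2}) $\overline{j}_*\circ k_{r*}=e_*$ is an isomorphism. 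Hence $\overline{j}_*$ is surjective, $\ker\overline{\alpha^I}=\mathrm{Im}\,\overline{j}_*$ is everything, and $\overline{\alpha^I}=0$ in all degrees. The sequence then breaks into short exact sequences $0\to\mathrm{H}_q^I(\mathbb{X}',\mathbb{A})\xrightarrow{\overline{i}_*}\mathrm{H}_q^I(\mathbb{X},\mathbb{A})\xrightarrow{\overline{j}_*}\mathrm{H}_q^I(\mathbb{X},\mathbb{X}')\to 0$, and $s:=k_{r*}\circ e_*^{-1}$ is a right inverse of $\overline{j}_*$, so each splits as $\mathrm{H}_q^I(\mathbb{X},\mathbb{A})=\overline{i}_*\bigl(\mathrm{H}_q^I(\mathbb{X}',\mathbb{A})\bigr)\oplus k_{r*}\bigl(\mathrm{H}_q^I(\mathbb{X}_r,\mathbb{A}_r)\bigr)$, using $\mathrm{Im}\,s=\mathrm{Im}\,k_{r*}$. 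Finally, since $\overline{i}_*\circ k_i'=k_i$ for the inclusions $k_i':(\mathbb{X}_i,\mathbb{A}_i)\hookrightarrow(\mathbb{X}',\mathbb{A})$ and $\overline{i}_*$ is injective, the inductive decomposition $\mathrm{H}_q^I(\mathbb{X}',\mathbb{A})=\bigoplus_{i<r}k_{i*}'\bigl(\mathrm{H}_q^I(\mathbb{X}_i,\mathbb{A}_i)\bigr)$ transports to $\overline{i}_*\bigl(\mathrm{H}_q^I(\mathbb{X}',\mathbb{A})\bigr)=\bigoplus_{i<r}k_{i*}\bigl(\mathrm{H}_q^I(\mathbb{X}_i,\mathbb{A}_i)\bigr)$, and together with the $i=r$ summand this gives $\mathrm{H}_q^I(\mathbb{X},\mathbb{A})=\bigoplus_{i=1}^{r}k_{i*}\bigl(\mathrm{H}_q^I(\mathbb{X}_i,\mathbb{A}_i)\bigr)$ with every $k_{i*}$ injective, which is the asserted injective representation.

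I expect the main obstacle to lie in the filtration bookkeeping rather than the homological algebra: one must verify that the union filtration is associative and compatible so that $\mathbb{X}=\mathbb{X}'\cup\mathbb{X}_r$ holds as filtered sets, that $\mathbb{X}'\cap\mathbb{X}_r=\mathbb{A}_r$ holds as filtered sets and not merely on underlying sets (i.e.\ the filtration values agree), and that all inclusions above are genuinely filtration preserving, so that \cref{as1} and the exactness of the triple sequence are legitimately applicable.
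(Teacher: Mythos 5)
Your proposal is correct, and its skeleton is the same as the paper's: induction on $r$, base case $r=1$ by excision, and an inductive step that splits off the last piece using $\mathbb{X}'=\mathbb{X}_1\cup\cdots\cup\mathbb{X}_{r-1}\cup\mathbb{A}$. Where you genuinely diverge is the mechanism of the two-summand splitting. The paper observes that $(\mathbb{X};\mathbb{X}',\mathbb{X}_{r}\cup\mathbb{A})$ is a proper triad (its union is $\mathbb{X}$, its intersection is $\mathbb{A}$, and both triad inclusions are excisions), invokes the proper-triad criterion of Theorem \ref{thm:injec-rep} to obtain the injective direct-sum representation of $\mathrm{H}_q^I(\mathbb{X},\mathbb{A})$ by $j_*$ and $j'_*$, and finishes with the excision isomorphism $k'_*:\mathrm{H}_q^I(\mathbb{X}_{r},\mathbb{A}_{r})\to\mathrm{H}_q^I(\mathbb{X}_{r}\cup\mathbb{A},\mathbb{A})$. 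You instead work directly with the exact sequence of the triple $(\mathbb{X},\mathbb{X}',\mathbb{A})$, use the single excision $e_*:\mathrm{H}_q^I(\mathbb{X}_r,\mathbb{A}_r)\to\mathrm{H}_q^I(\mathbb{X},\mathbb{X}')$ to make $\overline{j}_*$ surjective, conclude $\overline{\alpha^I}=0$ in all degrees, and split the resulting short exact sequence by the explicit section $s=k_{r*}\circ e_*^{-1}$. In effect you re-derive, by hand, exactly the direction of Theorem \ref{thm:injec-rep} that the paper needs (whose own proof leans on \cite[13.1]{foundations}); what your route buys is a more self-contained argument with one fewer black box, the explicit vanishing of the connecting homomorphism as a byproduct, and an explicit splitting map, at the cost of re-proving machinery the appendix already packages. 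Two minor remarks: your identification of "injective representation over a single index" with "$k_{1*}$ is an isomorphism" in the base case is the right reading and matches the paper's use of Axiom \ref{exision axiom} (equivalent to \cref{as1} by Theorem \ref{theoequiv}); and your closing caveat about filtration bookkeeping is well placed but does not expose a gap relative to the paper --- the published proof likewise asserts $\mathbb{X}'\cup(\mathbb{X}_{r}\cup\mathbb{A})=\mathbb{X}$ and $\mathbb{X}'\cap(\mathbb{X}_{r}\cup\mathbb{A})=\mathbb{A}$ as filtered sets without verifying the filtration values, so on this point you are exactly as rigorous as the source, and more candid about it.
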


\proof For each $q\in \mathbb{Z}$, the proof follows by induction on $r$. 

The case $r=1$ follows by Axiom \ref{exision axiom}, since $\mathbb{X} = \mathbb{X}_1\cup \mathbb{A}$ and $\mathbb{A}_1 = \mathbb{X}_1\cap \mathbb{A}$.

Assume, inductively, that the theorem has been proved for $r=n\geq 1$, and assume that $r = n+1$. Set $\mathbb{X}' = \mathbb{X}_1\cup \cdots \cup \mathbb{X}_n\cup \mathbb{A}$ and consider the inclusion maps
\[
(\mathbb{X}_i, \mathbb{A}_i)
\stackrel{{k'}_{i}}{\longrightarrow}
(\mathbb{X}', \mathbb{A})
\stackrel{j}{\longrightarrow}
(\mathbb{X}, \mathbb{A})  \qquad (i=1,\ldots , n) ,
\]
\[
(\mathbb{X}_{n+1}, \mathbb{A}_{n+1})
\stackrel{{k'}}{\longrightarrow}
(\mathbb{X}_{n+1}, \mathbb{A})
\stackrel{j'}{\longrightarrow}
(\mathbb{X}, \mathbb{A}) .
\]

${k_{i}}_\ast = j_\ast {{k'}_i}_\ast $ for $i=1, \ldots , n$, and ${k_{n+1}}_\ast = {j'}_\ast {k'}_\ast$. By Axiom \ref{exision axiom}, ${k'}_\ast$ is an isomorphism. Since $(\mathbb{X}; \mathbb{X}', \mathbb{X}_{n+1}\cup A)$ is a proper triad, with
\[
\mathbb{X}'\cup (\mathbb{X}_{n+1}\cup \mathbb{A}) = \mathbb{X}, \quad \mathbb{X}'\cap (\mathbb{X}_{n+1}\cup \mathbb{A}) = \mathbb{A},
\]
we obtain  from \ref{thm:injec-rep} that $j_\ast$ and ${j'}_\ast$ yield an injective representation of $H_q^{I} (\mathbb{X}, \mathbb{A})$ as a direct sum. 
By the inductive hypothesis, the maps ${k'_i}_\ast$, $i=1, \ldots, n$, give an injective representation of $H_q^I(\mathbb{X}', \mathbb{A})$ as a direct sum. This implies the conclusion of the theorem.
\endproof

\begin{defi}\label{def-simp-bord}
By a \textbf{$(q, \alpha)$-simplex $\mathbb{S}^q_\alpha$} we  mean a pair $(S^q_\alpha,F^q_\alpha)$, where $M\geq \alpha \geq 0$, $S^q_\alpha$ is a (ordered) set of $q+1$ points and $F^q_\alpha:\pow(S^q_\alpha)\longrightarrow{[0,M]}$ is defined by $F^q_\alpha(\sigma):=\alpha$ for all $\sigma\subset S^{q}_\alpha$. 

The boundary of the simplex, denoted by $\dot{\mathbb{S}}^q_\alpha$ is the pair $(S^q_\alpha,\dot{F}^q_\alpha)$, where $\dot{F}^q_\alpha(S^q_\alpha):=M$, and $\dot{F}^q_\alpha(\sigma):=\alpha $, otherwise.

Let $\mathbb{S}^{q-1}_\alpha$ be a $(q-1, \alpha)$-simplex satisfying $S^{q-1}_\alpha\subset S^q_\alpha$. Let $A$ be the vertex in $S^q_\alpha$ not in $S^{q-1}_\alpha$.
We will denote by $\mathbb{C}^{q-1}_\alpha$ the \textbf{closed star} of $A$ in $\dot{\mathbb{S}}^{q}_\alpha$, that is the filtered complex $(S^q_\alpha,F^c_\alpha)$, defined by 
\[
  (F^c_\alpha)(\sigma) := 
  \begin{cases}
  M , & \text { if } \sigma=S^q_\alpha \text{ or } S^{q-1}_\alpha;\\
  \alpha , & \text { otherwise. }
      \end{cases}
\]
\end{defi}

\begin{lemma}\label{homo-trivial-lemma}
The filtered sets $\mathbb{S}^q_\alpha$, $\mathbb{C}^{q-1}_\alpha$ and the pair $(\mathbb{S}^q_\alpha,\mathbb{C}^{q-1}_\alpha)$ are homologically trivial.
\end{lemma}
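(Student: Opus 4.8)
The plan is to reduce the statement to the two absolute cases and then invoke Corollary~\ref{coro-homo-triv}. Indeed, once we know that both $\mathbb{S}^q_\alpha$ and $\mathbb{C}^{q-1}_\alpha$ are homologically trivial, the pair $(\mathbb{S}^q_\alpha,\mathbb{C}^{q-1}_\alpha)$ is homologically trivial by Corollary~\ref{coro-homo-triv}. Here we use that $\mathbb{C}^{q-1}_\alpha\neq\emptyset$ and that it is a genuine filtered subset of $\mathbb{S}^q_\alpha$, since $F^c_\alpha(\sigma)\in\{\alpha,M\}\geq\alpha=F^q_\alpha(\sigma)$ for every $\sigma\subset S^q_\alpha$, so that $(\mathbb{S}^q_\alpha,\mathbb{C}^{q-1}_\alpha)$ is a legitimate relative filtered set. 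Thus the real content is to establish the triviality of each of the two filtered sets separately.

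For $\mathbb{S}^q_\alpha$ I would argue that it is contiguously equivalent to one of its vertices and then apply Theorem~\ref{thm:contequi-to-point}. Fix a vertex $a\in S^q_\alpha$, let $i\colon\mathbb{Q}_a\hookrightarrow\mathbb{S}^q_\alpha$ be the inclusion, and let $t\colon\mathbb{S}^q_\alpha\to\mathbb{Q}_a$ be the collapse sending every vertex to $a$. Both are filtration preserving because every subset of $S^q_\alpha$ carries the constant value $\alpha$. Since $t\circ i=\mathop{id}_{\mathbb{Q}_a}$, and at each level $\beta$ the complex $K^\beta$ is either empty (when $\beta<\alpha$) or the full simplex on $S^q_\alpha$ (which is star shaped at $a$), the composite $i\circ t$ is contiguous to $\mathop{id}_{\mathbb{S}^q_\alpha}$ via the choice $\tau=\sigma\cup\{a\}$. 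Hence $\mathbb{S}^q_\alpha$ is contiguously equivalent to a point and Theorem~\ref{thm:contequi-to-point} gives its triviality. One may alternatively read this off directly from \cref{as3} together with the splitting $\mathrm{H}_0^I(\mathbb{S}^q_\alpha)=\tilde{\mathrm{H}}_0^I(\mathbb{S}^q_\alpha)\oplus(G^I a)_{\mathbb{S}^q_\alpha}$, since $(G^I a)_{\mathbb{S}^q_\alpha}\cong G^I$ already exhausts $\mathrm{H}_0^I(\mathbb{S}^q_\alpha)=\mathbb{Z}$, forcing $\tilde{\mathrm{H}}_0^I=0$, while $\tilde{\mathrm{H}}_k^I=\mathrm{H}_k^I=0$ for $k>0$.

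For $\mathbb{C}^{q-1}_\alpha$ I would run the same collapse argument with the cone point $A$ in place of $a$. By Definition~\ref{def-simp-bord}, at every level $\beta<M$ the complex underlying $\mathbb{C}^{q-1}_\alpha$ consists of all proper faces of $S^q_\alpha$ except the opposite face $S^{q-1}_\alpha$; this is exactly the closed star of $A$ and is therefore star shaped at $A$, while for $\beta\geq M$ it is the full simplex. The collapse $t\colon\mathbb{C}^{q-1}_\alpha\to\mathbb{Q}_A$ is filtration preserving because $F^c_\alpha(\sigma)\geq\alpha$ always, and $t\circ i=\mathop{id}_{\mathbb{Q}_A}$ for the inclusion $i\colon\mathbb{Q}_A\hookrightarrow\mathbb{C}^{q-1}_\alpha$. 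Thus $\mathbb{C}^{q-1}_\alpha$ is likewise contiguously equivalent to a point, and Theorem~\ref{thm:contequi-to-point} yields its homological triviality. (Where the collapse is preferred, one could equally invoke Lemma~\ref{lemma2.2} on the star-shaped set and Theorem~\ref{thm:iso-onto} to pin down $\tilde{\mathrm{H}}_0^I$.)

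The step I expect to be the main obstacle is verifying that $i\circ t$ is contiguous to the identity on $\mathbb{C}^{q-1}_\alpha$: for each simplex $\sigma$ present at level $\beta$ one must exhibit a simplex $\tau$ of $\mathbb{C}^{q-1}_\alpha$ containing both $\sigma$ and $\{A\}$, and the natural candidate $\tau=\sigma\cup\{A\}$ must be shown to remain inside the closed star. The point is that $\sigma\cup\{A\}$ can never equal $S^{q-1}_\alpha$ (it contains $A$, which $S^{q-1}_\alpha$ omits) and equals $S^q_\alpha$ only when $\sigma\in\{S^{q-1}_\alpha,S^q_\alpha\}$, both of which are absent from $K^\beta$ for $\beta<M$; hence $F^c_\alpha(\tau)=\alpha\leq\beta$ and $\tau$ is indeed present. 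This combinatorial check is precisely what makes the collapse a contiguity, after which all three conclusions of the lemma follow as above.
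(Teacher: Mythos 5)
Your proposal is correct and takes essentially the same route as the paper: the paper's proof likewise asserts that $\mathbb{S}^q_\alpha$ and $\mathbb{C}^{q-1}_\alpha$ are contiguously equivalent to a point, applies Theorem~\ref{thm:contequi-to-point} to each, and then deduces triviality of the pair from Corollary~\ref{coro-homo-triv}. The only difference is that you supply the combinatorial verification (the collapse maps, filtration preservation, and the check that $\tau=\sigma\cup\{A\}$ avoids $S^{q-1}_\alpha$ and $S^q_\alpha$ below level $M$) that the paper leaves implicit, and this verification is sound.
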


\begin{proof}
We have that $\mathbb{S}^q_\alpha$ and $\mathbb{C}^{q-1}_\alpha$ are contiguously equivalent to a point and by Theorem \ref{thm:contequi-to-point} are homologically trivial. 
Since $\mathbb{S}^q_\alpha$ and $\mathbb{C}^{q-1}_\alpha$ are homologically trivial, the Corollary \ref{coro-homo-triv} implies that $(\mathbb{S}^q_\alpha, \mathbb{C}^{q-1}_\alpha)$ is also homologically trivial.
\end{proof}

\begin{theo}\label{incidence-isomorphism}
The group $\mathrm{H}_p^I(\mathbb{S}^q_\alpha, \dot{\mathbb{S}}^q_\alpha)$ is isomorphic to the group $\mathrm{H}_{p-1}^I(\mathbb{S}^{q-1}_\alpha,\dot{\mathbb{S}}^{q-1}_\alpha)$.
\end{theo}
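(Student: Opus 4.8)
The plan is to realize the claimed isomorphism as a composite of three isomorphisms, namely the map $(j_*)^{-1}\circ i_*\circ\partial$ that is later named the incidence isomorphism. The three factors arise, respectively, from the long exact sequence of the pair $(\mathbb{S}^q_\alpha,\dot{\mathbb{S}}^q_\alpha)$, from the long exact sequence of the pair $(\dot{\mathbb{S}}^q_\alpha,\mathbb{C}^{q-1}_\alpha)$, and from the Excision Axiom. Throughout I assume $q\geq 1$, so that $\mathbb{S}^{q-1}_\alpha$ and $\dot{\mathbb{S}}^{q-1}_\alpha\neq\emptyset$ are defined.

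First I would invoke Lemma \ref{homo-trivial-lemma}, which guarantees that $\mathbb{S}^q_\alpha$ and $\mathbb{C}^{q-1}_\alpha$ are homologically trivial. Applying Theorem \ref{thm: homotriv-3} to $(\mathbb{S}^q_\alpha,\dot{\mathbb{S}}^q_\alpha)$ — whose total space $\mathbb{S}^q_\alpha$ is homologically trivial and whose subspace $\dot{\mathbb{S}}^q_\alpha$ is nonempty — shows that the boundary operator $\partial\colon \mathrm{H}^I_p(\mathbb{S}^q_\alpha,\dot{\mathbb{S}}^q_\alpha)\to \mathrm{H}^I_{p-1}(\dot{\mathbb{S}}^q_\alpha)$ is an isomorphism, with $\mathrm{H}^I_0$ replaced by $\tilde{\mathrm{H}}^I_0$ when $p=1$. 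Next, since $\mathbb{C}^{q-1}_\alpha$ is homologically trivial and $\emptyset\neq \mathbb{C}^{q-1}_\alpha\subset \dot{\mathbb{S}}^q_\alpha$, Theorem \ref{thm: homotriv-2} shows that the inclusion-induced map $i_*\colon \mathrm{H}^I_{p-1}(\dot{\mathbb{S}}^q_\alpha)\to \mathrm{H}^I_{p-1}(\dot{\mathbb{S}}^q_\alpha,\mathbb{C}^{q-1}_\alpha)$ is an isomorphism.

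The remaining factor is an excision, and I expect the verification of its hypotheses to be the main bookkeeping step. Writing $A$ for the vertex of $S^q_\alpha$ not lying in $S^{q-1}_\alpha$, I would check, using the union and intersection of filtered sets, that $\mathbb{S}^{q-1}_\alpha\cup\mathbb{C}^{q-1}_\alpha=\dot{\mathbb{S}}^q_\alpha$ and $\mathbb{S}^{q-1}_\alpha\cap\mathbb{C}^{q-1}_\alpha=\dot{\mathbb{S}}^{q-1}_\alpha$. Concretely, evaluating the union filtration on each $\sigma\subset S^q_\alpha$ gives $M$ on $S^q_\alpha$ and $\alpha$ on every proper subset (the values of $\mathbb{C}^{q-1}_\alpha$ on simplices through $A$ and the value $\alpha$ coming from $\mathbb{S}^{q-1}_\alpha$ on the opposite face combine to exactly $\dot{F}^q_\alpha$); dually the intersection filtration equals $M$ on $S^{q-1}_\alpha$ and $\alpha$ elsewhere, which is $\dot{F}^{q-1}_\alpha$. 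With $(\mathbb{X},\mathbb{A})=(\dot{\mathbb{S}}^q_\alpha,\mathbb{C}^{q-1}_\alpha)$ and $(\mathbb{X}',\mathbb{A}')=(\mathbb{S}^{q-1}_\alpha,\dot{\mathbb{S}}^{q-1}_\alpha)$ satisfying $\mathbb{X}'\cup\mathbb{A}=\mathbb{X}$ and $\mathbb{X}'\cap\mathbb{A}=\mathbb{A}'$, the Excision Axiom \ref{ax7} yields that the inclusion induces an isomorphism $j_*\colon \mathrm{H}^I_{p-1}(\mathbb{S}^{q-1}_\alpha,\dot{\mathbb{S}}^{q-1}_\alpha)\to \mathrm{H}^I_{p-1}(\dot{\mathbb{S}}^q_\alpha,\mathbb{C}^{q-1}_\alpha)$.

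Finally I would assemble the composite $(j_*)^{-1}\circ i_*\circ\partial\colon \mathrm{H}^I_p(\mathbb{S}^q_\alpha,\dot{\mathbb{S}}^q_\alpha)\to \mathrm{H}^I_{p-1}(\mathbb{S}^{q-1}_\alpha,\dot{\mathbb{S}}^{q-1}_\alpha)$, which is an isomorphism as a composite of isomorphisms, proving the statement. The only delicate point is the low-degree bookkeeping at $p=0,1$, where one must consistently pass through the reduced persistent homology sequence and use the identifications $\mathrm{H}^I_0(\mathbb{X},\mathbb{A})=\tilde{\mathrm{H}}^I_0(\mathbb{X},\mathbb{A})$ for $\mathbb{A}\neq\emptyset$ that are established inside the proofs of Theorems \ref{thm: homotriv-2} and \ref{thm: homotriv-3}; the degrees in which one of the two groups vanishes are immediate, since the relative homology of a simplex mod its boundary is concentrated in a single dimension, so both sides are then $0$.
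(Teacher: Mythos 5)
Your composite $(j_*)^{-1}\circ i_*\circ\partial$ is precisely the boundary operator of the filtered proper triad $(\mathbb{S}^q_\alpha;\mathbb{S}^{q-1}_\alpha,\mathbb{C}^{q-1}_\alpha)$, i.e.\ the very map the paper later records, after Definition \ref{incidence-iso}, as the explicit form of the incidence isomorphism; so you have built the right map, but you certify it differently than the paper does. The paper's proof writes down the exact persistent homology sequence of that proper triad, identifies $\mathbb{S}^{q-1}_\alpha\cup\mathbb{C}^{q-1}_\alpha=\dot{\mathbb{S}}^{q}_\alpha$ and $\mathbb{S}^{q-1}_\alpha\cap\mathbb{C}^{q-1}_\alpha=\dot{\mathbb{S}}^{q-1}_\alpha$, and uses Lemma \ref{homo-trivial-lemma} (with Corollary \ref{coro-homo-triv}) to annihilate the flanking groups $\mathrm{H}^I_p(\mathbb{S}^q_\alpha,\mathbb{C}^{q-1}_\alpha)$ and $\mathrm{H}^I_{p-1}(\mathbb{S}^q_\alpha,\mathbb{C}^{q-1}_\alpha)$; exactness then makes the connecting map an isomorphism \emph{uniformly in $p$}, with no case analysis at all. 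You instead factor the map into three separately justified isomorphisms via Theorem \ref{thm: homotriv-3}, Theorem \ref{thm: homotriv-2} and Axiom \ref{ax7}. What your route buys is the explicit verification, with filtration values, of the two filtered-set identities — the paper asserts them without computation, even though the properness of its triad rests on exactly this excision check — so that part of your write-up is a genuine improvement in rigor.

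The cost of your packaging is the low-degree bookkeeping, and there your argument has a real, though repairable, gap. Theorems \ref{thm: homotriv-3} and \ref{thm: homotriv-2} as stated cover $p>1$ and $p>0$ respectively, together with reduced variants one degree lower, so your three factors are only certified for $p\geq 1$; yet the theorem is needed for all $p$, since Theorem \ref{pers-homo-simplex} iterates it down to $\mathrm{H}^I_{p-q}(\mathbb{S}^0_\alpha)$ and thus passes through degrees $\leq 0$ whenever $p<q$. Your fallback for those degrees — that both sides vanish because the homology of a simplex modulo its boundary is concentrated in a single dimension — is circular: that concentration \emph{is} Theorem \ref{pers-homo-simplex}, which is deduced from the present theorem. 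The fix stays within your scheme: for $p<0$, homologically trivial filtered sets have vanishing homology in all nonzero degrees (the argument of Remark \ref{Remark-3} applies in negative degrees as well, since $\mathrm{H}^I_n(\mathbb{P}_\alpha)=0$ for all $n\neq 0$), so the long exact sequences make all three factors isomorphisms; at $p=0$, since $\tilde{\mathrm{H}}^I_0(\mathbb{S}^q_\alpha)=0$ and $\dot{\mathbb{S}}^q_\alpha\neq\emptyset$, the map $i_*:\mathrm{H}^I_0(\dot{\mathbb{S}}^q_\alpha)\to\mathrm{H}^I_0(\mathbb{S}^q_\alpha)$ is onto, hence $j_*=0$ and $\partial:\mathrm{H}^I_0(\mathbb{S}^q_\alpha,\dot{\mathbb{S}}^q_\alpha)\to\mathrm{H}^I_{-1}(\dot{\mathbb{S}}^q_\alpha)$ is injective, while its surjectivity follows from $\mathrm{H}^I_{-1}(\mathbb{S}^q_\alpha)=0$. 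Alternatively, you can simply pass to the exact sequence of the proper triad, as the paper does, and the case analysis disappears entirely.
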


\begin{proof}
The homology sequence of the filtered proper triad $(\mathbb{S}^q_\alpha, \mathbb{S}^{q-1}_\alpha, \mathbb{C}^{q-1}_\alpha)$ is given by 
\[ 
{\longrightarrow} 
\mathrm{H}^I_{p} (\mathbb{S}^q_\alpha, \mathbb{C}^{q-1}_\alpha) \stackrel{j_*}{\longrightarrow} 
\mathrm{H}^I_{p} (\mathbb{S}^q_\alpha, \mathbb{S}^{q-1}_\alpha\cup \mathbb{C}^{q-1}_\alpha) \stackrel{\alpha^{I}}{\longrightarrow} 
\mathrm{H}^I_{p-1} (\mathbb{S}^{q-1}_\alpha, \mathbb{S}^{q-1}_\alpha\cap \mathbb{C}^{q-1}_\alpha)\stackrel{i_*}{\longrightarrow} 
\mathrm{H}^I_{p-1} (\mathbb{S}^q_\alpha, \mathbb{C}^{q-1}_\alpha)
{\longrightarrow} 
\]

We have that $\mathbb{S}^{q-1}_\alpha\cap\, \mathbb{C}^{q-1}_\alpha=\dot{\mathbb{S}}^{q-1}_\alpha$, $\mathbb{S}^{q-1}_\alpha\cup \mathbb{C}^{q-1}_\alpha=\dot{\mathbb{S}}^{q}_\alpha$ and,  by Lemma \ref{homo-trivial-lemma}, $(\mathbb{S}^q_\alpha, \mathbb{C}^{q-1}_\alpha)$ is homologically trivial. Then, the groups $\mathrm{H}^I_{p} (\mathbb{S}^q_\alpha, \mathbb{C}^{q-1}_\alpha)$ and $\mathrm{H}^I_{p-1} (\mathbb{S}^q_\alpha, \mathbb{C}^{q-1}_\alpha)$ are zero and the sequence becomes
$$ 
0
\longrightarrow \mathrm{H}^I_{p} (\mathbb{S}^q_\alpha,\dot{\mathbb{S}}^{q}_\alpha) \stackrel{\alpha^{I}}{\longrightarrow} \mathrm{H}^I_{p-1} (\mathbb{S}^{q-1}_\alpha,\dot{\mathbb{S}}^{q-1}_\alpha)\longrightarrow 0 $$

By the exactness of the sequence, we have that $\alpha^{I}$ is an isomorphism.

\end{proof}

\begin{defi}\label{incidence-iso}
The isomorphism $\alpha^{I}: \mathrm{H}^I_q(\mathbb{S}^q_\alpha,\dot{\mathbb{S}}^q_\alpha)\longrightarrow \mathrm{H}^I_{q-1}(\mathbb{S}^{q-1}_\alpha,\dot{\mathbb{S}}^{q-1}_\alpha)$ is called the \textbf{incidence isomorphism} and will be denoted by
$$ [\mathbb{S}^q_\alpha : \mathbb{S}^{q-1}_\alpha]: \mathrm{H}^I_q(\mathbb{S}^q_\alpha,\dot{\mathbb{S}}^q_\alpha)\approx \mathrm{H}^I_{q-1}(\mathbb{S}^{q-1}_\alpha,\dot{\mathbb{S}}^{q-1}_\alpha)$$

\end{defi}
\smallskip

\begin{theo}\label{pers-homo-simplex}
The persistent homology groups of $(\mathbb{S}^q_\alpha,\dot{\mathbb{S}}^q_\alpha)$ are as follows:
$$
\mathrm{H}_q^I(\mathbb{S}^q_\alpha,\dot{\mathbb{S}}^q_\alpha)\approx G^I_\alpha
$$
where $G^I_\alpha$ is defined in {Definition} \ref{def-persist-group} and 
$$\mathrm{H}_p^I(\mathbb{S}^q_\alpha,\dot{\mathbb{S}}^q_\alpha) =0, \text { for } p\neq q . $$ 
\end{theo}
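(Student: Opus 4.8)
The plan is to argue by induction on the simplex dimension $q$, using the incidence isomorphism of Theorem \ref{incidence-isomorphism} as the engine that lowers the degree. The point is that Theorem \ref{incidence-isomorphism} already supplies, for every $p$, an isomorphism $\mathrm{H}_p^I(\mathbb{S}^q_\alpha,\dot{\mathbb{S}}^q_\alpha)\approx \mathrm{H}_{p-1}^I(\mathbb{S}^{q-1}_\alpha,\dot{\mathbb{S}}^{q-1}_\alpha)$, so once the base case is settled the result drops out by shifting both the homological degree and the simplex dimension down by one in lockstep.

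For the base case $q=0$ I would first unwind Definition \ref{def-simp-bord}: the boundary $\dot{\mathbb{S}}^0_\alpha$ assigns the value $M$ to the unique nonempty subset of the one-point set $S^0_\alpha$, so for every $\varepsilon<M$ the associated subcomplex is empty. Hence $\mathrm{H}_p^I(\mathbb{S}^0_\alpha,\dot{\mathbb{S}}^0_\alpha)$ reduces to the absolute group $\mathrm{H}_p^I(\mathbb{S}^0_\alpha)$, and since $\mathbb{S}^0_\alpha$ is just the single-point filtered set $\mathbb{P}_\alpha$ of Definition \ref{def-persist-group}, the dimension axiom (\cref{as3}, or equivalently Axiom \ref{ax6}) identifies this group with $G^I_\alpha$ when $p=0$ and with $0$ when $p\neq 0$. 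This is exactly the asserted pattern in dimension $q=0$.

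For the inductive step I would assume the statement for $q-1$ and feed it through the incidence isomorphism $\mathrm{H}_p^I(\mathbb{S}^q_\alpha,\dot{\mathbb{S}}^q_\alpha)\approx \mathrm{H}_{p-1}^I(\mathbb{S}^{q-1}_\alpha,\dot{\mathbb{S}}^{q-1}_\alpha)$ of Theorem \ref{incidence-isomorphism}. By the inductive hypothesis the right-hand side equals $G^I_\alpha$ exactly when $p-1=q-1$, i.e.\ $p=q$, and vanishes otherwise. Translating back, $\mathrm{H}_q^I(\mathbb{S}^q_\alpha,\dot{\mathbb{S}}^q_\alpha)\approx G^I_\alpha$ and all other degrees vanish, completing the induction.

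Since the substantive work---the exactness arguments and the collapse of the persistent homology sequence of the triad $(\mathbb{S}^q_\alpha,\mathbb{S}^{q-1}_\alpha,\mathbb{C}^{q-1}_\alpha)$---is already packaged inside Theorem \ref{incidence-isomorphism} (via Lemma \ref{homo-trivial-lemma}), I do not expect a serious obstacle here; the statement is essentially a corollary. The only point demanding care is the base-case bookkeeping, namely verifying that the boundary of a $0$-simplex really does behave as the empty filtered set for all relevant $\varepsilon$, so that the relative group collapses to the absolute homology of a point and the coefficient group $G^I_\alpha$ is correctly identified.
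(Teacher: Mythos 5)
Your proposal is correct and follows essentially the same route as the paper: the paper likewise applies the incidence isomorphism of Theorem \ref{incidence-isomorphism} repeatedly to reduce $\mathrm{H}_p^I(\mathbb{S}^q_\alpha,\dot{\mathbb{S}}^q_\alpha)$ to $\mathrm{H}_{p-q}^I(\mathbb{S}^0_\alpha,\dot{\mathbb{S}}^0_\alpha)=\mathrm{H}_{p-q}^I(\mathbb{S}^0_\alpha)$ and concludes from the one-point case. Your induction-on-$q$ phrasing is just a reorganization of that iteration, with the added merit that you make explicit the base-case bookkeeping (that $\dot{\mathbb{S}}^0_\alpha$ is empty below level $M$, so the relative group collapses to the absolute homology of $\mathbb{P}_\alpha$, i.e.\ $G^I_\alpha$), which the paper leaves implicit.
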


\begin{proof}
By Theorem \ref{incidence-isomorphism} we have $\alpha^{I}: \mathrm{H}_p^I(\mathbb{S}^q_\alpha,\dot{\mathbb{S}}^q_\alpha)\approx \mathrm{H}_{p-1}^I(\mathbb{S}^{q-1}_\alpha,\dot{\mathbb{S}}^{q-1}_\alpha)$. Then, inductively we have
$\mathrm{H}_p^I(\mathbb{S}^q_\alpha,\dot{\mathbb{S}}^q_\alpha)
\approx
\mathrm{H}_{q-1}^I(\mathbb{S}^{p-1}_\alpha,\dot{\mathbb{S}}^{q-1}_\alpha)
\approx 
\cdots
\approx
\mathrm{H}_{p-q}^I(\mathbb{S}^{0}_\alpha,\dot{\mathbb{S}}^{0}_\alpha)=\mathrm{H}_{p-q}^I(\mathbb{S}^{0}_\alpha)$ and the result follows.

\end{proof}

\begin{coro}\label{correspondence-ordered-simplex}
For every ordered $(q, \alpha)$-simplex $\mathbb{S}^q_\alpha$, the correspondence $g\longrightarrow g S^q_\alpha$ is an isomorphism between $G^I_\alpha$ and $\mathrm{H}^I_q(\mathbb{S}^q_\alpha,\dot{\mathbb{S}}^q_\alpha)$.
\end{coro}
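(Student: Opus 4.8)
The plan is to argue by induction on the simplex dimension $q$, taking the incidence isomorphism of Theorem \ref{incidence-isomorphism} (Definition \ref{incidence-iso}) as the inductive engine. Recall that the element $gS^q_\alpha$ is pinned down inductively by the relation $[\mathbb{S}^q_\alpha : \mathbb{S}^{q-1}_\alpha]\,gS^q_\alpha = gS^{q-1}_\alpha$, where $\mathbb{S}^{q-1}_\alpha$ is the ordered face of $\mathbb{S}^q_\alpha$ obtained by deleting its initial vertex. So I would exhibit the correspondence $g \mapsto gS^q_\alpha$ as a composite of maps each already known to be an isomorphism.

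First I would dispatch the base case $q=0$. Here the boundary $\dot{\mathbb{S}}^0_\alpha$ carries no simplex below level $M$, so on the relevant interval it behaves as the empty filtered set and $\mathrm{H}^I_0(\mathbb{S}^0_\alpha, \dot{\mathbb{S}}^0_\alpha) = \mathrm{H}^I_0(\mathbb{S}^0_\alpha)$. Since $\mathbb{S}^0_\alpha$ is a single point with filtration value $\alpha$, it coincides with the model filtered set $\mathbb{P}_\alpha$ of Definition \ref{def-persist-group}, and the correspondence $g \mapsto gS^0_\alpha = (gx)_{\mathbb{S}^0_\alpha}$ is exactly the map induced by the identification $\mathbb{P}_\alpha \to \mathbb{S}^0_\alpha$; this is an isomorphism onto $\mathrm{H}^I_0(\mathbb{S}^0_\alpha) = G^I_\alpha$ directly from that definition.

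For the inductive step I would assume $g \mapsto gS^{q-1}_\alpha$ is an isomorphism $G^I_\alpha \to \mathrm{H}^I_{q-1}(\mathbb{S}^{q-1}_\alpha, \dot{\mathbb{S}}^{q-1}_\alpha)$. Theorem \ref{incidence-isomorphism} gives that the incidence map $[\mathbb{S}^q_\alpha : \mathbb{S}^{q-1}_\alpha]$ is an isomorphism, and the defining relation above displays $g \mapsto gS^q_\alpha$ as the composite
\[
G^I_\alpha \xrightarrow{\;g \mapsto gS^{q-1}_\alpha\;} \mathrm{H}^I_{q-1}(\mathbb{S}^{q-1}_\alpha, \dot{\mathbb{S}}^{q-1}_\alpha) \xrightarrow{\;[\mathbb{S}^q_\alpha : \mathbb{S}^{q-1}_\alpha]^{-1}\;} \mathrm{H}^I_q(\mathbb{S}^q_\alpha, \dot{\mathbb{S}}^q_\alpha).
\]
Both arrows are isomorphisms --- the first by the inductive hypothesis, the second as the inverse of an isomorphism --- so the composite is an isomorphism, closing the induction. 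Additivity of the composite (so that it really is a correspondence $g \mapsto gS^q_\alpha$ on the coefficient group) is automatic, since each factor is a homomorphism.

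The step I expect to be the main obstacle is the base-case bookkeeping rather than the inductive machinery: one must confirm that for a $0$-simplex the relative group $\mathrm{H}^I_0(\mathbb{S}^0_\alpha, \dot{\mathbb{S}}^0_\alpha)$ genuinely collapses to the absolute group $\mathrm{H}^I_0(\mathbb{S}^0_\alpha)$ on the interval $I$, and that under the canonical identification $\mathbb{S}^0_\alpha = \mathbb{P}_\alpha$ the symbol $gS^0_\alpha$ is precisely $(gx)_{\mathbb{S}^0_\alpha}$ of Definition \ref{def-persist-group}. Once these identifications are fixed, the remainder is the purely formal composition of isomorphisms described above.
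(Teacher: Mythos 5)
Your proof is correct and is essentially the paper's argument made explicit: the paper's own proof simply cites Theorem \ref{pers-homo-simplex} (which is itself established by iterating the incidence isomorphism of Theorem \ref{incidence-isomorphism} --- exactly your inductive step, since Definition \ref{defi:simplex} defines $gS^q_\alpha$ as the composite of those same inverse incidence isomorphisms) together with Theorem \ref{thm:iso-onto} for the degree-zero identification --- exactly your base case. The one slight gloss is your phrase ``directly from that definition'' at $q=0$: Definition \ref{def-persist-group} alone does not assert that $g\mapsto (gx)_X$ is injective or surjective, and the needed fact is supplied by Theorem \ref{thm:iso-onto} (or the direct-sum decomposition of $\mathrm{H}^I_0$) applied to the identification $\mathbb{P}_\alpha\to\mathbb{S}^0_\alpha$, which is precisely the citation the paper itself makes.
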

\begin{proof}
This result follows from Theorem \ref{thm:iso-onto} and Theorem \ref{pers-homo-simplex}.
\end{proof}

\begin{lemma}\label{commu-simplex}
Let $q>0$ and let $\mathbb{S}_{\alpha}^q$ and ${\mathbb{S}_{\alpha}'}^q$ be $(q,\alpha)$-simplices. Following the notation {from} Definition \ref{def-simp-bord}, if $f:(\mathbb{S}_{\alpha}^q,\dot{\mathbb{S}_{\alpha}}^q)\longrightarrow ({\mathbb{S}_{\alpha}'}^q,\dot{\mathbb{S}_{\alpha}'}^q)$ is such that $\mathbb{S}_{\alpha}^q$ and $\mathbb{C}_{\alpha}^{q-1}$ are mapped into ${\mathbb{S}_{\alpha}'}^q$ and ${\mathbb{C}_{\alpha}'}^{q-1}$, respectively, then {the following diagram is commutative:}
\[
\xymatrix
{
	\mathrm{H}_{q}^I(\mathbb{S}_{\alpha}^q,\dot{\mathbb{S}_{\alpha}}^{q})     \ar[d]_{f_*^{I}} \ar[rr]^{[\mathbb{S}_{\alpha}^q:\mathbb{S}_{\alpha}^{q-1}]} 
        & 
        & \mathrm{H}_{q-1}^I(\mathbb{S}_{\alpha}^{q-1},\dot{\mathbb{S}_{\alpha}}^{q-1}) \ar[d]^{f_{*}^{I}}  
    \\
	\mathrm{H}_{q}^I({\mathbb{S}_{\alpha}'}^q,\dot{\mathbb{S}_{\alpha}'}^q) \ar[rr]^{[{\mathbb{S}_{\alpha}'}^q:{\mathbb{S}_{\alpha}'}^{q-1}]} 
        & 
        &  \mathrm{H}_{q-1}^I({\mathbb{S}_{\alpha}'}^{q-1},\dot{\mathbb{S}_{\alpha}'}^{q-1})
}
\]
\end{lemma}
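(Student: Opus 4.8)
The plan is to identify the incidence isomorphism with the boundary operator of a filtered proper triad and then read off the commutativity from the naturality of that boundary operator, which was already established in Theorem \ref{incidence-triple}. Recall from Theorem \ref{incidence-isomorphism} and Definition \ref{incidence-iso} that $[\mathbb{S}_\alpha^q : \mathbb{S}_\alpha^{q-1}]$ is by definition the boundary homomorphism $\partial$ (the map called $\alpha^I$ there) in the persistent homology sequence of the filtered proper triad $(\mathbb{S}_\alpha^q; \mathbb{S}_\alpha^{q-1}, \mathbb{C}_\alpha^{q-1})$; here one uses the identities $\mathbb{S}_\alpha^{q-1} \cap \mathbb{C}_\alpha^{q-1} = \dot{\mathbb{S}}_\alpha^{q-1}$ and $\mathbb{S}_\alpha^{q-1} \cup \mathbb{C}_\alpha^{q-1} = \dot{\mathbb{S}}_\alpha^q$, together with the homological triviality of $(\mathbb{S}_\alpha^q, \mathbb{C}_\alpha^{q-1})$ from Lemma \ref{homo-trivial-lemma} to discard the two neighbouring groups. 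The same description applies verbatim to $[{\mathbb{S}'}^q_\alpha : {\mathbb{S}'}^{q-1}_\alpha]$ on the primed side.

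First I would promote $f$ to a map of filtered proper triads $f : (\mathbb{S}_\alpha^q; \mathbb{S}_\alpha^{q-1}, \mathbb{C}_\alpha^{q-1}) \longrightarrow ({\mathbb{S}'}^q_\alpha; {\mathbb{S}'}^{q-1}_\alpha, {\mathbb{C}'}^{q-1}_\alpha)$. By hypothesis $f$ carries the ambient simplex into ${\mathbb{S}'}^q_\alpha$ and the closed star $\mathbb{C}_\alpha^{q-1}$ into ${\mathbb{C}'}^{q-1}_\alpha$, and since $f$ is a filtration preserving map of pairs $(\mathbb{S}_\alpha^q, \dot{\mathbb{S}}_\alpha^q) \to ({\mathbb{S}'}^q_\alpha, \dot{\mathbb{S}'}^q_\alpha)$ it also carries $\dot{\mathbb{S}}_\alpha^q$ into $\dot{\mathbb{S}'}^q_\alpha$. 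Because $\mathbb{S}_\alpha^{q-1}$ is exactly the facet of $\dot{\mathbb{S}}_\alpha^q$ opposite the apex of the closed star $\mathbb{C}_\alpha^{q-1}$, compatibility with the closed stars forces $f$ to send $\mathbb{S}_\alpha^{q-1}$ into ${\mathbb{S}'}^{q-1}_\alpha$, and all three restrictions remain filtration preserving. Both triads are proper by Theorem \ref{incidence-isomorphism}, so $f$ is indeed a map of filtered proper triads.

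With this identification in place, Theorem \ref{incidence-triple} applies directly: the map $f$ of filtered proper triads induces a homomorphism from the persistent homology sequence of $(\mathbb{S}_\alpha^q; \mathbb{S}_\alpha^{q-1}, \mathbb{C}_\alpha^{q-1})$ into that of $({\mathbb{S}'}^q_\alpha; {\mathbb{S}'}^{q-1}_\alpha, {\mathbb{C}'}^{q-1}_\alpha)$, and in particular the boundary homomorphism of the triad commutes with the induced maps $f_*^I$. Specialising the relevant square of this map of sequences to degree $q$, with $\partial = [\mathbb{S}_\alpha^q : \mathbb{S}_\alpha^{q-1}]$ on top and $\partial' = [{\mathbb{S}'}^q_\alpha : {\mathbb{S}'}^{q-1}_\alpha]$ on the bottom, yields exactly $f_*^I \circ [\mathbb{S}_\alpha^q : \mathbb{S}_\alpha^{q-1}] = [{\mathbb{S}'}^q_\alpha : {\mathbb{S}'}^{q-1}_\alpha] \circ f_*^I$, which is the asserted commutativity.

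The main obstacle is the bookkeeping of the first step, namely confirming that the hypothesis on the closed stars genuinely upgrades $f$ to a map of triads --- that $\mathbb{S}_\alpha^{q-1}$ lands in ${\mathbb{S}'}^{q-1}_\alpha$ and that each restriction stays filtration preserving. Once $f$ is recognised as a map of filtered proper triads, the conclusion is not a fresh computation but a direct instance of the naturality of the triad boundary operator proved in Theorem \ref{incidence-triple}.
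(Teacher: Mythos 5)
Your proposal is correct and follows essentially the same route as the paper: the paper's proof likewise observes that the incidence isomorphism coincides with the boundary operator of the filtered proper triad $(\mathbb{S}_{\alpha}^q; \mathbb{S}_{\alpha}^{q-1}, \mathbb{C}_{\alpha}^{q-1})$ and then invokes the naturality of that boundary operator from Theorem \ref{incidence-triple}. The only difference is that you spell out the bookkeeping (that the hypothesis on the closed stars upgrades $f$ to a map of filtered proper triads), which the paper leaves implicit.
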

\begin{proof}
The incidence isomorphism coincides with the boundary operator of the filtered proper triad $(\mathbb{S}_{\alpha}^q, \mathbb{S}_{\alpha}^{q-1}, \mathbb{C}_{\alpha}^{q-1})$, in the case $p=q$. Therefore, the commutativity with induced homomorphism is already proved in Theorem \ref{incidence-triple}.
\end{proof}

\begin{defi}\label{defi:simplex}{
If $\mathbb{A}=(A, F_A)$ is a $(0,\alpha)$-simplex, with $A=\{a\}$, and $g\in G^I_\alpha$, let $gA$ denote the element $(ga)_A$ defined in Definition \ref{def-persist-group}.
}

Let $\mathbb{S}_{\alpha}^q$ be an ordered $q$-simplex with vertices $A^0 < A^1 < \cdots < A^q$. Define $\mathbb{S}_{\alpha}^k$, $k<q$, to be the ordered simplex with vertices $A^{q-k} <\cdots <A^q$. For each $g\in G^I_\alpha$, the element $gS^q_{\alpha}$ 
of
$\mathrm{H}^I_q(\mathbb{S}_{\alpha}^q,\dot{\mathbb{S}_{\alpha}}^q)$ is defined inductively by 

$$ gS^k_{\alpha}:=[\mathbb{S}_{\alpha}^k:\mathbb{S}_{\alpha}^{k-1}]^{-1}gS_{\alpha}^{k-1}.$$

\noindent Thus 
$$gS^q_{\alpha}=[\mathbb{S}_{\alpha}^q:\mathbb{S}_{\alpha}^{q-1}]^{-1}\cdots [\mathbb{S}_{\alpha}^1:\mathbb{A}^q]^{-1}gA^q.$$
\end{defi}

Using Lemma \ref{commu-simplex} we are able to prove the following result:

\begin{theo}\label{pres-order}
If ${\mathbb{S}_{\alpha}}_1$ 
and ${\mathbb{S}_{\alpha}}_2$ are ordered $(q,\alpha)$-simplices and $f:({\mathbb{S}_{\alpha}}_1,\dot{{\mathbb{S}_{\alpha}}}_1)\longrightarrow ({\mathbb{S}_{\alpha}}_2,\dot{{\mathbb{S}_{\alpha}}}_2)$ preserves both filtration and order, then $f^{I}_*(g{S_\alpha}_1)=g{S_\alpha}_2$.
\end{theo}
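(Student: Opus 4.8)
The plan is to argue by induction on the dimension $q$, following the inductive definition of the element $gS^q_\alpha$ given in Definition \ref{defi:simplex} and exploiting the naturality of the incidence isomorphism recorded in Lemma \ref{commu-simplex}. Throughout, write $\mathbb{S}_{\alpha,i}$ for ${\mathbb{S}_\alpha}_i$ ($i=1,2$), let $\mathbb{S}_{\alpha,i}^k$ denote its $k$-dimensional ordered face as in Definition \ref{defi:simplex}, and let $gS_{\alpha,i}^k$ be the associated element.

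For the base case $q=0$, each $\mathbb{S}_{\alpha,i}^0$ is a single-point filtered set $(\{a_i\},F)$ and, by Definition \ref{defi:simplex}, $gS_{\alpha,i}^0=(ga_i)_{A_i}$. Since $f$ is a filtration preserving map with $f(a_1)=a_2$, I would invoke Theorem \ref{thm:filt-to-onto}, whose proof gives precisely $f^{I}_*((ga_1)_{A_1})=(ga_2)_{A_2}$; this yields $f^{I}_*(gS_{\alpha,1}^0)=gS_{\alpha,2}^0$ directly.

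For the inductive step, suppose the statement holds in dimension $q-1$ and let $f$ preserve order and filtration between the two ordered $(q,\alpha)$-simplices. Because $f$ preserves the order of the vertices, it is the unique order-isomorphism on the $q+1$ vertices, sending the $i$-th vertex of $\mathbb{S}_{\alpha,1}$ to the $i$-th vertex of $\mathbb{S}_{\alpha,2}$. In particular it maps the face $\mathbb{S}_{\alpha,1}^{q-1}$ (spanned by $A^1<\cdots<A^q$) onto $\mathbb{S}_{\alpha,2}^{q-1}$ and, since the closed star $\mathbb{C}^{q-1}_\alpha$ of Definition \ref{def-simp-bord} is determined combinatorially by the omitted vertex $A^0$, it also carries $\mathbb{C}^{q-1}_{\alpha,1}$ into $\mathbb{C}^{q-1}_{\alpha,2}$. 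Thus the hypotheses of Lemma \ref{commu-simplex} are met, and the square relating $[\mathbb{S}_{\alpha,1}^q:\mathbb{S}_{\alpha,1}^{q-1}]$, $[\mathbb{S}_{\alpha,2}^q:\mathbb{S}_{\alpha,2}^{q-1}]$ and $f^{I}_*$ commutes.

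From that commutative square, rewriting it with the inverses on both sides gives the relation $f^{I}_*\circ[\mathbb{S}_{\alpha,1}^q:\mathbb{S}_{\alpha,1}^{q-1}]^{-1}=[\mathbb{S}_{\alpha,2}^q:\mathbb{S}_{\alpha,2}^{q-1}]^{-1}\circ f^{I}_*$. Applying this to $gS_{\alpha,1}^q=[\mathbb{S}_{\alpha,1}^q:\mathbb{S}_{\alpha,1}^{q-1}]^{-1}gS_{\alpha,1}^{q-1}$, and then using the inductive hypothesis for the restriction of $f$ to the $(q-1)$-faces (which still preserves order and filtration) to replace $f^{I}_*(gS_{\alpha,1}^{q-1})$ by $gS_{\alpha,2}^{q-1}$, one obtains $f^{I}_*(gS_{\alpha,1}^q)=[\mathbb{S}_{\alpha,2}^q:\mathbb{S}_{\alpha,2}^{q-1}]^{-1}gS_{\alpha,2}^{q-1}=gS_{\alpha,2}^q$, closing the induction. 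I expect the only genuinely delicate point to be verifying that $f$ respects the closed-star structure so that Lemma \ref{commu-simplex} applies; once that is checked, the remaining manipulations with the incidence isomorphisms are purely formal.
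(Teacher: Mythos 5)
Your proof is correct and follows exactly the route the paper intends: the paper states this theorem immediately after Lemma \ref{commu-simplex} with only the remark that it follows from that lemma, and your induction on $q$ via Definition \ref{defi:simplex} --- base case from Theorem \ref{thm:filt-to-onto}, inductive step from the naturality square for the incidence isomorphism, after checking that an order-preserving $f$ carries the distinguished face and the closed star $\mathbb{C}^{q-1}_\alpha$ to their counterparts --- is precisely the omitted argument. No gaps; your verification that $f$ respects the closed-star structure is indeed the one point needing care, and you handle it correctly.
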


\begin{theo}
Let $A$ and $B$ be the vertices of the $1$-simplex $\mathbb{S}_\alpha^1$. Using a simplified notation $\mathbb{S}^0_\alpha=\dot{\mathbb{S}_\alpha}^1$, every element $h\in \mathrm{H}_0^I(\mathbb{S}_\alpha^{0})$ can be written uniquely as:
$$ h=(gA)_{S^0} + (g'B)_{S^0}, g,g'\in G^I_\alpha.$$

The element $h$ is in $\tilde{\mathrm{H}}^I_0(\mathbb{S}_\alpha^0)$ if and only if, 
$g+g' =0$ or equivalently,
$$ h= (gA)_{S^0} - (gB)_{S^0}, g\in G^I_\alpha.$$
\end{theo}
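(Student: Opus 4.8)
The plan is to realize $\mathbb{S}^0_\alpha = \dot{\mathbb{S}}^1_\alpha$ as the union of its two vertices, apply the direct-sum decomposition of Theorem \ref{injective-representation} to obtain the unique expression for $h$, and then detect the reduced part by collapsing to a point. Throughout, the interesting range is $\varepsilon \geq \alpha$; for $\varepsilon < \alpha$ the group $\mathrm{H}_0^I(\mathbb{S}^0_\alpha)$ vanishes and the statement holds trivially with $g=g'=0$.

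First I would unwind the notation: by Definition \ref{def-simp-bord} the filtered set $\mathbb{S}^0_\alpha=\dot{\mathbb{S}}^1_\alpha$ is a \emph{two-point} set in which each vertex $A,B$ carries filtration value $\alpha$ while the edge $\{A,B\}$ carries value $M$. Writing $\mathbb{Q}_A=(\{A\},\alpha)$ and $\mathbb{Q}_B=(\{B\},\alpha)$ for the single-vertex filtered sets, I would check that $\mathbb{S}^0_\alpha = \mathbb{Q}_A\cup\mathbb{Q}_B$ as filtered sets: the single points receive $\alpha$, while $\{A,B\}$, being a subset of neither $\{A\}$ nor $\{B\}$, receives $M$ through the ``otherwise'' clause of the union filtration, matching $\dot F^1_\alpha$ exactly. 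Since $\mathbb{Q}_A\cap\mathbb{Q}_B=\emptyset$, the hypotheses of Theorem \ref{injective-representation} are satisfied with $r=2$ and ambient subset $\emptyset$, so the inclusions $k_A,k_B$ give an injective representation
\[
\mathrm{H}_0^I(\mathbb{S}^0_\alpha) = k_{A*}\,\mathrm{H}_0^I(\mathbb{Q}_A)\ \oplus\ k_{B*}\,\mathrm{H}_0^I(\mathbb{Q}_B).
\]
By the single-vertex case (Corollary \ref{correspondence-ordered-simplex} with $q=0$, together with Definition \ref{defi:simplex}) the map $g\mapsto (gA)_A=gA$ is an isomorphism $G^I_\alpha\to\mathrm{H}_0^I(\mathbb{Q}_A)$, and similarly for $B$. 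Functoriality of the symbol $(g\,{\cdot}\,)$ under inclusion, i.e.\ Theorem \ref{thm:filt-to-onto}, identifies $k_{A*}((gA)_A)=(gA)_{S^0}$ and $k_{B*}((g'B)_B)=(g'B)_{S^0}$. Substituting into the displayed decomposition yields the unique expression $h=(gA)_{S^0}+(g'B)_{S^0}$, which is the first assertion.

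For the second assertion I would use a collapsing map $\varphi:\mathbb{S}^0_\alpha\to\mathbb{P}_\alpha$ sending both $A$ and $B$ to $p_0$; by definition $\tilde{\mathrm{H}}_0^I(\mathbb{S}^0_\alpha)=\ker\varphi^I_*$. Applying Theorem \ref{thm:filt-to-onto} to $\varphi$ shows that $\varphi^I_*$ carries $(gA)_{S^0}$ and $(g'B)_{S^0}$ to the images of $g$ and $g'$ in $\mathrm{H}_0^I(\mathbb{P}_\alpha)$, which under the identification $\mathrm{H}_0^I(\mathbb{P}_\alpha)=G^I_\alpha$ are simply $g$ and $g'$. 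Since $\varphi^I_*$ is a homomorphism, $\varphi^I_*(h)=g+g'$, so $h\in\tilde{\mathrm{H}}_0^I(\mathbb{S}^0_\alpha)$ if and only if $g+g'=0$. Finally, using that $g\mapsto (gB)_{S^0}$ is a homomorphism, the relation $g'=-g$ gives the equivalent form $h=(gA)_{S^0}-(gB)_{S^0}$.

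The main obstacle I expect is bookkeeping rather than anything conceptual: one must take care that the ``$0$-simplex'' notation $\mathbb{S}^0_\alpha=\dot{\mathbb{S}}^1_\alpha$ denotes a two-point set, so that Theorem \ref{injective-representation} genuinely produces a rank-two direct sum, and one must verify the union-filtration identity precisely at the edge $\{A,B\}$ (the value $M$) in order to meet the hypotheses of the decomposition theorem. Once these identifications are in place, every remaining step is a routine application of Theorem \ref{thm:filt-to-onto} and the single-vertex computation.
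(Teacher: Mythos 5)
Your proof is correct, and it reaches the direct-sum decomposition by a genuinely different route than the paper. The paper's proof builds the explicit diagram of pairs $(\mathbb{S}^0_\alpha, \{A\})$ and $(\mathbb{S}^0_\alpha,\{B\})$, uses exactness of the homology sequences of the triples $(\mathbb{S}^0_\alpha,\{A\},\emptyset)$ and $(\mathbb{S}^0_\alpha,\{B\},\emptyset)$ together with the excision isomorphisms $k_{1\ast},k_{2\ast}$, and then invokes the direct-sum criterion I-13.1 of \cite{foundations} directly, with $h_{1\ast},h_{2\ast}$ supplying the coefficient identification. You instead realize $\mathbb{S}^0_\alpha$ as the filtered union $\mathbb{Q}_A\cup\mathbb{Q}_B$ and appeal to the general Theorem \ref{injective-representation} with $r=2$ and ambient set $\mathbb{A}=\emptyset$; since that theorem is itself proved from excision and the proper-triad criterion of Theorem \ref{thm:injec-rep}, both arguments ultimately rest on the same Eilenberg--Steenrod lemma, but yours buys economy (no ad hoc diagram chase) at the cost of having to verify the degenerate hypotheses of Theorem \ref{injective-representation} when $\mathbb{A}=\emptyset$ --- which you do carefully, in particular checking via the ``otherwise'' clause that the union filtration assigns $M$ to the edge $\{A,B\}$, matching $\dot{F}^1_\alpha$. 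Your treatment of the second assertion (collapse $\varphi:\mathbb{S}^0_\alpha\to\mathbb{P}_\alpha$, compute $\varphi^I_\ast(h)=g+g'$ via Theorem \ref{thm:filt-to-onto}, and read off $\tilde{\mathrm{H}}^I_0=\ker\varphi^I_\ast$) is exactly what the paper's terse ``consequence of Theorem \ref{thm:filt-to-onto}'' intends, only spelled out. One small citation quibble: the isomorphism $G^I_\alpha\cong\mathrm{H}_0^I(\mathbb{Q}_A)$, $g\mapsto (gA)_A$, is most directly the unnumbered theorem asserting $\mathrm{H}^I_0(\mathbb{P})=(G^I P)_P$ together with the theorem that $g\mapsto (gx)_X$ maps $G^I$ isomorphically onto $(G^I x)_X$; Corollary \ref{correspondence-ordered-simplex} as stated concerns the relative group $\mathrm{H}^I_q(\mathbb{S}^q_\alpha,\dot{\mathbb{S}}^q_\alpha)$, and for $q=0$ one needs the additional (easy, and used in the proof of Theorem \ref{pers-homo-simplex}) observation that $\dot{\mathbb{S}}^0_\alpha$ is empty below level $M$, so the relative and absolute groups agree. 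This is a matter of bookkeeping, not a gap.
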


\begin{proof}

Consider the following commutative diagram:
\[
\begin{tikzcd}
 (S^0, A)  
    &  
    & (S^0, B)            
    \\
    & (S^0, \emptyset)  \arrow[ul, "j_1 "]\arrow[ur, "j_2"]
    &
    \\
	(B, \emptyset) \arrow[uu, "k_1 "]\arrow[ur, "i_2"]
    & 
    & (A, \emptyset)\arrow[uu, "k_2 "]\arrow[ul, "i_1"] 
    \\
    (P_\alpha,\emptyset)\arrow[u, "h_2"]
    &
    & (P_\alpha, \emptyset)\arrow[u, "h_1"] .
\end{tikzcd} 
\]

By the exactness of the homology sequences of the triples 
$ (\mathbb{S}_\alpha^0, \{A\}, \emptyset)$ and
$ (\mathbb{S}_\alpha^0, \{B\}, \emptyset)$,
$\mathrm{Im}\ i_\alpha = \ker j_\alpha $, $\alpha = 1,2$.
The maps $k_1$ and $k_2$ are excisions, and therefore $k_{1\ast}$ and $k_{2\ast}$ are isomorphisms.
Thus, the conditions of \cite{foundations}, 1-13.1, are satisfied and $i_{1\ast}$ and $i_{2\ast}$ yield a injective direct sum representation of $\mathrm{H}_q^{I} (\mathbb{S}_\alpha^0, \emptyset)$.
Moreover, since $h_{1\ast}$ and $h_{2\ast}$ are isomorphisms, the firs part of the theorem follows.

The second part is a consequence of theorem \ref{thm:filt-to-onto}.
\end{proof}

\begin{theo}\label{even-odd}
Let $f:(\mathbb{S}_\alpha,\dot{\mathbb{S}_\alpha})\longrightarrow (\mathbb{S}_\alpha,\dot{\mathbb{S}_\alpha})$ be a permutation of the vertices of the $(q, \alpha)$-simplex $\mathbb{S}_\alpha$ and $h\in \mathrm{H}^I_q(\mathbb{S}_\alpha,\dot{\mathbb{S}_\alpha})$. Then
\[
  f^{I}_*(h) = 
  \begin{cases}
  h , & \text { if } f \text { is even; } \\
  -h , & \text { if } f \text { is odd. } 
      \end{cases}
\]

\end{theo}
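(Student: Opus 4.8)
The plan is to reduce the statement to a single transposition and then induct on $q$, descending one dimension at a time through the incidence isomorphism of Definition~\ref{incidence-iso}. By Corollary~\ref{correspondence-ordered-simplex} and Theorem~\ref{pers-homo-simplex} the group $\mathrm{H}^I_q(\mathbb{S}_\alpha,\dot{\mathbb{S}}_\alpha)$ is isomorphic to $G^I_\alpha$, so it suffices to identify $f^I_*$ as the endomorphism $\mathrm{sgn}(f)\,\mathrm{id}$. Since the symmetric group on the vertex set is generated by transpositions, I would write $f=\tau_1\circ\cdots\circ\tau_m$ with each $\tau_i$ a transposition, so that $\mathrm{sgn}(f)=(-1)^m$; by Axioms~\ref{ax1} and \ref{ax2} one has $f^I_*=(\tau_1)^I_*\circ\cdots\circ(\tau_m)^I_*$, and because we work in $\mathscr{A}b$ it is enough to prove that every transposition $\tau$ satisfies $\tau^I_*=-\mathrm{id}$. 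This last claim I prove by induction on $q$.

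For the base case $q=1$ the only nontrivial permutation of $\{A^0,A^1\}$ is $\tau=(A^0\,A^1)$. Here $\mathbb{S}^1_\alpha$ is homologically trivial by Lemma~\ref{homo-trivial-lemma}, so Theorem~\ref{thm: homotriv-3} provides an isomorphism $\tilde{\alpha}^I:\mathrm{H}_1^I(\mathbb{S}^1_\alpha,\dot{\mathbb{S}}^1_\alpha)\to\tilde{\mathrm{H}}_0^I(\dot{\mathbb{S}}^1_\alpha)$, which is natural in maps of pairs by Theorem~\ref{thm:rest-to-homo} and Theorem~\ref{theo:red-seq}. By the theorem immediately preceding this one, every element of $\tilde{\mathrm{H}}_0^I(\dot{\mathbb{S}}^1_\alpha)$ has the form $(gA^0)_{S^0}-(gA^1)_{S^0}$, and by Theorem~\ref{thm:iso-onto} together with Theorem~\ref{thm:filt-to-onto} the swap sends $(gA^0)_{S^0}\mapsto(gA^1)_{S^0}$ and conversely; hence the induced map is $-\mathrm{id}$ on $\tilde{\mathrm{H}}_0^I$. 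Naturality of $\tilde{\alpha}^I$ then yields $\tilde{\alpha}^I\circ\tau^I_*=(-\mathrm{id})\circ\tilde{\alpha}^I$, and since $\tilde{\alpha}^I$ is an isomorphism this forces $\tau^I_*=-\mathrm{id}$.

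For the inductive step I assume the full statement for $(q-1)$-simplices, take $q\geq2$, and let $\tau=(A^i\,A^j)$. Because $q\geq2$ there is a vertex $A^m$ with $m\notin\{i,j\}$; then $\tau$ fixes $A^m$ and therefore preserves both the face $\mathbb{S}^{q-1}_\alpha$ opposite $A^m$ and the closed star $\mathbb{C}^{q-1}_\alpha$ of $A^m$, since the two subsets on which $F^c_\alpha$ takes the value $M$ (namely $S^q_\alpha$ and the face opposite $A^m$) are each carried to themselves. Lemma~\ref{commu-simplex} then shows that $\tau^I_*$ commutes with the incidence isomorphism $[\mathbb{S}^q_\alpha:\mathbb{S}^{q-1}_\alpha]$. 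The restriction $\tau|_{\mathbb{S}^{q-1}_\alpha}$ is again the transposition $(A^i\,A^j)$, now of a $(q-1)$-simplex, so by the inductive hypothesis it acts as $-\mathrm{id}$ on $\mathrm{H}_{q-1}^I(\mathbb{S}^{q-1}_\alpha,\dot{\mathbb{S}}^{q-1}_\alpha)$; transporting this through the incidence isomorphism gives $\tau^I_*=-\mathrm{id}$, which closes the induction.

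The main obstacle will be the bookkeeping in the inductive step. The canonical element $gS^q_\alpha$ and the incidence isomorphism of Definition~\ref{incidence-iso} are defined relative to the face opposite the \emph{smallest} vertex, so a transposition that moves $A^0$ cannot be treated with that particular face. The observation that resolves this is that, once $q\geq2$, any transposition fixes \emph{some} vertex $A^m$, and Lemma~\ref{commu-simplex} applies verbatim to the face opposite $A^m$ together with the closed star of $A^m$; thus I only need to verify that $\tau$ preserves that face/closed-star decomposition, which is immediate from $\tau(A^m)=A^m$. The separate treatment of $q=1$ via the reduced boundary is precisely what is required, since in that dimension there is no spare vertex available to be fixed.
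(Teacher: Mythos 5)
Your proposal is correct and follows essentially the same route as the paper's own proof: reduction to transpositions, a base case at $q=1$ handled by pushing into $\tilde{\mathrm{H}}_0^I(\dot{\mathbb{S}}^1_\alpha)$ where the swap visibly acts as $-\mathrm{id}$ on elements $(gA^0)_{S^0}-(gA^1)_{S^0}$, and an inductive step that fixes a spare vertex and transports the sign through the incidence isomorphism via Lemma~\ref{commu-simplex}. The only cosmetic differences are that you invoke Theorem~\ref{thm: homotriv-3} to get $\tilde{\alpha}^I$ as a full isomorphism where the paper only needs injectivity of $\partial$ (from $\mathrm{H}_1^I(\mathbb{S}^1_\alpha)=0$), and you make explicit the check---left implicit in the paper---that a transposition fixing $A^m$ preserves the closed-star decomposition required by Lemma~\ref{commu-simplex}.
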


\begin{proof}
For $q=0$, the theorem is trivial. Let us consider the case $q=1$.

Denoting $\mathbb{S}_\alpha^{0}=\dot{\mathbb{S}}_\alpha^1$, it consists of the points $A$ and $B$ and let $f_0:\mathbb{S}_\alpha^0\longrightarrow \mathbb{S}_\alpha^{0}$ be the restriction of $f$. If $f$ is the identity, the result is trivial, thus we may assume $f(A)=B$ and $f(B)=A$. Then the following diagram is commutative:
\[
\xymatrix
{
	\mathrm{H}_{1}^I(\mathbb{S}_\alpha^1,\mathbb{S}_\alpha^0)     \ar[d]_{\partial} \ar[rr]^{f^{I}_*} & & \mathrm{H}_{1}^I(\mathbb{S}_\alpha^1, \mathbb{S}_\alpha^0) \ar[d]^{\partial}  \\
	\tilde{\mathrm{H}}_{0}^I(\mathbb{S}_\alpha^0) \ar[rr]^{f^{I}_{0*}} & &  \tilde{\mathrm{H}}_{0}^I(\mathbb{S}_\alpha^0)
}
\]

If $h\in \mathrm{H}^I_1(\mathbb{S}_\alpha^1, \mathbb{S}_\alpha^0)$, then by Theorem \ref{thm:rest-to-homo},
$ \partial h=(gA)_{S^0_\alpha}-(gB)_{S^0_\alpha}$
for some $g\in G^I$ and by Theorem \ref{thm:filt-to-onto}, one has that
$$\partial\circ f^{I}_*h=f^{I}_{0*}\circ \partial h=(g\circ f(A))_{S^{0}_\alpha}-(g\circ f(B))_{S^{0}_\alpha}= (gB)_{S^{0}_\alpha}-(gA)_{S^{0}_\alpha}=-\partial h .$$

Since $\mathrm{H}_{1}^I(\mathbb{S}_\alpha^1)=0$, the kernel of $\partial$ is zero. Hence $f^{I}_*h=-h$.

Assume, inductively, that the theorem is true for integers $q-1$, $q\geq 2$. Since every permutation is a product of simple permutations, it is sufficient to consider the case of a simple permutation $f$ of the vertices of $\mathbb{S}_\alpha$. Since $\mathbb{S}_\alpha$ has more than two vertices, there is a vertex $A_0$ such that $F(A_0)=A_0$. Let $\mathbb{S}_\alpha'$ be the $(q-1)$-face of $\mathbb{S}_\alpha$ opposite to the vertex $A_0$. Then f maps $\mathbb{S}_\alpha'$ onto itself and defines a permutation
$$ f':(\mathbb{S}_\alpha',\dot{\mathbb{S}}'_\alpha)\longrightarrow (\mathbb{S'}_\alpha,\dot{\mathbb{S}}'_\alpha).$$

Moreover, by Lemma \ref{commu-simplex}, the commutativity relation $[\mathbb{S}_\alpha:\mathbb{S}_\alpha']f^{I}_*=f'^{I}_*[\mathbb{S}_\alpha:\mathbb{S}_\alpha']$ holds.
\[
\xymatrix
{
	\mathrm{H}_{q}^I(\mathbb{S}_\alpha ,\dot{\mathbb{S}}_\alpha )     \ar[d]_{f_*^{I}} \ar[rr]^{[\mathbb{S}_\alpha :\mathbb{S}_\alpha']} 
        & 
        & \mathrm{H}_{q-1}^I(\mathbb{S}_\alpha',\dot{\mathbb{S}}'_\alpha) \ar[d]^{{f'}_{*}^{I}}  
    \\
	\mathrm{H}_{q}^I(\mathbb{S}_\alpha,\dot{\mathbb{S}}_\alpha) \ar[rr]^{[\mathbb{S}_\alpha:\mathbb{S}_\alpha']} 
        & 
        &  \mathrm{H}_{q-1}^I({\mathbb{S}_\alpha'},\dot{\mathbb{S}}'_\alpha)
}
\]

Since $f'$ is a simple permutation of the vertices of $\mathbb{S}_\alpha'$  and the theorem is assumed for the dimensions $q-1$, we have for $h\in \mathrm{H}^I_q(\mathbb{S}_\alpha,\dot{\mathbb{S}}_\alpha)$,
$$ f'^{I}_*[\mathbb{S}_\alpha:\mathbb{S}_\alpha']h=-[\mathbb{S}_\alpha:\mathbb{S}_\alpha']h.$$

This implies that
$$ [\mathbb{S}_\alpha:\mathbb{S}_\alpha']f^{I}_*h=-[\mathbb{S}_\alpha:\mathbb{S}_\alpha'](-h)$$

\noindent and, since $[\mathbb{S}_\alpha:\mathbb{S}_\alpha']$ is an isomorphism, $f^{I}_*h=-h$.
\end{proof}

\begin{theo}\label{two-ordered-simplex}
Let ${\mathbb{S}_\alpha}_1$ and ${\mathbb{S}_\alpha}_2$ be two ordered $(q,\alpha)$-simplices both mapped by the same unordered $(q,\alpha)$-simplex $\mathbb{S}_\alpha$. Then
$$ g{S_\alpha}_1=\pm g{S_\alpha}_2$$

\noindent according as the order of ${\mathbb{S}_\alpha}_2$ differs by an even permutation or odd permutation from the order of ${\mathbb{S}_\alpha}_1$.
\end{theo}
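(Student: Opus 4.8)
The plan is to exhibit a single vertex map that, read one way, is an order-preserving map between the two ordered simplices and, read the other way, is a permutation of the \emph{unordered} simplex, and then to play Theorem~\ref{pres-order} against Theorem~\ref{even-odd}.

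First I would fix the common underlying filtered set. Since ${\mathbb{S}_\alpha}_1$, ${\mathbb{S}_\alpha}_2$ and $\mathbb{S}_\alpha$ share the same vertex set and the same constant filtration value $\alpha$, the relative groups $\mathrm{H}^I_q({\mathbb{S}_\alpha}_1,\dot{{\mathbb{S}_\alpha}}_1)$, $\mathrm{H}^I_q({\mathbb{S}_\alpha}_2,\dot{{\mathbb{S}_\alpha}}_2)$ and $\mathrm{H}^I_q(\mathbb{S}_\alpha,\dot{\mathbb{S}}_\alpha)$ coincide as groups; the orderings enter only through the distinguished elements $g{S_\alpha}_1$ and $g{S_\alpha}_2$ of Definition~\ref{defi:simplex}. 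Writing $o_1,o_2\colon\{0,\dots,q\}\to V$ for the two orderings of the vertex set $V$, I define the vertex map $f$ by $f(o_1(i)):=o_2(i)$ and set $\pi:=o_2\circ o_1^{-1}$, the induced permutation of $V$; note $\pi$ is conjugate to the permutation $o_1^{-1}\circ o_2$ of $\{0,\dots,q\}$ by which the two orders differ, hence has the same parity.

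Next, read $f$ as a map $f\colon({\mathbb{S}_\alpha}_1,\dot{{\mathbb{S}_\alpha}}_1)\to({\mathbb{S}_\alpha}_2,\dot{{\mathbb{S}_\alpha}}_2)$. By construction it carries the $i$-th vertex of the first order to the $i$-th vertex of the second, so it preserves order, and it preserves filtration trivially since the filtration is constant $\alpha$; Theorem~\ref{pres-order} then gives $f^I_*(g{S_\alpha}_1)=g{S_\alpha}_2$. Now read the \emph{same} $f$ as a self-map of the unordered simplex $\mathbb{S}_\alpha$: there it is exactly the permutation $\pi$ of the vertices, so Theorem~\ref{even-odd} yields $f^I_*(h)=h$ when $\pi$ is even and $f^I_*(h)=-h$ when $\pi$ is odd, for every $h$, in particular for $h=g{S_\alpha}_1$. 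Combining the two readings, $g{S_\alpha}_2=f^I_*(g{S_\alpha}_1)=\pm g{S_\alpha}_1$, with sign $+$ or $-$ according as $\pi$ is even or odd; since the sign is $\pm1$ this is equivalent to $g{S_\alpha}_1=\pm g{S_\alpha}_2$, which is the assertion.

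The only real subtlety — and the point to flag carefully — is that the two applications refer to one and the same induced homomorphism $f^I_*$ on the single group $\mathrm{H}^I_q(\mathbb{S}_\alpha,\dot{\mathbb{S}}_\alpha)$: the induced map depends only on the underlying vertex map $f$, not on any chosen ordering, so there is no inconsistency in using the order-preserving description to identify the output as $g{S_\alpha}_2$ while using the permutation description to evaluate it as $\pm g{S_\alpha}_1$. Everything else is bookkeeping about parities, and the case $q=0$ (where there is nothing to permute) is immediate.
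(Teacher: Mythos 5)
Your proposal is correct and follows essentially the same route as the paper's own proof: both construct the single vertex map that is order-preserving when read between the two ordered simplices and a vertex permutation when read as a self-map of the unordered simplex, then combine Theorem~\ref{pres-order} ($f^{I}_*(g{S_\alpha}_1)=g{S_\alpha}_2$) with Theorem~\ref{even-odd} ($f^{I}_*(h)=\pm h$ by parity) to conclude $g{S_\alpha}_1=\pm g{S_\alpha}_2$. Your explicit remark that the induced homomorphism depends only on the underlying vertex map, so the two readings evaluate one and the same $f^{I}_*$, makes precise what the paper leaves implicit.
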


\begin{proof}
Let us denote by $f$ the filtration preserving map from $({\mathbb{S}_\alpha}_1, {\dot{\mathbb{{S}}}_\alpha}{}_1)$ to 
$({\mathbb{S}_\alpha}_2,{\dot{\mathbb{S}}_\alpha}{}_2)$ which maps ${{S}_\alpha}_2= \{x_0, x_1, \ldots, x_q \}$ onto ${S_\alpha}_1= \{ x_{\sigma(0)}, x_{\sigma(1)}, \ldots, x_{\sigma(q)} \} $, $f (x_{i}) = x_{\sigma(i)}$ $i = 1, \ldots, q$.

By Theorem \ref{pres-order} we have $f^{I}_*(gS_2)=gS_1$. Furthermore, Theorem \ref{even-odd} implies
$f^{I}_*(gS_2)=\pm gS_2$, and consequently $gS_1=\pm gS_2$.
\end{proof}

\begin{theo}
Let ${\mathbb{S}_\alpha}$ be an ordered simplex with vertices $A^0<\cdots <A^{q}$ and let ${\mathbb{S}_\alpha}_k$ be the face obtained by omitting the vertex $A^k$ and not changing the order of the others. Then
$$ [{\mathbb{S}_\alpha}:{\mathbb{S}_\alpha}_k]gS_\alpha =(-1)^kg{S_\alpha}_k.$$
\end{theo}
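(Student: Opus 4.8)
The plan is to reduce the general statement to the base case $k=0$, which holds by the very definition of the element $gS_\alpha$, and to absorb the remaining discrepancy into a sign produced by re-ordering the vertices. First I would record the base case: by Definition~\ref{defi:simplex}, the face ${\mathbb{S}_\alpha}_0$ obtained by deleting the first vertex $A^0$ satisfies $[\mathbb{S}_\alpha:{\mathbb{S}_\alpha}_0]\,gS_\alpha = g{S_\alpha}_0$, which is exactly the asserted identity for $k=0$ since $(-1)^0=1$. It therefore remains to treat $1\le k\le q$, and the idea is to re-order the vertices of $\mathbb{S}_\alpha$ so that $A^k$ becomes the \emph{first} vertex, turning the $k$-th face into a ``$0$-th face'' to which the base case applies.

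Concretely, I would introduce the ordered $(q,\alpha)$-simplex $\mathbb{S}'_\alpha$ having the same underlying filtered set as $\mathbb{S}_\alpha$ but equipped with the order
\[
A^k < A^0 < A^1 < \cdots < A^{k-1} < A^{k+1} < \cdots < A^q .
\]
This order differs from $A^0<\cdots<A^q$ by the cyclic permutation moving $A^k$ to the front, which is a product of $k$ adjacent transpositions and hence has sign $(-1)^k$. Since $\mathbb{S}_\alpha$ and $\mathbb{S}'_\alpha$ are two orderings of one and the same unordered $(q,\alpha)$-simplex (every face has filtration value $\alpha$, so every bijection of vertices is filtration preserving), Theorem~\ref{two-ordered-simplex} gives $gS_\alpha = (-1)^k\,gS'_\alpha$.

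Next I would apply the base case to $\mathbb{S}'_\alpha$. The face of $\mathbb{S}'_\alpha$ obtained by deleting its first vertex $A^k$ is, as an ordered simplex, precisely ${\mathbb{S}_\alpha}_k$, because the remaining vertices $A^0,\dots,\widehat{A^k},\dots,A^q$ keep their original relative order; hence Definition~\ref{defi:simplex} applied to $\mathbb{S}'_\alpha$ yields $[\mathbb{S}'_\alpha:{\mathbb{S}_\alpha}_k]\,gS'_\alpha = g{S_\alpha}_k$. The crucial observation is that the incidence isomorphism of Definition~\ref{incidence-iso} is built (Theorem~\ref{incidence-isomorphism}) from the homology sequence of a triad of \emph{unordered} filtered sets determined by $\mathbb{S}_\alpha$, the face ${\mathbb{S}_\alpha}_k$, and the closed star of the deleted vertex, so it does not depend on the chosen order of the top simplex; thus $[\mathbb{S}'_\alpha:{\mathbb{S}_\alpha}_k] = [\mathbb{S}_\alpha:{\mathbb{S}_\alpha}_k]$ as homomorphisms. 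Combining the two relations,
\[
[\mathbb{S}_\alpha:{\mathbb{S}_\alpha}_k]\,gS_\alpha
= (-1)^k\,[\mathbb{S}_\alpha:{\mathbb{S}_\alpha}_k]\,gS'_\alpha
= (-1)^k\,[\mathbb{S}'_\alpha:{\mathbb{S}_\alpha}_k]\,gS'_\alpha
= (-1)^k\,g{S_\alpha}_k ,
\]
which is the assertion.

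The main obstacle is bookkeeping rather than any deep point: one must pin down that the incidence isomorphism genuinely ignores the vertex order (so the two brackets coincide) and that the reordering permutation has sign exactly $(-1)^k$, with the direction in Theorem~\ref{two-ordered-simplex} applied correctly. Both are routine once stated carefully, and the whole argument rests only on the definition of $gS_\alpha$ together with Theorems~\ref{two-ordered-simplex} and~\ref{incidence-isomorphism}.
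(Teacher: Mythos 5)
Your proposal is correct and follows essentially the same route as the paper's own proof: reduce to the base case $k=0$ (which is Definition~\ref{defi:simplex}), move $A^k$ to the front of the ordering, and invoke Theorem~\ref{two-ordered-simplex} to absorb the sign $(-1)^k$. If anything, you improve on the paper by explicitly justifying the identification $[\mathbb{S}'_\alpha:{\mathbb{S}_\alpha}_k]=[\mathbb{S}_\alpha:{\mathbb{S}_\alpha}_k]$ via the order-independence of the incidence isomorphism, a step the paper's computation uses silently.
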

\begin{proof}
For the case $k=0$, the formula is exactly Definition \ref{defi:simplex}. The general case can be reduced to the case $k = 0$ as follows:
we obtain from ${\mathbb{S}_\alpha}$ a new ordered simplex $\overline{\mathbb{S}}_\alpha$ by moving the vertex $A^k$ in front of all the others. Then ${\mathbb{S}_\alpha}_k={\overline{\mathbb{S}}_\alpha}_0$ and $g{S_\alpha}_k=g{\overline{S}_\alpha}_0$, while using Theorem \ref{two-ordered-simplex} we have $gS_\alpha=(-1)^kg\overline{S}_\alpha$, concluding
$$[{\mathbb{S}_\alpha}:{\mathbb{S}_\alpha}_k]gS_\alpha=[\overline{\mathbb{S}}_\alpha:{\overline{\mathbb{S}}_\alpha}_0](-1)^kg\overline{S}_\alpha
=(-1)^kg{\overline{S}_\alpha}_0=(-1)^kg {S_\alpha}_k.$$
\end{proof}

\begin{defi}\label{qskltn}
Let $\mathbb{X}$ be a filtered set. The \textbf{$q$-dimensional skeleton} of $\mathbb{X}$, that will be denoted by $\mathbb{X}^{(q)} = (X^{(q)},F_X^{(q)})$, is the filtered set where the set $X^{(q)}=X$ and $F_X^{(q)}$ is defined by 
\[
  F_X^{(q)}(\sigma) := 
  \begin{cases}
  F_X(\sigma) , & \text { if } \# (\sigma)\leq q;\\
  M , & \text { otherwise. }
      \end{cases}
\]
\end{defi}

\begin{lemma}\label{lema5}
The relative filtered set 
$(\mathbb{X}^{(q)}\cup \mathbb{A}, \mathbb{X}^{(q-1)}\cup \mathbb{A})$
obtained from $((X,A),(F_X,F_A))$  has the property that if $p\neq q$, then $\mathrm{H}^I_p (\mathbb{X}^{(q)}\cup \mathbb{A}, \mathbb{X}^{(q-1)}\cup \mathbb{A})=0$.
\end{lemma}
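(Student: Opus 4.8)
The plan is to reduce the relative group to a direct sum indexed by the $q$-simplices of $X$ and then to a single model simplex. First I would dispose of the degenerate case in which $X$ carries no simplex of cardinality $q+1$: then $\mathbb{X}^{(q)}=\mathbb{X}^{(q-1)}$ as filtered sets, so $\mathbb{X}^{(q)}\cup\mathbb{A}=\mathbb{X}^{(q-1)}\cup\mathbb{A}$ and the relative group vanishes for every $p$ by Lemma \ref{lemma:PH-trivial}. Otherwise, let $\sigma_1,\dots,\sigma_r$ enumerate the $q$-simplices of $X$ (the cells of cardinality $q+1$, which are exactly the cells distinguishing $\mathbb{X}^{(q)}\cup\mathbb{A}$ from $\mathbb{X}^{(q-1)}\cup\mathbb{A}$), and let $\bar\sigma_i$ denote the closed simplex on the vertices of $\sigma_i$, equipped with the filtration induced from $\mathbb{Y}:=\mathbb{X}^{(q)}\cup\mathbb{A}$.

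Next I would apply Theorem \ref{injective-representation} with ambient set $\mathbb{Y}$, with the distinguished subspace taken to be $\mathbb{B}:=\mathbb{X}^{(q-1)}\cup\mathbb{A}$, and with the pieces $\mathbb{X}_i:=\bar\sigma_i$. The hypotheses are routine to check: every cell of dimension $\le q-1$ lies in $\mathbb{B}$ while each top cell $\sigma_i$ lies in $\bar\sigma_i$, so $\mathbb{Y}=\bar\sigma_1\cup\cdots\cup\bar\sigma_r\cup\mathbb{B}$ with the correct filtration; and for $i\neq j$ the intersection $\bar\sigma_i\cap\bar\sigma_j$ consists of common proper faces, hence lies in $\mathbb{X}^{(q-1)}\subseteq\mathbb{B}$, the inclusion being filtration preserving because a proper face receives the value $F_X$ on both sides. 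Theorem \ref{injective-representation} then yields, for every $p$, a direct-sum decomposition $\mathrm{H}^I_p(\mathbb{X}^{(q)}\cup\mathbb{A},\mathbb{X}^{(q-1)}\cup\mathbb{A})\cong\bigoplus_i \mathrm{H}^I_p(\bar\sigma_i,\dot\sigma_i)$, where $\dot\sigma_i:=\bar\sigma_i\cap\mathbb{B}$ is the closed simplex with its top cell pushed to the value it has in $\mathbb{B}$ (namely $M$ if $\sigma_i\not\subset A$ and $F_A(\sigma_i)$ if $\sigma_i\subset A$). It therefore suffices to prove $\mathrm{H}^I_p(\bar\sigma_i,\dot\sigma_i)=0$ for $p\neq q$.

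For a single pair $(\bar\sigma_i,\dot\sigma_i)$ I would compare it with the model pair $(\mathbb{S}^q_{\alpha_i},\dot{\mathbb{S}}^q_{\alpha_i})$ of Theorem \ref{pers-homo-simplex}, where $\alpha_i=F_X(\sigma_i)$: the two pairs share the same underlying set and the same top-cell values, and differ only in the filtration values assigned to the proper faces (all equal to $\alpha_i$ in the model, equal to $F_X$ in $\bar\sigma_i$). Since every proper face belongs to both terms of each pair, it cancels relatively, so the two relative groups should coincide. To make this axiomatic I would pass from the model to the true values by lowering the face values one face at a time, in order of increasing dimension (so that monotonicity is preserved at every stage, each face being lowered only after all of its subfaces), and show at each step that the relative persistent homology is unchanged, using excision (Axiom \ref{ax7}, equivalently \cref{as1}) together with the exact sequence of the appropriate triple. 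Granting the comparison, Theorem \ref{pers-homo-simplex} gives $\mathrm{H}^I_p(\mathbb{S}^q_{\alpha_i},\dot{\mathbb{S}}^q_{\alpha_i})=0$ for $p\neq q$, and summing over $i$ completes the proof.

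The hard part is precisely this last comparison. One cannot simply invoke Corollary \ref{homo-filt-iso} and the Five Lemma on the canonical filtration-preserving map $(\mathbb{S}^q_{\alpha_i},\dot{\mathbb{S}}^q_{\alpha_i})\to(\bar\sigma_i,\dot\sigma_i)$, because the absolute groups already disagree: with general face values the closed simplex $\bar\sigma_i$ need not be homologically trivial (at a window straddling some face births its sublevel set can be a nontrivial sphere), whereas $\mathbb{S}^q_{\alpha_i}$ is homologically trivial by Lemma \ref{homo-trivial-lemma}. Thus the vanishing is genuinely a statement about the relative complex, and the care goes into the face-lowering induction and into the bookkeeping for those $\sigma_i\subset A$ whose top cell enters the subspace at $F_A(\sigma_i)<M$ rather than at $M$.
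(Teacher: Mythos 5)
Your architecture is the paper's: decompose $\mathrm{H}^I_p(\mathbb{X}^{(q)}\cup \mathbb{A}, \mathbb{X}^{(q-1)}\cup \mathbb{A})$ via Theorem~\ref{injective-representation} into per-simplex summands, then kill each summand in degrees $p\neq q$ by Theorem~\ref{pers-homo-simplex}. But the step you concede with ``granting the comparison'' is a genuine gap, and it is an artifact of your choice of pieces. The missing idea is to take the summands $\mathbb{X}_i$ to be \emph{not} $\bar\sigma_i$ with the filtration induced from $\mathbb{X}^{(q)}\cup\mathbb{A}$, but the model $(q,\alpha_i)$-simplices $\mathbb{S}^q_{\alpha_i}$ of Definition~\ref{def-simp-bord}, carrying the \emph{constant} filtration $\alpha_i=F_X(\sigma_i)$ on all of $\pow(S^q_{\alpha_i})$ --- which is what the paper does. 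Since the union of filtered sets takes the \emph{minimum} of filtration values, $\mathbb{S}^q_{\alpha_1}\cup\cdots\cup\mathbb{S}^q_{\alpha_r}\cup(\mathbb{X}^{(q-1)}\cup\mathbb{A})$ still equals $\mathbb{X}^{(q)}\cup\mathbb{A}$ exactly: the true (smaller) values of $F_X$ on the proper faces are supplied by the skeleton term, while the constant value $\alpha_i$ on each top cell is the correct one. Since intersection takes the \emph{maximum}, $\mathbb{S}^q_{\alpha_i}\cap(\mathbb{X}^{(q-1)}\cup\mathbb{A})$ is on the nose the boundary $\dot{\mathbb{S}}^q_{\alpha_i}$ of Definition~\ref{def-simp-bord} (proper faces pushed up to $\alpha_i$, top cell to $M$). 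So the summands are literally the model pairs, Theorem~\ref{pers-homo-simplex} applies directly, and no face-lowering induction is ever needed.

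Moreover, even for your induced-filtration pieces the comparison you flag as hard is a single application of Axiom~\ref{as1}: take $\mathbb{X}=\mathbb{S}^q_{\alpha_i}$ and $\mathbb{Y}=\dot\sigma_i$ (induced values on proper faces, $M$ on the top cell); then by the same min/max conventions $\mathbb{X}\cup\mathbb{Y}=\bar\sigma_i$ and $\mathbb{X}\cap\mathbb{Y}=\dot{\mathbb{S}}^q_{\alpha_i}$, and the inclusion $(\mathbb{S}^q_{\alpha_i},\dot{\mathbb{S}}^q_{\alpha_i})\hookrightarrow(\bar\sigma_i,\dot\sigma_i)$ is filtration preserving because $\alpha_i\geq F_X(\tau)$ for every face $\tau$ by monotonicity; Axiom~\ref{as1} then gives $\mathrm{H}^I_p(\mathbb{S}^q_{\alpha_i},\dot{\mathbb{S}}^q_{\alpha_i})\cong \mathrm{H}^I_p(\bar\sigma_i,\dot\sigma_i)$ in one step. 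Your observation that $\bar\sigma_i$ need not be homologically trivial, so Corollary~\ref{homo-filt-iso} plus the Five Lemma cannot be used, is correct --- but it is moot once the isomorphism is obtained relatively by excision. One further divergence: the paper enumerates only the $q$-simplices with $S_{\alpha_i}\not\subset A$, consistently with Theorem~\ref{description-chain}, whereas you include the simplices inside $A$ and then must treat pairs whose top cell enters the subspace at $F_A(\sigma_i)<M$, to which Theorem~\ref{pers-homo-simplex} does not literally apply. Your bookkeeping does point at a real edge case the paper passes over silently (a $q$-simplex $\sigma\subset A$ with $F_A(\sigma)$ strictly larger than $F_X(\sigma)$ is excluded from the pieces yet distinguishes the two skeleta-with-$\mathbb{A}$ filtrations), but as a repair of the main gap it is not needed: adopt the constant-filtration pieces and the paper's enumeration, and the proof closes.
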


\begin{proof}

 Let $({S_{\alpha_1}}, f_1), (S_{\alpha_{2}}, f_2), \ldots , (S_{\alpha_{r}}, f_r)$ be the 
$q$-simplices
in $(X, F_X)$, $S_{\alpha_i}\not\subset A$, with inclusion maps $k_i:S_{\alpha_i}\to (X^{(q)}\cup A)$, $i=1,\ldots, r$.
Then considering $\cup$ and $\cap$ operations on filtered sets one has
$$ \left({S}_{\alpha_1} \cup \cdots \cup {S}_{\alpha_r} \cup ({X}^{(q-1)}\cup {A}), f_1\cup\cdots\cup f_r\cup (F_X^{(q-1)}\cup F_A) \right)
= ({X}^{(q)}\cup {A}, F_X^{(q)}\cup F_A).$$

The filtered sets $(S_{\alpha_1}, f_1),  (S_{\alpha_2}, f_2), \ldots , (S_{\alpha_r}, f_r)$,
 together with the filtered set 
 \linebreak 
{$ (X^{(q-1)}\cup A, F_X^{(q-1)}\cup F_A)$} satisfies the conditions of Theorem \ref{injective-representation}, that is, the homomorphisms 
$k_{i*}:\mathrm{H}_{q}^I(\mathbb{S}_{\alpha_i}, \mathbb{S}_{\alpha_i} \cap (\mathbb{X}^{(q-1)}\cup \mathbb{A}))\longrightarrow \mathrm{H}_{q}^I(\mathbb{X}^{(q)}\cup \mathbb{A})$ 
is an injective representation of $\mathrm{H}_{q}^I(\mathbb{X}^{(q)}\cup \mathbb{A})$ as a direct sum for each $i$.

Notice that $\dot{\mathbb{S}}_{\alpha_i}=\mathbb{S}_{\alpha_i} \cap (\mathbb{X}^{(q-1)}\cup \mathbb{A})$. Then  all we need to show is that 
$$\mathrm{H}^I_p(\mathbb{S}_{\alpha_i},\dot{\mathbb{S}}_{\alpha_i})=0$$

\noindent for $p\neq q$, and the lemma follows.
As $\mathbb{S}_{\alpha_i}$ is a simplex, then this equality follows by Theorem \ref{pers-homo-simplex}.
\end{proof}

\begin{defi}
{The group of  $q$-chains of $(\mathbb{X}, \mathbb{A})$}, denoted by $\mathcal{C}^{I}_q(\mathbb{X}, \mathbb{A})$, is defined as
$$\mathcal{C}^{I}_q(\mathbb{X}, \mathbb{A}):=
\mathrm{H}^I_q (\mathbb{X}^{(q)}\cup \mathbb{A}, \mathbb{X}^{(q-1)}\cup \mathbb{A}).$$

If $f:(\mathbb{X}, \mathbb{A})\longrightarrow (\mathbb{X}', \mathbb{A}')$ is a filtration preserving map, $f$ induces homomorphism
$$ f_q^{I}:\mathcal{C}^{I}_q(\mathbb{X}, \mathbb{A})\longrightarrow \mathcal{C}^{I}_q(\mathbb{X}', \mathbb{A}').$$
\end{defi}

\begin{remark}\label{theo-chain}
For the case $q<0$ or $q>\#X$, $\mathcal{C}^{I}_q(\mathbb{X}, \mathbb{A})=0$ is a consequence of Lemma \ref{lemma:PH-trivial}
\end{remark}

\begin{defi}\label{defi:vertices}
Let $A^0,\dots,A^q$ be a finite sequence of vertices of a filtered set $\mathbb{X}$. For each $g\in G^I$ (see \ref{def-persist-group}), we define the element $gA^0\dots A^q\in \mathcal{C}^{I}_q(\mathbb{X}, \mathbb{A})$ as follows:

Let $\mathbb{S}_\alpha$ be an ordered $(q,\alpha)$-simplex with vertices $B^0<\cdots<B^q$ with $\alpha= F_X(\{A^0,\ldots, A^q\})$, and let $f:(\mathbb{S}_\alpha ,\dot{\mathbb{S}}_\alpha )\longrightarrow (\mathbb{X}, \mathbb{X})$ be a filtration preserving  map defined by $f(B^i):=A^i$. Then
$$ g A^0\dots A^q=f^{I}_*(g S_\alpha^q).$$
\end{defi}

\begin{theo}\label{permutation-of-vertex}
The association of g to $gA^0\dots A^q$ defines a homomorphism $G^I\longrightarrow \mathcal{C}^{I}_q(\mathbb{X},\mathbb{A})$, i.e. for $g_1, g_2\in G^I$
$$ (g_1+g_2)A^0\dots A^q=g_1A^0\dots A^q +g_2A^0\dots A^q.$$

If $i_0,\dots,i_q$ is a permutation of the array $0,\dots,q$, then
$$ gA^{i_0}\dots A^{i_q}=\pm gA^0\dots A^q$$

\noindent according if the permutation is even or odd. If some vertex occurs at least twice in $A^0,\dots ,A^q$, then $gA^0\dots A^q=0$. If $A^0,\dots,A^q$ are all in $A$, then $gA^0\dots A^q=0$.
\end{theo}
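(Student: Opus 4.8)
The plan is to derive all four assertions from the single defining identity $gA^0\cdots A^q = f^I_*(gS^q_\alpha)$ of Definition \ref{defi:vertices}, together with the fact that $f^I_*$ is a homomorphism of abelian groups. \textbf{Additivity} is the quickest step: by Corollary \ref{correspondence-ordered-simplex} the correspondence $g\mapsto gS^q_\alpha$ is an isomorphism $G^I_\alpha\to \mathrm{H}^I_q(\mathbb{S}^q_\alpha,\dot{\mathbb{S}}^q_\alpha)$, in particular a homomorphism, so composing with the homomorphism $f^I_*$ gives $(g_1+g_2)A^0\cdots A^q = f^I_*\big((g_1+g_2)S^q_\alpha\big) = f^I_*(g_1 S^q_\alpha)+f^I_*(g_2 S^q_\alpha) = g_1A^0\cdots A^q + g_2 A^0\cdots A^q$.

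For the \textbf{sign under a permutation} I would first observe that since $\{A^{i_0},\ldots,A^{i_q}\}=\{A^0,\ldots,A^q\}$ as sets, both sequences share the same value $\alpha=F_X(\{A^0,\ldots,A^q\})$, so I may realize them on one and the same underlying $(q,\alpha)$-simplex equipped with two orderings and with one and the same defining map $f$ into $\mathbb{X}$: the ordering ${\mathbb{S}_\alpha}_1$ giving $f(B^j)=A^j$, and the reordering ${\mathbb{S}_\alpha}_2$ obtained by listing the vertices as $B^{i_0}<\cdots<B^{i_q}$, which gives the sequence $A^{i_0},\ldots,A^{i_q}$. Well-definedness of $gA^{i_0}\cdots A^{i_q}$ (independence of the simplex chosen) is guaranteed by Theorem \ref{pres-order}. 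The two orderings differ precisely by the permutation $k\mapsto i_k$, so Theorem \ref{two-ordered-simplex} yields $g{S_\alpha}_2=\pm g{S_\alpha}_1$ according to its parity; applying $f^I_*$ and using linearity gives $gA^{i_0}\cdots A^{i_q}=\pm gA^0\cdots A^q$ with the asserted sign.

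The two \textbf{degeneracy statements} are where the genuine content lies, and I expect this to be the main obstacle: the naive route via the transposition argument of Theorem \ref{two-ordered-simplex} only delivers $2\,gA^0\cdots A^q=0$, which is insufficient for a coefficient group carrying $2$-torsion. Instead I would show that in each degenerate case the defining map $f$ has image inside the subcomplex $\mathbb{X}^{(q-1)}\cup\mathbb{A}$. If some vertex is repeated, then $\{A^0,\ldots,A^q\}$ has at most $q$ elements, so the image simplex has dimension at most $q-1$ and hence lies in $\mathbb{X}^{(q-1)}$; if instead every $A^i$ lies in $A$, the image lies in $\mathbb{A}$. In either situation $f$ factors through the inclusion of a pair of the form $(\mathbb{Y},\mathbb{Y})$, with $\mathbb{Y}\subseteq \mathbb{X}^{(q-1)}\cup\mathbb{A}$, into the pair whose relative $q$-homology is $\mathcal{C}^I_q(\mathbb{X},\mathbb{A})$. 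Consequently $f^I_*$ factors through $\mathrm{H}^I_q(\mathbb{Y},\mathbb{Y})$, which is $0$ by Lemma \ref{lemma:PH-trivial}, and therefore $gA^0\cdots A^q = f^I_*(gS^q_\alpha)=0$. The only care required is to check that these factorizations are filtration preserving, which holds because $f$ is filtration preserving and corestricting it to the subcomplex already containing its image alters nothing at the level of chains.
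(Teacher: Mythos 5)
Your treatment of the first two clauses is correct and coincides with the paper's entire proof, which consists precisely of citing Corollary \ref{correspondence-ordered-simplex} for additivity and Theorem \ref{two-ordered-simplex} for the permutation sign; your additional appeal to Theorem \ref{pres-order} for well-definedness is a sensible supplement. For the two degeneracy clauses the paper offers no argument whatsoever, so there you are not deviating from the paper's route but filling a hole in it. Your repeated-vertex argument is the classical Eilenberg--Steenrod factorization and it does go through: every face $\sigma\subseteq S^q_\alpha$ satisfies $f(\sigma)\subseteq\{A^0,\dots,A^q\}$ with at most $q$ elements, so (reading Definition \ref{qskltn} with the dimension convention, $\#\sigma\leq q+1$, as the proof of Lemma \ref{lema5} requires) the filtration of $\mathbb{X}^{(q-1)}\cup\mathbb{A}$ on $f(\sigma)$ is at most $F_X(f(\sigma))\leq F_X(\{A^0,\dots,A^q\})=\alpha$ by monotonicity; hence $f$ factors filtration-preservingly through a pair of the form $(\mathbb{Y},\mathbb{Y})$ and Lemma \ref{lemma:PH-trivial} kills the class.

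The fourth clause is where your closing sentence (``corestricting \dots alters nothing'') conceals a genuine gap, and your instinct that this is the hard point was right. Corestriction is \emph{not} innocuous in the filtered category: a relative filtered set only requires $F_A\geq F_X$ on $\pow(A)$, and the inequality may be strict. If $A^0,\dots,A^q$ are distinct vertices all lying in $A$, the top face maps onto a set with $q+1$ elements, whose value in the union filtration of $\mathbb{X}^{(q-1)}\cup\mathbb{A}$ is $F_A(\{A^0,\dots,A^q\})$, not $F_X(\{A^0,\dots,A^q\})=\alpha$; your factored map $\mathbb{S}^q_\alpha\to\mathbb{X}^{(q-1)}\cup\mathbb{A}$ is filtration preserving only if $\alpha\geq F_A(\{A^0,\dots,A^q\})$, which nothing guarantees. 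The failure is not merely in the proof: take $X=A$ the vertex set of a single $q$-simplex with $F_X\equiv 0$, $F_A(\sigma)=0$ on proper faces and $F_A(X)=10$; then for $I\subseteq[0,10)$ the top simplex lies in $\mathbb{X}^{(q)}\cup\mathbb{A}$ but not yet in $\mathbb{X}^{(q-1)}\cup\mathbb{A}$, and computing with the concrete persistent homology of Definition \ref{defhomrel} (which the paper asserts satisfies the axioms, so no purely axiomatic proof can exclude it) one finds $gA^0\cdots A^q\neq 0$ in $\mathcal{C}^{I}_q(\mathbb{X},\mathbb{A})$. Your argument, and the clause itself, are valid exactly when $F_A$ agrees with $F_X$ on $\{A^0,\dots,A^q\}$ and its faces --- the classical situation where $\mathbb{A}$ is a genuine subcomplex --- or more generally when $F_A(\{A^0,\dots,A^q\})\leq\varepsilon'$; in full generality a hypothesis of this kind is needed, and neither your proposal nor the paper supplies one.
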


\begin{proof}
The first part of this theorem follows from Corollary \ref{correspondence-ordered-simplex} and the second part from Theorem \ref{two-ordered-simplex}.
\end{proof}

\begin{theo}\label{description-chain}
Let {$\mathbb{S}_{\alpha_1}^{q} ,\dots,\mathbb{S}_{\alpha_{s}}^{q} $} be the collection of simplices of $(\mathbb{X}, \mathbb{A})$ of order $q$ such that $S_{\alpha_{i}}^{q}\not\subset A$. Assume that for each $S_{\alpha_i}^{q}$, an order of its vertices $A_i^0<\cdots<A_i^q$ has been chosen. Then each $q$-chain $c$ of $(\mathbb{X}, \mathbb{A})$ can be written uniquely in the form
$$ c=\sum^{s}_{i=1} g_iA^0_i\cdots A_i^q,\qquad g_i\in G^I.$$
\end{theo}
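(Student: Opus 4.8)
The plan is to reduce the statement to the injective direct-sum decomposition of the relative persistent homology group that already underlies the very definition of the chain group. Recall that, by definition, $\mathcal{C}^I_q(\mathbb{X}, \mathbb{A}) = \mathrm{H}^I_q(\mathbb{X}^{(q)}\cup \mathbb{A}, \mathbb{X}^{(q-1)}\cup \mathbb{A})$. First I would observe that the $q$-simplices $\mathbb{S}^q_{\alpha_1}, \ldots, \mathbb{S}^q_{\alpha_s}$ of $(\mathbb{X}, \mathbb{A})$ with $S^q_{\alpha_i}\not\subset A$, together with the filtered set $\mathbb{X}^{(q-1)}\cup \mathbb{A}$ (playing the role of the distinguished subset $\mathbb{A}$ in that theorem), satisfy precisely the hypotheses of Theorem \ref{injective-representation}. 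This is exactly the configuration already exploited in the proof of Lemma \ref{lema5}, where one uses that $\mathbb{S}^q_{\alpha_i}\cap(\mathbb{X}^{(q-1)}\cup \mathbb{A}) = \dot{\mathbb{S}}^q_{\alpha_i}$.

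Second, I would apply Theorem \ref{injective-representation} with these data, so that the inclusion-induced homomorphisms $k_{i*}\colon \mathrm{H}^I_q(\mathbb{S}^q_{\alpha_i}, \dot{\mathbb{S}}^q_{\alpha_i}) \to \mathcal{C}^I_q(\mathbb{X}, \mathbb{A})$ yield an injective representation as a direct sum. In other words, every $q$-chain $c\in \mathcal{C}^I_q(\mathbb{X}, \mathbb{A})$ can be written uniquely as $c = \sum_{i=1}^s k_{i*}(c_i)$ with $c_i\in \mathrm{H}^I_q(\mathbb{S}^q_{\alpha_i}, \dot{\mathbb{S}}^q_{\alpha_i})$. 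This single appeal already delivers both existence and uniqueness at the level of summands; what remains is purely to rewrite each $c_i$ in the vertex notation of Definition \ref{defi:vertices}.

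Third, for each $i$ I would invoke Corollary \ref{correspondence-ordered-simplex} (which rests on Theorem \ref{pers-homo-simplex}): once the chosen order $A_i^0 < \cdots < A_i^q$ is fixed, the correspondence $g \mapsto g S^q_{\alpha_i}$ is an isomorphism from the coefficient group $G^I$ onto $\mathrm{H}^I_q(\mathbb{S}^q_{\alpha_i}, \dot{\mathbb{S}}^q_{\alpha_i})$. Hence each $c_i = g_i S^q_{\alpha_i}$ for a unique $g_i\in G^I$. It then remains to identify $k_{i*}(g_i S^q_{\alpha_i})$ with $g_i A_i^0\cdots A_i^q$: the point is that the inclusion $k_i$ is, up to the model identification, the same filtration-preserving map carrying the ordered simplex onto $\mathbb{S}^q_{\alpha_i}$ with its chosen order that is used to define $g_i A_i^0\cdots A_i^q$. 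Assembling the summands gives $c = \sum_{i=1}^s g_i A_i^0\cdots A_i^q$, and uniqueness of the $g_i$ is inherited from the uniqueness in the direct-sum representation combined with the bijectivity in Corollary \ref{correspondence-ordered-simplex}.

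The step I expect to be the main obstacle is the bookkeeping in this last identification. One must verify that the inclusion $k_i$ and the defining map of Definition \ref{defi:vertices} induce the same homomorphism on $\mathrm{H}^I_q(\mathbb{S}^q_{\alpha_i}, \dot{\mathbb{S}}^q_{\alpha_i})$, and that the fixed vertex orders are genuinely respected, so that the sign conventions recorded in Theorem \ref{permutation-of-vertex} and Theorem \ref{two-ordered-simplex} are compatible with the chosen decomposition. Everything else is a direct application of the already-established injective direct-sum representation and the isomorphism with the coefficient group.
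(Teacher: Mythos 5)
Your proposal is correct and follows essentially the same route as the paper's own proof: apply Theorem \ref{injective-representation} to the decomposition $\mathbb{X}^{(q)}\cup \mathbb{A} = \mathbb{S}_{\alpha_1}^{q}\cup\cdots\cup\mathbb{S}_{\alpha_s}^{q}\cup\mathbb{X}^{(q-1)}\cup\mathbb{A}$ with $\dot{\mathbb{S}}_{\alpha_i}^{q}=\mathbb{S}_{\alpha_i}^{q}\cap(\mathbb{X}^{(q-1)}\cup\mathbb{A})$, then use Corollary \ref{correspondence-ordered-simplex} to write each summand as $g_i S^q_{\alpha_i}$ and Definition \ref{defi:vertices} to identify $k_{i*}(g_i S^q_{\alpha_i})$ with $g_i A_i^0\cdots A_i^q$. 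The final identification you flag as the main obstacle is exactly the step the paper dispatches by noting that the defining map of Definition \ref{defi:vertices} is the inclusion when the vertex order matches the chosen one, so nothing further is needed.
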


\begin{proof}
Consider the inclusion $i_m:(\mathbb{S}_{\alpha_m}^{q},\dot{\mathbb{S}}_{\alpha_m}^{q})\subset (\mathbb{X}^{(q)}\cup \mathbb{A}, \mathbb{X}^{(q-1)}\cup \mathbb{A})$. By Corollary \ref{injective-representation} and from the fact that $\mathbb{X}^{(q)}\cup \mathbb{A} 
= \mathbb{S}_{\alpha_1}^{q}\cup \cdots \cup \mathbb{S}_{\alpha_s}^{q}\cup \mathbb{X}^{(q-1)}\cup \mathbb{A}$ and $\dot{\mathbb{S}}_{\alpha_m}^{q}=\mathbb{S}_{\alpha_m}^{q} \cap (\mathbb{X}^{(q-1)}\cup \mathbb{A})$ we have a unique representation
$$ c=\sum^{s}_{m=1} i_{m*}h_m.$$

For each $h_m$ we also have a correspondence by Theorem \ref{correspondence-ordered-simplex}, and one can write it uniquely as $h_m=g_m {S}_{\alpha_m}^{q}$. By Definition \ref{defi:vertices}, $i_{m*}(g_m S_{\alpha_m}^{q})=g_m A^0_m\dots A^q_m$. Hence 
\[
c = \sum^{s}_{m=1} i_{m*} h_m 
    =\sum^{s}_{m=1} i_{m*} (g_m S_{\alpha_m}^{q})
    =\sum^{s}_{m=1} g_m A^0_m \cdots A_m^q .
\]

\end{proof}

\begin{theo}\label{filt-pres-chain}
If $f:(\mathbb{X}, \mathbb{A})\longrightarrow (\mathbb{X}_1,\mathbb{A}_1)$ is a filtration preserving map, $g\in G^I$ and $gA^0\cdots A^q$ is a $q$-chain of $(\mathbb{X}, \mathbb{A})$, then
$$ f_q(gA^0\cdots A^q)=gf(A^0)\cdots f(A^q).$$
\end{theo}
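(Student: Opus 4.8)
The plan is to deduce the statement from the functoriality of $\mathrm{H}^I$ (Axiom \ref{ax2}) applied to the very map that defines the basis chain $gA^0\cdots A^q$. Write $\alpha=F_X(\{A^0,\ldots,A^q\})$ and let $\phi$ be the filtration preserving map of Definition \ref{defi:vertices}, defined on an ordered $(q,\alpha)$-simplex $\mathbb{S}_\alpha$ with vertices $B^0<\cdots<B^q$ by $\phi(B^i)=A^i$, so that $gA^0\cdots A^q=\phi^I_*(gS^q_\alpha)$. Since $f$ carries the relevant $q$-skeleton pair of $(\mathbb{X},\mathbb{A})$ into that of $(\mathbb{X}_1,\mathbb{A}_1)$ and $f_q$ is the map it induces on $\mathcal{C}^I_q$, Axiom \ref{ax2} gives $f_q(gA^0\cdots A^q)=(f\circ\phi)^I_*(gS^q_\alpha)$, where $f\circ\phi$ is the filtration preserving map sending $B^i\mapsto f(A^i)$.

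First I would line this up with the definition of the right-hand side. Set $\alpha'=F_{X_1}(\{f(A^0),\ldots,f(A^q)\})$; because $f$ is filtration preserving one has $\alpha'\le\alpha$. By Definition \ref{defi:vertices} there are an ordered $(q,\alpha')$-simplex $\mathbb{S}_{\alpha'}$ with vertices $C^0<\cdots<C^q$ and a filtration preserving map $\psi$ with $\psi(C^i)=f(A^i)$ and $gf(A^0)\cdots f(A^q)=\psi^I_*(gS^q_{\alpha'})$. Let $r\colon(\mathbb{S}_\alpha,\dot{\mathbb{S}}_\alpha)\to(\mathbb{S}_{\alpha'},\dot{\mathbb{S}}_{\alpha'})$ be the order-preserving map $B^i\mapsto C^i$; it is filtration preserving precisely because $\alpha\ge\alpha'$, and evaluating on vertices shows $\psi\circ r=f\circ\phi$. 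Applying Axiom \ref{ax2} once more, the proof is reduced to the single identity $r^I_*(gS^q_\alpha)=gS^q_{\alpha'}$, since then $f_q(gA^0\cdots A^q)=\psi^I_*(r^I_*(gS^q_\alpha))=\psi^I_*(gS^q_{\alpha'})=gf(A^0)\cdots f(A^q)$.

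This identity is the main obstacle, because the available comparison tools, Theorem \ref{pres-order} and Lemma \ref{commu-simplex}, are stated only for two simplices sharing the same filtration value, whereas here $\alpha>\alpha'$ in general. I would establish it by induction on $q$ following the inductive description of $gS^q_\alpha$ in Definition \ref{defi:simplex}. The base case $q=0$ asserts that $r^I_*$ takes the $0$-dimensional generator to the corresponding one with the coefficient $g$ unchanged, which is exactly Theorem \ref{thm:filt-to-onto} together with the injectivity in Theorem \ref{thm:iso-onto}. For the inductive step, $r$ extends to a map of filtered proper triads $(\mathbb{S}_\alpha^q,\mathbb{S}_\alpha^{q-1},\mathbb{C}_\alpha^{q-1})\to(\mathbb{S}_{\alpha'}^q,\mathbb{S}_{\alpha'}^{q-1},\mathbb{C}_{\alpha'}^{q-1})$, still the identity on vertices and filtration preserving since $\alpha\ge\alpha'$; Theorem \ref{incidence-triple} then shows, exactly as in the proof of Lemma \ref{commu-simplex}, that $r^I_*$ commutes with the incidence isomorphisms. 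Combined with the inductive hypothesis applied to the leading $(q-1)$-face, this gives $r^I_*(gS^q_\alpha)=gS^q_{\alpha'}$.

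It remains to handle the degenerate cases, which I would treat uniformly. If a vertex is repeated among the $A^i$, both sides vanish by Theorem \ref{permutation-of-vertex}; if the $A^i$ are distinct but two of the $f(A^i)$ coincide, then $gf(A^0)\cdots f(A^q)=\psi^I_*(gS^q_{\alpha'})=0$ by the same theorem, and the chain of equalities above still yields $f_q(gA^0\cdots A^q)=\psi^I_*(gS^q_{\alpha'})$, so both sides agree. The only genuinely delicate point throughout is the filtration bookkeeping when passing from $\alpha$ down to $\alpha'$, which is exactly what the collapse $r$ and the naturality of the incidence isomorphism are introduced to control.
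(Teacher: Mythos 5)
Your proof is correct, and it is worth noting that the paper itself states Theorem \ref{filt-pres-chain} with no proof at all, evidently regarding it (as in Eilenberg--Steenrod's \emph{Foundations}) as an immediate consequence of functoriality: classically $f\circ\phi$ is itself a map of the kind used in Definition \ref{defi:vertices}, so the identity is essentially definitional. You correctly observe that in the filtered setting this one-line argument does not apply verbatim, because Definition \ref{defi:vertices} pins the model simplex for $gf(A^0)\cdots f(A^q)$ at the level $\alpha'=F_{X_1}(\{f(A^0),\ldots,f(A^q)\})$, which may be strictly smaller than $\alpha=F_X(\{A^0,\ldots,A^q\})$, while $f\circ\phi$ is defined on $\mathbb{S}_\alpha$; the comparison tools the paper provides (Theorem \ref{pres-order}, Lemma \ref{commu-simplex}) are literally stated only for a single level. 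Your bridge --- the collapse $r\colon(\mathbb{S}_\alpha,\dot{\mathbb{S}}_\alpha)\to(\mathbb{S}_{\alpha'},\dot{\mathbb{S}}_{\alpha'})$, filtration preserving exactly because $\alpha\geq\alpha'$, together with the identity $r^I_*(gS^q_\alpha)=gS^q_{\alpha'}$ proved by induction on $q$ via naturality of the incidence isomorphism (Theorem \ref{incidence-triple}, applied to the triads $(\mathbb{S}^q_\alpha;\mathbb{S}^{q-1}_\alpha,\mathbb{C}^{q-1}_\alpha)$ just as in the proof of Lemma \ref{commu-simplex}), with base case supplied by Theorems \ref{thm:filt-to-onto} and \ref{thm:iso-onto} --- is precisely the cross-level generalization of Theorem \ref{pres-order} that the argument requires, and every step is available from the cited results; in effect you have supplied the missing proof and repaired a gap the paper glosses over. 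Two minor remarks. First, your separate treatment of degenerate cases is unnecessary: Definition \ref{defi:vertices} defines $gf(A^0)\cdots f(A^q)$ for an arbitrary, possibly repeating, sequence of vertices via the same map $\psi$, so the uniform chain of equalities $f_q(gA^0\cdots A^q)=\psi^I_*(r^I_*(gS^q_\alpha))=\psi^I_*(gS^q_{\alpha'})=gf(A^0)\cdots f(A^q)$ already covers all cases at once (the vanishing statements of Theorem \ref{permutation-of-vertex} are then consistent consequences, not inputs). Second, your implicit identification of the coefficient $g$ across the two levels, via the induced map $\mathrm{H}^I_0(\mathbb{P}_\alpha)\to\mathrm{H}^I_0(\mathbb{P}_{\alpha'})$, inherits an imprecision already present in the paper's own Definition \ref{def-persist-group} and in the proof of Theorem \ref{thm:filt-to-onto}, where $(gy)_Y$ is formed through $\mathbb{P}_\alpha$ rather than $\mathbb{P}_{F_Y(\{y\})}$; your usage is consistent with the paper's conventions, and flagging it as the delicate bookkeeping point is apt.
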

\bigskip

In what follows, we will present results by using $\mathcal{C}^{I}_q(\mathbb{X}, \mathbb{A}) = \mathrm{H}^I_q (\mathbb{X}^{(q)}\cup \mathbb{A}, \mathbb{X}^{(q-1)}\cup \mathbb{A})$, which were proved previously  for the case of relative filtered sets. This adaptation is necessary to prove the main theorem of this section.

\begin{theo}\label{similar-1}
The sequence
\begin{align*}
0 \longrightarrow 
\mathrm{H}^I_q (\mathbb{A}^{(q)}, \mathbb{A}^{(q-1)})
\stackrel{i_q}{\longrightarrow} 
\mathrm{H}^I_q (\mathbb{X}^{(q)} , \mathbb{X}^{(q-1)} )
\stackrel{j_q}{\longrightarrow}
\mathrm{H}^I_q (\mathbb{X}^{(q)}\cup \mathbb{A}, \mathbb{X}^{(q-1)}\cup \mathbb{A})
\longrightarrow 0
\end{align*}
\noindent is exact and the image of $i_q$ is a direct summand of $\mathrm{H}^I_q (\mathbb{X}^{(q)}, \mathbb{X}^{(q-1)})$.
\end{theo}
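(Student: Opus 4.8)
The plan is to reduce the statement to the combinatorics of which $q$-simplices of $\mathbb{X}$ lie in $\mathbb{A}$, exploiting the direct-sum decomposition supplied by Theorem \ref{injective-representation}. First I would apply that theorem three times, each time with a skeleton playing the role of the ambient subset called $\mathbb{A}$ in the statement of the theorem. Applied to the family of all $q$-simplices of $\mathbb{A}$ together with $\mathbb{A}^{(q-1)}$, it gives an (internal) direct sum $\mathrm{H}^I_q(\mathbb{A}^{(q)}, \mathbb{A}^{(q-1)}) = \bigoplus_{\mathbb{S}\subset\mathbb{A}} \mathrm{H}^I_q(\mathbb{S}, \dot{\mathbb{S}})$, the summands being the images of the inclusions $(\mathbb{S}, \dot{\mathbb{S}})\hookrightarrow (\mathbb{A}^{(q)}, \mathbb{A}^{(q-1)})$; applied to the family of \emph{all} $q$-simplices of $\mathbb{X}$ together with $\mathbb{X}^{(q-1)}$ it gives $\mathrm{H}^I_q(\mathbb{X}^{(q)}, \mathbb{X}^{(q-1)}) = \bigoplus_{\mathbb{S}} \mathrm{H}^I_q(\mathbb{S}, \dot{\mathbb{S}})$, the sum running over every $q$-simplex of $\mathbb{X}$; and, exactly as in the proof of Lemma \ref{lema5}, it gives $\mathrm{H}^I_q(\mathbb{X}^{(q)}\cup\mathbb{A}, \mathbb{X}^{(q-1)}\cup\mathbb{A}) = \bigoplus_{\mathbb{S}\not\subset\mathbb{A}} \mathrm{H}^I_q(\mathbb{S}, \dot{\mathbb{S}})$. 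The crucial observation is that the indexing set of the middle group is the disjoint union of those of the two outer groups, since every $q$-simplex of $\mathbb{X}$ either lies in $\mathbb{A}$ or does not.

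The second step is to identify $i_q$ and $j_q$ with the evident inclusion and projection of this decomposition. For a $q$-simplex $\mathbb{S}\subset\mathbb{A}$, the composite $(\mathbb{S}, \dot{\mathbb{S}})\hookrightarrow (\mathbb{A}^{(q)}, \mathbb{A}^{(q-1)})\hookrightarrow (\mathbb{X}^{(q)}, \mathbb{X}^{(q-1)})$ equals the direct inclusion of $(\mathbb{S}, \dot{\mathbb{S}})$, so by functoriality (Axiom \ref{ax2}) the map $i_q$ carries the $\mathbb{S}$-summand of the source isomorphically onto the $\mathbb{S}$-summand of the middle group. Hence $i_q$ is injective with image the sub-sum $\bigoplus_{\mathbb{S}\subset\mathbb{A}}$, which is a direct summand of the middle group with explicit complement $\bigoplus_{\mathbb{S}\not\subset\mathbb{A}}$. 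For $j_q$ I would split into two cases: if $\mathbb{S}\not\subset\mathbb{A}$ then $j$ composed with the inclusion into $(\mathbb{X}^{(q)}, \mathbb{X}^{(q-1)})$ is again the inclusion into $(\mathbb{X}^{(q)}\cup\mathbb{A}, \mathbb{X}^{(q-1)}\cup\mathbb{A})$, so Axiom \ref{ax2} shows $j_q$ sends the $\mathbb{S}$-summand isomorphically onto the corresponding summand of the target; if $\mathbb{S}\subset\mathbb{A}$, then $\mathbb{S}\subset \mathbb{X}^{(q-1)}\cup\mathbb{A}$, so that inclusion factors through the pair $(\mathbb{X}^{(q-1)}\cup\mathbb{A}, \mathbb{X}^{(q-1)}\cup\mathbb{A})$, whose persistent homology vanishes by Lemma \ref{lemma:PH-trivial}, and therefore $j_q$ annihilates the $\mathbb{S}$-summand.

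Putting these together, $j_q$ is surjective and its kernel is precisely $\bigoplus_{\mathbb{S}\subset\mathbb{A}}=\mathrm{Im}\, i_q$, which yields exactness at all three positions, while the splitting is immediate since $\mathrm{Im}\, i_q$ is a direct summand with the exhibited complement $\bigoplus_{\mathbb{S}\not\subset\mathbb{A}}$. I expect the main obstacle to be the bookkeeping of the second step: verifying carefully that the three applications of Theorem \ref{injective-representation} yield decompositions genuinely compatible with the inclusion-induced maps, in particular that for $\mathbb{S}\subset\mathbb{A}$ the relevant inclusion truly factors through a pair of the form $(\mathbb{Y},\mathbb{Y})$ so that Lemma \ref{lemma:PH-trivial} applies, and that the boundary set $\dot{\mathbb{S}}=\mathbb{S}\cap\mathbb{X}^{(q-1)}$ used in the middle decomposition coincides (as a filtered set, under the identification employed in Lemma \ref{lema5}) with $\mathbb{S}\cap(\mathbb{X}^{(q-1)}\cup\mathbb{A})$ used in the target. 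Once this compatibility is settled no further computation is required, since each summand $\mathrm{H}^I_q(\mathbb{S}, \dot{\mathbb{S}})$ is merely a copy of the coefficient group by Theorem \ref{pers-homo-simplex}.
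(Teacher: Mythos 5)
You should first know that the paper never proves Theorem \ref{similar-1}: it is stated and immediately used, so there is no in-paper argument to compare against. Your strategy --- three applications of Theorem \ref{injective-representation} (the third exactly as in Lemma \ref{lema5}), matching summands through functoriality (Axiom \ref{ax2}), and killing the summands with $\mathbb{S}\subset\mathbb{A}$ via a factorization through a pair of the form $(\mathbb{Y},\mathbb{Y})$ and Lemma \ref{lemma:PH-trivial} --- is the classical Eilenberg--Steenrod chain-group argument, and it is plainly what the authors intend, since it is the only use to which their apparatus can be put here. When $F_A$ is the restriction of $F_X$ to $\pow(A)$, your proof is complete and correct: the $\mathbb{S}$-summands of the left and middle groups are then literally the same pairs $(\mathbb{S}_\alpha,\dot{\mathbb{S}}_\alpha)$, and all the compatibilities you invoke do hold.

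In the generality in which the paper states the theorem, however ($F_A\geq F_X$ only), the verification you explicitly deferred is precisely where the argument breaks, and it cannot be repaired. A filtration preserving map must satisfy $F_{\mathrm{source}}\geq F_{\mathrm{target}}\circ f$. For a $q$-simplex $S\subset A$, the summand of the middle group places $\mathbb{S}$ at level $F_X(S)$, while the subspace $\mathbb{X}^{(q-1)}\cup\mathbb{A}$ assigns $S$ the value $\min\{M,F_A(S)\}=F_A(S)$; so the claimed factorization of $(\mathbb{S},\dot{\mathbb{S}})\hookrightarrow(\mathbb{X}^{(q)}\cup\mathbb{A},\mathbb{X}^{(q-1)}\cup\mathbb{A})$ through $(\mathbb{X}^{(q-1)}\cup\mathbb{A},\mathbb{X}^{(q-1)}\cup\mathbb{A})$ requires $F_X(S)\geq F_A(S)$, the \emph{reverse} of the standing inequality, and fails whenever $F_A(S)>F_X(S)$. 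For the same reason your claim that $i_q$ carries the $\mathbb{S}$-summand ``isomorphically'' onto its counterpart fails: on the left that summand is $\mathrm{H}^I_q(\mathbb{S}_{F_A(S)},\dot{\mathbb{S}}_{F_A(S)})\cong G^I_{F_A(S)}$, in the middle it is $G^I_{F_X(S)}$, and these differ whenever $F_X(S)\leq\varepsilon<F_A(S)$. The defect is not mere bookkeeping, because the theorem itself fails in this generality under the paper's definitions: take $X=\{a,b,c\}$ with $F_X=0$ on the vertices and on $\{a,b\}$ and $F_X=M$ otherwise, $A=\{a,b\}$ with $F_A=0$ on vertices and $F_A(\{a,b\})=1$, $q=1$, $I=[\tfrac12,1]$. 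Then $\mathrm{H}^I_1(\mathbb{A}^{(1)},\mathbb{A}^{(0)})=0$ (no $1$-simplex of $\mathbb{A}$ exists at scale $\tfrac12$), $\mathrm{H}^I_1(\mathbb{X}^{(1)},\mathbb{X}^{(0)})\cong G$ (generated by the relative cycle $ab$), and $\mathrm{H}^I_1(\mathbb{X}^{(1)}\cup\mathbb{A},\mathbb{X}^{(0)}\cup\mathbb{A})=0$, since the edge enters the subspace at value $1\leq\varepsilon'$ and the relative chain group at scale $\varepsilon'$ annihilates $k^I_1(Z^{1/2}_1)$; the sequence reads $0\to 0\to G\to 0\to 0$. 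Note that the same level discrepancy already vitiates the displayed union identity in the paper's own proof of Lemma \ref{lema5}, so you have inherited the problem rather than created it; but as a proof of Theorem \ref{similar-1} \emph{as stated}, your argument has a genuine gap at exactly the point you flagged, and closing it requires the additional hypothesis that $F_A$ agree with $F_X$ on the simplices of $A$ (at least in dimension $q$).
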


\begin{defi}
\textbf{The boundary operator} for chains 
$$ \partial_q: \mathcal{C}^{I}_q(\mathbb{X}, \mathbb{A})\longrightarrow \mathcal{C}^{I}_{q-1}(\mathbb{X}, \mathbb{A})$$

\noindent is defined to be the boundary operator of the triple $(\mathbb{X}^{(q)}\cup \mathbb{A}, \mathbb{X}^{(q-1)}\cup \mathbb{A}, \mathbb{X}^{(q-2)}\cup \mathbb{A})$.

Explicitly, $\partial_q$ is the composition of the homomorphisms 
$$
\mathrm{H}^I_q (\mathbb{X}^{(q)}\cup \mathbb{A}, \mathbb{X}^{(q-1)}\cup \mathbb{A})
\stackrel{{\alpha^{I}}}{\longrightarrow} 
\mathrm{H}^I_{q-1}(\mathbb{X}^{(q-1)} \cup \mathbb{A})
\stackrel{{j}^{''}_\ast} {\longrightarrow}
\mathrm{H}^I_{q-1}(\mathbb{X}^{(q-1)}\cup \mathbb{A}, \mathbb{X}^{(q-2)}\cup \mathbb{A}).
$$
\end{defi}

\begin{theo}\label{boundary-chain}
The boundary operator has the following properties:

\begin{enumerate}
    \item $\partial_q\circ\partial_{q+1}=0$. 
    \item If $f:(\mathbb{X}, \mathbb{A})\longrightarrow(\mathbb{X}', \mathbb{A}')$ is a filtration preserving map, then $f^{I}_{q-1}\partial_q=\partial_q f^{I}_q$.
    \item The boundary operator for $q$-chains $\partial_q$ satisfies
$$\partial_q(gA^0\cdots A^q)=\sum_{k=0}^{q}(-1)^kgA^0\cdots \hat{A}^k\cdots A^q.$$
\end{enumerate}
\end{theo}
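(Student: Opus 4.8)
The plan is to establish the three properties in turn, using throughout the explicit factorization $\partial_q = j''_\ast\circ\alpha^I$ recorded in the definition of the chain boundary, where $\alpha^I$ is the connecting homomorphism of the pair $(\mathbb{X}^{(q)}\cup\mathbb{A},\mathbb{X}^{(q-1)}\cup\mathbb{A})$ and $j''_\ast$ is induced by the inclusion into $\mathbb{X}^{(q-1)}\cup\mathbb{A}$ relative to $\mathbb{X}^{(q-2)}\cup\mathbb{A}$. For property (1), I would substitute this factorization for both boundaries to obtain
\[
\partial_q\circ\partial_{q+1} = j''_\ast\circ\alpha^I\circ j''_\ast\circ\alpha^I .
\]
The inner composite $\alpha^I\circ j''_\ast$ is exactly the map induced by the inclusion $\mathbb{X}^{(q)}\cup\mathbb{A}\hookrightarrow(\mathbb{X}^{(q)}\cup\mathbb{A},\mathbb{X}^{(q-1)}\cup\mathbb{A})$ followed by the connecting homomorphism of that same pair, i.e.\ two consecutive arrows in the long exact persistent homology sequence. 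Hence it vanishes by exactness (\cref{ax4}), and so $\partial_q\circ\partial_{q+1}=0$.

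For property (2), the point is that each of the two factors of $\partial_q$ is natural with respect to filtration preserving maps: $j''_\ast$ commutes with $f^I_\ast$ by functoriality (\cref{ax2}), while the connecting homomorphism $\alpha^I$ commutes with $f^I_\ast$ because $\partial$ is a natural transformation (\cref{ax3}). Since $\partial_q$ is precisely the boundary operator of the triple $(\mathbb{X}^{(q)}\cup\mathbb{A},\mathbb{X}^{(q-1)}\cup\mathbb{A},\mathbb{X}^{(q-2)}\cup\mathbb{A})$, this is nothing but the commutativity of induced maps with the triple boundary already established in Theorem~\ref{thm:induced-triple}. Pasting the two commuting squares gives $f^I_{q-1}\partial_q = \partial_q f^I_q$.

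Property (3) is the substantive one, and I would reduce it to a single standard simplex. By Definition~\ref{defi:vertices}, $gA^0\cdots A^q = f^I_\ast(gS^q_\alpha)$ for the vertex map $f$ of an ordered $(q,\alpha)$-simplex, so by property (2) it suffices to compute $\partial_q(gS^q_\alpha)$ and transport the result along $f$. On the standard simplex, $\mathcal{C}^I_{q-1}$ decomposes, via the injective direct-sum representation of Theorem~\ref{injective-representation} together with Theorem~\ref{pers-homo-simplex}, as the direct sum over the $(q-1)$-faces ${\mathbb{S}_\alpha}_k$ (obtained by deleting the $k$-th vertex) of the groups $\mathrm{H}^I_{q-1}({\mathbb{S}_\alpha}_k,\dot{{\mathbb{S}_\alpha}}_k)$. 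I would then identify the component of $\partial_q(gS^q_\alpha)$ in the $k$-th summand with the incidence isomorphism $[\mathbb{S}_\alpha:{\mathbb{S}_\alpha}_k]$ applied to $gS^q_\alpha$; the preceding theorem gives $[\mathbb{S}_\alpha:{\mathbb{S}_\alpha}_k]gS^q_\alpha = (-1)^k g{S_\alpha}_k$. Summing over $k$ and applying $f^I_\ast$, while using $f^I_\ast(g{S_\alpha}_k)=gA^0\cdots\hat{A}^k\cdots A^q$ from Definition~\ref{defi:vertices} and Theorem~\ref{filt-pres-chain}, produces the desired alternating-sum formula.

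The main obstacle is precisely the identification in property (3) of the $k$-th face-component of the skeletal triple boundary $\partial_q$ with the incidence isomorphism $[\mathbb{S}_\alpha:{\mathbb{S}_\alpha}_k]$ of Definition~\ref{incidence-iso}: the two are defined through different triples — the skeletal triple $(\mathbb{X}^{(q)},\mathbb{X}^{(q-1)},\mathbb{X}^{(q-2)})$ on the one hand, and the proper triad $(\mathbb{S}^q_\alpha,\mathbb{S}^{q-1}_\alpha,\mathbb{C}^{q-1}_\alpha)$ on the other — so matching them requires a diagram chase showing that projecting the boundary onto a fixed face reproduces exactly the triad boundary that defines that face's incidence isomorphism, together with the commutativity of incidence isomorphisms with induced maps from Lemma~\ref{commu-simplex}. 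Once this identification is in place, the signs and the summation follow immediately from the preceding theorem.
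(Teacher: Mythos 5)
A point of order first: the paper itself offers no proof of Theorem \ref{boundary-chain} --- it is stated as the final result of Appendix \ref{ap1} and left, implicitly, to the classical treatment in \cite{foundations} --- so your argument cannot be matched against a proof in the text; it can only be judged on its own terms, and on those terms it is correct and is in substance the Eilenberg--Steenrod argument transplanted to filtered sets. Parts (1) and (2) are complete as written: the factorization $\partial_q=j''_\ast\circ\alpha^I$ makes the inner composite $\alpha^I\circ j''_\ast$ two consecutive arrows of the exact sequence of the pair $(\mathbb{X}^{(q)}\cup\mathbb{A},\mathbb{X}^{(q-1)}\cup\mathbb{A})$, hence zero by Axiom \ref{ax4}, and naturality is Theorem \ref{thm:induced-triple} applied to the skeletal triple. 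The only thing to add there is the routine verification that a filtration preserving $f$ actually carries the skeletal triple of $(\mathbb{X},\mathbb{A})$ into that of $(\mathbb{X}',\mathbb{A}')$, which holds because $\#f(\sigma)\leq\#\sigma$ gives $F_{X'}^{(q)}\circ f\leq F_{X}^{(q)}$ for the skeleta of Definition \ref{qskltn}.

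For part (3), your reduction to the model simplex and the face-by-face direct-sum decomposition of $\mathcal{C}^{I}_{q-1}(\mathbb{S}^q_\alpha)$ via Theorem \ref{injective-representation} (the same pattern as in the proof of Lemma \ref{lema5}) is the right route, and the identification you flag as the main obstacle does go through, for a concrete reason worth recording: by Definition \ref{def-simp-bord} the closed star $\mathbb{C}^{q-1}_\alpha$ attached to the $k$-th face assigns the value $M$ only to $S^q_\alpha$ and to the omitted face, so both the $(q-2)$-skeleton of $\dot{\mathbb{S}}^q_\alpha$ and every face ${\mathbb{S}_\alpha}_j$ with $j\neq k$ are filtered subsets of $\mathbb{C}^{q-1}_\alpha$. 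Consequently the projection of $\partial_q(gS^q_\alpha)$ onto the $k$-th summand factors through $\mathrm{H}^I_{q-1}(\dot{\mathbb{S}}^q_\alpha,\mathbb{C}^{q-1}_\alpha)$, where the summands with $j\neq k$ die (their inclusions factor through $(\mathbb{C}^{q-1}_\alpha,\mathbb{C}^{q-1}_\alpha)$, whose persistent homology vanishes by Lemma \ref{lemma:PH-trivial}) and the $k$-th summand maps in by the isomorphism $j_\ast$ appearing in Theorem \ref{incidence-isomorphism}; the resulting composite is, by construction, the triad boundary defining $[\mathbb{S}^q_\alpha:{\mathbb{S}_\alpha}_k]$ in Definition \ref{incidence-iso}. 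With that chase written out, the sign $(-1)^k$ from the unlabeled theorem following Theorem \ref{two-ordered-simplex}, transport along the vertex map of Definition \ref{defi:vertices} using part (2), and Theorem \ref{filt-pres-chain} give the alternating-sum formula exactly as you describe. So there is no gap in ideas; only the flagged diagram chase, and the skeletal naturality check, need to be made explicit.
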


\section{Additional Basic Definitions}\label{apdxb}

\subsection{The category \texorpdfstring{$\mathscr{RF}$}{} and functor 
\texorpdfstring{$H$}{}}

 A \textbf{filtered set} is a pair $\mathbb{X}=(X, F_X)$, where  $F_X$ is a filtration over $X$. A map $f: \mathbb{X}\longrightarrow \mathbb{Y}$ is a \textbf{filtration preserving map} if $F_{X}\geq F_{Y}\circ f$.

A \textbf{relative filtered set} $(\mathbb{X},\mathbb{A}):=((X,A),(F_X,F_A))$ is a pair $(X,A)$ of finite sets, where $A\subset X$,  equipped with a pair of maps $(F_X,F_A)$ defined in the following way: the maps $F_X: \pow(X)\longrightarrow\mathbb{R}$ and $F_A:\pow(A)\longrightarrow \mathbb{R}$ are filtrations over ${X}$ and over ${A}$, respectively, and satisfy $F_A(\sigma)\geq F_X(\sigma)$ for all $\sigma\in \pow(A)$.
By a \textbf{filtration preserving map of relative filtered sets} $f: (\mathbb{X},\mathbb{A}) \longrightarrow (\mathbb{Y},\mathbb{B})$ we mean that 
    \begin{itemize}
        \item $f$ is a map of pairs $f:(X, A)\longrightarrow (Y, B)$, that is, $f:X\longrightarrow Y$ such that $F(A)\subset B$; 
        \item $f:(X, F_X) \longrightarrow (Y, F_Y)$ is a filtration preserving map and
        \item the restriction $f|_A:(A,F_A) \longrightarrow (B,F_B)$ is also a filtration preserving map.
\end{itemize}

\label{rf}
The category $\mathscr{R}\mathscr{F}$ will be defined as follows:

\begin{itemize}
    \item The elements of $\mathscr{R}\mathscr{F}$ are indexed pairs of the form $((\mathbb{X}, \mathbb{A}); I)$, where $(\mathbb{X}, \mathbb{A})$ are relative filtered pairs of finite sets and $I$ is a element of the poset ${\mathrm{Int}(\mathbb{R})} := \{ [\varepsilon, \varepsilon'], \, \varepsilon \leq \varepsilon']\}$;
    
    \item The collection of morphisms $\mathscr{R}\mathscr{F} \left( ((\mathbb{X}, \mathbb{A}); I), ((\mathbb{Y}, \mathbb{B}); J)\right)$ is empty if $I\neq J$ and otherwise it will consist of maps 
    $f: ((\mathbb{X}, \mathbb{A}); I)\longrightarrow ((\mathbb{Y}, \mathbb{B}); I)$, where $f$ represent a filtration preserving map
    $f^{I}: (\mathbb{X}, \mathbb{A})\longrightarrow (\mathbb{Y}, \mathbb{B})$.
\end{itemize}

Given a object $((\mathbb{X}, \mathbb{A}); I)$ in $\mathscr{RF}$, the identity map $\mathrm{id}: ((\mathbb{X}, \mathbb{A}); I)\longrightarrow ((\mathbb{X}, \mathbb{A}); I)$ represents the identity map of filtered pairs $\mathrm{id}:(\mathbb{X}, \mathbb{A})  \longrightarrow (\mathbb{X}, \mathbb{A})$. The composition law
\[
\circ:
\mathscr{RF} \left( ((\mathbb{X}, \mathbb{A}); I), ((\mathbb{Y}, \mathbb{B}); I)\right) 
\times 
\mathscr{RF} \left( ((\mathbb{Y}, \mathbb{B}); I), ((\mathbb{Z}, \mathbb{C}); I)\right) 
\longrightarrow 
\mathscr{RF} \left( ((\mathbb{X}, \mathbb{A}); I), ((\mathbb{Z}, \mathbb{C});I)\right)
\]
comes from the composition law for filtered maps of pairs which satisfies the associative law and the unity laws.
\smallskip

The relative persistent homology functor $\mathrm{H}:\mathscr{RF}\longrightarrow \mathscr{A}b$
associates to each object $ \left((\mathbb{X}, \mathbb{A}); I\right)$ to an abelian group $\mathrm{H}_\ast^{I}(\mathbb{X}, \mathbb{A})$
and to each map
$ f^{I}:(\mathbb{X}, \mathbb{A})\longrightarrow (\mathbb{Y}, \mathbb{B})$ a homomorphism $ f_\ast^{I}: \mathrm{H}_\ast^{I}(\mathbb{X}, \mathbb{A})\longrightarrow \mathrm{H}_\ast^{I}(\mathbb{Y}, \mathbb{B})$
as defined in \cref{defhomrel}.
\smallskip

Given an object $\left((\mathbb{X}, \mathbb{A}); I\right)$ and the identity morphism $\mathrm{id}: ((\mathbb{X}, \mathbb{A}); I)\longrightarrow ((\mathbb{X}, \mathbb{A}); I)$,
$\mathrm{id}_\ast^{I}([d]) = [\mathrm{id}^{\varepsilon'}_{\#}(d)] =[d]$, that is, $\mathrm{id}^{I}_\ast:\mathrm{H}_\ast^{I}(\mathbb{X}, \mathbb{A})\longrightarrow \mathrm{H}_\ast^{I}(\mathbb{Y}, \mathbb{B})$ is the identity map.
\smallskip

Given $f:((\mathbb{X}, \mathbb{A}); I)\longrightarrow ((\mathbb{Y}, \mathbb{B});I)$ and $g:((\mathbb{Y}, \mathbb{B});I)\longrightarrow ((\mathbb{Z}, \mathbb{C});I)$ one has
$ (g\circ f)^{I}_\ast ([d]) 
= [(g\circ f)^{\varepsilon'}_\#(d)] 
= [(g^{\varepsilon'}\circ f^{\varepsilon'})_\#(d)] 
= [(g^{\varepsilon'}_\#\circ f^{\varepsilon'}_\#)(d)]
= [g^{\varepsilon'}_\#(f^{\varepsilon'}_\#(d))]
= g^{I}_\ast ([f^{\varepsilon'}_\#)(d)])
= g^{I}_\ast\circ f^{I}_\ast ( [d] )$.

\subsection{Simplicial Complex}\label{simpcomp}

A \emph{simplicial complex} $K$ is a pair $K=(V, \mathcal{S})$ of a set $V$ together with a collection $\mathcal{S}$ of subsets of $V$ called \emph{simplices} with the following properties:
\begin{enumerate}[\bf i.]
    \item for all $v\in V$, $\{v\}\in \mathcal{S}$;
    \item if $\alpha\subset \sigma \in \mathcal{S}$, then $\alpha\in \mathcal{S}$.
\end{enumerate}
We call sets $\{v\}$  \emph{vertices} of $K$.
We also say that $\sigma\in \mathcal{S}$ is an $n$-\emph{simplex} or a \emph{simplex of dimension $n$} if $|\sigma |=n+1$.
If $\alpha\subset\sigma$, $\alpha$ is a \emph{face} of $\sigma$ and $\sigma$ is a \emph{coface} of $\alpha$. 
An orientation of $\sigma = \{v_0, v_1, \ldots, v_n\}$ is an equivalence class given by $(v_0, v_1, \ldots, v_n) \sim (v_{\tau(0)}, v_{\tau(1)}, \ldots, v_{\tau(n)})$ if $\tau$ is an even permutation.

A $n$-simplex may be realized geometrically as a convex hull of $n+1$  affinely independent points  ${\displaystyle u_{0},\dots ,u_{k}}$ in $\mathbb{R}^{d}$, $d\geq n$, which means that the $n$ vectors ${\displaystyle u_{1}-u_{0},\dots ,u_{k}-u_{0}}$ are linearly independent.. A realization provides us the familiar low-dimensional simplices: \emph{vertices}, \emph{edges}, \emph{triangles} and \emph{tetrahedron}. In a realized complex, the simplices must meet along common faces. A \emph{subcomplex} $L\subset K$ is a complex such that if we denote $L=(V_L, \mathcal{S}_L)$ and $K=(V_K, \mathcal{S}_K)$, then  $V_{L}\subset V_{K}$ and $\mathcal{S}_{L}\subset\mathcal{S}_{K}$.

\end{document}